\DeclareSymbolFontAlphabet{\mathbb}{AMSb}
\DeclareSymbolFontAlphabet{\mathbbl}{bbold}
\newcommand{\RPP}{\ensuremath\mathrm{PP}}
\newcommand{\MRPP}{\ensuremath\mathsf{ShPP}}
\numberwithin{equation}{section}
\theoremstyle{definition}
\newtheorem* {theorem*}{Theorem}
\newtheorem* {conjecture*}{Conjecture}
\newtheorem{theorem}{Theorem}[section]
\theoremstyle{definition}
\newtheorem* {example*}{Example}
\newtheorem{lemma}[theorem]{Lemma}
\theoremstyle{definition}
\newtheorem{definition}[theorem]{Definition}
\theoremstyle{definition}
\newtheorem{proposition}[theorem]{Proposition}
\newtheorem{corollary}[theorem]{Corollary}
\newtheorem*{remark*}{Remark}
\theoremstyle{definition}
\theoremstyle{definition}
\newtheorem {example}[theorem]{Example}
\theoremstyle{definition}
\theoremstyle{definition}
\theoremstyle{definition}
\def\({\left(}
\def\){\right)}
\newcommand{\QQ}{\mathbb{Q}}
\newcommand{\cP}{\mathcal{P}}
\newcommand{\cQ}{\mathcal{Q}}
\newcommand{\cO}{\mathcal{O}}
\newcommand{\cR}{\mathcal{R}}
\def\ZZ{\mathbb{Z}}
\def\spanning{\textnormal{-span}}
\def\barr{\begin{array}}
\def\earr{\end{array}}
\def\ba{\begin{aligned}}
\def\ea{\end{aligned}}
\def\be{\begin{equation}}
\def\ee{\end{equation}}
\def\qquand{\qquad\text{and}\qquad}
\def\quand{\quad\text{and}\quad}
\def\qquord{\qquad\text{or}\qquad}
\def\quord{\quad\text{or}\quad}
\def\cH{\mathcal H}
\def\hs{\hspace{0.5mm}}
\def\ben{\begin{enumerate}}
\def\een{\end{enumerate}}
\def\hs{\hspace{0.5mm}}
\def\D{\hat D}
\newcommand{\cA}{\mathcal{A}}
\newcommand{\cB}{\mathcal{B}}
\def\arcstart{\ \xy<0cm,-.15cm>\xymatrix@R=.1cm@C=.3cm }
\newcommand{\arcstartc}[1]{\ \xy<0cm,-.15cm>\xymatrix@R=.1cm@C=#1cm}
\def\cF{\mathcal{F}}
\def\cG{\mathcal{G}}
\newcommand{\weight}{\operatorname{wt}}
\def\cW{\mathcal{W}}
\def\D{\textsf{D}}
\def\SD{\textsf{SD}}
\def\GQ{G\hspace{-0.2mm}Q}
\def\GP{G\hspace{-0.2mm}P}
\def\gq{g\hspace{-0.1mm}q}
\def\gp{g\hspace{-0.1mm}p}
\def\jq{j\hspace{-0.1mm}q}
\def\jp{j\hspace{-0.1mm}p}
\def\ss{/\hspace{-1mm}/}
\def\cV{\mathcal{V}}
\newcommand{\cC}{\mathcal{C}}
\newcommand{\ytab}[1]{
\ytableausetup{boxsize = .55cm,aligntableaux=center}
{\small\begin{ytableau}  #1  \end{ytableau}}
}
\newcommand{\ytabsmall}[1]{
\ytableausetup{boxsize = .3cm,aligntableaux=center}
{\small\begin{ytableau}  #1  \end{ytableau}}
}
\def\SetTab{\mathsf{SVT}}
\def\ShSetTab{\mathsf{ShSVT}}
\def\ss{/\hspace{-1mm}/}
\def\BT{\mathsf{BT}}
\def\ShBTQ{\mathsf{ShBT}_Q}
\def\ShBTP{\mathsf{ShBT}_P}
\def\cq{\mathsf{q\_corners}}
\def\cp{\mathsf{p\_corners}}
\def\tauShBT{\tau^{\mathrm{ShBT}}}
\def\wtjp{\jp^{\mathrm{comb}}}
\def\wtjq{\jq^{\mathrm{comb}}}
\def\wtgp{\gp^{\mathrm{comb}}}
\def\wtgq{\gq^{\mathrm{comb}}}
\def\cribbonsym{\mathsf{ShRibbons}_Q}
\newcommand{\cribbons}[2]{\cribbonsym(#1\ss#2)}
\newcommand{\bribbons}[2]{\mathsf{ShRibbons}_P(#1\ss#2)}
\def\RC{\mathsf{RC}}
\def\cp{\mathfrak{p}}
\def\cq{\mathfrak{q}}
\def\cpp{\tilde{\mathfrak{p}}}
\def\cqq{\tilde{\mathfrak{q}}}
\def\cU{\mathcal{U}}
\def\PPart{\mathsf{PP}}
\def\yyyletter{\mathfrak{b}}
\def\zzzletter{\mathfrak{c}}
\newcommand{\yyy}[3]{ \yyyletter^{#1}_{#2,(#3)}}
\newcommand{\zzz}[3]{ \zzzletter^{#1}_{#2,(#3)}}
\definecolor{darkred}{rgb}{0.7,0,0} 
\newcommand{\defn}[1]{{\color{darkred}\emph{#1}}} 
\begin{document}
\title{Combinatorial formulas for shifted dual stable Grothendieck polynomials}
\author{
Joel Brewster Lewis \\ Department of Mathematics  \\ George Washington University \\ {\tt jblewis@gwu.edu}
\and
Eric Marberg \\ Department of Mathematics \\  HKUST \\ {\tt emarberg@ust.hk}
}

\date{}

\maketitle

\begin{abstract}
The $K$-theoretic Schur $P$- and $Q$-functions $\GP_\lambda$ and $\GQ_\lambda$ may be concretely defined 
as weight generating functions for semistandard shifted set-valued tableaux.
These symmetric functions are the shifted analogues of stable Grothendieck polynomials, and were introduced by Ikeda and Naruse 
for applications in geometry. 
Nakagawa and Naruse specified families of dual $K$-theoretic Schur $P$- and $Q$-functions 
$\gp_\lambda$ and $\gq_\lambda$ via a Cauchy identity involving  $\GP_\lambda$ and $\GQ_\lambda$.
They conjectured that the dual power series are weight generating functions for certain shifted plane partitions. 
We prove this conjecture. We also derive a related generating function formula for the images of $\gp_\lambda$ and $\gq_\lambda$
under the $\omega$ involution of the ring of symmetric functions. This confirms a conjecture of Chiu and the second author.
Using these results, we verify a conjecture of Ikeda and Naruse that the $\GQ$-functions are a basis for a ring. 
\end{abstract}

\tableofcontents

\section{Introduction}

The main results of this paper are to derive explicit combinatorial generating functions
for certain families of ``dual'' power series defined indirectly by Cauchy identities.
The formulas that we establish were originally conjectured in \cite{ChiuMarberg,NakagawaNaruse}.
This introduction gives a very quick summary of the power series involved and
the generating functions to be derived. We also explain one application of our formulas 
to resolve a conjecture of Ikeda and Naruse from \cite{IkedaNaruse}.

\subsection{Shifted set-valued generating functions}

The \defn{shifted Young diagram}
of a strict integer partition $\lambda = (\lambda_1 > \lambda_2   > \dots> \lambda_k>0)$
is the set 
\[\SD_\lambda := \{ (i,i+j-1) \in [k]\times \ZZ : 1 \leq j \leq \lambda_i\}\quad\text{where $[k] := \{1,2,\dots,k\}$.}\]
Elements of $\SD_\lambda$ are called \defn{positions} or \defn{boxes}.
A \defn{shifted set-valued tableau} of shape $\lambda$
is a filling  $T $ of $\SD_{\lambda}$ by finite, nonempty subsets of $\tfrac{1}{2}\ZZ$.
Throughout, we let
$ i' := i - \tfrac{1}{2}$  for $i \in \ZZ$
and refer to half-integers as \defn{primed numbers}. 

Let $T_{ij}$ denote the entry assigned by  $T$ to box  $(i,j) \in \SD_{\lambda}$,
and write $(i,j) \in T$ to indicate that $(i,j) $ is in the domain of $T$.
The \defn{diagonal positions} of  $T$ are the boxes $(i,j) \in T$ with $i=j$.
A shifted set-valued tableau $T$ is \defn{semistandard} if all of the following conditions hold:
\begin{itemize}
\item[(S1)] its entries $T_{ij}$ are nonempty finite subsets of $\{1'<1<2'<2<\dots\}$,
\item[(S2)] one has $\max(T_{ij}) \leq \min(T_{i+1,j})$ and $\max(T_{ij}) \leq \min(T_{i,j+1})$ for all relevant $(i,j) \in T$,
\item[(S3)] no unprimed number appears in different boxes within the same column, and
\item[(S4)] no primed number appears in different boxes within the same row.
\end{itemize}
We draw shifted tableaux in French notation; for example, both
\[
\ytableausetup{boxsize=0.6cm,aligntableaux=center}
 \begin{ytableau}
\none & \none & 345 & \none\\
\none &  2'  & 3' & \none\\
1 & 2' & 2 & 3'3
\end{ytableau}
\quand
 \begin{ytableau}
\none & \none & 5 & \none\\
\none &  3'  & 3 & \none\\
12 & 2 & 23' & 34
\end{ytableau}
\]
are semistandard shifted set-valued tableaux of shape $(4, 2, 1)$.
Let $\beta, x_1,x_2,x_3,\dots$ be commuting indeterminates. 
Define $|T| := \sum_{(i,j) \in T} |T_{ij}|$ and ${\bf x}^T := \prod_i x_i^{a_i+b_i}$
 where $a_i$ and $b_i$ are the number of times that 
$i$ and $i'$ appear in the shifted set-valued tableau $T$, respectively. 
Our examples above both have $|T| = 10$ and ${\bf x}^T = x_1 x_2^3 x_3^4 x_4 x_5$. 
When $\lambda$ is a partition (or more generally, any sequence with finite sum), we write $|\lambda|$ for the sum of its entries.

\begin{definition}
The \defn{$K$-theoretic Schur $P$- and $Q$-functions} indexed by a strict partition $\lambda$ are the formal power series
$ \GP_{\lambda} := \sum_{T \in \ShSetTab_P(\lambda)} \beta^{|T|-|\lambda|} {\bf x}^{T}$
and
$\GQ_{\lambda} := \sum_{T \in \ShSetTab_Q(\lambda)} \beta^{|T|-|\lambda|} {\bf x}^{T}$
where $\ShSetTab_Q(\lambda)$ 
is the  set of all semistandard shifted set-valued tableaux of shape $\lambda$ and 
 $\ShSetTab_P(\lambda)$ is the subset of such  tableaux with
no \textbf{primed} numbers in any diagonal positions.\footnote{
The functions $\GP_\lambda$ and $\GQ_\lambda$  are sometimes defined  
by these formulas with $\beta$ set to $-1$ or $1$.
There is no loss of generality in this since we can recover  
$ \GP_{\lambda} $ from $ \sum_{T \in \ShSetTab_P(\lambda)} (-1)^{|T|-|\lambda|} {\bf x}^{T}$
by substituting $x_i\mapsto  -\beta x_i$ then dividing by $\beta^{|\lambda|}$ (and likewise for $\GQ_\lambda$).
Similar comments apply to all other power series in this paper involving $\beta$.
}
 \end{definition}

These expressions belong to ring $\ZZ[\beta]\llbracket x_1,x_2,\dots\rrbracket  $. 
If $\deg \beta=-1$  then $\GP_\lambda$ and $\GQ_\lambda$ are   homogeneous of degree $|\lambda|$,
but if $\deg \beta=0$ then the power series have unbounded degree.
Both $\GP_\lambda$ and $\GQ_\lambda$ are symmetric in the $x_i$ 
variables  \cite[Thm.~9.1]{IkedaNaruse}.
Setting $\beta=0$ turns $\GP_\lambda$ and $\GQ_\lambda$ into the classical \defn{Schur $P$- and $Q$-functions} $P_\lambda$ and $Q_\lambda$.
It follows that as
 $\lambda$ ranges over all strict partitions,
the sets  $\{ \GP_\lambda\}$ and $\{\GQ_\lambda\}$ are linearly independent.
While $Q_\lambda=2^{\ell(\lambda) }P_\lambda$,
each $\GQ_\lambda$ is a  more complicated but still finite $\ZZ[\beta]$-linear combination of $\GP_\mu$'s \cite[Thm.~1.1]{ChiuMarberg}.

Ikeda and Naruse first introduced these symmetric functions  in \cite{IkedaNaruse}
for applications in geometry.
Specializations of $\GP_\lambda$ and $\GQ_\lambda$ represent the structure sheaves of Schubert varieties in the torus equivariant $K$-theory of the maximal isotropic Grassmannians of orthogonal and symplectic types \cite[Cor.~8.1]{IkedaNaruse}; see also  \cite{NakagawaNaruse2017,NakagawaNaruse,Naruse}. 
These power series further appear as ``stable limits'' of $K$-theory classes of
 certain orbit closures in the type A flag variety \cite{MP2020,MP2021}.

\subsection{Dual functions via Cauchy identities}\label{cauchy-intro-sect}
 
Our main results concern the following dual forms of $\GP_\lambda$ and $\GQ_\lambda$.

\begin{definition}
The \defn{dual $K$-theoretic Schur $P$- and $Q$-functions} $\gp_\lambda$ and $\gq_\lambda$
are the unique elements of $\ZZ[\beta]\llbracket  x_1,x_2,\dots\rrbracket  $ indexed by strict partitions $\lambda$ 
satisfying the Cauchy identities
 \be\label{cauchy-eq} \sum_\lambda \GQ_\lambda({\bf x}) \gp_\lambda({\bf y}) = \sum_\lambda \GP_\lambda({\bf x}) \gq_\lambda({\bf y}) = \prod_{i,j \geq 1} \tfrac{ 1 - \overline{x_i} y_j}{1-x_iy_j}
 \quad\text{where }\overline{x} := \tfrac{-x}{1+\beta x}
 .\ee
  \end{definition}

The power series
$\gp_{\lambda}$
and
$\gq_{\lambda}$
are special cases of Nakagawa and Naruse's \defn{dual universal factorial Schur $P$- and $Q$-functions},
which are defined via a more general
version of \eqref{cauchy-eq} \cite[Def.~3.2]{NakagawaNaruse}.
Both $\{\gp_\lambda\}$ and $\{\gq_\lambda\}$ are  linearly independent families  of  functions that are symmetric in the $x_i$ variables and  
homogeneous if $\deg \beta = 1$ \cite[Thm.~3.1]{NakagawaNaruse}. 
 These properties let one define
the following conjugate symmetric functions, which were first considered in \cite{ChiuMarberg}:

 \begin{definition}
   Write $\omega$ for the $\ZZ[\beta]$-linear involution of the ring of symmetric functions (in the variables $x_1,x_2,\dots$ with coefficients in $\ZZ[\beta]$)
   acting on Schur functions as $\omega(s_\lambda) = s_{\lambda^\top}$.
 The \defn{conjugate dual $K$-theoretic Schur $P$- and $Q$-functions} of a strict partition $\lambda$ are the   symmetric functions
 $ \jp_\lambda:= \omega(\gp_\lambda)$ and $ \jq_\lambda:= \omega(\gq_\lambda).$
 \end{definition}

\subsection{Shifted plane partition generating functions}

Our first main result is a generating function formula for $\gp_\lambda$ and $\gq_\lambda$ 
that was predicted in \cite{NakagawaNaruse}.
Let  $ \lambda$ be a strict partition.
A \defn{shifted plane partition} of shape $\lambda$
is
a filling 
of $\SD_{\lambda}$ by elements of $ \{1'<1<2'<2<\dots\}$ with weakly increasing rows and columns. Examples include 
\[
\ytableausetup{boxsize=0.5cm,aligntableaux=center}
\begin{ytableau}
\none &\none & \none & 3  \\
\none &\none & 2 & 3 \\
\none & 1& 2' & 2 \\
1' &1'  & 1& 1 & 5' 
\end{ytableau}
\quand
\begin{ytableau}
\none &\none & \none & 3'  \\
\none &\none & 2' & 2 \\
\none &1 & 2' & 2 \\
1  & 1 & 1&1& 5 
\end{ytableau},
\]
which both have shape $(5,3,2,1)$.
Given such a filling $T$, let
 $c_i$ be the number of distinct columns of $T$ containing $i$
and let $r_i$ be the number of distinct rows of $T$ containing  $i'$. Then define
\[
\weight_\RPP(T) := (c_1 + r_1, c_2 + r_2, c_3 + r_3, \dots)
\quand
\textstyle{\bf x}^{\weight_\RPP(T)} :=   \prod_{i\geq 1} x_i^{c_i + r_i}.
\] 
Both examples above  have $|\weight_\RPP(T)| =9$ and ${\bf x}^{\weight_\RPP(T)} = x_1^4 x_2^3x_3x_5$.

\begin{theorem}\label{main-thm1}
Let $\MRPP_Q(\lambda)$ be the set of all
shifted plane partitions of shape $\lambda$,
and  define
$\MRPP_P(\lambda)$ 
to be the subset of  such fillings
with no \textbf{unprimed} diagonal entries. Then
\[
\ba
\gp_{\lambda} = \sum_{T\in \MRPP_P(\lambda)} (-\beta)^{|\lambda| - |\weight_\RPP(T)|} {\bf x}^{\weight_\RPP(T)} 
\quand
\gq_{\lambda} = \sum_{T\in \MRPP_Q(\lambda)} (-\beta)^{|\lambda| -  |\weight_\RPP(T)|} {\bf x}^{\weight_\RPP(T)} .\ea
\]
\end{theorem}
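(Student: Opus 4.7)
The plan is to produce two candidate power series
\[
\widetilde{\gp}_\lambda({\bf x}) := \sum_{T\in \MRPP_P(\lambda)} (-\beta)^{|\lambda| - |\weight_\RPP(T)|} {\bf x}^{\weight_\RPP(T)}
\qquad\text{and}\qquad
\widetilde{\gq}_\lambda({\bf x}) := \sum_{T\in \MRPP_Q(\lambda)} (-\beta)^{|\lambda| - |\weight_\RPP(T)|} {\bf x}^{\weight_\RPP(T)}
\]
on the plane-partition side, and show that they solve the defining Cauchy identities \eqref{cauchy-eq}. By the stated uniqueness, this will force $\widetilde{\gp}_\lambda = \gp_\lambda$ and $\widetilde{\gq}_\lambda = \gq_\lambda$.

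First, I would verify that $\widetilde{\gp}_\lambda$ and $\widetilde{\gq}_\lambda$ are symmetric in $x_1,x_2,\ldots$. The natural route is to construct a shifted, $K$-theoretic analogue of a Bender--Knuth involution acting on $\MRPP_P(\lambda)$ and $\MRPP_Q(\lambda)$ that swaps the contribution of $i,i'$ with that of $i{+}1,(i{+}1)'$ while preserving the $\beta$-weight. The care needed here is that the statistic $\weight_\RPP$ counts rows containing $i'$ and columns containing $i$, so the involution must be designed row-by-row/column-by-column and respect the strict-shape diagonal rules that distinguish $\MRPP_P$ from $\MRPP_Q$.

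The main step, and the main obstacle, is to establish the Cauchy identities
\[
\sum_\lambda \GQ_\lambda({\bf x}) \widetilde{\gp}_\lambda({\bf y}) \;=\; \sum_\lambda \GP_\lambda({\bf x}) \widetilde{\gq}_\lambda({\bf y}) \;=\; \prod_{i,j \geq 1} \frac{1-\overline{x_i}\,y_j}{1-x_iy_j}.
\]
For this I would construct a weight-preserving bijection of shifted Hecke/RSK type between pairs $(S,T)$ with $S\in \ShSetTab_Q(\lambda)$, $T\in \MRPP_P(\lambda)$ of the same shape $\lambda$ on the one hand, and a combinatorial expansion of the RHS on the other. Expand the kernel as
\[
\prod_{i,j}\frac{1-\overline{x_i}y_j}{1-x_iy_j} \;=\; \prod_{i,j}\frac{1}{1-x_iy_j}\cdot\prod_{i,j}\Bigl(1+\tfrac{x_iy_j}{1+\beta x_i}\Bigr),
\]
which one interprets as a generating function over certain decorated biwords or $\NN$-matrices with marked entries, the markings contributing the $\beta$-powers. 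The bijection should be a shifted $K$-theoretic insertion: inserting the biword letter-by-letter builds a semistandard shifted set-valued tableau $S$ (the insertion tableau) while recording the growth of the shape in a shifted plane partition $T$ (the recording tableau), with primed/unprimed conventions on the diagonal distinguishing the $P$ vs.\ $Q$ cases. The key verification is that every insertion step preserves weight: a non-multiplicity insertion contributes an ${\bf x}{\bf y}$-monomial to the kernel and produces a ${\bf x}^S{\bf y}^T$-contribution of the same total degree, while a multiplicity/``bounce'' step produces one extra $\beta$-factor corresponding exactly to the $(-\beta)^{|\lambda|-|T|}$ statistic in the plane partition or the $\beta^{|\lambda|-|S|}$ statistic in the set-valued tableau.

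Once the bijection is in place, summing over biwords gives the Cauchy identities, and invoking the uniqueness from Section \ref{cauchy-intro-sect} finishes the proof of both formulas simultaneously. The two diagonal conventions (primes forbidden on the diagonal of $S\in\ShSetTab_P$ vs.\ unprimed entries forbidden on the diagonal of $T\in\MRPP_P$, and the dual exchange for the $Q$-version) are precisely what is needed to match the two sides of \eqref{cauchy-eq}; the delicate point will be checking that the insertion algorithm's behavior on diagonal boxes pairs up these two parity conditions correctly, which is where I expect the proof to require the most combinatorial bookkeeping.
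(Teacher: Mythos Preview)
Your approach is genuinely different from the paper's, and it has a real gap.

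The paper never builds Bender--Knuth involutions for shifted plane partitions and never attempts a direct RSK proof of the Cauchy identity. Instead it detours through the conjugate functions $\jp_\lambda = \omega(\gp_\lambda)$ and $\jq_\lambda = \omega(\gq_\lambda)$: it constructs Bender--Knuth involutions for shifted \emph{bar tableaux} (Section~\ref{bk-sect}), uses these plus an inductive Pieri-rule argument to prove the bar-tableau formulas for $\jp$ and $\jq$ (Theorems~\ref{jq-thm} and~\ref{jp-thm}), and only then recovers the plane-partition formulas. The transfer back is purely algebraic: a short computation (Lemma~\ref{gg-lem1}) shows that the one-variable operators encoding $\wtgp_{\lambda/\mu}(t)$ and $\wtjp_{\lambda/\mu}(t)$ are mutual inverses up to $t\mapsto -t$, and combining this with commutation relations already established in~\cite{ChiuMarberg} (Lemma~\ref{gg-lem2}, Corollary~\ref{gg-cor}) yields the Cauchy identity for $\wtgp_\lambda$ in a few lines, without ever directly proving its symmetry.

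The gap in your plan is the shifted $K$-theoretic RSK. You specify what the bijection must accomplish but do not construct it, and the construction \emph{is} the argument: the diagonal interaction between the set-valued insertion tableau and the plane-partition recording tableau, together with the correct $\beta$-accounting on both sides simultaneously, is exactly the hard part. This theorem was an open conjecture of Nakagawa and Naruse, so such an insertion cannot be presumed routine; no known shifted Hecke insertion pairs $\ShSetTab_Q$ with $\MRPP_P$ in the way you need. The paper's indirect route buys a way around this unknown bijection at the cost of first proving the $\jp$/$\jq$ theorem; your route, if it could be completed, would give a self-contained bijective proof and likely more structural information, but as written it is a program rather than a proof.
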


 This result was conjectured by Nakagawa and Naruse as \cite[Conj.~5.1]{NakagawaNaruse}.
We will actually prove a more general formula for the skew versions of $\gp_\lambda$ and $\gq_\lambda$; see Theorem~\ref{gp-gq-thm}.
These formulas make it possible to compute the terms in $\gp_\lambda$ and $\gq_\lambda$,
which is not straightforward from \eqref{cauchy-eq}.

\subsection{Shifted bar tableaux generating functions}

Our second main result is a generating function formula for $\jp_\lambda$ and $\jq_\lambda$ 
that was predicted in \cite{ChiuMarberg}.
Continue to let $\lambda$ be a strict integer partition.
Suppose $V$ is a shifted tableau\footnote{That is, a shifted set-valued tableau whose entries are all singleton sets.} of shape $\lambda$
with no unprimed entries repeated in any column and no primed entries repeated in any row.
Let $\Pi$ be a partition of the diagram $\SD_{\lambda}$
into (disjoint, nonempty) subsets of adjacent boxes containing the same entry in $V$. 
Each block of $ \Pi$ 
is a contiguous ``bar'' of positions in the same row or  column,
and we refer to the pair $T=(V,\Pi)$ as
a \defn{shifted bar tableau} of shape $\lambda$.

If $V$ is semistandard in the sense of having weakly increasing rows and columns,
then we say that $T$ is also \defn{semistandard}. 
We draw shifted bar tableaux as pictures like
\[
\begin{young}[15.5pt][c] 
, & ]=![cyan!75]2  & =]![cyan!75]  \ynobottom & ![pink!75]3'\ynobottom  \\ 
]=![red!75]1 & =]![red!75]   & ]=]![blue!60] 1 & ]=]![pink!75] \ynotop & ![magenta!75]3 
\end{young}
\quad\text{to represent}\quad
T=(V,\Pi) =  \(\hs \begin{young}[15.5pt][c]
 , &  2 &  2 &  3'\\
1 & 1 &   1 &   3' &   3
\end{young}
,\hs \begin{young}[15.5pt][c] 
, & ]= \cdot & =] \cdot \ynobottom & \cdot\ynobottom  \\ 
]= \cdot & =]\cdot & ]=]\cdot & ]=] \cdot \ynotop &  \cdot
\end{young}\hs \).
\]
These objects are the shifted analogues of what are called \defn{valued-set tableaux} in \cite{LamPyl}.
The word ``valued-set'' is just a formal transposition of ``set-valued'';
we believe that the name ``bar tableau'' is more intuitive and descriptive.

Given a shifted bar tableau $T=(V,\Pi)$, let
$ |T| := |\Pi|$ and $ {\bf x}^T := \prod_{i\geq 1} x_i^{b_i}$
where $b_i$ is the number of blocks in $\Pi$ containing $i$ or $i'$.
Our example above has $|T| = 5$ and ${\bf x}^T = x_1^2 x_2 x_3^2$.

\begin{theorem}\label{main-thm2}
Let $\ShBTQ(\lambda)$ denote the set of all semistandard shifted bar tableaux of shape $\lambda$
and let $\ShBTP(\lambda)$
be the subset of such tableaux with no \textbf{primed} diagonal entries.
Then
\[
\ba
 \jp_{\lambda} = \sum_{T\in \ShBTP(\lambda)} (-\beta)^{|\lambda|-|T|} {\bf x}^{T} 
 \quand
\jq_{\lambda} = \sum_{T\in \ShBTQ(\lambda)} (-\beta)^{|\lambda| -|T|} {\bf x}^{T} .
\ea
\]
\end{theorem}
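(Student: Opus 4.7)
The plan is to deduce Theorem~\ref{main-thm2} from Theorem~\ref{main-thm1} by applying the involution $\omega$ to the Cauchy identity \eqref{cauchy-eq} and then verifying a new combinatorial identity. First, I would apply $\omega$ in the $\mathbf{y}$-variables to the defining identity
\[ \sum_\lambda \GP_\lambda(\mathbf{x})\,\gq_\lambda(\mathbf{y}) \;=\; \prod_{i,j \geq 1} \frac{1 - \overline{x_i} y_j}{1 - x_i y_j}. \]
Using $\omega(p_n) = (-1)^{n-1} p_n$, a short computation gives $\omega_\mathbf{y}\prod_{i,j}(1 - x_i y_j)^{-1} = \prod_{i,j}(1 + x_i y_j)$ and $\omega_\mathbf{y}\prod_{i,j}(1 - \overline{x_i} y_j) = \prod_{i,j}(1 + \overline{x_i} y_j)^{-1}$, which combine to give the dual Cauchy identity
\[ \sum_\lambda \GP_\lambda(\mathbf{x})\,\jq_\lambda(\mathbf{y}) \;=\; \prod_{i,j \geq 1} \frac{1 + x_i y_j}{1 + \overline{x_i} y_j}, \]
and symmetrically an identity for $\jp_\lambda$ paired with $\GQ_\lambda$.

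Next, I would introduce the candidate $\jq'_\lambda(\mathbf{y}) := \sum_{T \in \ShBTQ(\lambda)} (-\beta)^{|\lambda|-|T|} \mathbf{y}^T$ given by the bar tableau formula of Theorem~\ref{main-thm2}, and show it satisfies the same dual Cauchy identity
\[ \sum_\lambda \GP_\lambda(\mathbf{x})\,\jq'_\lambda(\mathbf{y}) \;=\; \prod_{i,j \geq 1} \frac{1 + x_i y_j}{1 + \overline{x_i} y_j}. \]
The natural approach is to expand the right-hand side as a weighted sum over two-line arrays (biwords whose letters track both the numerator factors $(1 + x_iy_j)$ and the geometric expansion of $(1 + \overline{x_i}y_j)^{-1}$, the latter producing the $\beta$-dependent rational terms) and construct a weight-preserving bijection between such arrays and pairs $(P, Q)$ with $P \in \ShSetTab_Q(\lambda)$ and $Q \in \ShBTQ(\lambda)$, ranging over all strict partitions $\lambda$. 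A natural candidate is a shifted $K$-theoretic mixed insertion in which $P$ accumulates set-valued entries (absorbing the $\beta$-powers on the $\GP_\lambda$ side) while $Q$ develops a bar structure each time consecutive letters of the biword are merged by an insertion bump, producing exactly the factor $(-\beta)^{|\lambda|-|T|}$ on the bar tableau side.

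Once this identity is established, linear independence of $\{\GP_\lambda\}$ forces $\jq'_\lambda = \jq_\lambda$, proving the $\GQ$-version of Theorem~\ref{main-thm2}; the $\GP$-version then follows by the parallel argument with $\GQ_\lambda$ replacing $\GP_\lambda$ and $\ShBTP$ replacing $\ShBTQ$. The main obstacle is the construction of the shifted $K$-theoretic insertion realizing this bijection. The subtlety lies in correctly tracking primed-versus-unprimed structure on the diagonal (distinguishing $P$- from $Q$-functions on both sides), matching the $\beta$-powers bijectively (so that $(-\beta)^{|\lambda|-|T|}$ aligns with the signs produced by expanding $-\overline{x_i} = x_i/(1 + \beta x_i)$ as a geometric series), and verifying invertibility. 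The promised skew generalization (Theorem~\ref{gp-gq-thm}) suggests that a growth-diagram or local-rule formulation may be the cleanest way to handle these compatibilities uniformly, extending the same proof to skew shapes in parallel with the extension of Theorem~\ref{main-thm1}.
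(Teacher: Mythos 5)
Your reduction to the dual Cauchy identity $\sum_\lambda \GP_\lambda({\bf x})\,\jq_\lambda({\bf y}) = \prod_{i,j}\tfrac{1+x_iy_j}{1+\overline{x_i}y_j}$ is sound (it is the $\mu=\nu=\emptyset$ case of \eqref{cauchy-eq4}, and your $\omega$-computation is correct), and the uniqueness step via linear independence of $\{\GP_\lambda\}$ is legitimate. But the heart of your argument --- a shifted $K$-theoretic mixed insertion sending weighted two-line arrays to pairs $(P,Q)$ with $P$ a semistandard shifted set-valued tableau and $Q$ a semistandard shifted bar tableau of the same shape, with $\beta$-weights matching on both sides --- is not constructed; you name it as "the main obstacle" and leave it there. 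This is not a routine verification: no such correspondence is available off the shelf for the shifted primed setting, and designing one that simultaneously (i) merges biword letters into set-valued entries of $P$, (ii) records bar divisions in $Q$, (iii) respects the diagonal primed/unprimed dichotomy distinguishing the $P$- and $Q$-cases, and (iv) is invertible, would itself be a substantial theorem. As written, the proposal reduces the statement to an open construction rather than proving it. A secondary point: you announce that you will "deduce Theorem~\ref{main-thm2} from Theorem~\ref{main-thm1}," but Theorem~\ref{main-thm1} is never used in your argument; in fact the logical dependence in this paper runs the other way (Theorem~\ref{main-thm1} is derived from the bar-tableau formula).

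For contrast, the paper sidesteps any global bijection. It first proves that the bar-tableau generating functions $\wtjq_{\lambda/\mu}$ and $\wtjp_{\lambda/\mu}$ are symmetric by constructing Bender--Knuth-type involutions on shifted bar tableaux (Section~\ref{bk-sect}, Theorem~\ref{thm:simple transposition}). Symmetry then lets one compute the coefficients in the expansion of $\wtjq_{\kappa/(n)}$ purely from the one-variable specializations $\wtjq_{\kappa/\nnu}(t)$ (Lemmas~\ref{jq-lem1}--\ref{jq-lem2}), and a direct but finite case analysis (Lemma~\ref{jq-lem3}) matches these with the Pieri coefficients $\widehat b^\nu_{\lambda,(n)}$ computed from ribbon counts (Proposition~\ref{hat-b-prop}). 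Combining this with the one-row product formula of Corollary~\ref{one-row-rules-cor} gives $\wtjq_\lambda\wtjq_n = \sum_\nu \widehat b^\nu_{\lambda,(n)}\beta^{|\lambda|-|\nu|+n}\wtjq_\nu$, which is exactly the recursion satisfied by $\jq_\lambda$, and induction on a total order finishes the argument. If you want to salvage your route, you would need either to actually build the insertion (a significant project) or to replace it with a symmetry-plus-Pieri argument of the kind the paper uses.
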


This result was conjectured by Chiu and the second author as \cite[Conj.~7.2]{ChiuMarberg}.
We will actually prove a more general formula for the skew versions of $\jp_\lambda$ and $\jq_\lambda$; see Theorems~\ref{jq-thm} and \ref{jp-thm}.

\subsection{Application to conjectures of Ikeda and Naruse}

Consider the modules 
consisting of all infinite $\ZZ[\beta]$-linear combinations of 
the functions $\{ \GP_\lambda\}$ and $\{\GQ_\lambda\}$, with $\lambda$ ranging over all strict partitions.
Ikeda and Naruse proved that these modules are both rings \cite[Props.~3.4 and 3.5]{IkedaNaruse}.
Concretely, this means that  $\GP_\lambda \GP_\mu$ (respectively, $\GQ_\lambda \GQ_\mu$)
always expands as a (possibly infinite) $\ZZ[\beta]$-linear combination of $\GP_\nu$'s (respectively, $\GQ_\nu$'s).

Ikeda and Naruse conjectured that these expansions are actually finite \cite[Conj.~3.1 and 3.2]{IkedaNaruse}.
For the $\GP$-functions,
this stronger ring property was established by Clifford, Thomas, and Yong in \cite{CTY}; other proofs have subsequently been given in 
\cite[\S4]{HKRWZZ}, 
\cite[\S1.2]{M2021}, and \cite[\S8]{PechenikYong}.
The problem of showing the same property for the $\GQ$-functions appears to still be open. 
Building on \cite{ChiuMarberg}, 
we are able to resolve this problem:

\begin{theorem}\label{consequence-thm}
The free $\ZZ[\beta]$-modules with bases $\{ \GP_\lambda\}$ and $\{\GQ_\lambda\}$ (where $\lambda$ ranges over all strict partitions)
are subrings of $\ZZ[\beta]\llbracket  x_1,x_2,\dots\rrbracket  $. In particular, 
if $\lambda$ and $\mu$ are strict partitions then 
\[
\GP_\lambda \GP_\mu = \sum_\nu a_{\lambda\mu}^\nu  \beta^{|\nu| - |\lambda|-|\mu|} \GP_\nu
\quand
\GQ_\lambda \GQ_\mu = \sum_\nu b_{\lambda\mu}^\nu  \beta^{|\nu| - |\lambda|-|\mu|} \GQ_\nu
\] 
for integer coefficients $a_{\lambda\mu}^\nu$ and $ b_{\lambda\mu}^\nu$ that 
are (a)  nonzero for only finitely many strict partitions $\nu$, (b) zero whenever $|\nu| < |\lambda| + |\mu|$ or $\ell(\nu) > \ell(\lambda) + \ell(\mu)$, and (c) nonnegative.
\end{theorem}

\begin{proof}
The ring property for the span of the $\GP$-functions follows from \cite{CTY}.
Meanwhile,
\cite[Cor.~7.7]{ChiuMarberg}  is exactly the assertion that the ring property for 
the span of the $\GQ$-functions
follows from Theorem~\ref{main-thm2} (or more precisely from its skew version, which is Theorem~\ref{jp-thm}).  These properties assert that the products $\GP_\lambda \GP_\mu$ and $\GQ_\lambda \GQ_\mu$ expand in the respective bases with coefficients in $\ZZ[\beta]$ satisfying (a).  Taking $\deg \beta = -1$, the homogeneity of the $\GP$- and $\GQ$-functions implies that in fact the coefficient of $\GP_\nu$ (respectively, $\GQ_\nu$) must be of the form $a_{\lambda\mu}^\nu\beta^{|\nu| - |\lambda| - |\mu|}$ (respectively,  $b_{\lambda\mu}^\nu\beta^{|\nu| - |\lambda| - |\mu|}$) for an integer $a_{\lambda\mu}^\nu$ (respectively, $b_{\lambda\mu}^\nu$), and moreover that $a_{\lambda\mu}^\nu= b_{\lambda\mu}^\nu= 0$ whenever $|\nu| < |\lambda| + |\mu|$.
Given our main results (namely, Theorems~\ref{jq-thm} and \ref{jp-thm}), \cite[Thm.~7.6]{ChiuMarberg} asserts that 
$a_{\lambda\mu}^\nu= b_{\lambda\mu}^\nu=0$ whenever $\ell(\nu) > \ell(\lambda) + \ell(\mu)$.

Given Ikeda and Naruse's interpretation of $\GP_\lambda$ and $\GQ_\lambda$ as $K$-theory representatives for Schubert varieties in the orthogonal and Lagrangian Grassmannians, the fact that  $a_{\lambda\mu}^\nu$ and $ b_{\lambda\mu}^\nu$ are nonnegative 
can be deduced from a result of Brion \cite{Brion}.
We only explain this for the $b_{\lambda\mu}^\nu$ coefficients, since 
 $a_{\lambda\mu}^\nu \geq 0$ follows directly from \cite[Thm.~1.2]{CTY}.

As summarized in \cite[\S8.1]{IkedaNaruse}, 
for each strict partition $\lambda \subseteq (n,\dots,3,2,1)$ there is an associated \defn{Schubert variety} $\Omega_\lambda$
in the \defn{Lagrangian Grassmannian} $\cG_n := LG(n)$.
The variety $\Omega_\lambda$ is closed with codimension $|\lambda|$, and its 
structure sheaf $\cO_{\Omega_\lambda}$ defines a class $[\cO_{\Omega_\lambda}]$ in the $K$-theory ring $K(\cG_n)$ 
of coherent sheaves on $\cG_n$. These classes are an additive basis for $K(\cG_n)$,
and so we have 
 $ [\cO_{\Omega_\lambda}]  [\cO_{\Omega_\mu}] =  \sum_\nu (-1)^{|\nu| - |\lambda|-|\mu|}   c_{\lambda\mu}^\nu [\cO_{\Omega_\nu}]$
 for some integers $c_{\lambda\mu}^\nu \in \ZZ$.
The main result of \cite{Brion} shows that the coefficients $c_{\lambda\mu}^\nu$ are nonnegative. (In fact, the main result of \cite{Brion}
is a more general statement that applies  to Schubert varieties in any complex flag variety.) 

If we set $\beta=-1$ and define
 $\mathit{G\Gamma}_{n,+}$ to be the $\ZZ$-span of all polynomials $\GQ_\lambda(x_1,\dots,x_n)$ with $\ell(\lambda) \leq n$,
then  there is a surjective ring homomorphism 
$\mathit{G\Gamma}_{n,+} \to K(\cG_n)$ 
sending $\GQ_\lambda(x_1,\dots,x_n) $ to $ [\cO_{\Omega_\lambda}]$ if $\lambda \subseteq (n,\dots,3,2,1)$
and to zero otherwise \cite[Cor.~8.1]{IkedaNaruse}.
If $n$ is large enough that $(n, \ldots, 3, 2, 1)$ contains both $\lambda$ and $\mu$ as well as the finite number of strict partitions $\nu$ with $b_{\lambda\mu}^\nu \neq 0$, then it follows that 
 $ [\cO_{\Omega_\lambda}]  [\cO_{\Omega_\mu}] =  \sum_\nu   (-1)^{|\nu| - |\lambda|-|\mu|} b_{\lambda\mu}^\nu [\cO_{\Omega_\nu}]$,
and so $b_{\lambda\mu}^\nu = c_{\lambda\mu}^\nu \geq 0$.
\end{proof}

\begin{remark*}
The sets $\{ \GP_\lambda(x_1,\dots,x_n) : \ell(\lambda) \leq n\}$ and 
$\{ \GQ_\lambda(x_1,\dots,x_n) : \ell(\lambda) \leq n\}$ of polynomials in $\beta,x_1,\dots,x_n$ are linearly independent over $\ZZ[\beta]$
by \cite[Thm.~3.1 and Prop.~3.2]{IkedaNaruse}, and the expansion of $\GQ_\lambda(x_1,\dots,x_n) \GQ_\mu(x_1,\dots,x_n)$
into $\GQ_\nu(x_1,\dots,x_n)$'s can be calculated by a finite linear algebra computation.  Theorem~\ref{consequence-thm} implies that if $\ell(\lambda) + \ell(\mu) \leq n$ then the 
same finite computation completely determines the expansion of $\GQ_\lambda\GQ_\mu$ into $\GQ_\nu$'s,
and likewise for the $\GP$-functions.
\end{remark*}

The proof of \cite[Cor.~7.7]{ChiuMarberg}, with minor changes, would also show that 
our generating function for $\jq_\lambda$ implies the ring property for  
$\ZZ[\beta]\spanning\{\GP_\lambda\}$.
However, this nonconstructive argument is less informative than \cite[Thm.~1.2]{CTY},
which gives an explicit Littlewood--Richardson  rule for products of $\GP$-functions.
It is an open problem to find such a rule for the $\GQ$-functions,
as well as for the $\gp$- and $\gq$-functions,
which span two other subrings of symmetric functions. 

 \subsection{Comparison with unshifted versions}

Our main results are shifted analogues of ``classical'' theorems that we summarize here for comparison.
The (unshifted) \defn{Young diagram} of a partition $\lambda = (\lambda_1 \geq \lambda_2 \geq \dots \geq \lambda_k>0)$
is the set of positions $\D_\lambda = \{ (i,j) \in [k]\times\ZZ : 1 \leq j \leq \lambda_i\}$. 
An (unshifted) \defn{semistandard set-valued tableau} of shape $\lambda$ is defined in the same way as the analogous shifted object,
except that such a tableau is a filling 
of $\D_\lambda$  
by finite nonempty subsets of $\{1<2<3<\dots\}$ rather than $\{1'<1<2'<2<\dots\}$.

\begin{definition}\label{G-def}
Write $\SetTab(\lambda)$ 
for the  set of semistandard set-valued tableaux of shape $\lambda$.
Then the \defn{stable Grothendieck polynomial} of  $\lambda$ is the formal power series
$ G_{\lambda} := \sum_{T \in \SetTab(\lambda)} \beta^{|T|-|\lambda|} {\bf x}^{T}$.
\end{definition}

\begin{definition}\label{g-def}
The \defn{dual stable Grothendieck polynomials} $g_\lambda$ are the unique 
formal power series in $\ZZ[\beta]\llbracket x_1,x_2,\dots\rrbracket  $ indexed by integer partitions $\lambda$ satisfying the Cauchy identity 
\[ \sum_\lambda G_\lambda({\bf x}) g_\lambda({\bf y})
 =
 \sum_\lambda s_\lambda({\bf x}) s_\lambda({\bf y})
  = \prod_{i,j \geq 1} \tfrac{ 1}{1-x_iy_j}.\]
  \end{definition}
  
   \begin{definition}
 The \defn{conjugate dual stable Grothendieck polynomial} of  $\lambda$ is
 $ j_\lambda:= \omega(g_{\lambda^\top})$.
 \end{definition}

All three families $\{ G_\lambda\}$, $\{g_\lambda\}$, and $\{ j_\lambda\}$ are linearly independent symmetric functions
which coincide with the usual Schur functions $\{ s_\lambda\}$ when $\beta=0$ \cite{Buch,LamPyl}. Our definition of $j_\lambda$ involves a transposition of indices
 compared to \cite[\S9.8]{LamPyl}; this convention ensures that $j_\lambda|_{\beta=0} = s_\lambda$.

We define (unshifted) \defn{plane partitions} and \defn{semistandard bar tableaux}
of shape $\lambda$ in the same way as our shifted versions, except the relevant objects are fillings of $\D_\lambda$
by positive integers (excluding primed numbers).
Let $\PPart(\lambda)$ be the set of
 plane partitions of shape $\lambda$. 
 Let $\BT(\lambda)$ be the set of semistandard bar tableaux of shape $\lambda$; these objects are called \defn{valued-set tableaux} in \cite[\S9]{LamPyl}.

\begin{theorem}[{Lam and Pylyavskyy \cite[\S9]{LamPyl}}]
\label{gj-thm}
For all partitions $\lambda$ it holds that
\[
g_{\lambda} = \sum_{T\in \PPart(\lambda)} (-\beta)^{|\lambda| - |\weight_\RPP(T)|} {\bf x}^{\weight_\RPP(T)}
\quand  j_{\lambda} = \sum_{T\in \BT(\lambda)} (-\beta)^{|\lambda|-|T|} {\bf x}^{T} .\]
\end{theorem}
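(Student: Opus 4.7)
The plan is to verify each identity by introducing the proposed right-hand side as a candidate, checking it satisfies the defining property (the Cauchy identity for $g_\lambda$, or the $\omega$-image relation for $j_\lambda$), and then invoking uniqueness.

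For the $g_\lambda$ formula, I would set
\[\tilde g_\lambda(\y) := \sum_{T\in\PPart(\lambda)}(-\beta)^{|\lambda|-|\weight_\RPP(T)|}\y^{\weight_\RPP(T)}\]
and aim to prove
\[
\sum_\lambda G_\lambda(\x)\,\tilde g_\lambda(\y) = \prod_{i,j\ge 1}\tfrac{1}{1-x_iy_j},
\]
which by Definition~\ref{g-def} would force $\tilde g_\lambda = g_\lambda$. The natural tool is a K-theoretic (Hecke/uncrowding) RSK-style correspondence: starting from a finitely-supported matrix $A=(a_{ij})\in\NN^{\PP\times\PP}$, read off the associated biword and run an insertion procedure to produce a pair $(P,Q)$ in which $P\in\SetTab(\lambda)$ is the insertion tableau and $Q$ is a plane partition of shape $\lambda$ recording the sequence of insertions, so that multiple letters that bump into the same box of $P$ correspond to equal entries in adjacent boxes of $Q$. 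The essential check is that the signed $\beta$-weights on both sides balance: the ``extra'' entries of $P$ (producing $\beta^{|\lambda|-|P|}$) must cancel appropriately with the ``missing'' entries of $Q$ (producing $(-\beta)^{|\lambda|-|\weight_\RPP(Q)|}$), with a sign-reversing involution supplying the required cancellation.

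For the $j_\lambda$ formula, write $\tilde j_\lambda(\x) := \sum_{T\in\BT(\lambda)}(-\beta)^{|\lambda|-|T|}\x^T$. Since $j_\lambda = \omega(g_{\lambda^\top})$, it suffices to verify $\omega(\tilde g_{\lambda^\top}) = \tilde j_\lambda$. One route expands $\tilde g_{\lambda^\top}$ in the Schur basis and applies $\omega$ termwise via $\omega(s_\mu) = s_{\mu^\top}$, checking coefficients; a cleaner route, closer to \cite{LamPyl}, recognizes $\{G_\lambda\}$ and $\{\tilde j_\lambda\}$ (or equivalently $\{\tilde g_\lambda\}$ and a suitable dual family) as dual bases under a natural Hopf-algebraic pairing on symmetric functions, and then reads off both formulas from a shape-conjugation bijection between $\PPart(\lambda^\top)$ and $\BT(\lambda)$ that interchanges row and column structures. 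The bar-tableau formulation is particularly well-adapted to this, since transposing $\D_\lambda \leftrightarrow \D_{\lambda^\top}$ turns row bars into column bars and vice versa.

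The main obstacle will be tracking the signs. The K-theoretic RSK is not a bijection on the nose: without the factor $(-\beta)^{|\lambda|-|\weight_\RPP(T)|}$ on the plane-partition side there is nothing to cancel the excess contributions from boxes where insertion produces mergers, and so the heart of the argument is constructing an explicit sign-reversing, weight-preserving involution on the non-matrix part of the domain whose fixed-point set recovers exactly the unsigned sum over $\NN$-matrices. The $\omega$-step for $j_\lambda$ is more formal but still requires a careful transposition bijection between plane partitions of shape $\lambda^\top$ and bar tableaux of shape $\lambda$ that matches weights after applying $\omega$; verifying compatibility of the individual row/column data under this bijection is the combinatorial crux.
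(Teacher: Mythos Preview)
This theorem is not proved in the present paper; it is quoted from Lam--Pylyavskyy \cite{LamPyl} as background for the shifted analogues that are the paper's actual contribution, so there is no in-paper proof to compare against. That said, a few remarks on your sketch are in order.

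Your overall strategy for $g_\lambda$---define $\tilde g_\lambda$ by the generating function, verify the Cauchy identity, and invoke the uniqueness built into Definition~\ref{g-def}---is correct and is exactly the logical direction this paper adopts for its own shifted results. Lam--Pylyavskyy themselves proceed in the opposite direction: they \emph{define} $g_\lambda$ combinatorially and then establish Hall-duality with $\{G_\lambda\}$ by Hopf-algebraic and Fomin-operator methods rather than a direct RSK. A genuine bijective $K$-theoretic RSK proving $\sum_\lambda G_\lambda(\x)\,\tilde g_\lambda(\y) = \prod_{i,j}(1-x_iy_j)^{-1}$ does exist (see for instance \cite{Y2019}), so the sign-reversing involution you anticipate can be bypassed, though your instinct that the $\beta$-weights on the two sides must be reconciled is the right concern to flag.

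Your plan for $j_\lambda$ has a genuine gap. The proposed ``shape-conjugation bijection between $\PPart(\lambda^\top)$ and $\BT(\lambda)$'' does not exist as stated: transposing a plane partition of shape $\lambda^\top$ produces a plane partition of shape $\lambda$ (columns still only weakly increasing), not a semistandard bar tableau (underlying tableau with strictly increasing columns), and the weight statistic ``number of distinct columns containing $i$'' does not transform into ``number of bars containing $i$'' under transposition. Lam--Pylyavskyy prove $\omega(g_{\lambda^\top}) = j_\lambda$ by comparing the Schur expansions of the two sides, which come from separate insertion correspondences; this is essentially your ``first route,'' and it works. The bijective shortcut does not.
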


The stable Grothendieck polynomials $G_\lambda$ were introduced in Fomin and Kirillov's paper \cite{FominKirillov}  
as certain limits of Lascoux and Sch\"utzenberger's \defn{Grothendieck polynomials} \cite{LS1982},
which are $K$-theory representatives for Schubert varieties.
Buch \cite[Thm.~3.1]{Buch} derived the set-valued tableaux generating function for $G_\lambda$
given in Definition~\ref{G-def}, and also 
proved that the stable Grothendieck polynomials are a $\ZZ[\beta]$-basis for a ring \cite[Cor.~5.5]{Buch}.
For another proof of this ring property, see \cite{Y2019}.
The structure constants $n_{\lambda\mu}^\nu$ in the expansion $G_\lambda G_\mu = \sum_{\nu} n_{\lambda\mu}^\nu  \beta^{|\nu| - |\lambda| - |\mu|}  G_\nu$
are nonnegative integers with $n_{\lambda\mu}^\nu=0$ whenever $|\nu| < |\lambda| + |\nu|$ or $\ell(\nu) > \ell(\lambda) + \ell(\mu)$
by \cite[Thm.~5.4]{Buch}.

 Lam and Pylyavskyy defined $g_\lambda$ and $j_\lambda$ by the formulas in Theorem~\ref{gj-thm} with $\beta$ set to $-1$.
 They then proved that $\{ g_\lambda\}$ is the basis for the ring of symmetric functions dual to $\{G_\lambda\}$
 via the Hall inner product \cite[Thm.~9.15]{LamPyl}, along with the identity $j_\lambda = \omega(g_{\lambda^\top})$ \cite[Prop.~9.25]{LamPyl}.
For proofs of the Cauchy identity in Definition~\ref{g-def} and various generalizations, see \cite{GunnZinnJustin,LascouxNaruse,Y2019}.

\subsection{Outline}

We conclude this introduction with a brief outline of the rest of this article.
Section~\ref{prelim-sect} contains some miscellaneous results and definitions that are needed in later arguments.
The most technical part of our proofs of the main results 
is showing that the desired generating functions are symmetric.
We derive this explicitly for $\jp_\lambda$ and $\jq_\lambda$
by constructing Bender--Knuth involutions for semistandard shifted bar tableaux in Section~\ref{bk-sect}.
Once symmetry is established, we are able to prove Theorems~\ref{main-thm1} and \ref{main-thm2} by an inductive algebraic argument
in Section~\ref{gen-fun-der-sect}.

\subsection*{Acknowledgments}
The first author was partially supported by Simons Foundation grant 634530.
The second author was partially supported by Hong Kong RGC grant GRF 16306120.

\section{Preliminaries}\label{prelim-sect}

This section has three parts.
Section~\ref{structure-constants-sect} explains the main algebraic consequences of the Cauchy identity \eqref{cauchy-eq}.
Section~\ref{skew-sect} gives the precise definitions of the skew forms of the various symmetric functions in the introduction.
Section~\ref{pieri-sect} derives a Pieri rule for multiplying our dual functions.

\subsection{Structure constants}\label{structure-constants-sect}

Everywhere below, $\lambda$, $\mu$, and $\nu$ are strict partitions.
It follows from \cite[Props.~3.4 and 3.5]{IkedaNaruse} that there are integers $a_{\lambda\mu}^\nu$ and $b_{\lambda\mu}^\nu$ such that 
\be\label{ab-eq}\ba
\GP_\lambda \GP_\mu = \sum_\nu a_{\lambda\mu}^\nu \beta^{|\nu|-|\lambda|-|\mu|} \GP_\nu
\quand
\GQ_\lambda \GQ_\mu = \sum_\nu b_{\lambda\mu}^\nu \beta^{|\nu|-|\lambda|-|\mu|} \GQ_\nu.
\ea
\ee
Let ${\bf y} = (y_1,y_2,\dots)$ be a second set of commuting indeterminates. 
Given a power series $f=f({\bf x})=f(x_1,x_2,\dots) \in \ZZ[\beta]\llbracket x_1,x_2,\dots\rrbracket  $, let  
 $f({\bf y}) :=f(y_1,y_2,\dots) \in \ZZ[\beta]\llbracket y_1,y_2,\dots\rrbracket $
 and define 
\[ f({\bf x}, {\bf y})  := f(x_1,y_1,x_2,y_2,x_3,y_3,\dots) \in \ZZ[\beta]\llbracket x_1,y_1,x_2,y_2,x_3,y_3,\dots\rrbracket  .\]
This operation corresponds to the coproduct on the Hopf algebra of symmetric functions; see \cite[\S2.1]{GrinbergReiner}.
It follows from \cite[Thms.~4.19 and 5.11]{LM2021}
that there are also integers $\widehat a_{\lambda\mu}^\nu$ and $\widehat b_{\lambda\mu}^\nu$ such that 
\be\label{G-coproduct}
\ba 
\GP_\nu({\bf x}, {\bf y}) &= \sum_{\lambda,\mu} \widehat b_{\lambda\mu}^\nu \beta^{|\lambda|+|\mu|-|\nu|} \GP_\lambda({\bf x})\GP_\mu({\bf y}),
\\
\GQ_\nu({\bf x}, {\bf y}) &= \sum_{\lambda,\mu} \widehat a_{\lambda\mu}^\nu \beta^{|\lambda|+|\mu|-|\nu|} \GQ_\lambda({\bf x})\GQ_\mu({\bf y}).
\ea
\ee Observe that the letters $a$ and $b$ here have switched places compared to \eqref{ab-eq}.
We introduce a third set of integer coefficients $\widehat c_{\lambda\mu}^\nu$ such that 
\be\label{G-coproduct2}
\ba 
\GQ_\nu({\bf x}, {\bf y}) &= \sum_{\lambda,\mu} \widehat c_{\lambda\mu}^\nu \beta^{|\lambda|+|\mu|-|\nu|} \GQ_\lambda({\bf x})\GP_\mu({\bf y}).
\ea
\ee

The Cauchy identity \eqref{cauchy-eq} defining $\gp_\lambda$ and $\gq_\lambda$ implies that
\be\label{hat-abc-eq}
\ba 
\gp_\lambda \gp_\mu &= \sum_\nu \widehat a_{\lambda\mu}^\nu \beta^{|\lambda|+|\mu|-|\nu|} \gp_\nu,
&&&
\gp_\nu({\bf x}, {\bf y}) &= \sum_{\lambda,\mu} b_{\lambda\mu}^\nu \beta^{|\nu|-|\lambda|-|\mu|} \gp_\lambda({\bf x})\gp_\mu({\bf y}),
\\
\gq_\lambda \gq_\mu &= \sum_\nu \widehat b_{\lambda\mu}^\nu \beta^{|\lambda|+|\mu|-|\nu|} \gq_\nu,
&&&\gq_\nu({\bf x}, {\bf y}) &= \sum_{\lambda,\mu} a_{\lambda\mu}^\nu \beta^{|\nu|-|\lambda|-|\mu|} \gq_\lambda({\bf x})\gq_\mu({\bf y}).
\\
\gp_\lambda \gq_\mu &= \sum_\nu \widehat c_{\lambda\mu}^\nu \beta^{|\lambda|+|\mu|-|\nu|} \gp_\nu,
\ea\ee
We explain how to derive the bottom left identity; the others are special cases of \cite[Prop.~3.2]{NakagawaNaruse} and follow similarly.
Introducing a third sequence of variables ${\bf z} = (z_1,z_2,\dots)$,
one can write 
\[
\ba\sum_{\nu} \GQ_\nu({\bf x},{\bf y}) \gp_\nu({\bf z}) &= 
 \prod_{i,j \geq 1} \tfrac{ 1 - \overline{x_i} z_j}{1-x_iz_j}\cdot  \prod_{i,j \geq 1} \tfrac{ 1 - \overline{y_i} z_j}{1-y_iz_j}
= \(\sum_{\lambda} \GQ_\lambda({\bf x}) \gp_\lambda({\bf z})\) \( \sum_{\mu} \GP_\mu({\bf y}) \gq_\mu({\bf z})\)
\\
&= \sum_{\lambda,\mu} \GQ_\lambda({\bf x}) \GP_\mu({\bf y}) \gp_\lambda({\bf z})\gq_\mu({\bf z}).
\ea
\]
Then by substituting \eqref{G-coproduct2} into the first expression and equating coefficients of $\GQ_\lambda({\bf x}) \GP_\mu({\bf y})$,
one obtains the identity $\gp_\lambda \gq_\mu = \sum_\nu \widehat c_{\lambda\mu}^\nu \beta^{|\lambda|+|\mu|-|\nu|} \gp_\nu$, as desired.

Since $\omega$ is a bialgebra automorphism, the formulas in \eqref{hat-abc-eq}   still hold
if  we replace every ``$\gp$'' by ``$\jp$''
and every ``$\gq$'' by ``$\jq$''.

The coefficients $\widehat a_{\lambda\mu}^\nu$ and $\widehat b_{\lambda\mu}^\nu$ 
are zero whenever $|\nu| > |\lambda| +|\mu|$ by \cite[Eq.~(5.6)]{ChiuMarberg}; since $\gq_\mu$ is a linear combination
of $\gp_\kappa$'s with $\kappa\subseteq \mu$ by \cite[Cor.~6.2]{ChiuMarberg}, the same must be true of $ \widehat c_{\lambda\mu}^\nu$.
Thus the sums on the left side of \eqref{hat-abc-eq} can be limited to strict partitions with $|\nu| \leq |\lambda| +|\mu|$.
Likewise, the coefficients $ a_{\lambda\mu}^\nu$ and $ b_{\lambda\mu}^\nu$ 
are zero whenever $|\nu| < |\lambda| +|\mu|$ by \cite[Prop.~6.5]{ChiuMarberg},
so the sums on the right side of \eqref{hat-abc-eq} can be limited to strict partitions with $|\lambda| +|\mu| \leq |\nu|$.

\subsection{Skew generalizations}\label{skew-sect}

We write $\mu\subseteq \lambda$ if $\mu$ and $\lambda$ are partitions with $\mu_i \leq \lambda_i$ for all $i$.
If $\mu \subseteq\lambda$ are strict partitions then the \defn{shifted diagram} of $\lambda/\mu$ is the set difference $\SD_{\lambda/\mu} := \SD_\lambda - \SD_\mu$. We define shifted set-valued tableaux, plane partitions, and bar tableaux of skew shape $\lambda/\mu$
in exactly the same way as in the introduction, only now the relevant objects are fillings of $\SD_{\lambda/\mu}$.
The definitions of all related weight statistics like the monomials ${\bf x}^T$ are also unchanged.
Some relevant notation:
\begin{itemize}
\item Let $\ShSetTab_Q(\lambda/\mu)$ be the set of semistandard shifted set-valued tableaux of shape $\lambda/\mu$, and
let $\ShSetTab_P(\lambda/\mu)$ be the subset of such tableaux with no primed numbers in diagonal boxes.

\item Let $\MRPP_Q(\lambda/\mu)$ be the set of 
shifted plane partitions of shape $\lambda/\mu$,
and let
$\MRPP_P(\lambda/\mu)$ 
 be the subset of  such fillings
with no unprimed diagonal entries.

\item Let $\ShBTQ(\lambda/\mu)$ be the set of  semistandard shifted bar tableaux of shape $\lambda/\mu$,
and let $\ShBTP(\lambda/\mu)$
be the subset of such tableaux with no primed diagonal entries.

\end{itemize}
We define  these sets to be empty if $\mu \not\subseteq\lambda$.
These sets will index the terms in generating functions for the skew analogues of  
$\GQ_\lambda$, $\GP_\lambda$, $\gq_\lambda$, $\gp_\lambda$, $\jq_\lambda$, and $\jp_\lambda$,
which are defined as follows:

\begin{definition}
For a strict partition $\lambda$, let 
$\gp_{\lambda/\mu},\gq_{\lambda/\mu} \in  \ZZ[\beta]\llbracket x_1,x_2,\dots\rrbracket  $ be the elements with
\be\label{gp-xy-eq}
\gp_\lambda({\bf x},{\bf y}) = \sum_{\mu }\gp_\mu({\bf x}) \gp_{\lambda/\mu}({\bf y})
\quand
\gq_\lambda({\bf x},{\bf y}) = \sum_{\mu }\gq_\mu({\bf x}) \gq_{\lambda/\mu}({\bf y})
\ee
where the sums are over all strict partitions $\mu$.
\end{definition}

Both $\gp_{\lambda/\mu}$ and $\gq_{\lambda/\mu}$ are symmetric and homogeneous if $\deg \beta = 1$,
but the families of such functions are no longer linearly independent. 
These power series are defined when $\mu\not\subseteq \lambda$,
but one can show that
 $ \gp_{\lambda/\mu}$ and $\gq_{\lambda/\mu}$ are each nonzero
if and only if $\mu \subseteq \lambda$ \cite[Props.~6.5 and 6.7]{ChiuMarberg}.

\begin{definition}
For a strict partition $\lambda$, let 
$\jp_{\lambda/\mu},\jq_{\lambda/\mu} \in  \ZZ[\beta]\llbracket x_1,x_2,\dots\rrbracket  $ be the elements with
\be\label{jp-xy-eq}
\jp_\lambda({\bf x},{\bf y}) = \sum_{\mu }\jp_\mu({\bf x}) \jp_{\lambda/\mu}({\bf y})
\quand
\jq_\lambda({\bf x},{\bf y}) = \sum_{\mu }\jq_\mu({\bf x}) \jq_{\lambda/\mu}({\bf y})
\ee
where the sums are over all strict partitions $\mu$.
\end{definition}

Equivalently, one could define these skew analogues by the following formula:

\begin{proposition}[{\cite[Eq.~(7.4)]{ChiuMarberg}}]
It holds that $\jp_{\lambda/\mu} := \omega(\gp_{\lambda/\mu})$ and $\jq_{\lambda/\mu} := \omega(\gq_{\lambda/\mu})$.
\end{proposition}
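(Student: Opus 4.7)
The plan is to start from the defining equations \eqref{gp-xy-eq} and apply the involution $\omega$ to both sides, exploiting the fact (already used implicitly in Section~\ref{structure-constants-sect}) that $\omega$ is a Hopf algebra automorphism of the ring of symmetric functions. Since the coproduct on $\Sym$ corresponds precisely to the variable substitution $f \mapsto f({\bf x}, {\bf y})$, the identity $\Delta \circ \omega = (\omega \otimes \omega) \circ \Delta$ translates to
\[
\omega(f)({\bf x}, {\bf y}) \;=\; \bigl(\omega_{\bf x} \otimes \omega_{\bf y}\bigr)\bigl(f({\bf x},{\bf y})\bigr),
\]
where $\omega_{\bf x}$ and $\omega_{\bf y}$ denote the action of $\omega$ on the symmetric functions in the variable sets ${\bf x}$ and ${\bf y}$ respectively. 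I would first justify this commutation by the standard argument that the algebra map on $\Sym \otimes \Sym$ induced by specialization $p_n \otimes 1 + 1 \otimes p_n \mapsto p_n({\bf x}) + p_n({\bf y})$ is compatible with $\omega \otimes \omega$, since $\omega(p_n) = (-1)^{n-1} p_n$.

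With this in hand, apply $\omega$ (in the combined variable set $({\bf x}, {\bf y})$) to the first equality in \eqref{gp-xy-eq}. The left side becomes $\omega(\gp_\lambda)({\bf x}, {\bf y}) = \jp_\lambda({\bf x}, {\bf y})$. Since $\omega$ on the right factors through the tensor product and each summand $\gp_\mu({\bf x}) \gp_{\lambda/\mu}({\bf y})$ lies in $\Sym({\bf x}) \otimes \Sym({\bf y})$, the right side becomes
\[
\sum_{\mu} \omega_{\bf x}(\gp_\mu)({\bf x})\, \omega_{\bf y}(\gp_{\lambda/\mu})({\bf y}) \;=\; \sum_{\mu} \jp_\mu({\bf x})\, \omega(\gp_{\lambda/\mu})({\bf y}).
\]
Comparing with the definition $\jp_\lambda({\bf x}, {\bf y}) = \sum_\mu \jp_\mu({\bf x})\, \jp_{\lambda/\mu}({\bf y})$, the identity $\jp_{\lambda/\mu} = \omega(\gp_{\lambda/\mu})$ will follow once we know that the coefficients of $\jp_\mu({\bf x})$ on the right are uniquely determined. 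For this I would invoke linear independence of $\{\jp_\mu\}$ as a $\ZZ[\beta]$-linearly independent family of symmetric functions in ${\bf x}$ -- which is immediate from the corresponding statement for $\{\gp_\mu\}$ recalled in Section~\ref{cauchy-intro-sect}, since $\omega$ is an automorphism. The argument for $\jq_{\lambda/\mu} = \omega(\gq_{\lambda/\mu})$ is identical, just replacing $\gp$ by $\gq$ throughout.

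The only mildly subtle point is that we are really working in a completion $\wh{\Sym}_{\ZZ[\beta]}$ that allows infinite sums of basis elements; but since the sum over $\mu$ in \eqref{gp-xy-eq} is supported on strict partitions with $|\mu| \leq |\lambda|$ (as the coefficients $b^\nu_{\lambda\mu}$ and the analogous skew Littlewood--Richardson type coefficients vanish outside this range, as recalled at the end of Section~\ref{structure-constants-sect}), the sum is actually finite once a fixed term is fixed on the left, and no convergence issues arise. Thus the proof is a short three-line algebraic manipulation once the Hopf-automorphism property of $\omega$ and the linear independence of $\{\jp_\mu\}$, $\{\jq_\mu\}$ are quoted.
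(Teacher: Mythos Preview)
Your argument is correct and is exactly the natural proof: since $\omega$ is a bialgebra automorphism, applying it to the defining identity \eqref{gp-xy-eq} and invoking linear independence of $\{\jp_\mu\}$ immediately yields the result. The paper itself does not supply a proof here but simply cites \cite[Eq.~(7.4)]{ChiuMarberg}; your reconstruction is precisely the argument one would expect (and indeed the paper already uses the same bialgebra-automorphism observation just below \eqref{hat-abc-eq}). One cosmetic point: rather than appealing to the vanishing of $b^\nu_{\lambda\mu}$ for $|\lambda|+|\mu|>|\nu|$, you can more directly cite the paper's remark that $\gp_{\lambda/\mu}=0$ unless $\mu\subseteq\lambda$, which makes the sum in \eqref{gp-xy-eq} manifestly finite.
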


The skew versions of $\GP_\lambda$ and $\GQ_\lambda$ that are most relevant to our discussion are not the obvious generating functions
for the sets $\ShSetTab_P(\lambda/\mu)$ and $\ShSetTab_Q(\lambda/\mu)$. Instead,
define a \defn{removable corner box} of  $\SD_\lambda$
to be a position $(i,j)$
such that $\SD_\lambda - \{(i,j)\} = \SD_\mu$ for a strict partition $\mu \neq \lambda$.
Let $\RC(\lambda)$ denote the set of such boxes in $\SD_\lambda$. For strict partitions $\mu \subseteq\lambda$, define 
\[
\ShSetTab_P(\lambda\ss\mu) := \bigsqcup_{\substack{\nu \subseteq\mu \\ \SD_{\mu/\nu}\subseteq \RC(\mu)}} \ShSetTab_P(\lambda/\nu)
\quand
\ShSetTab_Q(\lambda\ss\mu) := \bigsqcup_{\substack{\nu \subseteq\mu \\ \SD_{\mu/\nu}\subseteq \RC(\mu)}} \ShSetTab_Q(\lambda/\nu),
\]
where the unions are over all strict partitions $\nu$ such that $\nu \subseteq \mu$ and $\SD_{\mu/\nu}\subseteq \RC(\mu)$.
For strict partitions $\mu \not\subseteq\lambda$  
set $\ShSetTab_P(\lambda\ss\mu)=\ShSetTab_Q(\lambda\ss\mu):= \varnothing$.

\begin{definition}\label{GJ-def}
 For strict partitions $\mu$ and $\lambda$ define $|\lambda/\mu| := |\lambda|-|\mu|$ and let 
\[ \GP_{\lambda\ss\mu} := \sum_{T \in \ShSetTab_P(\lambda\ss\mu)} \beta^{|\lambda/\mu|-|T|} {\bf x}^{T}
\quand
\GQ_{\lambda\ss\mu} := \sum_{T \in \ShSetTab_Q(\lambda\ss\mu)} \beta^{|\lambda/\mu|-|T|} {\bf x}^{T}.
\]
\end{definition}

Observe that while $\GP_{\lambda\ss \emptyset} =\GP_\lambda$ and $\GQ_{\lambda\ss\emptyset} = \GQ_\lambda$,
the functions $\GP_{\lambda\ss \lambda}$ and $\GQ_{\lambda\ss \lambda}$ are typically not equal to $1$.
Both $\GP_{\lambda\ss\mu} $ and $\GQ_{\lambda\ss\mu} $ are zero if $\mu\not\subseteq\lambda$
since the sets indexing the relevant summations are both empty. These power series are symmetric since we have
\be
\GP_\lambda({\bf x}, {\bf y}) = \sum_\mu \GP_\mu({\bf x})\GP_{\lambda\ss\mu}({\bf y}) 
\quand
\GQ_\lambda({\bf x}, {\bf y}) = \sum_\mu \GQ_\mu({\bf x})\GQ_{\lambda\ss\mu}({\bf y}) 
\ee
and the power series $\GP_\lambda$ and $\GQ_\lambda$ are symmetric.

There are several more general versions of the Cauchy identity \eqref{cauchy-eq} that relate all of these families of skew functions; see \cite[Thm.~6.9 and Cor.~7.8]{ChiuMarberg}.
Two such identities that will be needed later are given as follows.
  Let $\mu$ and $\nu$ be strict partitions. Then by \cite[Cor.~7.8]{ChiuMarberg} we have
       \be\label{cauchy-eq4}
    \sum_\lambda \GP_{\lambda\ss\mu}({\bf x}) \jq_{\lambda/\nu}({\bf y}) = \prod_{i,j \geq 1} \tfrac{1 + x_iy_j}{ 1 + \overline{x_i} y_j}
    \sum_\kappa \GP_{\nu\ss\kappa}({\bf x}) \jq_{\mu/\kappa}({\bf y})
    \ee
    and
           \be\label{cauchy-eq5}
    \sum_\lambda \GQ_{\lambda\ss\mu}({\bf x}) \jp_{\lambda/\nu}({\bf y}) = \prod_{i,j \geq 1} \tfrac{1 + x_iy_j}{ 1 + \overline{x_i} y_j}
    \sum_\kappa \GQ_{\nu\ss\kappa}({\bf x}) \jp_{\mu/\kappa}({\bf y}),
    \ee
    where $\overline{x} := \frac{-x}{1+\beta x}$ as in Section~\ref{cauchy-intro-sect} and the sums are over all strict partitions $\lambda$ and $\kappa$.

\subsection{Pieri rules}\label{pieri-sect}

Buch and Ravikumar derived
Pieri rules in \cite{BuchRavikumar} to compute $a_{\lambda\mu}^\nu$ and $b_{\lambda\mu}^\nu$ when
$\mu = (n)$ is a one-row partition.
Here, we describe analogous formulas for  $\widehat a_{\lambda\mu}^\nu$, $\widehat b_{\lambda\mu}^\nu$, and $\widehat c_{\lambda\mu}^\nu$
when $\mu=(n)$.

We define a \defn{shifted ribbon} to be a shifted skew shape $\SD_{\nu/\lambda}$ 
that does not contain two boxes $(i_1,j_1)$ and $(i_2,j_2)$ with $i_1<i_2$ and $j_1<j_2$.
We do not require this shape to be connected, so 
\[
\SD_{ (8,5,4,1)/(5,4,1)} = 
\ytabsmall{ 
\none & \none & \none & \ \\
\none & \none & \none[\cdot] & \  & \ & \ \\
\none & \none[\cdot] & \none[\cdot] & \none[\cdot] & \none[\cdot] & \ \\
\none[\cdot] & \none[\cdot] & \none[\cdot] & \none[\cdot] & \none[\cdot] & \ & \ & \
}
\quand
\SD_{ (8,4,3,1)/(5,4,1)} = 
\ytabsmall{ 
\none & \none & \none & \ \\
\none & \none & \none[\cdot] & \  & \  \\
\none & \none[\cdot] & \none[\cdot] & \none[\cdot] & \none[\cdot]  \\
\none[\cdot] & \none[\cdot] & \none[\cdot] & \none[\cdot] & \none[\cdot] & \ & \ & \
}
\]
are both shifted ribbons.   In general, if $\nu = (\nu_1 > \nu_2 > \nu_3 > \dots > \nu_k > 0)$,
 then 
$\SD_{\nu/\lambda}$ is a shifted ribbon 
if and only if the partition $ (\nu_2, \nu_3,\dots, \nu_k) $ is contained in $ \lambda$.

The \defn{row reading word order} of  $\ZZ \times \ZZ$
  has $(i_1,j_1) < (i_2,j_2)$ if $i_1 > i_2$ or if $i_1=i_2$ and $j_1<j_2$.
Suppose $T$ is a semistandard set-valued shifted tableau containing only $1'$ and $1$.
Then the shape of $T$ must be a shifted ribbon, 
and any entry which is not the first in its  connected component (in row reading order)
is uniquely determined and given by the sets $\{1'\}$ or $\{1\}$.
However, the first entries in  each connected component of $T$
may be any of $\{1'\}$, $\{1\}$, or $\{1',1\}$.

\begin{example} 
Two such tableaux of shape $(8,5,3,1)/(5,4,1)$ are given by
\[
\ytab{ 
\none & \none & \none & 1 \\
\none & \none & \none[\cdot] & 1'  & 1  \\
\none & \none[\cdot] & \none[\cdot] & \none[\cdot] & \none[\cdot] & 1'1  \\
\none[\cdot] & \none[\cdot] & \none[\cdot] & \none[\cdot] & \none[\cdot] & 1' & 1 & 1
}
\qquand
\ytab{ 
\none & \none & \none & 1'1 \\
\none & \none & \none[\cdot] & 1'  & 1  \\
\none & \none[\cdot] & \none[\cdot] & \none[\cdot] & \none[\cdot] & 1'  \\
\none[\cdot] & \none[\cdot] & \none[\cdot] & \none[\cdot] & \none[\cdot] & 1' & 1 & 1
}.
\]
There are seven other such tableaux of this shape.
\end{example}

For strict partitions $\lambda \subseteq \nu$, define 
$\cribbonsym(\nu/\lambda)$ to be 
the set of semistandard shifted set-valued tableaux of shape $\SD_{\nu/\lambda}$ containing only   $1'$ and $1$ as entries. Then let 
\[\cribbons{\nu}{\lambda} := \bigsqcup_{\substack{\mu \subseteq \lambda \\ \SD_{\lambda/\mu} \subseteq \RC(\lambda)}} \cribbonsym(\nu/\mu)\]
where the union is over strict partitions $\mu$ whose shifted diagrams are formed by deleting a possibly empty set of removable corner boxes from $\SD_\lambda$.
 The set $\cribbons{\nu}{\lambda} $ is nonempty if and only if $\SD_{\nu/\lambda}$ is a shifted ribbon,
and we have  $|\cribbonsym(\nu/\nu)|=1\leq |\cribbons{\nu}{\nu}|=4^{|\RC(\nu)|}$.

 \begin{example}
If $\lambda=(5,4,1)$ and  $ \nu =(8,5,3,1)$ then $\cribbons{\nu}{\lambda}$ consists of  
\[
\ytab{ 
\none & \none & \none & 1' \\
\none & \none & \none[\cdot] & 1'  & 1  \\
\none & \none[\cdot] & \none[\cdot] & \none[\cdot] & 1' & 1  \\
\none[\cdot] & \none[\cdot] & \none[\cdot] & \none[\cdot] & \none[\cdot] & 1' & 1 & 1
}
\qquad
\ytab{ 
\none & \none & \none & 1 \\
\none & \none & \none[\cdot] & 1'  & 1  \\
\none & \none[\cdot] & \none[\cdot] & \none[\cdot] & 1' & 1  \\
\none[\cdot] & \none[\cdot] & \none[\cdot] & \none[\cdot] & \none[\cdot] & 1' & 1 & 1
}
\qquad
\ytab{ 
\none & \none & \none & 1'1 \\
\none & \none & \none[\cdot] & 1'  & 1  \\
\none & \none[\cdot] & \none[\cdot] & \none[\cdot] & 1' & 1  \\
\none[\cdot] & \none[\cdot] & \none[\cdot] & \none[\cdot] & \none[\cdot] & 1' & 1 & 1
}
\]
in addition to the nine elements of  $\cribbonsym(\nu/\lambda)$.
\end{example}

Let $\bribbons{\nu}{\lambda}$ be the set of elements in  
$\cribbons{\nu}{\lambda}$ 
with no primed numbers appearing in any diagonal boxes.
Set $\bribbons{\nu}{\lambda} =\cribbons{\nu}{\lambda} := \varnothing$
for strict partitions $\lambda \not\subseteq\nu$. Finally, for integers $n \geq 0$
define $\bribbons{\nu}{\lambda;n}\subseteq \bribbons{\nu}{\lambda}$ and $ 
\cribbons{\nu}{\lambda;n}\subseteq \cribbons{\nu}{\lambda}$
to be the subsets of tableaux $T$ with ${\bf x}^T = x_1^n$.

\begin{proposition}\label{hat-b-prop}
Suppose $\lambda$ and $\nu$ are strict partitions and $n$ is a nonnegative integer. Then
\[ \widehat b_{\lambda,(n)}^\nu=|\bribbons{\nu}{\lambda;n}|
\quand \widehat c_{\lambda,(n)}^\nu=|\cribbons{\nu}{\lambda; n}|.\]
\end{proposition}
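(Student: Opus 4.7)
The plan is to extract $\widehat b_{\lambda,(n)}^\nu$ and $\widehat c_{\lambda,(n)}^\nu$ as the $y_1^n$-coefficients of $\GP_{\nu\ss\lambda}(y_1)$ and $\GQ_{\nu\ss\lambda}(y_1)$ respectively, and then match these against the explicit tableau generating functions provided by Definition~\ref{GJ-def}.

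The first step is to establish the identities
\[
\GP_{\nu\ss\lambda}({\bf y}) = \sum_\mu \widehat b_{\lambda\mu}^\nu \beta^{|\lambda|+|\mu|-|\nu|} \GP_\mu({\bf y})
\quand
\GQ_{\nu\ss\lambda}({\bf y}) = \sum_\mu \widehat c_{\lambda\mu}^\nu \beta^{|\lambda|+|\mu|-|\nu|} \GP_\mu({\bf y}).
\]
These follow by combining the factorizations $\GP_\nu({\bf x},{\bf y}) = \sum_\mu \GP_\mu({\bf x})\GP_{\nu\ss\mu}({\bf y})$ and $\GQ_\nu({\bf x},{\bf y}) = \sum_\mu \GQ_\mu({\bf x})\GQ_{\nu\ss\mu}({\bf y})$ from Section~\ref{skew-sect} with \eqref{G-coproduct} and \eqref{G-coproduct2} respectively, and reading off the coefficients of $\GP_\lambda({\bf x})$ and $\GQ_\lambda({\bf x})$ using the linear independence of $\{\GP_\lambda\}$ and $\{\GQ_\lambda\}$.

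Next I would specialize ${\bf y} = (y_1, 0, 0, \dots)$. A direct check using (S1)--(S4) and the diagonal rule for $\GP$ shows that $\GP_\mu(y_1)$ vanishes whenever $\mu$ has at least two parts: the diagonal box $(2,2)$ must equal $\{1\}$, which by (S3) forces $T_{1,2} = \{1'\}$, which combined with (S2) and the $\GP$ diagonal rule at $(1,1)$ is a contradiction. On the other hand $\GP_{(n)}(y_1) = y_1^n$ for all $n \geq 0$, since the unique one-variable filling of a one-row shape for $\GP$ has every box equal to $\{1\}$. Hence the two right-hand sides reduce to $\sum_{n\geq 0}\widehat b_{\lambda,(n)}^\nu \beta^{|\lambda|+n-|\nu|} y_1^n$ and $\sum_{n\geq 0}\widehat c_{\lambda,(n)}^\nu \beta^{|\lambda|+n-|\nu|} y_1^n$.

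On the left, Definition~\ref{GJ-def} writes $\GP_{\nu\ss\lambda}(y_1)$ and $\GQ_{\nu\ss\lambda}(y_1)$ as weighted sums over the subsets of $\ShSetTab_P(\nu\ss\lambda)$ and $\ShSetTab_Q(\nu\ss\lambda)$ whose entries lie in $\{1', 1\}$; these subsets are by definition exactly $\bribbons{\nu}{\lambda}$ and $\cribbons{\nu}{\lambda}$. The $\beta$-exponent in that definition depends only on $|T|$, $|\nu|$, and $|\lambda|$, so grouping by $n = |T|$ yields $\GP_{\nu\ss\lambda}(y_1) = \sum_n |\bribbons{\nu}{\lambda;n}|\, \beta^{|\lambda|+n-|\nu|} y_1^n$ and analogously for $\GQ$ with $\cribbons{\nu}{\lambda;n}$. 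Comparing coefficients of $y_1^n$ on both sides yields the proposition. The only delicate point is the multi-row vanishing of $\GP_\mu(y_1)$; everything else is formal bookkeeping with the coproduct expansions.
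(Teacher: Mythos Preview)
Your proof is correct and follows essentially the same route as the paper's: both arguments specialize the coproduct expansions \eqref{G-coproduct}--\eqref{G-coproduct2} to a single $y$-variable, use that $\GP_\mu(y_1)$ vanishes for $\ell(\mu)\geq 2$ and equals $y_1^n$ for $\mu=(n)$, and then match against the one-variable tableau description of $\GP_{\nu\ss\lambda}$ and $\GQ_{\nu\ss\lambda}$ coming from Definition~\ref{GJ-def}. The only cosmetic difference is that you first isolate the identities $\GP_{\nu\ss\lambda}=\sum_\mu \widehat b_{\lambda\mu}^\nu\beta^{|\lambda|+|\mu|-|\nu|}\GP_\mu$ and $\GQ_{\nu\ss\lambda}=\sum_\mu \widehat c_{\lambda\mu}^\nu\beta^{|\lambda|+|\mu|-|\nu|}\GP_\mu$ before specializing, whereas the paper sets $\beta=1$ and extracts the coefficient of $\GQ_\lambda({\bf x})t^n$ directly from $\GQ_\nu({\bf x};t)$.
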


\begin{proof}
For simplicity assume $\beta=1$.
Then $ \widehat c_{\lambda,(n)}^\nu$ is the coefficient of 
$\GQ_{\lambda}({\bf x})\GP_{(n)}({\bf y})$ in $\GQ_\nu({\bf x}, {\bf y})$.
Since 
$\GP_\mu(y_1,0,0,\dots)$ is zero whenever $\ell(\mu) \geq 2$ and equal to $y_1^{\mu_1}$ if $\ell(\mu) \leq 1$,
the number $ \widehat c_{\lambda,(n)}^\nu$ is also the coefficient of
$\GQ_{\lambda}({\bf x}) t^n$ in the power series $\GQ_\nu({\bf x};t) $
formed by setting $y_1= t$ and $y_2=y_3=\dots=0$ in $\GQ_\nu({\bf x}, {\bf y})$.
Since $\GQ_\nu= \sum_{T \in \ShSetTab_Q(\nu)}   {\bf x}^{T}$ is symmetric,
we have
\[
 \GQ_\nu({\bf x};t)
  =
   \sum_{\lambda \subseteq \nu} \sum_{T \in \bribbons{\nu}{\lambda}} 
   \GQ_\lambda({\bf x})   t^{|T|}
   =
    \sum_{n\geq 0}  \sum_{\lambda \subseteq \nu} | \cribbons{\nu}{\lambda;n}|  
   \GQ_\lambda({\bf x})  t^{n}
   \]
so $ \widehat c_{\lambda,(n)}^\nu=|\cribbons{\nu}{\lambda; n}|$.
The argument to show that  $\widehat b_{\lambda,(n)}^\nu=|\bribbons{\nu}{\lambda; n}|$ is similar,
since $\widehat b_{\lambda,(n)}^\nu$ is equal to the coefficient of $\GP_{\lambda}({\bf x}) t^n$ in $\GP_\nu({\bf x};t) $.
\end{proof}

It is slightly less straightforward to  interpret the numbers  $\widehat a_{\lambda,(n)}^\nu$
along these lines.
However, if $n>0$ then it is easy to check that $\gq_n = 2\gp_n + [n>1] \beta \gp_{n-1}$
and so \eqref{hat-abc-eq} implies that 
\be
\widehat c_{\lambda,(n)}^\nu = 2 \widehat a_{\lambda,(n)}^\nu + [n > 1]  \widehat a_{\lambda,(n-1)}^\nu.\ee

\section{Bender--Knuth involutions for shifted bar tableaux}\label{bk-sect}

In this section, we consider several families of shifted bar tableaux.  Recall that these objects are
shifted tableaux whose boxes are divided into bars, each consisting  of a contiguous sequence of equal primed entries 
in one column or equal unprimed entries in one row. In such tableaux, no primed entry can be repeated in a row and no unprimed entry can be repeated in a column.   

For this section only, \textbf{we assume the entries of all shifted bar tableaux are limited to the set $\{1', 1, 2', 2\}$}.
In this context, the \defn{weight} of a shifted bar tableau is the pair $(a_1,a_2)$ where $a_i$ is the number of bars containing $i$ or $i'$.
Our goal is to construct a shape-preserving and weight-reversing involution of 
the set of such tableaux that are semistandard, analogous to the Bender--Knuth involutions on usual semistandard Young tableaux.

\subsection{Sorted bar tableaux}

For convenience, we will often abbreviate ``shifted bar tableau'' as ``ShBT.''
Recall that a ShBT is \defn{semistandard}
if
the entries in the tableau (ignoring bars) are weakly increasing in the order $1'<1<2'<2$ along rows and columns.  
We will need a variant of this property:

\begin{definition}
\label{def:sorted}

Let $T$ be a shifted bar tableau with all entries in $\{1', 1, 2', 2\}$. We say that $T$ is \defn{sorted}
if (in addition to our blanket assumption that no primed entry is repeated in any row and no unprimed entry is repeated in any column) any of the following holds:
\begin{itemize}
\item[(a)] 
The entries of $T$
are weakly increasing along rows and columns in the order $1' \prec 2' \prec 1\prec 2$.

\item[(b)] $T$ has exactly two diagonal boxes, $(i,i)$ and $(i+1,i+1)$,
such that   $T_{i,i+1} = 1$, $T_{i+1,i+1}=2'$,
and either 
  $T_{i,i} = 2'$ or  boxes $(i,i)$ and $(i,i+1)$ are part of the same bar with entry $1$,
  and
changing the value in box $(i+1,i+1)$ to $2$ results in a shifted bar tableau satisfying (a).
  
 For example, all of the following are sorted:
\be\label{inter-ex1}
\begin{young}[15.5pt][c] 
 , &  ![pink!99]2' \\
 2' & ]=![cyan!99] 1 & ![cyan!99]   
 \end{young},
 \qquad
 \begin{young}[15.5pt][c] 
 , &  ![pink!99]2' \\
 ]=![cyan!99] 1 & ![cyan!99]  & =]![cyan!99]   
 \end{young},
 \qquad
\begin{young}[15.5pt][c] 
 , &  ![pink!99]2' \\
 2' & ![cyan!99]1 \\
 ![lightgray!99] 1' \ynobottom & ![yellow!99] 2' \ynobottom \\
 ![lightgray!99]  \ynotop & ![yellow!99] \ynotop
  \end{young},
 \qquand
 \begin{young}[15.5pt][c] 
 , &  ![pink!99]2' \\
 ]=![cyan!99]1 & =]![cyan!99] \\
 ]=]![lightgray!99] 1' \ynobottom & ]=]![yellow!99] 2' \ynobottom \\
 ]=]![lightgray!99]  \ynotop & ]=]![yellow!99] \ynotop
  \end{young} .
 \ee
  
\item[(c)] $T$ has exactly two diagonal boxes $(i,i)$ and $(i+1,i+1)$
such that   $T_{i,i} = 1$, $T_{i,i+1}=2'$,
and either 
  $T_{i+1,i+1} = 1$ or  boxes $(i+1,i+1)$ and $(i,i+1)$ are part of the same bar with entry $2'$,
  and
changing the value in box $(i,i)$ to $1'$ results in a shifted bar tableau satisfying (a).

For example, all of the following are sorted:
\be\label{inter-ex2}
 \begin{young}[15.5pt][c] 
  , &  1 \\
 ![cyan!99]1 & ![pink!99]2' \ynobottom \\
 , & ![pink!99] \ynotop  
 \end{young},
\qquad
 \begin{young}[15.5pt][c] 
  , &  ![pink!99]2'  \ynobottom \\
 ![cyan!99]1 & ![pink!99] \ynotop \ynobottom \\
 , & ![pink!99] \ynotop  
 \end{young},
 \qquad
 \begin{young}[15.5pt][c] 
   , &  1  &  ]=![yellow!99] 2&  =]![yellow!99]  \\
 ![cyan!99]1 & ]=]![pink!99]2'  &  ]=![lightgray!99]1 &   =]![lightgray!99] 
  \end{young},
\qquand
 \begin{young}[15.5pt][c] 
   , &  ]=]![pink!99]2' \ynobottom &  ]=![yellow!99] 2&  =]![yellow!99]  \\
 ![cyan!99]1 & ]=]![pink!99] \ynotop &  ]=![lightgray!99]1 &   =]![lightgray!99] 
  \end{young}.
 \ee
 \end{itemize}
 \end{definition}

%
%

\subsection{Ascending swaps}

\def\swap{\mathsf{swap}_{1\leftrightarrow 2'}}
\def\unswap{\mathsf{unswap}_{1\leftrightarrow 2'}}
\def\align{\mathsf{align}}
\def\wtrev{\mathsf{reverse\_weight}}

 The \defn{first} (respectively, \defn{last}) box in a bar within a ShBT
is the box $(i,j)$ with $i$ and $j$ both minimal (respectively, maximal). 
Everywhere in this subsection, $T$ denotes an arbitrary ShBT with all entries in $\{1' , 2' , 1 , 2\}$, which is not necessarily semistandard or sorted.

\begin{definition}\label{def:swap}
Suppose $\cH$ and $\cV$ are bars in $T$ with unique entries $1$ and $2'$, respectively.  
Below, we define two new bars $\tilde \cH$ and $\tilde \cV$ whose union  $\tilde \cH\sqcup \tilde \cV$ occupies the same boxes as $\cH\sqcup \cV$.
We then set
$ \swap(T,\cV, \cH) := (\tilde T, \tilde \cV)$
where $\tilde T$ is formed from $T$ by replacing $\cH$   with $\tilde \cH$ and   $\cV$ with $\tilde \cV$.
The rules defining $\tilde \cH$ and $\tilde \cV$ are as follows:
\begin{itemize}

\item[(a)] Suppose the first boxes of $\cH$ and $\cV$ are $(i,j)$ and $(i+1,j)$, respectively,
and that if $(i,j-1) \in T$ then $T_{i,j-1}\neq 2'$. This means boxes $(i,j-1)$, $(i,j)$, $(i+1,j)$ in $T$ cannot have the form
\be\label{exclude1}
 \begin{young}[15.5pt][c] 
  , &  ![pink!99]2' \\
2' & ![cyan!99]1  
\end{young}
\qquord
 \begin{young}[15.5pt][c] 
  , &  ![pink!99]2' \\
]=![cyan!99]1 & =]![cyan!99]  
\end{young},
\ee
since in first case $T_{i,j-1} = 2'$ and in the second case the first boxes of $\cH$ and $\cV$ are not in the same column.
If $\cH$ has more than one box, then we form $\tilde \cH$ 
from $\cH$  by removing its first box and we form $\tilde \cV$ by adding this box to $\cV$.
If $\cH$ has only one box, then we form $\tilde \cV$ by moving $\cV$ down one row
and we form $\tilde \cH$ by moving $\cH$ to occupy the last box of $\cV$.
In pictures, we have
\[
 \begin{young}[15.5pt][c]   
   ![pink!99] 2' \ynobottom \\
    ![pink!99] \ynotop \\ 
    ]=![cyan!99] 1  & ![cyan!99] & ![cyan!99]  \end{young} \mapsto 
     \begin{young}[15.5pt][c]  
        ![pink!99]2' \ynobottom\\ 
        ![pink!99]\ynobottom \ynotop \\ 
        ![pink!99] \ynotop & ]=![cyan!99]1 & =]![cyan!99]\end{young}
\qquord
 \begin{young}[15.5pt][c]    
  ![pink!99] 2' \ynobottom \\ 
  ![pink!99] \ynotop \\ 
  ![cyan!99] 1    \end{young} \mapsto  
  \begin{young}[15.5pt][c]     
  ![cyan!99]1 \\ 
  ![pink!99]2'  \ynobottom\\ 
  ![pink!99] \ynotop \end{young}
\]
where the  blue boxes are $\cH$ and $\tilde \cH$
and the  red boxes are $\cV$ and $\tilde \cV$.

\item[(b)] Suppose the last boxes of $\cH$ and $\cV$ are $(i,j)$ and $(i,j+1)$, respectively,
and that if $(i + 1 ,j+1) \in T$ then $T_{i+1,j+1}\neq 1$. This mean boxes  $(i,j)$, $(i,j+1)$, $(i+1,j+1)$ in $T$ cannot have the form
\be\label{exclude2}
  \begin{young}[15.5pt][c] 
, &  1 \\
![cyan!99]1 & ![pink!99]2'  \end{young}
\qquord
  \begin{young}[15.5pt][c] 
 , &  ![pink!99]2' \ynobottom \\
![cyan!99]1 & ![pink!99] \ynotop  \end{young}.
\ee
If $\cV$ has more than one box, then we form $\tilde \cV$ 
from $\cV$  by removing its last box and we form $\tilde \cH$ by adding this box to $\cH$.
If $\cV$ has only one box, then we form $\tilde \cH$ by moving $\cH$ right one column
and we form $\tilde \cV$ by moving $\cV$ to occupy the first box of $\cH$.
In pictures, we have
\[
  \begin{young}[15.5pt][c] 
]=![cyan!99] 1  & =]![cyan!99] & ![pink!99] 2' \ynobottom\\ 
, & , & ![pink!99] \ynotop \ynobottom \\ 
, & , & ![pink!99] \ynotop \end{young} \mapsto  
  \begin{young}[15.5pt][c]
]=![cyan!99] 1 & ![cyan!99] & =]![cyan!99]  \\ 
, & , & ![pink!99]2' \ynobottom \\ 
, & , & ![pink!99] \ynotop\end{young}
\qquord
  \begin{young}[15.5pt][c] 
]=![cyan!99] 1  & =]![cyan!99] & ![pink!99] 2'  \end{young}
 \mapsto  
  \begin{young}[15.5pt][c] ![pink!99] 2' & ]=![cyan!99] 1& =]![cyan!99] \end{young}.
\]

\item[(c)] In all other cases 
we set $\tilde \cH := \cH$ and $\tilde \cV :=\cV$.
\end{itemize}
\end{definition}

Above we defined how to ``swap'' one $2'$-bar in $T$ with a $1$-bar.
We will extend this to a procedure successively swapping a given $2'$-bar with all $1$-bars in a certain order. Then we will further extend that operation to one successively swapping all $2'$-bars in $T$ with all $1$-bars. This will give an operation transforming $T$ to another ShBT in which the relative order of   $1'$ and $2$ is reversed.

\begin{definition}\label{swap-def}
Suppose $\cH_1,\cH_2,\dots,\cH_k$ are the distinct bars of $T$ containing $1$ ordered from right to left,  so that if $i < j$ then $\cH_i$ occurs in larger-indexed columns than $\cH_j$. 
For any single bar $ \cV$ of $T$ with unique entry $2'$, inductively define 
\[(\tilde T_0, \tilde \cV_0) := (T,\cV)
\quand  (\tilde T_{i},\tilde\cV_{i}) := 
\swap(\tilde T_{i-1},\tilde\cV_{i-1}, \cH_i)\text{ for $i=1,2,\dots,k$,}\]
and then set $\swap(T,\cV)  := \tilde T_k$.
Next, suppose $\cV_1,\cV_2,\dots,\cV_l$ are the distinct bars of $T$ containing $2'$ ordered from bottom to top,  so that if $i < j$ then $\cV_i$ occurs in smaller-indexed rows than $\cV_j$.
Finally, define
$ \swap(T) := \swap(\cdots \swap(\swap(T,\cV_1),\cV_2) \dots, \cV_l).$
\end{definition}

\begin{example}
In the image below, the bars $\cH_1$, $\cH_2$, $\cH_3$, $\cH_4$, and $\cH_5$ are respectively gray, green, yellow, blue, and white; the bars $\cV_1$, $\cV_2$, and $\cV_3$ are respectively orange, pink, and red.  We illustrate each individual operation of the form $\swap( -, \cV, \cH)$ that results in a new tableau (i.e., that does not fall in case (c) of Definition~\ref{def:swap}):
\begin{multline*}
\text{$\begin{young}[15.5pt][c] , & ![red!99] 2' \\ 1 & ]=![cyan!99] 1 & =]![cyan!99]  & ![pink!99] 2' \ynobottom \\ , & , & , & ![pink!99]  \ynotop \\ , & , & , & ![yellow!99] 1 & ]=![green!99] 1 & =]![green!99]  & ![orange!99] 2' \\ , & , & , & , & , & , & ]=![lightgray!99] 1 & =]![lightgray!99] \end{young}$}
\to
\text{$\begin{young}[15.5pt][c] , & ![red!99] 2' \\ 1 & ]=![cyan!99] 1 & =]![cyan!99]  & ![pink!99] 2' \ynobottom \\ , & , & , & ![pink!99] \ynotop \\ , & , & , & ![yellow!99] 1 & ]=![green!99] 1 & =]![green!99]  & ![orange!99] 2' \ynobottom \\ , & , & , & , & , & , & ![orange!99] \ynotop & ]=]![lightgray!99] 1\end{young}$}
\to
\text{$\begin{young}[15.5pt][c] , & ![red!99] 2' \\ 1 & ]=![cyan!99] 1 & =]![cyan!99]  & ![pink!99] 2' \ynobottom \\ , & , & , & ![pink!99] \ynotop \\ , & , & , & ![yellow!99] 1 & ]=![green!99] 1 & ==![green!99]  & =]![green!99]  \\ , & , & , & , & , & , & ![orange!99] 2' & ]=]![lightgray!99] 1\end{young}$} \\
\to \cdots \to 
\text{$\begin{young}[15.5pt][c] , & ![red!99] 2' \\ 1 & ]=![cyan!99] 1 & =]![cyan!99]  & ![yellow!99] 1 \\ , & , & , & ![pink!99] 2' \ynobottom \\ , & , & , & ![pink!99] \ynotop & ]=![green!99] 1 & ==![green!99]  & =]![green!99]  \\ , & , & , & , & , & , & ![orange!99] 2' & ]=]![lightgray!99] 1\end{young}$}
\to \cdots \to 
\text{$\begin{young}[15.5pt][c] , & ![red!99] 2' \ynobottom \\ 1 & ![red!99] \ynotop & ![cyan!99] 1 & ![yellow!99] 1 \\ , & , & , & ![pink!99] 2' \ynobottom \\ , & , & , & ![pink!99] \ynotop & ]=![green!99] 1 & ==![green!99]  & =]![green!99]  \\ , & , & , & , & , & , & ![orange!99] 2' & ]=]![lightgray!99] 1\end{young}$}.
\end{multline*}
\end{example}

\begin{proposition}
\label{prop:semi to inter}
If $T$ is a semistandard shifted bar tableau then $\swap(T)$ is a sorted shifted bar tableau of the same shape and weight.
\end{proposition}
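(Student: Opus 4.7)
The plan is to establish three properties of $\swap(T)$: that it is a valid shifted bar tableau, that its weight equals the weight of $T$, and that it is sorted. Weight preservation is the simplest of the three: in each of cases (a), (b), and (c) of Definition~\ref{def:swap}, the operation either leaves bars unchanged, relocates a single-box bar, or transfers one box between a $1$-bar and a $2'$-bar. In every situation the total number of $1$-bars and the total number of $2'$-bars is unchanged, and $1'$- and $2$-bars are untouched; so the weight $(a_1,a_2)$ is preserved throughout the iteration defining $\swap(T)$.

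For well-definedness, I will verify by case analysis that each call to $\swap(T,\cV,\cH)$ produces a valid ShBT: each bar remains a contiguous horizontal or vertical strip, no primed entry is repeated in a row, and no unprimed entry is repeated in a column. The excluded configurations \eqref{exclude1} and \eqref{exclude2} are precisely the patterns that would cause these conditions to fail, and the hypotheses of cases (a) and (b) rule them out. Some care is required to check that the neighbors of the swapped bars---boxes in $T$ just outside $\cH\cup\cV$---remain compatible with the new configuration, but this follows from the semistandardness of $T$ together with the total order $1'<1<2'<2$ (for example, the entry two rows below the top of $\cV$, if present, must be at least $2'$, and so remains consistent when $\cV$ is shifted upward).

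For sortedness, the strategy is inductive. I will first show that after the inner loop $\swap(T,\cV)$, which moves a single $2'$-bar $\cV$ past each of the $1$-bars $\cH_1,\dots,\cH_k$ in right-to-left order, the resulting $2'$-bar lies weakly up-and-left of every $1$-bar it has been compared with, except possibly in one of the two diagonal configurations from Definition~\ref{def:sorted}(b)--(c). Processing the $2'$-bars $\cV_1,\dots,\cV_l$ from bottom to top in the outer loop then globalizes this property: each successive $\cV_j$ reaches its leftmost/uppermost position compatible with the already-processed bars. The main obstacle will be showing that a later swap does not destroy the sortedness achieved by earlier swaps; the key observation is that once a $2'$-bar has passed a $1$-bar, the transferred boxes sit in a row/column combination that cannot participate in any further blocking configuration during the processing of higher bars, so previously sorted pairs remain sorted. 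Extra care is needed near the shifted diagonal, where the geometry of $\SD_\lambda$ forces exactly the exceptional patterns in Definition~\ref{def:sorted}(b)--(c); I will argue that at most one such pattern can persist in the final tableau, by using the semistandardness of $T$ to constrain how $1$- and $2'$-entries can cluster near the diagonal.
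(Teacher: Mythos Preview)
Your outline has the right architecture (weight is easy; then validity; then sortedness by induction over the $2'$-bars), but two load-bearing steps are not yet there, and one geometric remark is off.

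\textbf{Missing invariant for sortedness.} Your inductive step says that after the inner loop the processed $2'$-bar ``lies weakly up-and-left of every $1$-bar it has been compared with'' and that ``transferred boxes sit in a row/column combination that cannot participate in any further blocking configuration.'' That is exactly the claim that needs a precise formulation and proof, and as written it is too vague to verify. The paper's proof makes this work by first restricting to the subtableau on $\{1,2'\}$ (which you do not do), observing that this subtableau can contain no $2\times 2$ square and hence is essentially a ribbon, and then introducing a concrete notion of a \emph{bad pair}: a $1$-bar with a box directly west or directly south of a box in a $2'$-bar. The induction is then the specific statement that after processing $\cV_1,\dots,\cV_i$, none of those bars participates in a bad pair (with a carefully described diagonal exception only when $i=l$). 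Each swap in Definition~\ref{def:swap}(a)--(b) visibly kills one adjacent bad pair, and the ribbon geometry is what lets you argue that no new bad pairs are created involving already-processed bars. Without the reduction to $\{1,2'\}$ and the bad-pair invariant (or something equally precise), your ``key observation'' cannot be checked.

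\textbf{Validity via semistandardness of $T$.} You justify that each individual $\swap(\,\cdot\,,\cV,\cH)$ yields a valid ShBT by appealing to the semistandardness of the \emph{original} $T$. But the operation is iterated, and intermediate tableaux are generally not semistandard; the boxes neighboring $\cH\cup\cV$ may already have been altered by earlier swaps. So ``this follows from the semistandardness of $T$'' is not a valid justification at a generic step. The paper sidesteps this by working only with $\{1,2'\}$-entries on a ribbon, where the only delicate interactions are exactly the bad pairs being tracked.

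\textbf{A geometric slip.} In your example you refer to ``when $\cV$ is shifted upward.'' In both nontrivial cases of Definition~\ref{def:swap}, the $2'$-bar $\cV$ either gains a box at its bottom or is relocated downward; it never moves up. This suggests re-checking the local pictures before building the case analysis on them.

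In short: keep your overall plan, but first restrict to the $\{1,2'\}$ subtableau, use the no-$2\times2$-square observation, and replace your informal ``passed'' language with a precise invariant such as the paper's bad-pair condition. That is the missing idea.
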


\begin{proof}
Let $T$ be a semistandard ShBT. Then $\swap(T)$ has the same shape and weight as $T$ by construction.
Since the operation $\swap$ affects only the boxes in $T$ filled with $1$ or $2'$, the result is valid for all semistandard $T$ filled with $\{1', 1, 2', 2\}$ if and only if it is valid for those semistandard $T$ filled with only $\{1, 2'\}$.  So without loss of generality assume that the only entries of $T$ are $1$ and $2'$.  Such a tableau $T$ contains no $2 \times 2$ square of entries, as there is no way to complete the diagram $\ytab{? & 2' \\ 1 & }$ to a semistandard ShBT.  In particular, the only way that $T$ can contain one box strictly northeast of another is if it contains two consecutive boxes on the diagonal:
\[
\ytab{ \none & 2' \\ 1 & 1} \qquord \ytab{ \none & 2' \\ 1 & 2'} \qquad \text{(ignoring the division into bars).}
\]
Thus, the partial order $\preceq$ on the bars in $T$ where one bar is smaller than (``before'') another if the first lies entirely (weakly) southeast of the latter is either a total order (if $T$ does not contain two diagonal boxes) or has a unique incomparable pair that come after all other bars of $T$.  This order is compatible with the orders $\cH_1, \ldots, \cH_k$ and $\cV_1, \ldots, \cV_l$ in Definition~\ref{swap-def}.  Moreover, this order extends to every $\{1, 2'\}$-filled ShBT (semistandard, sorted, or otherwise) with the same shape as $T$, and to every tableau appearing as an intermediate stage in the computation of $\swap(T)$.  

Suppose the bars of $1$s in $T$ are $\cH_1, \ldots, \cH_k$ and the bars of $2'$s in $T$ are $\cV_1, \ldots, \cV_l$, as in Definition~\ref{swap-def}.  
Applying any number of $\swap$ operations to $T$ results in a tableau with $k$ bars of $1$s and $l$ bars of $2'$s.  For $i = 1, \ldots, l$, let 
$
T^{(i)} := \swap( \cdots \swap(T, \cV_1) \cdots, \cV_i),
$
and let the bars of $1$s and $2'$s in $T^{(i)}$ be respectively $\cH^{(i)}_1, \ldots,  \cH^{(i)}_k$ and $\cV^{(i)}_1, \ldots,  \cV^{(i)}_l$.

Given a tableau, we will say that a pair of bars $A$, $B$ is \defn{bad} if $A$ is filled with $1$, $B$ is filled with $2'$, and $A$ contains a box either directly west or directly south of a box in $B$ as in these pictures:
\[
\ytab{1 &  \cdots & 2'} \qquord \ytab{2' \\  \vdots \\ 1}.
\]
Each move in (a) and (b) of Definition~\ref{def:swap} replaces an adjacent pair of bad bars with a non-bad pair.  We claim that in the tableau $T^{(i)}$, there are no bad pairs involving any of the bars $\cV_1^{(i)}, \ldots, \cV_i^{(i)}$, except possibly in the case that $T$ has two diagonal boxes and $i = l$.  

For $i = 0$, this is vacuously true; we proceed by induction.    By hypothesis, there are no bad pairs involving $\cV_1^{(i - 1)}, \ldots, \cV_i^{(i - 1)}$ in $T^{(i)}$.  Suppose that $\cV_i = \cV_i^{(i - 1)}$ lies in column $c$, and choose $j$ such that $\cH_1^{(i - 1)}, \ldots, \cH_j^{(i - 1)}$ lie in columns whose indices are greater than or equal to $c$ (i.e., they lie weakly southeast of $\cV_i$ in $T^{(i - 1)}$), while $\cH_{j + 1} = \cH_{j + 1}^{(i - 1)}$, \ldots, $\cH_{k} = \cH_{k}^{(i - 1)}$ lie in columns whose indices are strictly smaller than $c$.  In this setup, it is possible that a single bar of $1$s lies in columns with indices both smaller than and greater than or equal to $c$, but only if $i = l$ and $\cV_i$ consists of a single diagonal box, as in 
$ \begin{young}[15.5pt][c] 
  , &  ![pink!99]2' \\
]=![cyan!99]1 & ==![cyan!99] & =]![cyan!99]  
\end{young}$.

The first $j - 1$ swaps in the computation of $\swap(T^{(i - 1)}, \cV_i)$ do nothing.  The $j$th swap either does nothing (if $\cH_j^{(i - 1)}$ does not contain any boxes in column $c$) or performs a swap of the type described in Definition~\ref{def:swap}(a).  Following this swap, the resulting two bars $\tilde{\cV}_i, \tilde{\cH}_j$ no longer form a bad pair.  Moreover, although the bar $\tilde{\cV}_i$ may extend one row below $\cV_i$, this cannot introduce any new bad pairs involving the bar $\tilde{\cV}_i$ unless there is a $1$ to the left of the first box in $\cH_j^{(i - 1)}$, which only happens if $i = l$ and $\cV_i$ consists of a single diagonal box, as in these examples:
\[
 \begin{young}[15.5pt][c] 
  , &  ![pink!99]2' \\
]=]![lightgray!99]1 & ]=![cyan!99]  1& ==![cyan!99] & =]![cyan!99]  
\end{young}
\to
 \begin{young}[15.5pt][c] 
  , &  ![pink!99]2'  \ynobottom \\
]=]![lightgray!99]1 & ]=]![pink!99]  \ynotop & ]=![cyan!99]  1 & =]![cyan!99]  
\end{young}
\quord
 \begin{young}[15.5pt][c] 
  , &  ![pink!99]2' \\
![lightgray!99]1 & ![cyan!99]  1
\end{young}
\to
 \begin{young}[15.5pt][c] 
  , &  ![cyan!99]1   \\
![lightgray!99]1 & ![pink!99] 2'   
\end{young}.
\]

If, at this stage, there are any further bad pairs involving $\tilde{\cV}_i$, they must come from $1$s in boxes directly west of a box in $\tilde{\cV}_i$.  This can happen if $i = l$ and $\tilde{\cV}_i$ contains a diagonal box $\begin{young} [15.5pt][c] 
  , &  ![pink!99]2' \ynobottom  \\
![lightgray!99]1 & ![pink!99]  \ynotop   
\end{young}$,
but otherwise the bar $\cH_{k + 1}$ of $1$s in the bad pair must be directly west of and adjacent to the last box in $\tilde{\cV}_i$.  If $\tilde{\cV}_i$ has more than one box, then the immediate next swap (of the type in Definition~\ref{def:swap}(b)) removes the box from $\tilde{\cV}_i$ that is involved in the bad pair, and the remaining swaps (with $\cH_{j + 2}$, \ldots) do nothing, leaving a tableau in which the new bar $\cV^{(i)}_i$ is not involved in any bad pairs.  If $\tilde{\cV}_i$ only has a single box, then possibly several moves of type Definition~\ref{def:swap}(b) are required; after these moves, the box occupied by $\cV^{(i)}_i$ was previously occupied by a box labeled $1$, so there cannot be any other $1$s in the same column, and so also in this case $\cV^{(i)}_i$ is not involved in any bad pairs.  

Finally, we observe that none of these moves changes any bar of $1$s that is east of $\cV^{(i - 1)}_{i - 1}$ (because they are all strictly southeast of $\cV_i$) and none of these moves causes any $1$s to appear in a lower row than they previously appeared, so no bad pairs involving $\cV^{(i - 1)}_1 = \cV^{(i)}_1$, \ldots, $\cV^{(i - 1)}_{i - 1} = \cV^{(i)}_{i - 1}$ are created.  This completes the proof of the claim.

It follows from the preceding argument that either $\swap(T)$ has no bad pairs at all or its shape has two diagonal boxes and there is a bad pair involving the bar of $2'$s in the upper diagonal box.  By comparing with the definition of sorted tableaux (see Definition~\ref{def:sorted}), we find that this is precisely the desired conclusion.
%
%
\end{proof}

\subsection{Descending swaps}

We now define an inverse to the $\swap$ operation.
In this subsection, $\tilde T$ continues to denote an arbitrary ShBT with all entries in $\{1' , 2' , 1 , 2\}$, which is not necessarily semistandard or sorted.

\begin{definition}
\label{def:unswap}
Suppose $\tilde\cH$ and $\tilde\cV$ are bars in $\tilde T$ with unique entries $1$ and $2'$, respectively.
Below, we define two new bars $  \cH$ and $  \cV$ whose union  $  \cH\sqcup   \cV$ occupies the same boxes as $\tilde\cH\sqcup \tilde\cV$.
We then set
$ \unswap(\tilde T,\tilde\cV, \tilde\cH) := (  T,   \cV)$
where $  T$ is formed from $\tilde T$ by replacing $\tilde \cH$ with $  \cH$ and $\tilde\cV$ with $  \cV$.
The rules defining $  \cH$ and $  \cV$ are as follows:
\begin{itemize}

\item[(a)] Suppose the first boxes of $\tilde\cV$ and $\tilde\cH$ are $(i,j)$ and $(i,j+1)$, respectively.
 If $\tilde\cV$ has more than one box, then we form $ \cV$ from $\tilde\cV$ by removing its first box and we form $ \cH$ by adding this box to $\tilde\cH$.
If $\tilde\cV$ has only one box, then we form $ \cH$ by moving $\tilde\cH$  to the left one column
and we form $ \cV$ by moving $\tilde\cV$ to occupy the last box of $\tilde\cH$.
In pictures, we have 
\[
     \begin{young}[15.5pt][c]  
        ![pink!99]2' \ynobottom\\ 
        ![pink!99]\ynobottom \ynotop \\ 
        ![pink!99] \ynotop & ]=![cyan!99]1 & =]![cyan!99]\end{young}
        \mapsto
 \begin{young}[15.5pt][c]   
   ![pink!99] 2' \ynobottom \\
    ![pink!99] \ynotop \\ 
    ]=![cyan!99] 1  & ![cyan!99] & ![cyan!99]  \end{young} 
\qquord
  \begin{young}[15.5pt][c] ![pink!99] 2' & ]=![cyan!99] 1& =]![cyan!99] \end{young}
  \mapsto
    \begin{young}[15.5pt][c]  ]=![cyan!99] 1  & =]![cyan!99] & ![pink!99] 2'  \end{young}
    \]
where the blue boxes are $\tilde \cH$ and $\cH$ while the red boxes are $\tilde \cV$ and $\cV$.

\item[(b)] Suppose the last boxes of $\tilde\cV$ and $\tilde\cH$ are $(i,j)$ and $(i+1,j)$, respectively.
 If $\tilde\cH$ has more than one box, then we form $ \cH$ from $\tilde\cH$ by removing its last box and we form $ \cV$ by adding this box to $\tilde\cV$.
If $\tilde\cH$ has only one box, then we form $ \cV$ by moving $\tilde\cV$ up one row
and we form $ \cH$ by moving $\tilde\cH$ to occupy the first box of $\tilde\cV$.
In pictures, we have 
\[
  \begin{young}[15.5pt][c]
]=![cyan!99] 1 & ![cyan!99] & =]![cyan!99]  \\ 
, & , & ![pink!99]2' \ynobottom \\ 
, & , & ![pink!99] \ynotop
\end{young}
\mapsto
  \begin{young}[15.5pt][c] 
]=![cyan!99] 1  & =]![cyan!99] & ![pink!99] 2' \ynobottom\\ 
, & , & ![pink!99] \ynotop \ynobottom \\ 
, & , & ![pink!99] \ynotop \end{young}   
\qquord 
  \begin{young}[15.5pt][c]     
  ![cyan!99]1 \\ 
  ![pink!99]2'  \ynobottom\\ 
  ![pink!99] \ynotop \end{young}
  \mapsto \begin{young}[15.5pt][c]    
  ![pink!99] 2' \ynobottom \\ 
  ![pink!99] \ynotop \\ 
  ![cyan!99] 1    \end{young} .
\]

\item[(c)] In all other cases we set $ \cH := \tilde\cH$ and $ \cV :=\tilde\cV$.
\end{itemize}
\end{definition}

\begin{definition}
\label{unswap-def}
Suppose $\tilde\cH_1,\tilde\cH_2,\dots,\tilde\cH_k$ 
and $\tilde\cV_1,\tilde\cV_2,\dots,\tilde\cV_l$
are the distinct bars of $\tilde T$ containing $1$ and $2'$, respectively,
listed in the same orders as in Definition~\ref{swap-def} (i.e., the vertical bars filled with $2'$ are ordered from bottom to top and the horizontal bars filled with $1$ are ordered from right to left).  For any single bar $\tilde \cV \in \{\tilde\cV_1,\tilde\cV_2,\dots,\tilde\cV_l\}$, 
inductively define  
\[ ( T_k,  \cV_k) := (\tilde T,\tilde \cV)
\quand  ( T_{i-1},\cV_{i-1}) := 
\unswap( T_{i},\cV_{i}, \cH_i)\text{ for $i=k,\dots,3,2,1$,}
\]
and then set $\unswap(\tilde T,\tilde \cV)  :=  T_0$.
Finally let
\[ \unswap(\tilde T) := \unswap(\cdots \unswap(\unswap(\tilde T,\tilde \cV_l),\tilde\cV_{l-1}) \dots, \tilde\cV_1).\]
\end{definition}

\begin{example}
In the image below, the bars $\tilde\cH_1$, $\tilde\cH_2$, $\tilde\cH_3$, $\tilde\cH_4$, and $\tilde\cH_5$ are respectively gray, green, yellow, blue, and white; the bars $\tilde\cV_1$, $\tilde\cV_2$, and $\tilde\cV_3$ are respectively orange, pink, and red.  We illustrate each individual operation of the form $\unswap( -, \cV, \cH)$ that results in a new tableau (i.e., that does not fall in case (c) of Definition~\ref{def:unswap}):
\begin{multline*}
\text{$\begin{young}[15.5pt][c] , & ![red!99] 2' \ynobottom \\ 1 & ![red!99] \ynotop & ![cyan!99] 1 & ![yellow!99] 1 \\ , & , & , & ![pink!99] 2' \ynobottom \\ , & , & , & ![pink!99] \ynotop & ]=![green!99] 1 & ==![green!99]  & =]![green!99]  \\ , & , & , & , & , & , & ![orange!99] 2' & ]=]![lightgray!99] 1\end{young}$}
\to \cdots \to 
\text{$\begin{young}[15.5pt][c] , & ![red!99] 2' \\ 1 & ]=![cyan!99] 1 & =]![cyan!99]  & ![yellow!99] 1 \\ , & , & , & ![pink!99] 2' \ynobottom \\ , & , & , & ![pink!99] \ynotop & ]=![green!99] 1 & ==![green!99]  & =]![green!99]  \\ , & , & , & , & , & , & ![orange!99] 2' & ]=]![lightgray!99] 1\end{young}$}
\to \cdots \to 
\\
\text{$\begin{young}[15.5pt][c] , & ![red!99] 2' \\ 1 & ]=![cyan!99] 1 & =]![cyan!99]  & ![pink!99] 2' \ynobottom \\ , & , & , & ![pink!99] \ynotop \\ , & , & , & ![yellow!99] 1 & ]=![green!99] 1 & ==![green!99]  & =]![green!99]  \\ , & , & , & , & , & , & ![orange!99] 2' & ]=]![lightgray!99] 1\end{young}$}
\to \cdots \to
\text{$\begin{young}[15.5pt][c] , & ![red!99] 2' \\ 1 & ]=![cyan!99] 1 & =]![cyan!99]  & ![pink!99] 2' \ynobottom \\ , & , & , & ![pink!99] \ynotop \\ , & , & , & ![yellow!99] 1 & ]=![green!99] 1 & =]![green!99]  & ![orange!99] 2' \ynobottom \\ , & , & , & , & , & , & ![orange!99] \ynotop & ]=]![lightgray!99] 1\end{young}$}
\to
\text{$\begin{young}[15.5pt][c] , & ![red!99] 2' \\ 1 & ]=![cyan!99] 1 & =]![cyan!99]  & ![pink!99] 2' \ynobottom \\ , & , & , & ![pink!99]  \ynotop \\ , & , & , & ![yellow!99] 1 & ]=![green!99] 1 & =]![green!99]  & ![orange!99] 2' \\ , & , & , & , & , & , & ]=![lightgray!99] 1 & =]![lightgray!99] \end{young}$}.
\end{multline*}
\end{example}
 
\begin{proposition}
\label{prop:inter to semi}
If $\tilde T$ is a sorted shifted bar tableau then $\unswap(\tilde T)$ is a semistandard shifted bar tableau of the same shape and weight. 
\end{proposition}

\begin{proof}
Let $\tilde T$ be a sorted shifted bar tableau. By construction $\tilde T$ and $\unswap(\tilde T)$ have the same shape and same weight.
First suppose that $\tilde T$ does not contain two diagonal boxes.  In this case, by comparing the definitions of sorted and semistandard ShBT, we see that rotating $\tilde T$ by $180^\circ$ produces a semistandard ShBT $\tilde T^r$.  Moreover, by comparing Definitions~\ref{def:swap}, \ref{swap-def} with Definitions~\ref{def:unswap}, \ref{unswap-def}, we see that $\swap(\tilde T^r) = \unswap(\tilde T)^r$, and consequently that $\unswap(\tilde T) = \swap(\tilde T^r)^r$ is a semistandard ShBT.  

Otherwise, suppose $\tilde T$ has two diagonal boxes.  There are four possible arrangements of the entries the first two diagonals of $\tilde T$,
which we illustrate as follows; in all cases, $\tilde{\cV}_l$ is colored in pink and $\tilde{\cH}_k$ is colored in cyan:
\ben
\item[(1)]
$\begin{young}[15.5pt][c] 
  , &  ![pink!99]2' \\
]=![cyan!99]1 & =]![cyan!99]  
\end{young}$,
 extending to $\begin{young}[15.5pt][c] 
  , &  ![pink!99]2' \\
]=![cyan!99]1 & =]![cyan!99] & \cdots & ]=]![lightgray!99]1 \\
, & , & , & 2'  
\end{young}$ or  $\begin{young}[15.5pt][c] 
  , &  ![pink!99]2' \\
]=![cyan!99]1 & =]![cyan!99]  \\
, & 2'  
\end{young}$, 

\item[(2)]
$
\ytab{ \none & *(pink) 2' \\ *(yellow) 2' & *(cyan) 1}$,   extending to  $\ytab{ \none & *(pink) 2' \\ *(yellow) 2' & *(cyan) 1 & \cdots & *(lightgray) 1 \\ \none & \none & \none & 2'}$ or $\ytab{ \none & *(pink) 2' \\ *(yellow) 2' & *(cyan) 1 \\  \none & 2'}$,

\item[(3)]
$\begin{young}[15.5pt][c]  , & ![pink!99]2' \ynobottom \\
![cyan!99]1 & ![pink!99]2' \ynotop\end{young}$, 
  extending to $\begin{young}[15.5pt][c]  , & ![pink!99]2' \ynobottom \\
![cyan!99]1 & ![pink!99]2' \ynotop \\
, & \vdots \\
, & ![lightgray!99]2 & 1 \end{young}$ 
 or $\begin{young}[15.5pt][c]  , & ![pink!99]2' \ynobottom \\
![cyan!99]1 & ![pink!99]2' \ynotop & 1\end{young}$, and

\item[(4)]
$\ytab{ \none & *(green) 1 \\ *(cyan) 1 & *(pink) 2'}$,  extending to $\ytab{ \none & *(green) 1 \\ *(cyan) 1 & *(pink) 2' \\ \none & \vdots \\ \none & *(lightgray) 2' & 1} $ or $\ytab{ \none & *(green) 1 \\ *(cyan) 1 & *(pink) 2' & 1}.$
\een
In the first two cases, one has $\unswap(\tilde T, \tilde{\cV}_l) = \tilde T$.  In these cases, 
deleting the unique box of $\tilde{\cV}_l$ from $\tilde T$ produces a sorted tableau
$\tilde T - \tilde{\cV}_l$ with only one diagonal box.  Therefore, by the argument in the previous paragraph, $\unswap(\tilde T - \tilde{\cV}_l)$ is a semistandard tableau. This tableau becomes $\unswap(\tilde T) = \unswap(\tilde T - \tilde{\cV}_l) \cup \{\tilde{\cV}_l\}$ when we add back the unique box of $\tilde{\cV}_l$ at the end.  Since the added box always contains $2'$, the result is semistandard, as needed.

 In the latter two cases, one has  $\unswap(\tilde T,  \tilde{\cV}_l, \tilde{\cH}_k) = (\tilde T, \tilde{\cV}_l)$, i.e., the very first swap belongs to case (c) of Definition~\ref{def:unswap} and so does not change anything.  In these cases, deleting the unique box of $\tilde{\cH}_k$ produces a sorted tableau $\tilde T - \tilde{\cH}_k$ with only one diagonal box.  Again by the argument in the first paragraph of this proof, we deduce that $\unswap(\tilde T - \tilde{\cH}_k)$ is a semistandard tableau. This tableau becomes $\unswap(\tilde T) = \unswap(\tilde T - \tilde{\cH}_k) \cup \{\tilde{\cH}_k\}$ when we add back the unique box of $\tilde{\cH}_k$ at the end, which preserves semistandard-ness.  This completes the proof.
\end{proof}

\begin{theorem}
\label{thm:inverse bijections}
The operations $\swap$ and $\unswap$
are inverse shape- and weight-preserving bijections between
semistandard and sorted shifted bar tableaux with all entries in $\{1', 1, 2', 2\}$.
\end{theorem}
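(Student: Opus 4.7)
The plan is to combine Propositions 3.2 and 3.3 with a direct check that $\swap$ and $\unswap$ are mutual inverses. Shape and weight preservation are immediate from Definitions 3.2--3.5: every individual move redistributes boxes within the union $\cH\sqcup\cV$ of two bars, so the shape is unchanged, and neither the set of bar values nor the total number of $1$- or $2'$-bars is touched. By Propositions 3.2 and 3.3, $\swap$ already sends semistandard tableaux into the sorted set and $\unswap$ sends sorted tableaux into the semistandard set, so only the two inverse identities $\unswap\circ\swap = \id$ (on semistandard tableaux) and $\swap\circ\unswap = \id$ (on sorted tableaux) need to be verified.

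The first step is to show that a single swap move is inverted by a single unswap move. A case-by-case comparison of the pictures in Definitions 3.2 and 3.4 reveals the matching: swap case (a) with $|\cH|>1$ is undone by unswap case (a); swap case (a) with $|\cH|=1$ is undone by unswap case (b); and symmetrically swap case (b) with $|\cV|>1$ (respectively $|\cV|=1$) is undone by unswap case (b) (respectively (a)). Case (c) is the identity on both sides. The exclusion patterns (3.4) and (3.5) that govern which configurations fall into swap cases (a) and (b) are precisely what guarantees that, after the swap is performed, the resulting pair of bars fits into a unique case of unswap with no ambiguity.

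The main obstacle is reconciling the iterative orders used in Definitions 3.3 and 3.5. The procedure $\swap$ processes $2'$-bars from bottom to top ($\cV_1, \dots, \cV_l$), and for each fixed $\cV$ processes $1$-bars from right to left ($\cH_1, \dots, \cH_k$); $\unswap$ applied to the output uses the opposite orders. To see that the two macro-procedures cancel, I would invoke the partial order $\preceq$ on bars already introduced in the proof of Proposition 3.2, under which one bar is smaller than another when it lies entirely weakly southeast. The crucial invariant is that no individual swap or unswap move ever interchanges two $2'$-bars or two $1$-bars in $\preceq$-order: each non-trivial move affects only a $\preceq$-adjacent pair of bars and redistributes a single box between them. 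This invariant is preserved inductively throughout both the $\swap$ and $\unswap$ procedures.

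With this invariant in hand, an induction on the total number of non-trivial single moves finishes the argument. The very last non-trivial move performed by $\swap(T)$ involves the $\preceq$-latest $2'$-bar together with the $\preceq$-latest $1$-bar that it still touches; by the order-preservation invariant, this is exactly the first non-trivial move performed by $\unswap$ on $\swap(T)$. These two moves cancel by the local inverse property of the first step, and the remaining composition reduces to the inductive hypothesis on a configuration with strictly fewer non-trivial moves. The identity $\swap\circ\unswap = \id$ on sorted tableaux follows by the symmetric argument, with Proposition 3.3 playing the role of Proposition 3.2.
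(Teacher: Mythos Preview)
Your approach is essentially the same as the paper's: invoke Propositions~\ref{prop:semi to inter} and~\ref{prop:inter to semi} for domain/codomain, and then argue that the two compositions are the identity because each individual swap is inverted by the matching unswap and the relative orders of the $\cV_i$ and of the $\cH_j$ are preserved throughout. The paper's proof is in fact even terser than yours, asserting only that ``since each swap preserves the relative orders of the $\cV_i$ and of the $\cH_j$, it is immediate from the definitions'' that the maps are mutual inverses.

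One small comment: your inductive framing is slightly off. After cancelling the outermost swap/unswap pair, what remains is not $\unswap\circ\swap$ applied to some other semistandard tableau, so the inductive hypothesis as you state it does not directly apply. The cleaner formulation is simply that $\swap$ is the composite $f_N\circ\cdots\circ f_1$ of single moves and, by order preservation, $\unswap$ applied to the output is the composite $f_1^{-1}\circ\cdots\circ f_N^{-1}$ of the corresponding local inverses in reverse; hence the two compose to the identity with no induction needed. Your local case analysis (matching swap~(a) with unswap~(a) or~(b), etc.) and your order-preservation invariant are exactly the ingredients required for this.
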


\begin{proof}
By Propositions~\ref{prop:semi to inter} and~\ref{prop:inter to semi}, the two maps are shape- and weight-preserving and they have the correct domain and codomain.  Since each swap preserves the relative orders of the $\cV_i$ and of the $\cH_j$, it is immediate from the definitions that $\unswap(\swap(T)) = T$ for any semistandard ShBT $T$ and $\swap(\unswap(\tilde T)) = \tilde T$ for any sorted ShBT $\tilde T$.  
\end{proof}

\subsection{Weight reversal}

In this subsection, $T$ denotes a fixed choice of sorted ShBT as specified in Definition~\ref{def:sorted}, again with all entries in $\{1' , 2' , 1 , 2\}$.  
We partition the bars of this tableau into four kinds of ``groups'' and then define a weight-reversing involution locally group-by-group.  

It will be convenient in what follows to refer to the following \defn{conjugation} operation on sorted ShBT; all subsequent constructions will be symmetric under conjugation.
For this definition, we fix a large positive integer $N$ such that all boxes of $T$ are in $[N-1]\times [N-1]$.

\begin{definition}
\label{def:conjugation}
Let $T^\ast$ be the ShBT formed by applying the transformation
$(i,j) \mapsto (N-j,N-i)$ to the boxes in $T$ and the transformation $v \mapsto 3' - v$ to the entries in $T$. \end{definition}

For example,  $T\mapsto T^\ast$ would exchange
 \[
 \begin{young}[15.5pt][c]
  , &   , & ,  &  ,\cdot \\
    , &   , & ,\cdot &  ,\cdot \\
  , &   ![pink!99]2' \ynobottom& ]=![yellow!99] 2 &  =]![yellow!99]  \\
 ]=]![cyan!99]1 & ![pink!99] \ynotop &  ]=![lightgray!99]1 &   =]![lightgray!99]  \end{young}
 \qquad\longleftrightarrow \quad
 \begin{young}[15.5pt][c] ,  &,  &, &  ![cyan!99]2' \\
,  &,    & ]=![pink!99]1 & =]![pink!99]  \\
,  & ,\cdot    & ]=]![yellow!99] 1' \ynobottom & ]=]![lightgray!99] 2' \ynobottom  \\
,\cdot   & ,\cdot &  ]=]![yellow!99] \ynotop & ]=]![lightgray!99] \ynotop  \end{young}
 \]
 for an appropriate choice of $N$. 
Since the examples in \eqref{inter-ex1} are conjugate to those in \eqref{inter-ex2}, this operation does indeed give another sorted ShBT.

Let $\cA$ and $\cB$ be bars in $T$.
We write $\cA \approx \cB$ if the two bars have unprimed entries in the same column or have primed entries in the same row. 
We also write $\cA \approx \cB$ if $T$ has two diagonal boxes $(i,i)$ and $(i+1,i+1)$, 
the bar $\cA$ consists of one of the two diagonal boxes (and no off-diagonal boxes), and the bar $\cB$ contains the box $(i,i+1)$, or vice-versa.  
An equivalence class  
 for the transitive closure of $\approx$ 
is a \defn{two-row-group} 
(respectively, a \defn{two-column-group})
if the class contains at least two bars in $T$ and the bars in the class fit entirely in two rows (respectively, columns), with the following exception.
It is possible to have classes that consist of exactly three boxes (in two or three bars), two of which are diagonal,
and which therefore fit entirely into two rows and also into two columns; for example:
\begin{equation}
\label{2r2c}
\begin{young}[15.5pt][c] 
 , &  ![pink!99] 2  \\
]=![cyan!99] 1 & =]![cyan!99]
\end{young},
\qquad
\begin{young}[15.5pt][c] 
 , &  ![pink!99] 1  \\
![cyan!99] 1' & ![lightgray!99] 2'
\end{young},
\qquord
\begin{young}[15.5pt][c] 
 , &  ![pink!99] 2'  \\
![cyan!99] 1' & ![lightgray!99] 2'
\end{young}.
\end{equation}
We refer to these classes as \defn{two-row-groups} (respectively, \defn{two-column-groups}) if and only if their non-diagonal entry is unprimed (respectively, primed).  Thus, of the three tableaux in \eqref{2r2c}, the first is a two-row-group and the others are two-column-groups.

Here is our first nontrivial observation about these equivalence classes.

\begin{proposition}\label{2row-prop}
Every non-singleton equivalence class for the transitive closure of  
$\approx$ is a two-row-group or a two-column-group.  Every two-row-group is of one of the forms
 \[
 \ytab{ 2 & \cdots &2 & \cdots & 2 \\
 \none & \none &1 & \cdots & 1 & \cdots & 1 }
 \quord
  \ytab{ \none & y & 2 &  \cdots &2  \\
 x & 1 & 1 & \cdots &1  & \cdots & 1 } \quad\text{(with some division into bars),}
 \]
 that is, the boxes in this union are either of the form
$(\{i\} \times [j_2,j_4])  \sqcup (\{i+1\} \times [j_1,j_3])$
 where $i \leq j_1\leq j_2\leq j_3 \leq j_4$ or 
of the form $(\{i\} \times [i, j_4])  \sqcup (\{i+1\} \times [i + 1, j_3])$
where $i+1 \leq j_3 \leq j_4$; and every two-column-group is conjugate to one of these.  Also, if a two-row-group has two diagonal boxes (marked $x$, $y$ in the illustration), then these are filled in one of the following six ways: 
\[
\begin{young}[15.5pt][c]
, & ![pink!99] 2 \\
![cyan!99] 1' & ![lightgray!99] 1
\end{young},
\qquad
\begin{young}[15.5pt][c]
, & ![pink!99] 2 \\
![cyan!99] 2' & ![lightgray!99] 1
\end{young},
\qquad
\begin{young}[15.5pt][c]
, & ![pink!99] 2 \\
![cyan!99] 1 & ![lightgray!99] 1
\end{young},
\qquad
\begin{young}[15.5pt][c]
, & ![pink!99] 2 \\
]=![cyan!99] 1 & =]![cyan!99]
\end{young},
\qquad
\begin{young}[15.5pt][c]
, & ![pink!99] 2' \\
![cyan!99] 2' & ![lightgray!99] 1
\end{young},
\quord
\begin{young}[15.5pt][c]
, & ![pink!99] 2' \\
]=![cyan!99] 1 & =]![cyan!99]
\end{young};
\]
and in a two-column-group, the options are conjugate to these.
\end{proposition}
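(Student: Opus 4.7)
The plan is to analyze a non-singleton equivalence class $\mathcal{C}$ under the transitive closure of $\approx$ and show directly that it takes the stated form up to a conjugation symmetry. I will first exploit the conjugation of Definition~\ref{def:conjugation}: it preserves sortedness (swapping cases (b) and (c) of Definition~\ref{def:sorted}), interchanges primed with unprimed entries, swaps rows with columns, and commutes with $\approx$ (in particular with the diagonal exception). It therefore suffices to handle the case where $\mathcal{C}$ contains at least one unprimed horizontal bar; the statement about two-column-groups then follows as the conjugate.

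The key structural observation, valid in all three sorted cases, is that sortedness combined with no-repetition forces each row of $T$ to contain at most one bar with entry $1$ and at most one with entry $2$, since a sorted row has the form $(1')^?,(2')^?,1^*,2^*$ with $1'$ and $2'$ appearing at most once; dually, each column contains at most one $1'$-bar and at most one $2'$-bar. I will next show that if a $1$-bar in row $r$ shares a column $c$ with a $2$-bar in row $s>r$, then $s=r+1$: any intermediate row $r'$ would force $T_{r',c}\in\{1,2\}$ by column sortedness, but both values already appear in column $c$, contradicting no-repetition. Combining these facts, any equivalence class made entirely of unprimed horizontal bars contains exactly one $1$-bar in some row $i$ and exactly one $2$-bar in row $i+1$; from the sorted-row structure I will then deduce that every column of the $2$-bar either lies in the $1$-bar's range or has a primed entry in row $i$, and symmetrically that every column of the $1$-bar either lies in the $2$-bar's range, equals the diagonal column $i$, or lies past row $i+1$'s reach. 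These conditions, together with the endpoints of each bar, produce the two allowed shapes in the proposition.

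The remaining task is to analyze classes containing both unprimed horizontal and primed vertical bars, which can only arise through the diagonal exception, requiring $T$ to contain two diagonal boxes $(i,i),(i+1,i+1)$. In sorted cases (b) and (c) this pair is built into the definition and direct inspection of the exceptional configurations in \eqref{inter-ex1} and \eqref{inter-ex2} confirms that $\mathcal{C}$ lies in rows $i,i+1$ or in columns $i,i+1$ respectively. In sorted case (a) I will show that the diagonal exception can only attach single-box diagonal bars to a class already confined to two adjacent rows or two adjacent columns: chaining further through the exception (for example via a $2$-bar containing $(i+1,i+2)$ triggering an exception at the pair $(i+1,i+1),(i+2,i+2)$) is blocked by the same column-repetition argument, since the would-be third diagonal entry would be forced to equal $2$ while $2$ already appears in that column. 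For the small classes with exactly three boxes displayed in \eqref{2r2c}, the convention classifying them by whether their non-diagonal entry is primed or unprimed is consistent with this analysis.

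Once the shape of a two-row-group is established, I will verify the list of six diagonal fillings by direct enumeration of triples $(T_{i,i},T_{i,i+1},T_{i+1,i+1})$ compatible with the local sortedness constraints from cases (a), (b), or (c), the no-repetition rules, and the condition that the three boxes belong to bars forming a single equivalence class. The main obstacle will be the careful bookkeeping in the case-(a) portion of the argument above: ruling out the various ways the diagonal exception could in principle link a class spanning more than two adjacent rows requires simultaneously tracking column-repetition constraints, the fact that consecutive primed diagonals are highly restricted in case (a), and the subtle interaction between the $\approx$ relation and the requirement that a bar consist of exactly one diagonal box with no off-diagonal content.
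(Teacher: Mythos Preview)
Your plan follows essentially the same route as the paper's proof: reduce by conjugation, show that purely unprimed classes are confined to two adjacent rows via sortedness and the no-repetition-in-columns rule, then handle mixed classes through a case analysis of the diagonal exception, and finally enumerate the six diagonal fillings. One caution: your intermediate claim that ``each row of $T$ contains at most one bar with entry $1$'' and hence that the class ``contains exactly one $1$-bar in some row $i$'' is not literally true (a contiguous run of $1$s in a row may be subdivided into several bars); what you need, and what the paper uses, is only that the $1$-entries in row $i$ form a contiguous interval of boxes, so the union of boxes in the class has the stated shape even though it may comprise several bars.
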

\begin{proof}
First, consider the equivalence classes that consist of all unprimed bars.  In this case, if $\cA \approx \cB$ 
then the relations satisfied by the entries of a sorted ShBT imply that 
when the bar $\cA$ is filled with $1$ and lies in row $i$, the bar $\cB$ must be filled with $2$s and lie in row $i + 1$, and conversely. Hence the entire equivalence class is restricted to two consecutive rows $i$ and $i + 1$, with row $i$ containing $1$s and row $i + 1$ containing $2$s.  Furthermore, row $i + 1$ cannot extend to the right of row $i$ because the only way to complete the diagram
\[
\begin{young}[15.5pt][c]
\cdots & 2 & 2 \\
\cdots & 1 & {?}
\end{young}
\]
to a valid sorted ShBT is if the box $\begin{young}{?}\end{young}$ is filled with a $1$.  Similarly, row $i$ cannot extend to the left of row $i + 1$ except in the case
\[
\begin{young}[15.5pt][c]
,\cdot & 2 & \cdots \\
 1 & 1 & \cdots
\end{young}
\]
where it does so at the diagonal.  The case of equivalence classes containing all primed bars is the conjugate of this case.

We now consider the case of equivalence classes that contain both primed and unprimed bars.  In this case, the exceptional rule defining $\approx$ must come into play: the tableau $T$ must have two diagonal boxes $(i, i)$ and $(i + 1, i + 1)$ for some $i$, at least one of which must be a bar unto itself.  Any skew shifted shape that contains $(i, i)$ and $(i + 1, i + 1)$ also contains $(i, i + 1)$; up to conjugacy, we may assume that the box $(i, i + 1)$ is filled with an unprimed entry.  In fact, this unprimed entry must be $1$: no tableau of the form $\begin{young}[15.5pt][c]
, & y \\
 x & 2 & \cdots
\end{young}$ with entries in $\{1', 2', 1, 2\}$ is a sorted ShBT.  Let $\cA$ be the bar containing the box $(i, i + 1)$.  By hypothesis, one of the two boxes $(i, i)$ and $(i + 1, i + 1)$ must belong to a one-box bar that is filled with either $1'$ or $2'$.  

First, suppose that the box $(i, i)$ is a singleton bar $\cB$.  In this case, its only relation under $\approx$ is $\cA \approx \cB$, and the box $(i + 1, i + 1)$ could either be filled with $2'$ (if $\cB$ is also filled with $2'$) or with $2$.  In the case that $(i + 1, i + 1)$ is filled with $2'$, its unique relation under $\approx$ is with $\cA$, and so in either case the logic of the previous paragraph applies to conclude that all boxes in the equivalence class belong to rows $i$ and $i + 1$.  

Second, suppose that box $(i + 1, i + 1)$ is filled with a primed entry.  By Definition~\ref{def:sorted}, it follows that this entry is $2'$, and that $(i, i)$ is either filled with $1$ or with $2'$.  In the former case, the equivalence class again belongs to two rows, for the same reasons.  In the latter case, it turns out that the box $(i, i)$ must also be a bar unto itself because it is not possible to complete the diagram
\[
\begin{young}[15.5pt][c] 
 ,\cdot &  ]=]![pink!99] 2'  \\
 ![cyan!99]2' \ynobottom & ]=]![lightgray!99] 1 \\
    ![cyan!99] \ynotop & ]=] ?  
 \end{young},
\]
to a sorted ShBT.  This establishes all of our claims about two-row-groups.  The case of two-column-groups is equivalent after taking conjugates everywhere.
\end{proof}

Now suppose $\cA$ and $\cB$ are bars within $T$ that do not belong to any two-row-
or two-column-group.
We write $\cA \sim \cB$ if $\cA$ and $\cB$ 
are adjacent unprimed bars (in the same row) or
adjacent primed bars (in the same column).
We also write $\cA \sim \cB$ 
if the bars are adjacent and one bar consists of a single box on the diagonal.
An equivalence class for the transitive closure of $\sim$ is a \defn{one-row-group} (respectively, \defn{one-column-group}) if the bars in the class fit entirely in one row (respectively, one column), with the exception that an equivalence class containing a single one-box bar is a one-row-group if its entry is unprimed and a one-column-group if its entry is primed.

\begin{proposition}\label{1row-prop}
Every equivalence class for the transitive closure of $\sim$ is either a one-row- or one-column-group.  Every one-row-group is of one of forms
(with some division into bars)
\[
\ytab{1 & \cdots & 1 & 2 & \cdots & 2}, 
\quad
\ytab{1' & 1 & \cdots & 1 & 2 & \cdots & 2}, 
\quord
\ytab{2' & 1 & \cdots & 1 & 2 & \cdots & 2}, 
\]
where in all cases the number of $1$s or $2$s might be $0$, and the latter two cases occur only when the leftmost box is on the diagonal; and every one-column-group is conjugate to one of these.
 \end{proposition}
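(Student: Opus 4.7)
My approach will mirror the case analysis used to prove Proposition~\ref{2row-prop}, using the conjugation symmetry of Definition~\ref{def:conjugation} to halve the work. Fix an equivalence class $\cC$ under $\sim$ whose bars (by hypothesis) lie outside every two-row- or two-column-group; the case that $\cC$ consists of a single one-box bar is handled directly by the exceptional clause of the definition of one-row-/one-column-group, so assume that $|\cC|\geq 2$ or that $\cC$ contains a non-singleton bar.

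First suppose $\cC$ contains only unprimed bars. Rule (a) of $\sim$ confines any pair of $\sim$-related unprimed bars to a common row, while rule (c) can only extend such a class by prepending a singleton unprimed diagonal box at $(i,i)$, which still sits in row $i$. So $\cC$ lies entirely in one row $i$. Reading off the sorted condition on row $i$ (Definition~\ref{def:sorted}(a), together with the slight modifications made by (b) and (c) when diagonal boxes are involved) forces the entries to be $1$s followed by $2$s, which matches form~1. By the conjugation $T\mapsto T^\ast$, all-primed classes produce one-column-groups.

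Next suppose $\cC$ contains both primed and unprimed bars. Then some $\sim$-relation must come from rule (c) rather than (a) or (b), so $\cC$ contains a singleton diagonal bar $\cS$ at some position $(i,i)$; by conjugation we may assume $\cS$ is primed with entry $p \in \{1',2'\}$. Any unprimed bar $\cA \sim \cS$ must be edge-adjacent to $\cS$ and therefore contain either $(i,i+1)$ or $(i-1,i)$; the latter has entry $\leq p$ and is therefore primed, ruling it out. Hence $\cA$ must contain $(i,i+1)$ and lie in row $i$. A crucial observation is that $(i+1,i+1)\notin T$: otherwise the exceptional clause of $\approx$ would force $\cS \approx \cA$, putting $\cS$ into a two-group contrary to hypothesis.

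The remaining task is to rule out $\cC$ extending upward along column $i$ from $\cS$. If $(i-1,i)\in T$, the primed bar $\cB$ containing it is vertical in column $i$, primed, and satisfies $\cB \sim \cS$ via rule (c); I plan to show that every sorted configuration compatible with $\cB$ forces $\cB$ to be $\approx$-related to another primed bar in a neighbouring column, i.e., into a two-column-group, hence excluded from the $\sim$-analysis. The main obstacle is precisely this step: cleanly cataloguing the local sorted configurations around the diagonal primed singleton (invoking the six allowed diagonal patterns from Proposition~\ref{2row-prop} and the analogous primed patterns) to pin down when a vertical extension must exist and verifying that every such extension lands inside a two-group. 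Once this is settled, sortedness on row $i$ immediately forces the portion of $\cC$ to the right of $\cS$ to be a block of $1$s followed by a block of $2$s, yielding form~2 if $p=1'$ and form~3 if $p=2'$.
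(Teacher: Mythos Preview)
Your outline is broadly correct and runs parallel to the paper's argument, but the step you flag as the ``main obstacle'' is a genuine gap that must be closed before the proof is complete.  The good news is that your plan does work: once you know the unprimed neighbour $\cA$ at $(i,i+1)$ lies in $\cC$ (hence not in any two-group), the box $(i-1,i+1)$ is forced to belong to $T$ --- strictness of $\lambda$ gives $\lambda_{i-1} > \lambda_i \geq 2$, so $\lambda_{i-1} \geq 3$, while $\mu_{i-1}\leq 1$ since $(i-1,i)\in T$ --- and its entry is forced to be primed, because an unprimed entry there would share column $i+1$ with $\cA$ and drag $\cA$ into a two-row-group via $\approx$.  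That primed entry in row $i-1$ then gives $\cB \approx (\text{bar at }(i-1,i+1))$, putting $\cB$ into a two-column-group as you intended.  So the obstacle dissolves once you invoke the shape constraint.

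The paper sidesteps this analysis entirely.  After reducing (as you do, via the exceptional $\approx$-clause) to equivalence classes containing at most one diagonal box, it recasts the remaining claim as: \emph{the diagonal box cannot be simultaneously $\sim$-connected to its east neighbour and its south neighbour}.  It then settles this by brute force, enumerating all $30$ sorted ShBTs on the local $2\times 2$ shape $\SD_{(3,2)/(1)}$ and observing that in each of them at least one neighbour of the diagonal box already lies in a two-row- or two-column-group.  Your route is more structural and explains \emph{why} the obstruction occurs, but the paper's finite check is self-contained and avoids the appeal to strictness of $\lambda$.

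A minor tightening: in your all-unprimed case, the assertion that rule~(c) ``still sits in row $i$'' needs one extra line.  A singleton unprimed diagonal bar at $(j,j)$ adjacent to an unprimed bar containing $(j-1,j)$ would be $\approx$-related to it (both unprimed, same column), hence already in a two-group and therefore outside the domain of $\sim$ to begin with.
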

\begin{proof}
The case of equivalence classes that do not contain any diagonal box is straightforward from the definition.  If a tableau $T$ contains two diagonal boxes $(i, i)$ and $(i + 1, i + 1)$, then at least one of them belongs to a two-row- or two-column-group with $(i, i + 1)$ and so does not belong to any one-row- or one-column-group.  Thus, we may restrict our attention to the case of tableaux with a single diagonal box, and the equivalence class that contains that box.  In particular, we must show that the diagonal box cannot be in the same group with its neighbors to the east and south simultaneously.  It is a simple finite computation to check that in all $30$ sorted ShBT of shape $\begin{young}
, &   &  \\
,\cdot &  &  
\end{young}$, at least one of the neighbors of the diagonal box belongs to a two-row- or two-column-group.  Then the result follows immediately.
%
%
 \end{proof}

We now describe a collection of operations that can be applied locally to a given group.  These operations will all reverse the weight of the group, while globally replacing the sorted ShBT with another sorted ShBT.  They will also have predictable effects on which diagonal boxes carry primes (which will be important when considering the symmetry of the generating function $\jp_{\lambda/\mu}$).

First consider a one-row-group. We define an operation called \defn{toggling the labels} of the group, as follows.
If the group has a (necessarily unique) primed diagonal entry then remove its prime.
Suppose this results in a group with $p$ bars containing $1$ and $q$ bars containing $2$.
We order these bars from left to right in the usual way, unless there is a one-box bar on the diagonal containing $2$. In the latter case we consider the diagonal bar
to be the last bar in the group and order the remaining bars from left to right.
Finally, relabel   these $p+q$ bars so that first $q$ bars contain $1$ and 
the last $p$ bars contain $2$,
and then add back a prime to the one-box diagonal bar if this was removed earlier.
For example, this would give
 \be\label{toggling-eq} 
\ba
   \text{$\begin{young}[15.5pt][c]  ![lightgray!99]1 &  ]=![yellow!99]1 &  =]![yellow!99] &  ![cyan!99]1 & ]=![pink!99]2& ![pink!99] & =]![pink!99]   \end{young}$}
\mapsto
   \text{$\begin{young}[15.5pt][c]  ![lightgray!99]1 &  ]=![yellow!99]2 &  =]![yellow!99] &  ![cyan!99]2 & ]=![pink!99]2& ![pink!99] & =]![pink!99]   \end{young}$} & \;(p=3\text{ and }q=1),
 \\[-10pt]\\
   \text{$\begin{young}[15.5pt][c]  ![lightgray!99]1' &  ]=![yellow!99]1 &  =]![yellow!99] &  ![cyan!99]1 & ]=![pink!99]2& ![pink!99] & =]![pink!99]   \end{young}$}
\mapsto
   \text{$\begin{young}[15.5pt][c]  ![lightgray!99]1' &  ]=![yellow!99]2 &  =]![yellow!99] &  ![cyan!99]2 & ]=![pink!99]2& ![pink!99] & =]![pink!99]   \end{young}$}
 &\; (p=3\text{ and }q=1), 
 \\[-10pt]\\
\text{$\begin{young}[15.5pt][c] 2' &  ![lightgray!99]1 &  ]=![yellow!99]2 &  =]![yellow!99] &  ![cyan!99]2 & ]=![pink!99]2 & =]![pink!99]   \end{young}$}
\mapsto
\text{$\begin{young}[15.5pt][c] 2' &  ![lightgray!99]1 &  ]=![yellow!99]1 &  =]![yellow!99] &  ![cyan!99]1 & ]=![pink!99]1 & =]![pink!99]   \end{young}$}
 &\;(p=1\text{ and }q=4), \text{ and}
 \\[-10pt]\\
 \text{$\begin{young}[15.5pt][c]  ![lightgray!99]1 &  ]=![yellow!99]1 &  =]![yellow!99] &  ![cyan!99]1 & ]=![pink!99]1& ![pink!99] & =]![pink!99]   \end{young}$}
\mapsto
   \text{$\begin{young}[15.5pt][c]  ![lightgray!99]2 &  ]=![yellow!99]2 &  =]![yellow!99] &  ![cyan!99]2 & ]=![pink!99]2& ![pink!99] & =]![pink!99]   \end{young}$} & \;(p=4\text{ and }q=0).
 \ea\ee

Toggling the labels in a one-column-group is precisely the conjugate operation; for example, toggling the labels transforms
 \be\label{toggling-one-column-eq}
 \begin{young}[15.5pt][c]
 ![lightgray!99] 2' \\
 ![yellow!99] 2' \ynobottom \\
 ![yellow!99]  \ynotop \\
 ![cyan!99] 2' \\
 ![pink!99] 1' \ynobottom\\
 ![pink!99]  \ynotop\ynobottom\\
 ![pink!99]  \ynotop
 \end{young} \mapsto
  \begin{young}[15.5pt][c]
 ![lightgray!99] 2' \\
 ![yellow!99] 1' \ynobottom \\
 ![yellow!99]  \ynotop \\
 ![cyan!99] 1' \\
 ![pink!99] 1' \ynobottom\\
 ![pink!99]  \ynotop\ynobottom\\
 ![pink!99]  \ynotop
 \end{young},
 \qquad
  \begin{young}[15.5pt][c]
 ![lightgray!99] 2 \\
 ![yellow!99] 2' \ynobottom \\
 ![yellow!99]    \ynotop \\
 ![cyan!99] 2' \\
 ![pink!99] 1' \ynobottom \\
 ![pink!99]    \ynotop\ynobottom\\
 ![pink!99]    \ynotop
 \end{young} \mapsto
  \begin{young}[15.5pt][c]
 ![lightgray!99] 2 \\
 ![yellow!99] 1' \ynobottom \\
 ![yellow!99]    \ynotop \\
 ![cyan!99] 1' \\
 ![pink!99] 1' \ynobottom \\
 ![pink!99]    \ynotop\ynobottom\\
 ![pink!99]    \ynotop
 \end{young},
 \qquad
  \begin{young}[15.5pt][c]
   1 \\
 ![lightgray!99] 2' \\
 ![yellow!99] 1' \ynobottom \\
 ![yellow!99]    \ynotop \\
 ![cyan!99] 1' \\
 ![pink!99] 1' \ynobottom \\
 ![pink!99]    \ynotop 
 \end{young} \mapsto
  \begin{young}[15.5pt][c]
   1 \\
 ![lightgray!99] 2' \\
 ![yellow!99] 2' \ynobottom \\
 ![yellow!99]    \ynotop \\
 ![cyan!99] 2' \\
 ![pink!99] 2' \ynobottom \\
 ![pink!99]    \ynotop 
 \end{young},
\qquand
 \begin{young}[15.5pt][c]
 ![lightgray!99] 2' \\
 ![yellow!99] 2' \ynobottom \\
 ![yellow!99]  \ynotop \\
 ![cyan!99] 2' \\
 ![pink!99] 2' \ynobottom\\
 ![pink!99]  \ynotop\ynobottom\\
 ![pink!99]  \ynotop
 \end{young} \mapsto
  \begin{young}[15.5pt][c]
 ![lightgray!99] 1' \\
 ![yellow!99] 1' \ynobottom \\
 ![yellow!99]  \ynotop \\
 ![cyan!99] 1' \\
 ![pink!99] 1' \ynobottom\\
 ![pink!99]  \ynotop\ynobottom\\
 ![pink!99]  \ynotop
 \end{young}.
 \ee

Next consider a two-row-group with boxes in rows $i$ and $i+1$. First assume the group has at most one diagonal box. (In this case, by Proposition~\ref{2row-prop}, all bars in the group are filled with unprimed entries.)  We define a weight-reversing operation called \defn{toggling the bar divisions} in the group, as follows.  We form another two-row-group with the same entries 
that has a bar division between columns $j$ and $j+1$ in row $i$ (respectively, $i+1$)
 if and only if 
a bar division occurs between columns $j$ and $j+1$ in row $i+1$ (respectively, $i$)
in the starting group, as in
\[
  \begin{young}[15.5pt][c] 
]=![cyan!99] 2 & ==![cyan!99]  & ==![cyan!99]  & =]![cyan!99] &  ![pink!99]2    \\
, &  ![lightgray!99]1 &  ]=![yellow!99]1 &  ==![yellow!99] &  ==![yellow!99] & =]![yellow!99]   \end{young}
 \longleftrightarrow
\begin{young}[15.5pt][c]
 ]=![lightgray!99]2 & =]![lightgray!99]2 & ]=![yellow!99] 2 & ==![yellow!99] &  =]![yellow!99]    \\
, &  ]=![cyan!99]1 &  ==![cyan!99] &  =]![cyan!99] &  ]=![pink!99]1 & =]![pink!99]
\end{young}
 .
\]
This operation is obviously weight-reversing.

The situation of two-row-groups that contain two diagonal boxes is more complicated.  We again introduce a notion of \defn{toggling the bar divisions} that is not necessarily weight-reversing (or even well defined on ShBT); it will be used as one step in the final weight-reversing map of Definition~\ref{def:weight reversal} below.  Given a two-row-group with two diagonal boxes $(i,i)$ and $(i,i+1)$, 
we define the operation of toggling the bar divisions  
as above, except that we mandate that boxes $(i,i)$ and $(i,i+1)$ 
belong to the same bar in the toggled group if and only if they belong to the same bar in the original group; for example, as in 
\[
  \begin{young}[15.5pt][c] , &![cyan!99] 2 & ]=![pink!99]2 &  =]![pink!99]    \\
 ]=![yellow!99]1 &  ==![yellow!99] &  ==![yellow!99] &  ==![yellow!99] & =]![yellow!99]   \end{young}
 \mapsto
   \begin{young}[15.5pt][c] , &]=![yellow!99] 2 & ==![yellow!99] &  =]![yellow!99]    \\
 ]=![cyan!99]1 &  =]![cyan!99] &  ]=![pink!99]1 &  ==![pink!99] & =]![pink!99]   \end{young}
 \quand
   \begin{young}[15.5pt][c] , &![cyan!99] 2 & ]=![pink!99]2 &  =]![pink!99]    \\
 ![lightgray!99]1 &  ]=![yellow!99]1 &  ==![yellow!99] &  ==![yellow!99] & =]![yellow!99]   \end{young}
 \mapsto
   \begin{young}[15.5pt][c] , &]=![yellow!99] 2 & ==![yellow!99] &  =]![yellow!99]    \\
 ![lightgray!99]1 &  ]=]![cyan!99]1 &  ]=![pink!99]1 &  ==![pink!99] & =]![pink!99]   \end{young}
 .
\]
If the box $(i+1,i+1)$ contains $2'$ and box $(i+1,i+2)$ is part of the group, then our toggling operation
 could result in a tableau that is not a valid ShBT; in practice we will never apply our operation when this case would arise.
%

There are analogous toggling operations for two-column-groups. They are precisely the conjugate (in the sense of Definition~\ref{def:conjugation}) of the operations defined above.  For example, to \defn{toggle the bar divisions} in a two-column-group transforms
\[
  \begin{young}[15.5pt][c]
![cyan!99] 1' \ynobottom \\
![cyan!99]    \ynotop\ynobottom &![lightgray!99]  2' \\
![cyan!99]    \ynotop\ynobottom &![yellow!99]  2' \ynobottom \\
![cyan!99]    \ynotop &![yellow!99]   \ynotop\ynobottom\\
![pink!99] 1'&![yellow!99]  \ynotop\ynobottom \\
 , &![yellow!99]  \ynotop
 \end{young}
 \mapsto
   \begin{young}[15.5pt][c]
![lightgray!99] 1' \ynobottom \\
![lightgray!99]   \ynotop &![cyan!99]  2'\ynobottom \\
![yellow!99] 1' \ynobottom &![cyan!99]   \ynotop\ynobottom \\
![yellow!99] \ynotop\ynobottom &![cyan!99]   \ynotop\\
![yellow!99] \ynotop &![pink!99]  2' \ynobottom \\
 , &![pink!99]  \ynotop
  \end{young},
  \qquad
  \begin{young}[15.5pt][c]
 , &![yellow!99] 2' \ynobottom \\
![cyan!99] 1'&![yellow!99] \ynobottom\ynotop\\
![pink!99] 1' \ynobottom &![yellow!99] \ynobottom\ynotop \\
![pink!99]  \ynotop &![yellow!99]  \ynobottom\ynotop\\
, &![yellow!99]  \ynotop 
  \end{young}
 \mapsto
   \begin{young}[15.5pt][c]
 , &![cyan!99] 2' \ynobottom \\
![yellow!99] 1' \ynobottom &![cyan!99] \ynotop \\
![yellow!99] \ynobottom\ynotop &![pink!99] 2' \ynobottom\\
![yellow!99] \ynotop &![pink!99]  \ynobottom \ynotop\\
, &![pink!99]  \ynotop 
  \end{young},
     \qquand
  \begin{young}[15.5pt][c]
 , &![lightgray!99] 2' \\
![cyan!99] 1'&![yellow!99] 2' \ynobottom\\
![pink!99] \ynobottom 1'&![yellow!99]  \ynobottom\ynotop\\
![pink!99] \ynotop &![yellow!99]   \ynobottom\ynotop\\
, &![yellow!99]  \ynotop
  \end{young}
 \mapsto
   \begin{young}[15.5pt][c]
 , &![lightgray!99] 2' \\
![yellow!99] 1' \ynobottom &![cyan!99] 2'\\
![yellow!99]    \ynobottom\ynotop &![pink!99] 2' \ynobottom\\
![yellow!99]    \ynotop &![pink!99]   \ynobottom\ynotop\\
, &![pink!99]   \ynotop
  \end{young}.
  \]
We now combine all of these local transformations
into the following weight-reversing operator.

\begin{definition}
\label{def:weight reversal}
Let $T$ be a sorted shifted bar tableau with all entries in $\{1' , 2' , 1 , 2\}$.
We form another shifted bar tableau, denoted $\wtrev(T)$, by transforming $T$ as follows:
\ben
\item First toggle the labels in each one-row-group and one-column-group.
Then toggle the bar divisions in each two-row-group and two-column-group,
excluding the unique group with two diagonal boxes (if this exists).

\item Suppose $T$ has a group with two diagonal boxes.
If this is a two-row-group with diagonal boxes $(i,i)$ and $(i+1,i+1)$,
then proceed according to the following cases:
\ben
\item[(R1)] If $(i,i)$ is a one-box bar containing
  $1$ (respectively, $2'$) and the entry in $(i+1,i+1)$ is $2$,
then  toggle the bar divisions 
and  change the entry in $(i,i)$ to $2'$ (respectively, $1$):
  \[
  \begin{young}[15.5pt][c] , &  ]=![pink!99] 2 &   ==![pink!99] &   ==![pink!99]  &  =]![pink!99]  &  ]=![gray!99] 2  &  =]![gray!99]          \\
 ![cyan!99]1 &  ]=![lightgray!99] 1 &  ==![lightgray!99] &  =]![lightgray!99]  & ]=![yellow!99] 1  & ==![yellow!99]    & ==![yellow!99]   & =]![yellow!99] 
\end{young}
\leftrightarrow
  \begin{young}[15.5pt][c] , &  ]=![lightgray!99] 2 &   ==![lightgray!99] &   =]![lightgray!99]  &  ]=![yellow!99] 2 &  ==![yellow!99]   &  =]![yellow!99]          \\
 ![cyan!99]2' &  ]=![pink!99] 1 &  ==![pink!99] &  ==![pink!99]  & =]![pink!99]   & ]=![gray!99] 1   & ==![gray!99]   & =]![gray!99] 
\end{young}.
 \]
 
 \item[(R2)] Suppose $(i,i)$ is a one-box bar containing
  $1'$, 
   boxes $(i,i+1)$ and $(i,i+2)$ belong to the same bar, and
   $(i+1,i+2)$ is in the group.
     In this case the entry in $(i+1,i+1)$ is necessarily $2$ by Proposition~\ref{2row-prop}.
  First  change the entry in $(i,i)$ to $1$ and merge this box into the bar of
$(i,i+1)$. Next, toggle the bar divisions.  Finally, change $(i+1,i+1)$ to a one-box bar containing $2'$:
  \[
  \begin{young}[15.5pt][c] , &  ]=![pink!99] 2 &   ==![pink!99] &   ==![pink!99]  &  =]![pink!99]  &  ]=![gray!99] 2  &  =]![gray!99]          \\
 ![cyan!99]1' &  ]=![lightgray!99] 1 &  ==![lightgray!99] &  =]![lightgray!99]  & ]=![yellow!99] 1  & ==![yellow!99]    & ==![yellow!99]    & =]![yellow!99]  \end{young}
\mapsto
  \begin{young}[15.5pt][c] , &  ![cyan!99] 2' &   ]=![lightgray!99] 2&   =]![lightgray!99]  &  ]=![yellow!99] 2 &  ==![yellow!99]   &  =]![yellow!99]         \\
 ]=![pink!99]1 &  ==![pink!99]  &  ==![pink!99] &  ==![pink!99]  & =]![pink!99]   & ]=![gray!99] 1   & ==![gray!99]   & =]![gray!99]  \end{young}.
 \]

 \item[(R3)] Suppose $(i+1,i+1)$ is a one-box bar containing $2'$,
 the boxes $(i,i)$ and $(i,i+1)$ belong to the same bar (necessarily containing $1$), and 
 $(i+1,i+2)$ 
 is in the group. First change the entry in $(i+1,i+1)$ to $2$ and merge this box 
 into the bar of $(i+1,i+2)$. Next, toggle the bar divisions.  Finally, change $(i,i)$ to a one-box bar containing $1'$.
 This is illustrated by reversing the direction of the example in case (R2).

  \item[(R4)] Suppose $(i,i)$ is a one-box bar containing
  $1'$, but the boxes $(i,i+1)$ and $(i,i+2)$ do not belong to the same bar or
  the box $(i+1,i+2)$ is not part of the group.
  In this case the entry in $(i+1,i+1)$ is necessarily $2$ by Proposition~\ref{2row-prop}.
  Then toggle the bar divisions 
and  change the entries in both $(i,i)$ and $(i+1,i+1)$ to $2'$:
  \[
  \begin{young}[15.5pt][c] , &  ]=![pink!99] 2 &   ==![pink!99] &     =]![pink!99]  &  ]=![gray!99] 2  &  =]![gray!99]         \\
 ]=]![cyan!99]1' &  ![lightgray!99] 1 &  ]=![yellow!99] 1&  ==![yellow!99]  & ==![yellow!99]   & ==![yellow!99]    & =]![yellow!99]   \end{young}
 \mapsto
  \begin{young}[15.5pt][c] , &  ![lightgray!99] 2' &   ]=![yellow!99] 2&   ==![yellow!99]  &  ==![yellow!99]   &  =]![yellow!99]          \\
 ]=]![cyan!99]2' &  ]=![pink!99] 1 &  ==![pink!99] &  =]![pink!99]  & ]=![gray!99] 1  & ==![gray!99]    & =]![gray!99]   \end{young}
 \quord
  \begin{young}[15.5pt][c] , &  ![pink!99] 2          \\
 ]=]![cyan!99]1' &  ]=![lightgray!99] 1 &  =]![lightgray!99]     \end{young}
 \mapsto
  \begin{young}[15.5pt][c] , &  ![lightgray!99] 2'          \\
 ]=]![cyan!99]2' &  ]=![pink!99] 1 &  =]![pink!99]       \end{young}.
 \]

  \item[(R5)]
  If  $(i,i)$ and $(i+1,i+1)$ are both one-box bars containing
  $2'$, then change the entries in $(i,i)$ and $(i+1,i+1)$ to $1'$ and $2$, respectively, and toggle the bar divisions.
   This is illustrated by reversing the examples in case (R4).
   
   \item[(R6)] If none of (R1)--(R5) apply, just toggle the bar divisions in the two-row-group.
 \een
 If
   the two diagonal boxes instead occur in a two-column-group,
  then replace this group by its conjugate, transform
  that two-row-group according to cases (R1)--(R6), and finally conjugate the result.  Explicitly, this means that if there is a two-column-group with diagonal boxes $(i-1,i-1)$ and $(i,i)$,
then we proceed according to the following cases (illustrated in Figure~\ref{fig:column weight reversal}):
\ben
\item[(C1)] If $(i,i)$ is a one-box bar containing
  $2'$ (respectively, $1$) and the entry in $(i-1,i-1)$ is $1'$,
then  toggle the bar divisions 
and  change the entry in $(i,i)$ to $1$ (respectively, $2'$).

 \item[(C2)] Suppose $(i,i)$ is a one-box bar containing
  $2$, 
   boxes $(i-1,i)$ and $(i-2,i)$ belong to the same bar, and
   $(i-1,i-2)$ is in the group.
    In this case the entry in $(i-1,i-1)$ is necessarily $1'$ by Proposition~\ref{2row-prop}.
First  change the entry in $(i,i)$ to $2'$ and merge this box into the bar of
$(i-1,i)$. Next, toggle the bar divisions.  Finally, change $(i-1,i-1)$ to a one-box bar containing $1$.

 \item[(C3)] Suppose $(i-1,i-1)$ is a one-box bar containing $1$,
 the boxes $(i,i)$ and $(i-1,i)$ belong to the same bar (necessarily containing $2'$), and 
 $(i-2,i-1)$ 
 is in the group. First change the entry in $(i-1,i-1)$ to $1'$ and merge this box 
 into the bar of $(i-2,i-1)$. Next, toggle the bar divisions.
 Finally, change $(i,i)$ to a one-box bar containing $2$.

  \item[(C4)] Suppose $(i,i)$ is a one-box bar containing
  $2$, but the boxes $(i-1,i)$ and $(i-2,i)$ do not belong to the same bar or
  the box $(i-2,i-1)$ is not part of the group.
      In this case the entry in $(i-1,i-1)$ is necessarily $1'$ by Proposition~\ref{2row-prop}.
  Then toggle the bar divisions 
and  change the entries in both $(i,i)$ and $(i-1,i-1)$ to $1$.

  \item[(C5)]
  If  $(i,i)$ and $(i-1,i-1)$ are both one-box bars containing
  $1$, then change the entries in $(i,i)$ and $(i-1,i-1)$ to $2$ and $1'$, respectively, and 
  toggle the bar divisions.
   
   \item[(C6)] If none of (C1)--(C5) apply, just toggle the bar divisions in the two-column-group.
 \een

\een

\end{definition}

 \begin{figure}
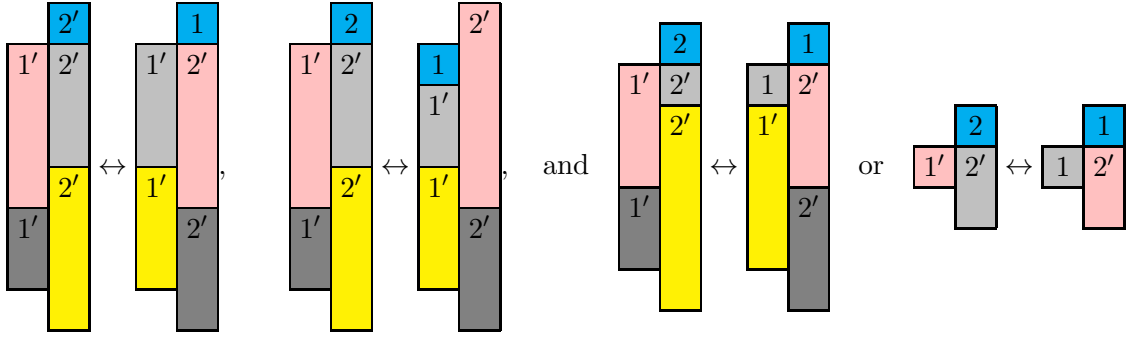

 \[
  \begin{young}[15.pt][c] , & ![cyan!99] 2' \\
 ![pink!99]  1' \ynobottom & ![lightgray!99]  2' \ynobottom \\
  ![pink!99] \ynobottom\ynotop& ![lightgray!99]  \ynobottom\ynotop\\
  ![pink!99] \ynobottom\ynotop & ![lightgray!99]\ynotop \\
  ![pink!99] \ynotop& ![yellow!99] 2'  \ynobottom\\
  ![gray!99] 1' \ynobottom& ![yellow!99]  \ynobottom\ynotop\\
 ![gray!99]  \ynotop& ![yellow!99] \ynobottom\ynotop \\
 , & ![yellow!99] \ynotop \end{young}
\leftrightarrow
  \begin{young}[15.pt][c] , & ![cyan!99] 1 \\
 ![lightgray!99]  1' \ynobottom & ![pink!99]  2'  \ynobottom \\
  ![lightgray!99] \ynobottom\ynotop& ![pink!99] \ynobottom\ynotop\\
  ![lightgray!99]  \ynotop& ![pink!99] \ynobottom\ynotop \\
  ![yellow!99] 1' \ynobottom& ![pink!99] \ynotop \\
  ![yellow!99]  \ynobottom\ynotop& ![gray!99] 2' \ynobottom \\
 ![yellow!99] \ynotop & ![gray!99] \ynobottom\ynotop\\
 , & ![gray!99] \ynotop  \end{young},
 \qquad
 \begin{young}[15.pt][c] , & ![cyan!99] 2 \\
 ![pink!99]  1' \ynobottom & ![lightgray!99]  2' \ynobottom \\
  ![pink!99] \ynotop\ynobottom & ![lightgray!99] \ynotop\ynobottom \\
  ![pink!99] \ynotop\ynobottom& ![lightgray!99] \ynotop \\
  ![pink!99] \ynotop & ![yellow!99] 2' \ynobottom \\
  ![gray!99] 1' \ynobottom & ![yellow!99] \ynotop\ynobottom \\
 ![gray!99] \ynotop & ![yellow!99] \ynotop\ynobottom  \\
 , & ![yellow!99] \ynotop  \end{young}
\leftrightarrow
  \begin{young}[15.pt][c] , & ![pink!99] 2' \ynobottom \\
 ![cyan!99]  1 & ![pink!99]  \ynotop\ynobottom \\
  ![lightgray!99] 1' \ynobottom & ![pink!99] \ynotop\ynobottom \\
  ![lightgray!99] \ynotop & ![pink!99] \ynotop\ynobottom \\
  ![yellow!99] 1' \ynobottom& ![pink!99] \ynotop \\
  ![yellow!99] \ynotop\ynobottom & ![gray!99] 2' \ynobottom\\
 ![yellow!99] \ynotop & ![gray!99] \ynotop\ynobottom \\
  , & ![gray!99] \ynotop  \end{young},
  \quand
 \begin{young}[15.pt][c] , & ![cyan!99] 2 \\
 ![pink!99]  1' \ynobottom & ![lightgray!99]  2' \\
  ![pink!99] \ynobottom\ynotop & ![yellow!99] 2'  \ynobottom\\
  ![pink!99] \ynotop & ![yellow!99] \ynobottom\ynotop \\
  ![gray!99] 1'  \ynobottom& ![yellow!99] \ynobottom\ynotop \\
  ![gray!99] \ynotop & ![yellow!99] \ynobottom\ynotop \\
 ,  & ![yellow!99] \ynotop  \end{young}
\leftrightarrow
  \begin{young}[15.pt][c] , & ![cyan!99] 1 \\
 ![lightgray!99]  1 & ![pink!99]  2' \ynobottom \\
  ![yellow!99] 1' \ynobottom& ![pink!99] \ynobottom\ynotop \\
  ![yellow!99] \ynobottom\ynotop & ![pink!99] \ynotop \\
  ![yellow!99] \ynobottom\ynotop & ![gray!99] 2' \ynobottom \\
  ![yellow!99] \ynotop & ![gray!99] \ynobottom\ynotop \\
 ,& ![gray!99] \ynotop  \end{young}
 \quord
  \begin{young}[15.pt][c] , & ![cyan!99] 2 \\
 ![pink!99]  1' & ![lightgray!99]  2' \ynobottom \\
  , & ![lightgray!99] \ynotop   \end{young}
\leftrightarrow
  \begin{young}[15.pt][c] , & ![cyan!99] 1 \\
 ![lightgray!99]  1 & ![pink!99]  2' \ynobottom \\
  , & ![pink!99] \ynotop   \end{young}
  \]
  \caption{The special cases (C1), (C2)--(C3), and (C4)--(C5) of Definition~\ref{def:weight reversal}}
  \label{fig:column weight reversal}
\end{figure}

\begin{theorem}
\label{thm:wt reversal}
The operation $\wtrev$ is a shape-preserving and weight-reversing involution of the set of sorted shifted bar tableaux
with all entries in $\{1',1,2',2\}$.
\end{theorem}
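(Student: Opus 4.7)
The plan is to verify the four required properties of $\wtrev$ --- that the output is a sorted ShBT, that the shape is preserved, that the weight is reversed, and that the operation is an involution --- by analyzing each of the four types of local groups separately. By Propositions~\ref{2row-prop} and~\ref{1row-prop} the bars of any sorted ShBT partition into disjoint one-row-, one-column-, two-row-, and two-column-groups; since the transformations in Definition~\ref{def:weight reversal} act on pairwise disjoint subsets of boxes, it suffices to treat each group in isolation.

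Shape preservation is essentially immediate: label-toggling fixes the underlying diagram, bar-division toggling fixes the entries and the shape, and each special case (R1)--(R6), (C1)--(C6) modifies only entries and bar divisions inside at most two adjacent rows or columns of a single group.

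For weight reversal, the label-toggling step on one-row- and one-column-groups is defined precisely so that the number of bars containing elements of $\{1,1'\}$ is interchanged with the number containing elements of $\{2,2'\}$, with any primed diagonal entry preserved. On a two-row-group without two diagonal boxes, Proposition~\ref{2row-prop} guarantees that all bars are unprimed, and toggling the bar divisions between the two rows is a self-inverse bijection between arrangements with $p$ bars of $1$s and $q$ bars of $2$s and those with $q$ bars of $1$s and $p$ bars of $2$s; the two-column case is conjugate. For the at most one two-row- or two-column-group containing two diagonal boxes, weight reversal is verified directly against the pictures in cases (R1)--(R6).

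The main obstacle is to verify that the cases (R1)--(R6) are exhaustive, that the output of each case is a valid sorted ShBT, and that the whole operation is an involution. Exhaustiveness follows by enumerating the six possible fillings of the two diagonal boxes and the adjacent box $(i,i+1)$ permitted by Proposition~\ref{2row-prop} and matching each against the hypotheses of (R1)--(R6). For involutivity the cases pair as (R1)--(R1), (R2)--(R3), (R4)--(R5), and (R6)--(R6); for each pair one must check that the output of one case satisfies the precondition of its partner and that the composition returns the original tableau, which can be done by direct comparison of the before/after diagrams drawn in Definition~\ref{def:weight reversal}. Validity of the output in the sense of Definition~\ref{def:sorted} amounts to confirming that its diagonal-box configuration lies in one of the allowed types (a), (b), (c); this is visible from the same illustrations. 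Finally, the cases (C1)--(C6) reduce to (R1)--(R6) via the conjugation symmetry of Definition~\ref{def:conjugation}, under which $\wtrev$ is built to be equivariant, so no separate analysis is needed for two-column-groups with two diagonal boxes.
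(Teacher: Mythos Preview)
Your proposal has a genuine gap at the step ``it suffices to treat each group in isolation.'' The local transformations do act on disjoint sets of boxes, but the property of being a \emph{sorted} ShBT is not local to a group: Definition~\ref{def:sorted}(a) requires weak increase in the order $1'\prec 2'\prec 1\prec 2$ along every row and column, and this imposes constraints on pairs of adjacent boxes lying in \emph{different} groups. The paper's proof devotes most of its length to exactly this boundary check, analyzing each possible adjacency between group types (one-row with two-row, two-row with two-row, etc.) and arguing that the order relations survive. For instance, a two-row-group and an adjacent one-row-group can only abut in a single specific geometric configuration, and one must verify that after toggling both, the interface box in the two-row-group still contains $1$ (resp.\ $2$) and so is compatible with whatever the one-row-group toggled to. Your proposal never addresses this, and the claim that ``validity of the output \ldots\ amounts to confirming that its diagonal-box configuration lies in one of the allowed types (a), (b), (c)'' misreads the definition: type (a) is the global monotonicity condition, not merely a diagonal pattern.

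There is a second, related gap in your involutivity argument. Checking that (R1)--(R1), (R2)--(R3), (R4)--(R5), (R6)--(R6) pair off is necessary but not sufficient: you must also know that applying $\wtrev$ to $\wtrev(T)$ computes the \emph{same} group decomposition, so that the second application acts on the same sets of boxes as the first. The paper handles this by arguing that ``the groups of $\wtrev(T)$ are supported on the same boxes as the groups of $T$,'' which again requires the inter-group boundary analysis (e.g., that a two-row-group with at most one diagonal box remains $\approx$-connected after toggling bar divisions). Without this, the local pairing of cases does not assemble into a global involution.
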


\begin{proof}
By construction, every bar in a sorted ShBT belongs to a group, and each local transformation (toggling the labels on one-row- and one-column-groups, toggling the bar divisions on two-row- and two-column-groups that contain at most one diagonal box, applying rules (R1)--(R6) and (C1)--(C6) to two-row- and two-column-groups that contain two diagonal boxes) reverses the weight and preserves the shape of the group on which it acts, so the map $\wtrev$ is weight-reversing and shape-preserving.  It remains to show first that $\wtrev$ is well defined (i.e., that for any sorted ShBT $T$, the image $\wtrev(T)$ is also a sorted ShBT), and second that the groups of $\wtrev(T)$ are supported on the same boxes as the groups of $T$. Since each local transformation is an involution by definition, this will establish that $\wtrev$ is an involution.

Given any group in $T$, its image under the appropriate local transformation is compatible with the definition of sorted tableaux.  Thus, to check that $\wtrev$ is well defined as a map from sorted ShBT to sorted ShBT, we must establish that (1) adjacent boxes in different groups satisfy the correct order relation, and (2) if there are two diagonal boxes, then they and the box between them are validly filled.

We begin with some basic observations.  All of the local transformations have the property that if a non-diagonal box $(i, j)$ in a sorted ShBT $T$ is filled with a primed (respectively, unprimed) entry, then the box $(i, j)$ in $\wtrev(T)$ is also filled with a primed (respectively, unprimed) entry.  Moreover, every non-diagonal box of a one- or two-column-group is filled with a primed entry, and every non-diagonal box of a one- or two-row-group is filled with an unprimed entry.  Therefore, to prove that $\wtrev(T)$ is a sorted ShBT, we need only check conditions at the diagonal and between groups of the same ``type'' (both row or both column).  We handle the second case first.

In the case of two-row-groups that do not contain two diagonal boxes, each box is filled with the same number after toggling the bar divisions as it was before (only the division of these boxes into bars changes), and the new bars are still connected under $\approx$ (a division between bars that fails to disconnect the group still fails to disconnect the group when it switches rows).  Thus, the operation $\wtrev$ preserves the order relation between groups of this type.  

Two one-row-groups cannot have any adjacent boxes: if a box in one were directly below a box in the other, then these two boxes would belong to a two-row-group (and so, by definition, could not belong to one-row-groups), while if a box in one were directly to the right of a box in the other, then these two boxes would belong to the same one-row-group.  

Finally, suppose that $\cA$ is a two-row-group occupying rows $i$ and $i + 1$ and $\cB$ is a one-row-group that contains a box adjacent to a box in $\cA$.  These adjacent boxes must lie in the same row (rather than the same column) because otherwise the box in the one-row-group would belong instead to a two-row-group.  It follows as 
in the proof of Proposition~\ref{2row-prop} that 
if the boxes in the one-row-group $\cB$ are to the right of the boxes in $\cA$, then it must lie in row $i$, because there is no way to complete
\[
\begin{young}[15.pt][c] 
]=![cyan!50] \ynobottom \cdots & =]![cyan!50] \ynobottom & ]=![pink!50] & =]![pink!50] \cdots  \\
]=![cyan!50] \ynotop  & =]![cyan!50] \ynotop & ]=] ? 
\end{young}
\]
to a sorted ShBT. Similarly, if the boxes in $\cB$ lie to the left of the boxes in $\cA$, then they must lie in row $i + 1$.  (In the latter case, there is an additional consideration concerning the situation in which the one-row-block consists of a single diagonal box, but in this case the putative one-row-block would actually be part of the two-row-block.)  In the first situation, after applying $\wtrev$, the box in the two-row-group is still filled with $1$ and still to the left of the box in the one-row-group (which will either end up filled with $1$ or $2$), and in the second case, the box in the two-row-group is still filled with $2$ and still to the right of the box in the one-row-group. Thus the order relations are preserved in both cases.  The situation for one- and two-column-groups is precisely the conjugate.  This completes the proof that $\wtrev(T)$ obeys the order relations of a sorted ShBT among all pairs of adjacent boxes except possibly those at the diagonal (in the case that $T$ contains two diagonal boxes).

Now consider the collection of sorted ShBT with two diagonal boxes in positions $(i, i)$ and $(i + 1, i + 1)$.  There are twelve possible fillings of the three boxes $(i, i)$, $(i, i + 1)$, $(i + 1, i + 1)$: six of them are shown in the statement of Proposition~\ref{2row-prop}, and the other six are conjugate to these.  In the cases
\[
\begin{young}[15.5pt][c]
, & ![pink!99] 2 \\
![cyan!99] 2' & ![lightgray!99] 1
\end{young},
\qquad
\begin{young}[15.5pt][c]
, & ![pink!99] 2 \\
![cyan!99] 1 & ![lightgray!99] 1
\end{young},
\qquad
\begin{young}[15.5pt][c]
, & ![pink!99] 2 \\
]=![cyan!99] 1 & =]![cyan!99]
\end{young},
\qquad
\begin{young}[15.5pt][c]
, & ![pink!99] 2' \\
![cyan!99] 2' & ![lightgray!99] 1
\end{young},
\qquad
\begin{young}[15.5pt][c]
, & ![pink!99] 2' \\
]=![cyan!99] 1 & =]![cyan!99]
\end{young}
\]
and their conjugates, it is easy to see that the three boxes must belong to a single two-row-group.  (The nontrivial cases are the first and fourth; the proof of Proposition~\ref{2row-prop} shows explicitly that in the box $(i, i)$ must be a one-bar group in the fourth case, and the proof in the first case is identical.)  The boxes directly below this group (if there are any) can only be filled with primed entries; after applying $\wtrev$ (checking the six cases (R1)--(R6)), those boxes will still be filled with primed entries and so the necessary order relations are respected.  Similarly, the boxes directly to the right of the group (if there are any) will be filled with unprimed entries, and any box in row $i + 1$ will be part of a two-row-group and filled with $2$; after applying $\wtrev$, we again have that all order relations are respected.  The same holds for the conjugates of these cases after swapping the words ``row'' with ``column'', ``primed'' with ``unprimed'', ``below'' with ``right'', and ``$(i, i)$'' with ``$(i + 1, i + 1)$''.  
This leaves the two mutually conjugate cases
\[
\begin{young}[15.5pt][c]
, & ![pink!99] 2 \\
![cyan!99] 1' & ![lightgray!99] 1
\end{young}
\qquand
\begin{young}[15.5pt][c]
, & ![cyan!99] 2 \\
![pink!99] 1' & ![lightgray!99] 2'
\end{young}.
\]
Here we have two possibilities to consider: box $(i, i + 1)$ is automatically in the same two-row- or two-column-group as one of the other diagonal boxes, but the odd box out could be part of a bar with more than one box. For example, in the tableau
$
\begin{young}[15.5pt][c]
, & ]=![cyan!99] 2 & =]![cyan!99] 2\\
]=]![pink!99] 1' & ]=]![lightgray!99] 2' & ]=]![yellow!99] 1
\end{young}
$
the unprimed bars are part of a two-row-group and the primed bars are part of a two-column-group.  If the two diagonal boxes are in different groups, then the three boxes are unchanged by the operation $\wtrev$, all boxes below them (if any) are primed and stay this way after applying $\wtrev$, and all boxes to their right (if any) are unprimed and stay this way after applying $\wtrev$, so all order relations involving these boxes are preserved.  
 If the two diagonal boxes are part of the same group, and when we apply $\wtrev$ we fall into case (R4) or (C4), 
 and the rest of the argument is essentially the same as for the cases discussed above.
\end{proof}

\subsection{Composing operations}

Putting together everything above, we arrive at the main theorem of this section:

\begin{theorem}
\label{thm:simple transposition}
The composition \[ \tauShBT := \unswap \circ \wtrev \circ \swap\] is a shape-preserving and weight-reversing involution
of the set of semistandard shifted bar tableaux with all entries in $\{1',1,2',2\}$.
Moreover, if $T$ is a semistandard shifted bar tableau then $\tauShBT(T)$ and $T$ have the same number of primed entries on the main diagonal.
\end{theorem}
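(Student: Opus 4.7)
The first three claims (shape-preserving, weight-reversing, and involution) follow by composition of known results. From Theorem~\ref{thm:inverse bijections}, $\swap$ is a shape- and weight-preserving bijection from semistandard to sorted ShBT with inverse $\unswap$, and from Theorem~\ref{thm:wt reversal}, $\wtrev$ is a shape-preserving, weight-reversing involution on sorted ShBT. Consequently $\tauShBT = \unswap \circ \wtrev \circ \swap$ is a shape-preserving, weight-reversing map from semistandard ShBT to semistandard ShBT. For the involution property, I would compute
\[
\tauShBT \circ \tauShBT = \unswap \circ \wtrev \circ (\swap \circ \unswap) \circ \wtrev \circ \swap = \unswap \circ \wtrev \circ \wtrev \circ \swap = \unswap \circ \swap = \id,
\]
using $\swap \circ \unswap = \id$ and $\wtrev \circ \wtrev = \id$.

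The substantive content is the statement about primed diagonal entries, and my plan here is a local case analysis. First I would observe that $\swap$ and $\unswap$ only permute bars filled with $1$ or $2'$, so diagonal entries equal to $1'$ or $2$ are never touched; the only way for these operations to alter a diagonal entry at all is via a move of the form Definition~\ref{def:swap}(a) with $|\cH|=1$ (or its inverse in Definition~\ref{def:unswap}), which can replace a single-box diagonal $2'$ by a $1$, or conversely. Next I would examine $\wtrev$ group by group: toggling the labels of a one-row- or one-column-group explicitly preserves the primed status of the group's (necessarily unique) diagonal entry, since primes are stripped before relabeling and restored afterwards; and toggling the bar divisions of a two-row- or two-column-group that contains at most one diagonal box leaves every entry unchanged. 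Thus the only operations that can alter the primed count on the diagonal are the exceptional rules (R1)--(R6) and (C1)--(C6) of Definition~\ref{def:weight reversal}, which apply precisely when there are two adjacent diagonal boxes.

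The main obstacle is the case analysis in these two-diagonal-box configurations. Cases (R2) and (R3) (and their (C) counterparts) merely transfer a prime between $(i,i)$ and $(i+1,i+1)$, preserving the diagonal count; case (R6) (and (C6)) changes no entries. However, (R1), (R4), (R5) (and (C1), (C4), (C5)) individually change the count of primed diagonal entries, so they require nontrivial compensation. My expectation is that in each of these cases the original semistandard tableau has its $1$ and $2'$ bars arranged so that the preceding $\swap$ step (via the diagonal side effect noted above) and the subsequent $\unswap$ step exactly offset the discrepancy introduced by $\wtrev$. Concretely, I would enumerate the twelve two-diagonal-box configurations classified in Proposition~\ref{2row-prop} together with the few single-diagonal-box configurations, trace each through the composite $\unswap \circ \wtrev \circ \swap$, and check diagonal prime counts directly; the calculation should close into the desired equality.
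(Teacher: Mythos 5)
Your first paragraph is fine: the composition argument for shape-preservation, weight-reversal, and the involution property is exactly the paper's, and the identity $\tauShBT\circ\tauShBT = \unswap\circ\wtrev\circ(\swap\circ\unswap)\circ\wtrev\circ\swap = \id$ is legitimate because Theorem~\ref{thm:wt reversal} guarantees that $\wtrev$ lands back in the set of sorted ShBT, where $\swap\circ\unswap=\id$ holds.

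For the diagonal-prime claim, however, what you have written is a plan, not a proof. Your reduction is correct and matches the paper's strategy (isolate the single move \eqref{slide} and its conjugate as the only $\swap$/$\unswap$ steps that touch the diagonal; observe that label-toggling preserves the primed status of a diagonal entry; observe that bar-division-toggling on groups with at most one diagonal box changes no entries), but the conclusion is then asserted with ``my expectation is'' and ``the calculation should close,'' and that case analysis is precisely where all the content lies. Two points in particular are glossed over. First, your claim that ``the only operations that can alter the primed count on the diagonal are the exceptional rules (R1)--(R6) and (C1)--(C6)'' is not right as stated: in the single-diagonal-box case, label-toggling can change the diagonal \emph{value} (e.g.\ a one-row-group of all $2$'s becomes all $1$'s, as in the last figure of \eqref{toggling-one-column-eq}), and one must then verify that the subsequent $\unswap$ cannot execute a move of type Definition~\ref{def:unswap}(b) that slides a $2'$ from the box below onto the diagonal; the paper does this by checking that the box below the diagonal is either absent or filled with $1'$. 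Second, in the two-diagonal-box cases the rules (R1), (R4), (R5) and their column analogues genuinely change which diagonal boxes are primed at the intermediate (sorted) stage, and the compensation you anticipate from the surrounding $\swap$ and $\unswap$ steps has to be exhibited configuration by configuration --- including verifying which near-diagonal arrangements can actually arise after $\swap$, and which rule among (R1)--(R6) applies to each. Until that enumeration is actually carried out, the second assertion of the theorem is unproven.
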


\begin{proof}
Let $T$ be a semistandard shifted bar tableau.
In view of Theorems~\ref{thm:inverse bijections} and \ref{thm:wt reversal}, it 
remains only to prove our claim that $\tauShBT(T)$ and $T$ have the same number of primed diagonal entries.
 
By considering the various cases in Definition~\ref{def:swap}, we see that the only swaps in the initial application of $\swap$ that can change a diagonal box are moves of the form
\begin{equation}
\label{slide}
\begin{young}[15.5pt][c] 
]=![cyan!99] 1  & =]![cyan!99] & ![pink!99] 2'  \end{young}
 \mapsto  
  \begin{young}[15.5pt][c] ![pink!99] 2' & ]=![cyan!99] 1& =]![cyan!99] \end{young}
\end{equation}
or its conjugate, where the leftmost (respectively, topmost) affected box belongs to the diagonal.  In either case, if such a swap is applied, then it will be the \emph{last} swap in the computation of $\swap(T)$ that changes anything (i.e., any remaining swaps will fall into case (c) of Definition~\ref{def:swap}).  
Similarly, the only swaps in the final application of $\unswap$ that can change a diagonal box are moves that are the reverse of \eqref{slide} or its conjugate, and when such swaps are applied, they occur as the \emph{first} swap in the computation when $\unswap$ is applied to $\wtrev(\swap(T))$.
We now consider exhaustively the possible changes that can occur at the diagonal.

First suppose $T$ contains a single diagonal box $(i,i)$, and assume that this box of $T$ 
 is filled with $1'$ or $2$.
Then $\swap(T)$ also possesses a single diagonal box filled with the same entry.  In the sorted ShBT $\swap(T)$, this box belongs either to a one-row- or one-column-group, or to a two-row- or two-column-group that contains exactly one diagonal box.  On groups of this type, the operation $\wtrev$ preserves the entry in the diagonal box with a unique exception, namely, if the diagonal box belongs to a one-row- or one-column-group whose entries are all equal (as in the last figure in \eqref{toggling-one-column-eq}).  

In the non-exceptional case, $\wtrev(\swap(T))$ has its unique diagonal box $(i, i)$ filled with the same value ($1'$ or $2$) as $T$. Applying $\unswap$ preserves this property
since it only affects boxes containing $1$ and $2'$, so $\tauShBT(T)$ also has its diagonal box filled with the same value as $T$.

Assume we are in the exceptional case, so that 
the unique diagonal box $(i, i)$ of $\swap(T)$ is part of a one-row- or one-column-group whose entries are all equal.
Without loss of generality (up to conjugation) we can assume that it is a one-row-group whose entries are all $2$, and so that the diagonal box in $\wtrev(\swap(T))$ is filled with $1$.  
Possibly the box $(i - 1, i)$ directly below the diagonal is empty (i.e., it does not belong to $T$).  Then the very first step of the application of $\unswap$ cannot be a move of the form 
\begin{equation}
\label{eq:this doesn't happen}
  \begin{young}[15.5pt][c]     
  ![cyan!99]1 \\ 
  ![pink!99]2'  \ynobottom\\ 
  ![pink!99] \ynotop \end{young}
  \mapsto \begin{young}[15.5pt][c]    
  ![pink!99] 2' \ynobottom \\ 
  ![pink!99] \ynotop \\ 
  ![cyan!99] 1    \end{young}
\end{equation}
and hence the resulting tableau $\tauShBT(T)$ still has unprimed diagonal entry.  

If, on the other hand, the box $(i - 1, i)$ is not empty, then in $\swap(T)$ it cannot be filled with $2$ (because $\swap(T)$ is sorted), and it cannot be filled with $1$ (because $(i, i)$ does not belong to a two-row-group), so it must be filled with a primed entry.  Furthermore, we claim that $(i - 1, i)$ cannot be filled with $2'$: if box $(i-1,i)$ had $2'$ and the bar of $(i,i)$ had multiple boxes then $(i-1,i+1)$ would be filled with $1$ to be sorted and $(i,i)$ would be in a two-row-group, whereas if box $(i-1,i)$ had $2'$ and the bar of $(i,i)$ had just one box then $(i-1,i)$ would part of a one-column-group that includes $(i,i)$,
which is a contradiction in either case.  

Thus, when the box $(i - 1, i)$ belongs to $T$, it must be filled with $1'$ in $\swap(T)$, and this $1'$ must be part of a two-column group.  After applying $\wtrev$ to $\swap(T)$, it follows that the diagonal box will be filled with the value $1$ and the box $(i - 1, i)$ will be filled with $1'$. This means that applying $\unswap$ to $\wtrev(\unswap(T))$ cannot begin with a move of the form
\eqref{eq:this doesn't happen}
and hence the resulting tableau $\tauShBT(T)$ still has unprimed diagonal entry.  This completes the claim in the case that $T$ has one diagonal box and it is filled with $1'$ or $2$.
 
Continue to assume that $T$ has a single diagonal box $(i,i)$, but now 
suppose this box is filled with $1$ or $2'$. 
Up to conjugacy we may assume that the box is filled with $1$, so that $T$ looks like
\[
\begin{young}[15.5pt][c]
, & , & ,\cdot \\
, & ![cyan!99]1 & \text{??} \\
,\cdot & ,\cdot   & \text{??}
\end{young}
\qquord
\begin{young}[15.5pt][c]
, & , & ,\cdot \\
, & ![cyan!99]1 & \text{??} \\
,\cdot & 1' & \text{??}
\end{young},
\]
where possibly the bar containing the diagonal box extends to the right.  
Our objective is to show that the diagonal box $(i,i)$ is still unprimed in $\tauShBT(T)$.
To this end, we first observe that the bar tableau $\swap(T)$ must look like
\begin{equation}
\label{eq:four cases}
\begin{young}[15.5pt][c]
, & , & ,\cdot \\
, & ![cyan!99]1 & \text{??} \\
,\cdot & 1' & \text{??}
\end{young}
\qquord
\begin{young}[15.5pt][c]
, & , & ,\cdot \\
, & ![cyan!99]1 & \text{??} \\
,\cdot & ,\cdot   & \text{??}
\end{young}
\qquord
\begin{young}[15.5pt][c]
, & , & ,\cdot \\
, & 2' & ![cyan!99] 1  \\
,\cdot & ,\cdot & \text{??}
\end{young}
\qquord
\begin{young}[15.5pt][c]
, & , & ,\cdot \\
, & 2' & ![cyan!99] 1  \\
,\cdot & ]=] 1' & \text{??}
\end{young},
\end{equation}
where again ``the bar of $1$s'' may extend to the right. Since $(i, i)$ is the unique diagonal box of $T$, it must be the case that $T$ does not contain any boxes in row $i + 1$, and consequently in all four cases ``the bar of $1$s'' in $\swap(T)$ is not related by $\approx$ to any other box.  In the second, third, and fourth cases of \eqref{eq:four cases}, it follows that this bar is in a one-row-group; in the first case, it might be part of either a one-row- or one-column-group, depending on the presence and contents of the cells marked ``$??$'' in $\swap(T)$.  We consider the cases in order.

In the first case, suppose first that the diagonal box $(i, i)$ is part of a one-row-group in $\swap(T)$.  Then the bar containing $(i - 1, i)$ that is filled with $1'$ must belong to a two-column-group in $\swap(T)$ (or else the two bars would be related by $\sim$).  Therefore, after applying $\wtrev$, the diagonal box in $\wtrev(\swap(T))$ will be unprimed and the $(i - 1, i)$ box will still be filled with $1'$.  The operation $\unswap$ can only convert an unprimed diagonal box into a primed diagonal box by an application of the operation
\[  
\begin{young}[15.5pt][c]     
  ![cyan!99]1 \\ 
  ![pink!99]2'  \ynobottom\\ 
  ![pink!99] \ynotop \end{young}
  \mapsto \begin{young}[15.5pt][c]    
  ![pink!99] 2' \ynobottom \\ 
  ![pink!99] \ynotop \\ 
  ![cyan!99] 1    \end{young},
\]
but this cannot happen when the box $(i - 1, i)$ is filled with $1'$.  Thus the diagonal box in $\tauShBT(T)$ is unprimed in this case.

Continuing with the first case, suppose instead that the diagonal box $(i, i)$ is part of a one-column-group in $\swap(T)$.  Since $(i - 1, i)$ is filled with $1'$ in $\swap(T)$, the entire one-column-group is filled with $1'$ and $1$.  Therefore, after toggling the labels of the group, the diagonal box in $\wtrev(\swap(T))$ will be filled with $2$ (and the rest of the one-column-group with $2'$).  No step in the application of $\unswap$ moves a box filled with $2$, hence the diagonal box in $\tauShBT(T)$ is unprimed in this case.

In the second case, the diagonal box in $\swap(T)$ can only belong to a one-row-group, so the diagonal box in $\wtrev(\swap(T))$ is unprimed.  The operation $\unswap$ can only convert an unprimed diagonal box into a primed diagonal box by an application of the operation
\[  
\begin{young}[15.5pt][c]     
  ![cyan!99]1 \\ 
  ![pink!99]2'  \ynobottom\\ 
  ![pink!99] \ynotop \end{young}
  \mapsto \begin{young}[15.5pt][c]    
  ![pink!99] 2' \ynobottom \\ 
  ![pink!99] \ynotop \\ 
  ![cyan!99] 1    \end{young},
\]
but this cannot happen when the box $(i - 1, i)$ is not part of the shape.  Thus the diagonal box in $\tauShBT(T)$ is also unprimed in this case.

In the third case, the diagonal one-box-bar filled with $2'$ belongs to the same one-row-group in $\swap(T)$ as ``the bar of $1$s''.  
These two bars are filled the same in $\wtrev(\swap(T))$ as in $\swap(T)$ since 
when we apply $\wtrev$
we are in the case of toggling the labels with $p, q > 0$ in the sense of \eqref{toggling-eq}. Then the very first step of $\unswap$ applies the rule
\[
 \begin{young}[15.5pt][c] ![pink!99] 2' & ]=![cyan!99] 1& =]![cyan!99] \end{young}
  \mapsto
    \begin{young}[15.5pt][c]  ]=![cyan!99] 1  & =]![cyan!99] & ![pink!99] 2'  \end{young},
\]
so the diagonal box in $\tauShBT(T)$ is unprimed in this case.

Finally, in the fourth case, we have by sortedness that ``the bar of $1$s'' must be a singleton bar, and the box filled with ``$??$'' must actually be filled with $2'$, so $\swap(T)$ is of the form
\[
\begin{young}[15.5pt][c]
, & , & ,\cdot \\
, & 2' & ![cyan!99] 1 & ,\cdot \\
,\cdot & ]=] 1' & 2' & \text{??}
\end{young}
.
\]
In this case, the bars containing boxes $(i - 1, i)$ and $(i - 1, i + 1)$ in $\swap(T)$ belong to a two-column-group, and so the one-box bars $(i, i)$ and $(i, i + 1)$ form a one-row-group in $\swap(T)$.  It follows that $\wtrev(\swap(T))$ has the same entries in these four boxes.  Then, as in the third case, the very first step of $\unswap$ applies the rule
\[
 \begin{young}[15.5pt][c] ![pink!99] 2' & ]=![cyan!99] 1& =]![cyan!99] \end{young}
  \mapsto
    \begin{young}[15.5pt][c]  ]=![cyan!99] 1  & =]![cyan!99] & ![pink!99] 2'  \end{young},
\]
and so the diagonal box in $\tauShBT(T)$ is unprimed in this case.

Finally, we consider the case when $T$ has two diagonal boxes.
If both diagonal boxes are unprimed, then there are three ways that they and the box between them can be filled in $T$:
\[
\begin{young}[15.5pt][c]
, & ![cyan!99] 2 \\
]=![pink!99] 1 & =]![pink!99]
\end{young},
\qquad
\begin{young}[15.5pt][c]
, & ![cyan!99] 2 \\
![pink!99] 1 & ![lightgray!99] 1
\end{young},
\qquord
 \begin{young}[15.5pt][c]
, & ![cyan!99] 2 \\
![pink!99] 1 & ![lightgray!99] 2'
\end{young}.
\]
After applying $\swap$ to such a tableau, we might end up with one of the near-diagonal arrangements 
\[
\begin{young}[15.5pt][c]
, & ![cyan!99] 2 \\
]=![pink!99] 1 &  =]![pink!99] 
\end{young}
\qquord
\begin{young}[15.5pt][c]
, & ![cyan!99] 2 \\
![pink!99] 1 & ![lightgray!99] 1 
\end{young}
\qquord
\begin{young}[15.5pt][c]
, & ![cyan!99] 2 \\
![lightgray!99] 2' & ![pink!99] 1 
\end{young},
\]
where in the last case necessarily the diagonal box $\ytab{*(lightgray) 2'}$ forms a one-box bar (following a move of the form \eqref{slide}).  In the first case, applying $\wtrev$ toggles the bar divisions and so does not change the contents of the diagonal boxes or the box $(i, i + 1)$, and applying $\unswap$ to the result preserves the diagonal boxes.  The second and third cases are precisely those to which the rule (R1) in the definition of $\wtrev$ applies; they are interchanged, and then $\unswap$ restores the unprimed diagonal boxes.

The argument when both diagonal boxes of $T$ are primed is precisely the conjugate of the previous case.  This leaves the situation when $T$ has exactly two diagonal boxes, only one of which is primed.  Then there are six possible arrangements for the two diagonal boxes and the box between them, in three conjugate pairs:
\[
\begin{young}[15.5pt][c]
, & ![pink!99] 2 \\
![cyan!99] 1' & ![lightgray!99] 1
\end{young}
\qquad
\begin{young}[15.5pt][c]
, & ![cyan!99] 2 \\
![pink!99] 1' & ![lightgray!99] 2'
\end{young},
\qquord
\begin{young}[15.5pt][c]
, & ![pink!99] 2' \\
![cyan!99] 1 & ![lightgray!99] 1
\end{young}
\qquad
\begin{young}[15.5pt][c]
, & ![cyan!99] 2' \\
![pink!99] 1 & ![lightgray!99] 2'
\end{young},
\qquord
\begin{young}[15.5pt][c]
, & ![pink!99] 2' \\
]=![cyan!99] 1 & =]![cyan!99]
\end{young}
\qquad
\begin{young}[15.5pt][c]
, & ![cyan!99]\ynobottom 2' \\
]=]![pink!99] 1 & ![cyan!99]\ynotop
\end{young}.
\]
Suppose $T$ belongs to one of the first two cases. Then $\swap(T)$ still has the near-diagonal arrangement $
\begin{young}[15.5pt][c]
, & ![pink!99] 2 \\
![cyan!99] 1' & ![lightgray!99] 1
\end{young}
$ or $
\begin{young}[15.5pt][c]
, & ![cyan!99] 2 \\
![pink!99] 1' & ![lightgray!99] 2'
\end{young}$ (possibly reversed from what it had before); up to conjugacy, we assume the former.  If the diagonal boxes $\swap(T)$ belong to different groups, then after applying $\wtrev$ the near-diagonal arrangement is unchanged, and then $\unswap$ does not affect the diagonal boxes.  Suppose instead that the two diagonal boxes belong to the same group in $\swap(T)$.   Depending on whether the bars containing boxes $(i, i + 1)$ and $(i + 1, i + 1)$ extend to the right, applying $\wtrev$ to $\swap(T)$ belongs to either case (R2) or (R4), and the image necessarily has the near-diagonal arrangement
\[
\begin{young}[15.5pt][c]
, & ![pink!99] 2' \\
]=![cyan!99] 1 & =]![cyan!99]
\end{young} \text{$\quad$in case (R2)}
\qquord
\begin{young}[15.5pt][c]
, & ![pink!99] 2' \\
![lightgray!99] 2' & ![cyan!99] 1
\end{young}\text{$\quad$in case (R4)}.
\]
Applying $\unswap$ to such a tableau results in a tableau with near-diagonal arrangement 
\[
\begin{young}[15.5pt][c]
, & ![pink!99] 2' \\
]=![cyan!99] 1 & =]![cyan!99]
\end{young}
\qquord
\begin{young}[15.5pt][c]
, & ![pink!99] 2' \\
![cyan!99] 1 & ![lightgray!99] 2'
\end{young},
\]
and so we conclude that $\tauShBT(T)$ has exactly one primed diagonal box, as needed.

The remaining four cases are, up to conjugacy, precisely the reverse direction of the cases just considered; since $\tauShBT$ is an involution, it follows that its action in these cases also preserves the number of primed diagonal boxes.  This completes the proof.
%
%
\end{proof}

\section{Generating function derivations}\label{gen-fun-der-sect}

Let $\mu$ and $\lambda$ be strict partitions
and recall the definitions of 
the sets of shifted plane partitions $ \MRPP_P(\lambda/\mu) \subseteq  \MRPP_Q(\lambda/\mu)$
and semistandard shifted bar tableaux $\ShBTP(\lambda/\mu) \subseteq \ShBTQ(\lambda/\mu)$
from Section~\ref{skew-sect}.
Throughout this section we work with the concrete generating functions 
\[
\ba
\wtgp_{\lambda/\mu} &:= \sum_{T\in \MRPP_P(\lambda/\mu)} (-\beta)^{|\lambda/\mu| - |\weight_\RPP(T)|} {\bf x}^{\weight_\RPP(T)} 
,\\
\wtgq_{\lambda/\mu} &:= \sum_{T\in \MRPP_Q(\lambda/\mu)} (-\beta)^{|\lambda/\mu| -  |\weight_\RPP(T)|} {\bf x}^{\weight_\RPP(T)}, \ea
\qquad
\ba
 \wtjp_{\lambda/\mu} &:= \sum_{T\in \ShBTP(\lambda/\mu)} (-\beta)^{|\lambda/\mu|-|T|} {\bf x}^{T} 
,\\
\wtjq_{\lambda/\mu} &:= \sum_{T\in \ShBTQ(\lambda/\mu)} (-\beta)^{|\lambda/\mu| -|T|} {\bf x}^{T} .
\ea
\]
Our   goal is to prove that these power series coincide 
with the symmetric functions 
$\gp_{\lambda/\mu}$, $\gq_{\lambda/\mu}$, $\jp_{\lambda/\mu}$, and $\jq_{\lambda/\mu}$ defined in Section~\ref{skew-sect}. On setting $\mu=\emptyset$, this will establish Theorems~\ref{main-thm1} and \ref{main-thm2}. 

\subsection{Base cases and symmetry}

Our proof strategy uses an inductive algebraic argument whose base case is the following:

\begin{proposition}[{\cite[Prop.~7.5]{ChiuMarberg}; \cite[Prop.~5.2]{NakagawaNaruse}}] \label{one-part-prop}
If $\lambda$ has at most one part then 
\[
\gp_{\lambda/\mu}=\wtgp_{\lambda/\mu},\quad
\gq_{\lambda/\mu}=\wtgq_{\lambda/\mu},\quad
\jp_{\lambda/\mu}= \wtjp_{\lambda/\mu},\quand 
\jq_{\lambda/\mu}=\wtjq_{\lambda/\mu}.\]
\end{proposition}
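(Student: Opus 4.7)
My plan is to handle everything by reduction to explicit single-row computations. If $\lambda = \emptyset$, the only strict partition $\mu \subseteq \lambda$ is $\mu = \emptyset$, each combinatorial set has one empty element, and both sides of all four equalities reduce to $1$. So I may assume $\lambda = (n)$ for some $n \geq 1$, in which case $\mu \in \{\emptyset, (1), (2), \ldots, (n)\}$.

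First I would verify $\wtgp_{(n)} = \gp_{(n)}$ and $\wtgq_{(n)} = \gq_{(n)}$ (the case $\mu = \emptyset$) by specializing the Cauchy identity \eqref{cauchy-eq}. Setting $x_2 = x_3 = \cdots = 0$, the left-hand side collapses to a sum over $\lambda$ with $\ell(\lambda) \leq 1$, since $\GQ_\lambda(x_1,0,0,\dots) = 0$ otherwise. For such $\lambda$, the power series $\GQ_{(n)}(x_1)$ is tabulated by listing the possible one-row shifted set-valued tableaux with entries in $\{1',1\}$, and the resulting identity can be solved formally for the $\gp_{(n)}({\bf y})$. Comparing to the explicit generating function for $\wtgp_{(n)}$, which enumerates length-$n$ weakly increasing sequences in $\{1' < 1 < 2' < 2 < \cdots\}$ beginning with a primed entry, yields the equality. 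The analogous argument using the companion Cauchy identity gives $\wtgq_{(n)} = \gq_{(n)}$. The identities $\wtjp_{(n)} = \jp_{(n)}$ and $\wtjq_{(n)} = \jq_{(n)}$ then follow either by applying $\omega$ and performing the analogous computation, or by specializing the skew Cauchy identities \eqref{cauchy-eq4} and \eqref{cauchy-eq5}.

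Next, for $\mu = (m)$ with $1 \leq m \leq n$, the skew shape $\SD_{(n)/(m)}$ is a horizontal row of length $n - m$ with no diagonal box. The defining coproduct identities \eqref{gp-xy-eq} and \eqref{jp-xy-eq} determine $\gp_{(n)/(m)}$, $\gq_{(n)/(m)}$, $\jp_{(n)/(m)}$, $\jq_{(n)/(m)}$ recursively in terms of the unskew cases already handled. The parallel identities for the combinatorial generating functions follow from a straightforward splitting bijection: any one-row object on $\SD_{(n)}$ with entries drawn from the concatenated alphabet in $({\bf x}, {\bf y})$ decomposes uniquely into an initial $\SD_{(m)}$-portion using only ${\bf x}$-letters and a terminal $\SD_{(n)/(m)}$-portion using only ${\bf y}$-letters. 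Matching these with the algebraic recursion yields the claim.

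The main obstacle is the case bookkeeping around the unique diagonal box $(1,1)$, which is present precisely when $\mu = \emptyset$. The $P$-versus-$Q$ distinction (no primed diagonal entries in $\wtjp$ and $\ShBTP$; no unprimed diagonal entries in $\wtgp$ and $\MRPP_P$) interacts with the weight statistic and the exponent of $\beta$, so one must carefully partition the enumeration according to the first entry and verify that each piece matches the corresponding contribution on the algebraic side. Once the single-row formulas are pinned down, everything else is routine. This is essentially the computation carried out in \cite[Prop.~5.2]{NakagawaNaruse} and \cite[Prop.~7.3]{ChiuMarberg}.
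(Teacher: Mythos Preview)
The paper does not supply its own proof of this proposition; it simply cites \cite[Prop.~7.3]{ChiuMarberg} and \cite[Prop.~5.2]{NakagawaNaruse} and moves on. Your outline is consistent with the computations carried out in those references: specialize the relevant Cauchy identity to a single $x$-variable to pin down the one-part straight-shape functions, then use the coproduct relations \eqref{gp-xy-eq} and \eqref{jp-xy-eq} together with the obvious one-row splitting to deduce the skew cases. One small caution: the alternative you mention of ``applying $\omega$'' to pass from the $\gp$/$\gq$ identities to the $\jp$/$\jq$ identities is not self-contained, since it would require independently knowing that $\omega(\wtgp_{(n)}) = \wtjp_{(n)}$ as combinatorial generating functions; the route through the Cauchy identities \eqref{cauchy-eq4}--\eqref{cauchy-eq5} (with $\mu=\nu=\emptyset$ and a single $x$-variable) avoids this and is the cleaner option.
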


 Our inductive step relies on the following consequence of Theorem~\ref{thm:simple transposition}:
 
\begin{theorem}\label{j-sym-thm}
The power series $\wtjp_{\lambda/\mu}$ and $\wtjq_{\lambda/\mu}$ are symmetric in the $x_i$ variables.
\end{theorem}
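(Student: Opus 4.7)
The plan is to reduce to the Bender--Knuth framework built in Section~\ref{bk-sect}: it suffices to show, for each $i \geq 1$, that $\wtjp_{\lambda/\mu}$ and $\wtjq_{\lambda/\mu}$ are invariant under the transposition $x_i \leftrightarrow x_{i+1}$. To this end I will define a shape- and $|T|$-preserving involution $\sigma_i$ on $\ShBTQ(\lambda/\mu)$ (and, separately, on $\ShBTP(\lambda/\mu)$) that exchanges the exponents of $x_i$ and $x_{i+1}$ in ${\bf x}^T$. The involution $\sigma_i$ will be a ``localized'' application of $\tauShBT$ from Theorem~\ref{thm:simple transposition}.

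Given $T \in \ShBTQ(\lambda/\mu)$, let $S \subseteq \SD_{\lambda/\mu}$ be the \emph{active region} consisting of cells $(r,c)$ with $T_{r,c} \in \{i', i, (i+1)', i+1\}$. Because every cell of a single bar of $T$ carries the same entry, each bar of $T$ lies entirely inside or entirely outside $S$. A standard level-set argument---the shifted analogue of the familiar fact for ordinary semistandard tableaux---then shows that $S$ has the form $\SD_{\nu/\kappa}$ for strict partitions $\kappa \subseteq \nu$: for each half-integer $k$ one checks that $\SD_\mu \cup \{(r,c) \in \SD_{\lambda/\mu} : T_{r,c} \leq k\}$ is a shifted diagram $\SD_{\nu^{(k)}}$ of some strict partition $\nu^{(k)}$, by using column weak-increase to verify that the rightmost column occupied in row $r$ is at least the rightmost column occupied in row $r+1$. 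Taking the difference of the level sets at $k = i+1$ and at $k = i-1$ realizes $S$ as $\SD_{\nu/\kappa}$.

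Restricting $T$ to $S$ and relabeling $i \mapsto 1$, $i' \mapsto 1'$, $i+1 \mapsto 2$, $(i+1)' \mapsto 2'$ yields a semistandard ShBT on $\SD_{\nu/\kappa}$ with all entries in $\{1', 1, 2', 2\}$, to which I apply $\tauShBT$. Relabeling the result back and reinserting the frozen bars of $T$ outside $S$ defines $\sigma_i(T)$, which still has shape $\lambda/\mu$. By Theorem~\ref{thm:simple transposition}, $\tauShBT$ is a shape-preserving involution that preserves the total number of bars while interchanging the counts of bars with entries in $\{1', 1\}$ and $\{2', 2\}$; translating through the relabeling, $\sigma_i$ is an involution on $\ShBTQ(\lambda/\mu)$ that preserves $|T|$ and exchanges the contributions of $x_i$ and $x_{i+1}$ to ${\bf x}^T$. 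This establishes the symmetry of $\wtjq_{\lambda/\mu}$.

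The identical construction handles $\wtjp_{\lambda/\mu}$ provided $\sigma_i$ preserves the subset $\ShBTP(\lambda/\mu)$. Here the second assertion of Theorem~\ref{thm:simple transposition} is essential: $\tauShBT$ preserves the number of primed entries on the main diagonal of the sub-tableau. If $T \in \ShBTP$, the diagonal cells of $T$ lying in $S$ are all unprimed, so this count is zero and remains zero after $\tauShBT$; the diagonal cells outside $S$ are untouched by $\sigma_i$. Hence $\sigma_i(T) \in \ShBTP(\lambda/\mu)$ as required. The main obstacle---already resolved by the careful case analysis of Section~\ref{bk-sect}---is precisely this diagonal bookkeeping; once the shifted skew-shape property of $S$ is granted, the remainder of the argument is formal.
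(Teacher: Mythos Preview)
Your proof is correct and takes essentially the same approach as the paper: both restrict to the subtableau on entries $\{i',i,(i+1)',i+1\}$, apply the Bender--Knuth involution $\tauShBT$ of Theorem~\ref{thm:simple transposition}, and use its diagonal-prime-preserving property to handle $\wtjp_{\lambda/\mu}$. Your version is somewhat more explicit than the paper's in verifying that the active region $S$ is a shifted skew shape $\SD_{\nu/\kappa}$ via level sets, a detail the paper leaves implicit.
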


\begin{proof}
The \defn{weight} of a general ShBT is the tuple $a=(a_1,a_2,\dots)$ where $a_i$ is the number of bars that contain either $i$ or $i'$.
Let $|a| = a_1+a_2+\dots$.
The coefficient of $(-\beta)^{|\lambda/\mu|-|a|} x_1^{a_1} \cdots x_i^{a_i} x_{i + 1}^{a_{i + 1}} \cdots$ in $\wtjq_{\lambda/\mu}$ is the number of semistandard ShBT's of shape $\lambda/\mu$ with weight $a=(a_1,a_2,\dots)$.  By the first part of Theorem~\ref{thm:simple transposition}, applying the map $\tauShBT$ to the subtableaux generated by the entries $i', i, i' + 1, i + 1$ gives a bijection with the ShBT's counted by the coefficient of $(-\beta)^{|\lambda/\mu|-|a|} x_1^{a_1} \cdots x_i^{a_{i + 1}} x_{i + 1}^{a_{i}} \cdots$.  Since any finitely supported permutation is a product of adjacent transpositions, it follows that $\wtjq_{\lambda/\mu}$ is symmetric 
in the $x_i$ variables.

The second part of Theorem~\ref{thm:simple transposition} ensures that the map $\tauShBT$ restricts to an involution on the subset $\ShBTP(\lambda/\mu)\subseteq\ShBTQ(\lambda/\mu)$ consisting of those ShBT's with no primed diagonal entries.  
Thus $\wtjp_{\lambda/\mu}$ is symmetric by the same argument. 
\end{proof}

In the next three subsections we derive a Pieri rule for the $\wtjp$- and $\wtjq$-functions,
in order to prove that $\wtjp_{\lambda/\mu}=\jp_{\lambda/\mu}$ and $\wtjq_{\lambda/\mu}=\jq_{\lambda/\mu}$.
We then use these results to derive the analogous identities for $\gp_{\lambda/\mu}$ and $\gq_{\lambda/\mu}$ by a formal argument
in Section~\ref{rpp-gf-sect}.
 
\subsection{Product formulas and one-row identities}

There is an easy way to express 
$\wtjq_{\lambda/\mu}\wtjq_{\nu/\kappa}$ and $\wtjp_{\lambda/\mu}\wtjq_{\nu/\kappa} $
as sums of other functions indexed by skew shapes.
 Suppose $\rho$ and $\tau$ are finite, nonempty subsets of $\{1,2,3,\dots\}\times \{1,2,3,\dots\}$.
 Let 
 \[ i = \min \{ a : (a,b) \in \rho \text{ for some } b\}\quand j = \max\{ b : (i,b) \in \rho\}.\]
 In French notation, $(i, j)$ is the box of $\rho$ in its bottom row that is farthest to the right.
 Then let
 \[ k = \max \{ a : (a,b) \in \tau \text{ for some } b\}\quand l = \min\{ b : (k,b) \in \tau\}.\]
  In French notation, $(k, l)$ is the box of $\tau$ in its top row that is farthest to the left.
  Form a new diagram $ \rho \vartriangleright \tau$ by translating the sets $\rho$ and $\tau$  so that $(i,j)$ is directly to the left of $(k,l)$  in the same row;
form $ \rho \vartriangleleft \tau$ by translating the  two sets  so that $(i,j)$ is directly above $(k,l)$  in the same column;
and form $ \rho \circ \tau$ by translating the two sets  so that the boxes $(i,j)$ are $(k,l)$ coincide.
If \[\Delta_1 := k-i\quand \Delta_2 := \Delta_1+ j+1 - l\] 
then these shapes are defined precisely by setting 
 \[ 
 \ba
 \rho \vartriangleright \tau &:= \Bigl[ \rho +  ( \Delta_1 ,\Delta_1) \Bigr]\sqcup \Bigl[ \tau + (0,\Delta_2)\Bigr],
 \\
 \rho \vartriangleleft \tau &:=\Bigl[ \rho +  ( \Delta_1+1 ,\Delta_1+1) \Bigr]\sqcup \Bigl[ \tau + (0,\Delta_2)\Bigr],
  \\
 \rho \circ \tau &:= \Bigl[ \rho +  ( \Delta_1 ,\Delta_1) \Bigr] \cup \Bigl[ \tau + (0,\Delta_2-1)\Bigr].
 \ea
   \]
The third union is not disjoint. All three shapes have the same number of diagonal boxes as $\rho$. 
   
   \begin{example}\label{our-two-shapes-ex}
   If our two shapes are
   \[
   \rho = \SD_{(3,2)}=
   \ytabsmall{
   \none & \ & \ \\ 
 \ & \ & \
   }
\qquand
      \tau = \D_{(2,2)/(1)}=
   \ytabsmall{
     \ & \ \\ 
\none[\cdot] & \
   }
   \] then we have $(i,j) = (1,3)$ and $(k,l) = (2,1)$, so $\Delta_1 = 1$ and $\Delta_2 = 4$.  Thus 
%
  \[
 \rho \vartriangleright \tau=
    \ytabsmall{
\none &   \none & *(pink) & *(pink) \\ 
\none &  *(pink) & *(pink) & *(pink) & *(cyan) & *(cyan) \\
 \none[\cdot] & \none[\cdot] & \none[\cdot] & \none[\cdot] & \none[\cdot] & *(cyan)
   },
   \quad
   \rho \vartriangleleft \tau=
    \ytabsmall{
  \none& \none&  \none & *(pink) & *(pink) \\ 
 \none& \none& *(pink) & *(pink) & *(pink)  \\
 \none& \none[\cdot] & \none[\cdot] & \none[\cdot] & *(cyan) & *(cyan) \\
  \none[\cdot] & \none[\cdot]  &  \none[\cdot] & \none[\cdot] & \none[\cdot] & *(cyan)
   },
   \quand
    \rho \circ \tau=
    \ytabsmall{
\none &   \none & *(pink) & *(pink) \\ 
\none &  *(pink)  & *(pink) & *(magenta) & *(cyan) \\
 \none[\cdot] & \none[\cdot] &  \none[\cdot] & \none[\cdot] & *(cyan)
   }.
   \]
   \end{example}

Suppose $\rho = \SD_{\lambda/\mu}$ 
for strict partitions $\mu \subsetneq \lambda$ and $\tau = \D_{\nu/\kappa}$ for
  arbitrary partitions
 $\kappa \subsetneq \nu$.
We may view $\tau = \D_{\nu/\kappa}$ as a translation of a shifted skew shape:
if
$m:=\ell(\nu)-1$
and $\delta:=(m,\dots,1,0)$, then
$\tau$ is the tableau formed by translating   $\SD_{(\nu+\delta)/(\kappa+\delta)}$ to the left by $m$ columns.  
In this case, we define $\ShBTQ(\tau) := \ShBTQ((\nu+\delta)/(\kappa+\delta))$ and 
$\wtjq_{\tau} := \wtjq_{(\nu+\delta)/(\kappa+\delta)}$.

\begin{proposition}\label{easy-prop}
Suppose $\rho$ is a nonempty shifted skew shape and $\tau$ is a nonempty unshifted skew shape, as above. Then
 $ \rho \vartriangleright \tau$, $ \rho \vartriangleleft \tau$, and $ \rho \circ \tau$
are shifted skew shapes, and
it holds that 
\[\ba 
\wtjq_{\rho}\wtjq_\tau &= \wtjq_{\rho \vartriangleright \tau} + \wtjq_{\rho \vartriangleleft \tau} + \beta \wtjq_{\rho \circ \tau},
\\
\wtjp_{\rho}\wtjq_\tau &= \wtjp_{\rho \vartriangleright \tau} + \wtjp_{\rho \vartriangleleft \tau} + \beta \wtjp_{\rho \circ \tau}.
\ea\]
\end{proposition}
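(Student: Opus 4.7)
The plan is to prove the identities via two explicit bijections. Let $p$ and $q$ denote the rightmost box of $\rho$'s bottom row and the leftmost box of $\tau$'s top row, in their translated positions within the combined shape under consideration; these are horizontally adjacent in $\rho \vartriangleright \tau$, vertically adjacent in $\rho \vartriangleleft \tau$, and identified in $\rho \circ \tau$. The preliminary step is to verify directly from the defining translation formulas that each of $\rho \vartriangleright \tau$, $\rho \vartriangleleft \tau$, and $\rho \circ \tau$ is a shifted skew shape, by checking that every row of the combined figure is a contiguous range of columns (the shared row is obtained by concatenation in $\rho \vartriangleright \tau$ or by overlapping in $\rho \circ \tau$) and that the shifted diagonal condition is inherited from $\rho$ and $\tau$.

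I then classify tableaux on each combined shape by the behavior at the junction. Call a tableau on $\rho \vartriangleright \tau$ or $\rho \vartriangleleft \tau$ \emph{split} if $p$ and $q$ lie in distinct bars and \emph{merged} otherwise; a merged tableau on $\rho \vartriangleright \tau$ necessarily has $p,q$ in one horizontal bar with unprimed entry, while a merged tableau on $\rho \vartriangleleft \tau$ has them in one vertical bar with primed entry. The first bijection $\Phi$ sends a pair $(T_\rho, T_\tau) \in \ShBTQ(\rho) \times \ShBTQ(\tau)$ to a split tableau by comparing $a := T_\rho(p)$ with $b := T_\tau(q)$: place the pair side by side in $\rho \vartriangleright \tau$ when $a < b$ or when $a = b$ is unprimed, and vertically in $\rho \vartriangleleft \tau$ otherwise, in each case keeping all bars of the pair separate. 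The delicate check is cross-row and cross-column semistandardness: if some primed value $j'$ appeared both in the bottom row of $T_\rho$ and in the top row of $T_\tau$, then semistandardness of each individual tableau would force $a \geq j' \geq b$, ruling out the routing to $\rho \vartriangleright \tau$; the analogous argument rules out unprimed repetitions in the shared column of $\rho \vartriangleleft \tau$. The inverse of $\Phi$ is restriction to the $\rho$- and $\tau$-parts.

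The second bijection $\Psi$ matches merged tableaux on $\rho \vartriangleright \tau$ (respectively $\rho \vartriangleleft \tau$) with tableaux on $\rho \circ \tau$ whose shared box lies in a bar with unprimed (respectively primed) entry; together these cases exhaust $\ShBTQ(\rho \circ \tau)$, since every bar is classified by the primed or unprimed nature of its entry, including singleton bars. The collapse shrinks the merged bar by one box while preserving the number of bars and the monomial ${\bf x}^T$, so the weight ratio is exactly $-\beta$. Consequently, the sum of the merged-tableau generating functions on $\rho \vartriangleright \tau$ and $\rho \vartriangleleft \tau$ equals $-\beta\,\wtjq_{\rho \circ \tau}$. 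Combining this with the identity $\wtjq_\rho \wtjq_\tau = (\text{sum over split tableaux on }\rho \vartriangleright \tau) + (\text{sum over split tableaux on }\rho \vartriangleleft \tau)$ coming from $\Phi$, and the obvious split/merged decomposition of $\wtjq_{\rho \vartriangleright \tau}$ and $\wtjq_{\rho \vartriangleleft \tau}$, yields the product formula. The identity for $\wtjp_\rho \wtjq_\tau$ follows by the same argument because the diagonal boxes of each of the three combined shapes coincide with those of $\rho$ (after its shift), since $\tau$'s diagonal boxes are moved off the diagonal under the translation by $\Delta_2 \geq 1$ columns; hence the no-primed-diagonal constraint on $\rho$ transfers to the combined shape unchanged under both $\Phi$ and $\Psi$. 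The main obstacle is the cross-constraint bookkeeping in $\Phi$ that ensures the combined tableau is genuinely semistandard.
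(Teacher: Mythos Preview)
Your argument is correct and follows essentially the same route as the paper's proof: both split $\ShBTQ(\rho\vartriangleright\tau)\sqcup\ShBTQ(\rho\vartriangleleft\tau)$ according to whether the two junction boxes lie in the same bar, identify the ``split'' part with $\ShBTQ(\rho)\times\ShBTQ(\tau)$ via the same $a$-versus-$b$ comparison you describe, and identify the ``merged'' part with $\ShBTQ(\rho\circ\tau)$ by collapsing the duplicated box. Your write-up is more explicit than the paper's about verifying cross-row and cross-column semistandardness at the junction, which is a nice touch.

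One small inaccuracy: the inequality $\Delta_2\geq 1$ need not hold in general (for instance, if $\tau$ is a single box far to the right in its own coordinates), and in any case that inequality alone would only show that boxes of $\tau$ of the form $(a,a)$ move off the diagonal, not that \emph{no} $\tau$-box lands on the diagonal of the combined shape. The correct argument is that every box $(a,b)\in\tau$ satisfies $b\geq l$ (since in an unshifted skew shape the top row has the leftmost start), whence its image $(a,b+\Delta_2)$ has column $b+\Delta_2\geq l+\Delta_2 = k + (j-i) + 1 > k \geq a$, so it lies strictly right of the diagonal. This is what the paper encapsulates in the remark that ``all three shapes have the same number of diagonal boxes as $\rho$.''
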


\begin{proof}
The following argument is similar to the proof of \cite[Lem.~9.12]{LamPyl}.
Checking that $ \rho \vartriangleright \tau$, $ \rho \vartriangleleft \tau$, and $ \rho \circ \tau$
are shifted skew shapes is straightforward.
Suppose $T \in \ShBTQ(\rho)$ and $U \in \ShBTQ(\tau)$.
Let $T \vartriangleright U $ and $T \vartriangleleft U$ be the shifted bar tableaux of shapes $\rho \vartriangleright \tau$
and $\rho \vartriangleleft \tau$ given by translating the boxes of $T$ and $U$ in the obvious way.

If $T_{ij} < U_{kl}$ or $T_{ij} = U_{kl}\in \ZZ$
then we have $T \vartriangleright U  \in \ShBTQ(\rho \vartriangleright \tau)$,
$|T \vartriangleright U| = |T|+|U|$, and $x^{T \vartriangleright U}  = x^T x^U$.
On the other hand,
if $T_{ij} > U_{kl}$ or $T_{ij} = U_{kl}\in \ZZ'$
then it holds that $T \vartriangleleft U  \in \ShBTQ(\rho \vartriangleleft \tau)$,
 $|T \vartriangleleft U| = |T|+|U|$, and $x^{T \vartriangleleft U}  = x^T x^U$.

Merging the 
boxes of $\rho \vartriangleright \tau $ or $\rho \vartriangleleft \tau$ corresponding to $(i,j) \in \rho$ and $(k,l) \in \tau$
gives a bijection from the set of elements in $ \ShBTQ(\rho \vartriangleright \tau)\sqcup \ShBTQ(\rho \vartriangleleft \tau)$
that do not arise from one of these cases (namely, those in which the boxes that are adjacent but of different colors in Example~\ref{our-two-shapes-ex} belong to the same bar) to  $ \ShBTQ(\rho \circ \tau)$.
The expansion for $\wtjq_{\rho}\wtjq_\tau $ then follows.
Under these bijections, the diagonal boxes in $T \vartriangleleft U$ and $T \vartriangleright U$ are filled with the same entries as the diagonal boxes in $T$, so the formula for $\wtjp_{\rho}\wtjq_\tau $ also follows.
\end{proof}

We write 
$\wtjp_n := \wtjp_{(n)}$ and
$\wtjq_n := \wtjq_{(n)}$ when $n>0$,
and set $
\wtjp_0 = \wtjq_0 := 1$.

\begin{corollary}\label{one-row-rules-cor}
Suppose $\lambda$ is a nonempty strict partition and $n$ is a positive integer. Then 
\[ 
\ba
\wtjq_\lambda \wtjq_n  &= \wtjq_{(n+\lambda_1,\lambda_2,\lambda_3,\dots)} +\beta \wtjq_{(n+\lambda_1-1,\lambda_2,\lambda_3,\dots)}
+ \wtjq_{(n+\lambda_1,\lambda_1,\lambda_2,\lambda_3,\dots)/(\lambda_1)},
\\
\wtjp_\lambda \wtjq_n  &= \wtjp_{(n+\lambda_1,\lambda_2,\lambda_3,\dots)} +\beta \wtjp_{(n+\lambda_1-1,\lambda_2,\lambda_3,\dots)}
+ \wtjp_{(n+\lambda_1,\lambda_1,\lambda_2,\lambda_3,\dots)/(\lambda_1)}.
\ea
\]
\end{corollary}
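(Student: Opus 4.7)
The plan is to apply Proposition~\ref{easy-prop} with $\rho := \SD_\lambda$ and $\tau := \D_{(n)}$, and then identify the three resulting shapes. Since $\rho$ is a (non-skew) shifted diagram and $\tau$ is a one-row unshifted diagram, both fall within the hypotheses of the proposition. Moreover, under the convention in the paragraph preceding Proposition~\ref{easy-prop}, we have $\wtjq_\tau = \wtjq_{(n)/\varnothing} = \wtjq_n$, so the left-hand sides agree.

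First I would read off the coordinates used in the definitions of $\rho \vartriangleright \tau$, $\rho \vartriangleleft \tau$, and $\rho \circ \tau$. The rightmost box in the bottom row of $\rho = \SD_\lambda$ is $(i,j) = (1,\lambda_1)$, and the leftmost box in the top row of $\tau = \D_{(n)}$ is $(k,l) = (1,1)$. Hence $\Delta_1 = 0$ and $\Delta_2 = \lambda_1$. A direct substitution into the formulas for the three constructions gives
\[
\rho \vartriangleright \tau = \SD_{(n+\lambda_1,\lambda_2,\lambda_3,\dots)}, \qquad
\rho \vartriangleleft \tau = \SD_{(n+\lambda_1,\lambda_1,\lambda_2,\lambda_3,\dots)/(\lambda_1)}, \qquad
\rho \circ \tau = \SD_{(n+\lambda_1-1,\lambda_2,\lambda_3,\dots)}.
\]
Each of the tuples appearing here is a strict partition because $\lambda$ is strict and $n \geq 1$ (so $n+\lambda_1 > \lambda_1 > \lambda_2$ and $n+\lambda_1 - 1 \geq \lambda_1 > \lambda_2$), so each shape is a valid shifted (skew) shape. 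Plugging these identifications into the first equality of Proposition~\ref{easy-prop} yields the stated formula for $\wtjq_\lambda \wtjq_n$.

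For the $\wtjp$ identity, I would note that Proposition~\ref{easy-prop} also asserts the $\wtjp$ version under the same constructions; the only thing to check is that the restriction ``no primed diagonal entry'' matches up correctly on both sides. The diagonal boxes of $\SD_\lambda$ are $(a,a)$ for $1 \leq a \leq \ell(\lambda)$. Under the translation $(i,j) \mapsto (i,j)$ used to form $\rho \vartriangleright \tau$ and $\rho \circ \tau$, these stay on the diagonal of the outer shape; under the translation $(i,j) \mapsto (i+1,j+1)$ used to form $\rho \vartriangleleft \tau$, they become $(a+1,a+1)$, which are exactly the diagonal boxes of the skew shape $(n+\lambda_1,\lambda_1,\lambda_2,\dots)/(\lambda_1)$. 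So ``no primed diagonal entries'' on $T$ is equivalent to ``no primed diagonal entries'' on each of the three output tableaux, and the $\wtjp$ identity follows.

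There is no real obstacle: the result is a direct specialization of Proposition~\ref{easy-prop}; the only work is the shape arithmetic and the diagonal bookkeeping just described.
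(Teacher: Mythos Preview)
Your proposal is correct and takes exactly the same approach as the paper: the paper's proof is the single line ``Take $\rho = \SD_\lambda$ and $\tau = \D_{(n)}$ in Proposition~\ref{easy-prop},'' and you have simply written out the shape arithmetic explicitly. Your additional diagonal bookkeeping for the $\wtjp$ case is fine but unnecessary, since that is already handled inside the proof of Proposition~\ref{easy-prop}.
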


\begin{proof}
Take $\rho = \SD_\lambda$ and $\tau = \D_{(n)}$ in Proposition~\ref{easy-prop}.
\end{proof}

Given $f \in \ZZ[\beta]\llbracket x_1,x_2,\dots\rrbracket $ form $f(t) \in \ZZ[\beta]\llbracket t\rrbracket $ by substituting $x_1\mapsto t$ and $x_i \mapsto 0$ for all $i>0$.
 
\begin{proposition}\label{one-var-prop}
One has $\wtjp_1(t) = t$ and $\wtjq_1(t) = 2t$, and 
if $n\geq 2$ is an integer then
\[\wtjp_n(t) =t(t-\beta)^{n-1} \quand \wtjq_n(t) = (2t^2-\beta t)(t-\beta)^{n-2}.\] 
\end{proposition}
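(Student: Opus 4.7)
The plan is to evaluate each generating function by directly enumerating the relevant shifted bar tableaux of shape $(n)$ whose only entries are $1$ and $1'$, since setting $x_i=0$ for $i>1$ eliminates all other contributions. Since the shape $(n)$ is a single row whose leftmost box lies on the main diagonal, the semistandard condition (with the order $1' < 1$) forces any $1'$ entries to precede all $1$ entries, and the rule that no primed number can repeat in a row allows at most one $1'$. Thus the underlying filling falls into one of two classes:
\begin{itemize}
\item[(A)] all $n$ boxes contain $1$, or
\item[(B)] the leftmost (diagonal) box contains $1'$ and the remaining $n-1$ boxes contain $1$.
\end{itemize}

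For each such filling I would then count the allowable bar decompositions. Bars of an unprimed entry are horizontal contiguous runs in one row, and bars of a primed entry are vertical contiguous runs in one column; in a single-row shape, this means a run of $1$'s may be split into any number of contiguous bars, while an isolated $1'$ is forced to be a one-box bar of its own.

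For $\wtjp_n(t)$: the no-primed-diagonal-entries condition excludes class (B), so only class (A) contributes. A decomposition of the row into $k$ bars of $1$'s corresponds to a choice of $k-1$ dividers among $n-1$ slots, giving $\binom{n-1}{k-1}$ tableaux, each contributing $(-\beta)^{n-k}t^k$. The binomial theorem then gives
\[
\wtjp_n(t) = \sum_{k=1}^{n}\binom{n-1}{k-1}(-\beta)^{n-k}t^k = t\sum_{j=0}^{n-1}\binom{n-1}{j}(-\beta)^{n-1-j}t^j = t(t-\beta)^{n-1},
\]
which also yields $\wtjp_1(t)=t$ at $n=1$.

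For $\wtjq_n(t)$: class (A) contributes the same sum $t(t-\beta)^{n-1}$. For $n\geq 2$, class (B) contributes the $1'$ as one bar plus any decomposition of the remaining $n-1$ boxes into $k$ bars of $1$'s, so the total number of bars is $k+1$ (each still containing $1$ or $1'$) and the contribution is
\[
\sum_{k=1}^{n-1}\binom{n-2}{k-1}(-\beta)^{n-1-k}t^{k+1} = t^2(t-\beta)^{n-2}
\]
by the same binomial identity. Adding the two classes and factoring gives
\[
\wtjq_n(t) = t(t-\beta)^{n-1} + t^2(t-\beta)^{n-2} = (t-\beta)^{n-2}\bigl[t(t-\beta)+t^2\bigr] = (2t^2-\beta t)(t-\beta)^{n-2},
\]
as claimed. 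Finally, for $n=1$ class (B) contributes the single tableau $1'$ with weight $t$, so $\wtjq_1(t) = t+t = 2t$, matching the stated formula. No step presents a serious obstacle; the argument is a routine enumeration paired with a binomial identity.
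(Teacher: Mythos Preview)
Your proof is correct and is exactly the ``straightforward check from the definitions'' that the paper alludes to without writing out: you correctly identify that only the fillings $1^n$ and $1'\,1^{n-1}$ survive the specialization, count their bar decompositions via a binomial sum, and add the contributions. There is nothing to add.
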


\begin{proof}
These identities are straightforward to check   from the definitions.
\end{proof}

\subsection{Bar tableau generating functions with diagonal primes}
 
 \def\nnu{\Psi}
 \def\kkappa{\Lambda}
 \def\llambda{\Gamma}
 
 \def\aparam{\mathsf{sc}}
 \def\bparam{\mathsf{mc}}
 \def\cparam{\mathsf{fb}}
 \def\dparam{\mathsf{res}}

Let $\kkappa = (\kkappa_1,\kkappa_2,\kkappa_3,\dots)$ be a strict partition and define $\llambda = (\kkappa_2,\kkappa_3,\dots)$.
 Then $\SD_{\kkappa/\nu}$ is a (not necessarily connected) shifted ribbon if and only if $\nu$ contains $\llambda$ and is contained in $\kkappa$.
 We fix one such strict partition $\nnu = (\nnu_1,\nnu_2,\dots)$ with  $\llambda \subseteq \nnu \subseteq \kkappa$.
 Throughout this subsection, the capitalized Greek letters $\kkappa$, $\llambda$, and $\nnu$ will denote these fixed partitions,
 and we will use the typical lower case symbols $\lambda$, $\mu$, $\nu$, etc., for partitions that may be arbitrary.

 Define a \defn{forced box} in  $ \SD_{\kkappa/\nnu}$ to be a position $(i,j) $
such that  $(i+1,j),(i+1,j-1) \in \SD_{\kkappa/\nnu}$ or $(i,j-1),(i+1,j-1) \in \SD_{\kkappa/\nnu}$.
These are the boxes containing $\bullet$ in 
$\ytabsmall{ \ & \ \\ \none & \bullet } $ and $\ytabsmall{\ & \none \\ \ & \bullet }$.
For this subsection, we also fix the meaning of certain integer parameters $\aparam$, $\bparam$, $\cparam$, $\dparam$.
Let $\aparam$ be the number of \underline{s}ingleton \underline{c}onnected components in $\SD_{\kkappa/\nnu}$
(i.e., components with only one box), let $\bparam$ be the number of \underline{m}ultiple box \underline{c}onnected components,
 let $\cparam$ be the number of \underline{f}orced \underline{b}oxes in $\SD_{\kkappa/\nnu}$,
and let $\dparam = |\kkappa| - |\nnu| - \aparam - 2\bparam-\cparam+2$ be a \underline{res}idual value. We always have  $\dparam\geq 2$.

\begin{lemma}\label{jq-lem1}
We have $ \wtjq_{\kkappa/\nnu}(t) =   (2t^2  -\beta t)  (t-\beta)^{\dparam-2}   2^{\aparam}   t^{\aparam+\bparam+\cparam-1}  (2t  -\beta)^{\bparam-1} $.
\end{lemma}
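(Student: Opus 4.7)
The plan is to enumerate semistandard shifted bar tableaux of $\SD_{\kkappa/\nnu}$ with entries in $\{1', 1\}$ directly. Because each bar is contiguous and the semistandard conditions are local, $\wtjq_{\kkappa/\nnu}(t)$ factors over the connected components of $\SD_{\kkappa/\nnu}$. A singleton component contributes $2t$ (the box is either $1$ or $1'$, each a one-box bar), which combines to the $2^{\aparam}\,t^{\aparam}$ part of the formula. It remains to show that each multi-box connected component with $s$ boxes and $c$ forced boxes contributes $(2t^2 - \beta t)(t - \beta)^{s - 2 - c}\, t^c$; taking the product over the $\bparam$ multi-box components then gives the rest of the formula.

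For a multi-box component $C$, with rows labeled $r_{\min} < \cdots < r_{\max}$ so that row $r$ occupies columns $[a_r, b_r]$ with $a_r = b_{r+1}$, I would first observe that every box $(r, c) \in C$ is either leftmost in its row (i.e.\ $c = a_r$) or else topmost in its column (since $c > a_r$ implies $c > b_{r+1}$, so no box in column $c$ lies in row $r+1$ or higher). The unique box that is simultaneously leftmost and topmost is the corner $(r_{\max}, a_{r_{\max}})$; all other leftmost-in-row boxes are forced to $1'$ by the ``at most one unprimed per column'' condition, and all non-leftmost-in-row boxes are forced to $1$ by the ``at most one primed per row'' condition, leaving the corner box as the sole binary choice. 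The paper's forced boxes are precisely the leftmost hinges $(r, a_r)$ with $w_{r+1} \ge 2$ (Case 1) together with the second-leftmost boxes $(r, a_r + 1)$ with $r < r_{\max}$ and $w_r \ge 2$ (Case 2).

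With the entries determined up to the binary choice at the corner, a bar decomposition amounts to independently partitioning each row's maximal $1$-run and each multi-box column's maximal $1'$-run into contiguous sub-bars, and a run of length $n$ contributes $t(t - \beta)^{n-1}$ to the generating function. Summing over the two corner choices and multiplying the row and column contributions, the result takes the form $t(2t - \beta)(t-\beta)^{\gamma}\cdot F$, where $\gamma$ and $F$ are explicit in terms of the row widths $w_r$ and column heights $m_c$ of $C$. Matching this against $(2t^2 - \beta t)(t - \beta)^{s - 2 - c}\, t^c$ amounts to verifying (i) that the $t$-exponent of $F$ equals $c$, which follows by directly counting the Case 1 and Case 2 forced boxes, and (ii) that the $(t-\beta)$-exponent identity reduces to the combinatorial identity $k + M = s + 1$, where $k$ and $M$ are respectively the number of rows and the number of distinct columns of $C$. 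This last identity holds because the bipartite incidence graph of rows versus columns (with boxes as edges) is a tree: any cycle would contain two boxes $(i_1, j_1), (i_2, j_2)$ with $i_1 < i_2$ and $j_1 < j_2$, contradicting the shifted ribbon condition. The main obstacle is the case analysis: depending on whether $w_{r_{\max}} \ge 2$ or $w_{r_{\max}} = 1$, the top-left column has either $1$ or $\ge 2$ boxes, and the corner box contribution behaves differently; but both subcases collapse to the same form $t(2t-\beta)(t-\beta)^\gamma$, after which the tree identity makes the exponent reconciliation mechanical.
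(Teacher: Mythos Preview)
Your approach is correct and lands on the same per-component target $(2t^2-\beta t)\,t^{c}\,(t-\beta)^{s-2-c}$ as the paper, but your route to it is considerably more elaborate than necessary. The paper also factors over connected components and observes that the corner box is the unique box with a free $\{1',1\}$ choice, but then it simply walks through the boxes in row-reading order: the first box of a multi-box component must start a bar and has two entry choices; the second box has its entry forced and must start a new bar precisely when it differs from the first (so the first two boxes together contribute $t\cdot t + t\cdot(t-\beta)=2t^2-\beta t$); every forced box must start a new bar (contributing $t$); and every remaining box has the same entry as its row-reading predecessor and may freely start or continue a bar (contributing $t-\beta$). This gives the formula in one line, with no run decomposition, no case split on $w_{r_{\max}}$, and no need for the tree identity $k+M=s+1$.

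Your run decomposition and bipartite tree argument are valid and pleasant in their own right (the tree identity is the standard ``a ribbon on $s$ boxes with $k$ rows and $M$ columns satisfies $k+M=s+1$'' fact), but they are doing extra work that the paper's box-by-box accounting avoids. In particular, your sketch still leaves the verification that the two corner-choice cases combine to exactly $t(2t-\beta)(t-\beta)^{\gamma}\cdot t^{c}$ as an exercise with a nontrivial case analysis; the paper's argument makes this immediate because the corner choice only interacts with the single box following it.
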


\begin{proof}
Consider the tableaux in  $\ShBTQ(\kkappa/\nnu)$ with all entries in  $\{1' ,1\}$, with boxes ordered in the usual row-reading order.
The first entry in each connected component may be arbitrarily primed or unprimed, while the entries in all other boxes (ignoring bars) are uniquely determined by the shape.  
In each connected component, the first box must be the beginning of a new bar, as must every forced box (if any are present).  In a connected component with multiple boxes,
 if the first two boxes are in the same row (respectively, column) then the second box is only required to start a new bar if the first box is primed (respectively, unprimed). Any boxes not included in these cases
 may either start a new bar or continue the bar of its predecessor.
Comparing these observations with the definition  $ \wtjq_{\kkappa/\nnu}$, we conclude 
 that 
 $ \wtjq_{\kkappa/\nnu}(t) = (2t)^{\aparam}  (t^2 + t(t-\beta))^{\bparam}       t^{\cparam}    (t-\beta)^{|\kkappa|-|\nnu| - 2 \aparam-\bparam-\cparam}$, which gives the result after rearranging terms.
\end{proof}

By Proposition~\ref{one-var-prop} the set $\{\wtjq_n(t): n\geq 0\}$ is a homogeneous $\QQ[\beta]$-basis
for  $\QQ[\beta,t]$,
and so
\be\label{yyy-eq}
 \wtjq_{\kkappa/\nnu}(t)  = \sum_{n\geq 0} \yyy{\kkappa}{\nnu}{n}  \cdot \beta^{|\kkappa|-|\nnu| - n} \cdot \wtjq_n(t)
 \ee for a unique choice of numbers $\yyy{\kkappa}{\nnu}{n} \in \QQ$. 
Recall that $\nnu$ is a strict partition with $\llambda \subseteq \nnu \subseteq \kkappa$.

\begin{corollary} \label{jq-cor1}
 The coefficients $\yyy{\kkappa}{\nnu}{n}$ are all nonnegative integers with the following properties:
 \ben
 \item[(a)] If $ 0=n< |\kkappa|-|\nnu|$  or  $n >|\kkappa|-|\nnu|$   then $\yyy{\kkappa}{\nnu}{n} =  0$.

 \item[(b)] If $0 < n = |\kkappa| - |\nnu|$ then $\yyy{\kkappa}{\nnu}{n} =  2^{ \aparam+\bparam-1}$.

 \item[(c)] If $0<n = |\kkappa| - |\nnu| - 1$ then $\yyy{\kkappa}{\nnu}{n} = 2^{\aparam+\bparam-2}(2\aparam+3\bparam+2\cparam-3)$.
 \een
 \end{corollary}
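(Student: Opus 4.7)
The plan is to expand both sides of \eqref{yyy-eq} in the ring $\QQ[\beta, t]$ and match coefficients. First, I would rewrite Lemma~\ref{jq-lem1} in the compact polynomial form
\[ \wtjq_{\kkappa/\nnu}(t) \;=\; 2^{\aparam}\, t^{\aparam+\bparam+\cparam}\, (2t-\beta)^{\bparam}\, (t-\beta)^{\dparam-2},\]
obtained by absorbing the factor $2t^2 - \beta t = t(2t-\beta)$ into the existing monomials. This expression is visibly a polynomial in $\QQ[\beta, t]$, homogeneous of total degree $d := |\kkappa|-|\nnu|$ under $\deg\beta = \deg t = 1$. By Proposition~\ref{one-var-prop}, each $\wtjq_n(t)$ is homogeneous of degree $n$ with leading $t$-coefficient $2$ for $n \geq 1$ (and $\wtjq_0 = 1$), so $\{\wtjq_n(t)\}_{n\geq 0}$ is a triangular $\QQ[\beta]$-basis of $\QQ[\beta,t]$. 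Combined with homogeneity, this forces the expansion coefficients $\yyy{\kkappa}{\nnu}{n}$ to be rational numbers that vanish whenever $n > d$. For the remaining case $0 = n < d$ in part (a), setting $t = 0$ annihilates the left side (since $\aparam+\bparam \geq 1$ when the shape is nonempty, so the monomial $t^{\aparam+\bparam+\cparam}$ is non-constant), while the right side of \eqref{yyy-eq} at $t=0$ collapses to $\yyy{\kkappa}{\nnu}{0}\beta^d$, giving $\yyy{\kkappa}{\nnu}{0} = 0$.

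For parts (b) and (c), I would extract the coefficients of $t^d$ and of $\beta\, t^{d-1}$ on both sides. Expanding $(2t-\beta)^{\bparam}(t-\beta)^{\dparam-2}$ and keeping only the top two $t$-powers yields
\[ 2^{\bparam}\, t^{\bparam+\dparam-2} \;-\; 2^{\bparam-1}\bigl(\bparam + 2(\dparam-2)\bigr)\,\beta\, t^{\bparam+\dparam-3} \;+\; \cdots,\]
so after multiplying by $2^{\aparam}\, t^{\aparam+\bparam+\cparam}$ we obtain
\[ \wtjq_{\kkappa/\nnu}(t) \;=\; 2^{\aparam+\bparam}\, t^d \;-\; 2^{\aparam+\bparam-1}\bigl(\bparam + 2\dparam - 4\bigr)\,\beta\, t^{d-1} \;+\; \cdots.\]
On the right side, using $\wtjq_n(t) = 2t^n - (2n-3)\beta\, t^{n-1} + O(\beta^2)$ for $n \geq 2$, only the $n = d$ and $n = d-1$ terms contribute to the $t^d$ and $\beta\, t^{d-1}$ coefficients; they produce $2\yyy{\kkappa}{\nnu}{d}$ and $-(2d-3)\yyy{\kkappa}{\nnu}{d} + 2\yyy{\kkappa}{\nnu}{d-1}$, respectively. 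Matching $t^d$-coefficients immediately yields $\yyy{\kkappa}{\nnu}{d} = 2^{\aparam+\bparam-1}$, proving (b). Matching $\beta\, t^{d-1}$-coefficients and substituting $d = \aparam+2\bparam+\cparam+\dparam-2$ produces
\[ 2\,\yyy{\kkappa}{\nnu}{d-1} \;=\; 2^{\aparam+\bparam-1}\bigl(2d - \bparam - 2\dparam + 1\bigr) \;=\; 2^{\aparam+\bparam-1}\bigl(2\aparam + 3\bparam + 2\cparam - 3\bigr),\]
which gives (c).

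Nonnegative integrality then follows from a brief case check: when $d \geq 2$, either $\aparam+\bparam \geq 2$ (in which case $2^{\aparam+\bparam-2}$ is a positive integer and $2\aparam+3\bparam+2\cparam - 3 \geq 1$), or $\aparam+\bparam = 1$, which forces $\aparam=0$ and $\bparam=1$ (a single multi-box component); in the latter subcase the bracket reduces to $2\cparam$ and absorbs the fractional $2^{-1}$, yielding $\yyy{\kkappa}{\nnu}{d-1} = \cparam$. The entire argument is routine once the compact product formula is in hand; the only mild obstacle is verifying that all steps remain valid in the degenerate cases where one of $\bparam$, $\dparam-2$, or $d-1$ vanishes, which is handled uniformly by the polynomial form above and by the unified expansion of $\wtjq_n(t)$ for $n \geq 2$.
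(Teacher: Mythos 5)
Your derivation of parts (a), (b), and (c) is correct, and it is a genuinely different route from the paper's: you extract only the top two $t$-degree coefficients of the compact product form $2^{\aparam} t^{\aparam+\bparam+\cparam}(2t-\beta)^{\bparam}(t-\beta)^{\dparam-2}$ and invert the resulting $2\times 2$ triangular system, whereas the paper substitutes $u=t-\beta$ and divides out the common factor $(u+\beta)(2u+\beta)u^{\dparam-2}$ shared by $\wtjq_{\kkappa/\nnu}(t)$ and $\wtjq_n(t)$, thereby identifying \emph{every} $\yyy{\kkappa}{\nnu}{n}$ as the coefficient of $u^n$ in $2^{\aparam}(u+1)^{\aparam+\bparam+\cparam-1}(2u+1)^{\bparam-1}u^{\dparam}$. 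Your compact form also quietly removes the paper's need to treat $\bparam=0$ separately (their factored form has exponent $\bparam-1$), and your degenerate-case checks (in particular that only the leading coefficient $2$ of $\wtjq_{d-1}(t)$ is used, so the $n=1$ anomaly is harmless) do go through.

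However, there is a genuine gap: the corollary asserts that \emph{all} the coefficients $\yyy{\kkappa}{\nnu}{n}$ are nonnegative integers, and your argument establishes this only for $n\in\{0\}\cup\{d-1,d\}\cup\{n>d\}$ where $d=|\kkappa|-|\nnu|$. For the intermediate values $1\le n\le d-2$ you obtain nothing beyond rationality: the basis $\{\wtjq_n(t)\}$ has leading $t$-coefficient $2$, so a priori the expansion coefficients could have powers of $2$ in their denominators, and nothing in a two-coefficient extraction rules this out, let alone negativity. This is exactly what the paper's change of variables buys — once $\yyy{\kkappa}{\nnu}{n}$ is exhibited as a coefficient of a polynomial in $u$ with manifestly nonnegative integer coefficients, integrality and nonnegativity hold for all $n$ at once. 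To repair your proof you would need either to carry out that substitution (or an equivalent full triangular inversion with denominator control), so the gap cannot be closed by the "brief case check" you describe.
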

 
 \begin{proof}
 If $\bparam=0$ then $\cparam=0$, $\aparam = |\kkappa|-|\nnu|$, $\dparam = 2$,
and $\wtjq_{\kkappa/\nnu}(t)  = (2t)^{\aparam}$. In this case the desired properties
are easy to deduce using Proposition~\ref{one-var-prop}.


Assume $\bparam\geq 1$ and 
let $u=t-\beta$. By Proposition~\ref{one-var-prop} and Lemma~\ref{jq-lem1},
 $\yyy{\kkappa}{\nnu}{n}$
is the coefficient of $\beta^{|\kkappa|-|\nnu| -n} u^{n-\dparam}$
 in $2^{\aparam}   (u+\beta)^{\aparam+\bparam+\cparam-1}   (2u +\beta)^{\bparam-1} =  2^{\aparam}   t^{\aparam+\bparam+\cparam-1}  (2t  -\beta)^{\bparam-1} $.
Equivalently, this is the coefficient of $u^n$ in 
$2^{\aparam}   (u+1)^{\aparam+\bparam+\cparam-1}   (2u +1)^{\bparam-1}u^{\dparam}$,
which is clearly a nonnegative integer that is zero if $n<2 \leq \dparam$ or
$n > \aparam+2\bparam+\cparam-2 + \dparam = |\kkappa|-|\nnu|$.
It is also straightforward to extract the coefficients in parts (b) and (c) in this case.
\end{proof}
 
Recall that  $\llambda := (\kkappa_2,\kkappa_3,\kkappa_4,\dots)$.

\begin{lemma}\label{jq-lem2}
If $0 \leq n \leq \kkappa_1$ is an integer then
\[
\wtjq_{\kkappa/(n)} = \sum_{\substack{\mu \text{ strict} \\  \llambda \subseteq \mu \subseteq \kkappa}}  \yyy{\kkappa}{\mu}{n} \cdot \beta^{|\kkappa|-|\mu|-n}\cdot  \wtjq_\mu .
\]
\end{lemma}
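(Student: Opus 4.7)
The plan is to reduce the stated identity to a bivariate identity $(\star)$ below, and then prove $(\star)$ via two dual ``extremal'' decompositions of a common generating function.

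By the defining expansion~\eqref{yyy-eq} of $\yyy{\kkappa}{\mu}{n}$ and the $\QQ[\beta]$-linear independence of the polynomials $\{\wtjq_n(t)\}_{n\geq 0}$ in $\QQ[\beta,t]$ (which follows from Proposition~\ref{one-var-prop}, since these polynomials have strictly increasing $t$-degrees), the identity in the lemma is equivalent to
\begin{equation*}
\sum_{\substack{\mu \text{ strict}\\ \llambda \subseteq \mu \subseteq \kkappa}} \wtjq_\mu({\bf x})\,\wtjq_{\kkappa/\mu}(t) \;=\; \sum_{n=0}^{\kkappa_1} \wtjq_{\kkappa/(n)}({\bf x})\,\wtjq_n(t). \qquad(\star)
\end{equation*}
Indeed, substituting the one-variable expansion of $\wtjq_{\kkappa/\mu}(t)$ into the LHS of $(\star)$ and interchanging the order of summation, then extracting coefficients of the $\wtjq_n(t)$, recovers the lemma.

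To prove $(\star)$, I would interpret both sides as realizations of the bivariate generating function $\wtjq_\kkappa({\bf x}, t)$ that enumerates semistandard shifted bar tableaux of shape $\kkappa$ whose entries come from the combined alphabet associated to $({\bf x}, t)$. The RHS of $(\star)$ is the ``minimum-entry'' decomposition, treating $t$ as the \emph{smallest} variable: semistandardness forces the bars with entries $1'$ or $1$ (weighted by $t$) to occupy a contiguous prefix of shape $(n) \subseteq \kkappa$ of the first row, since $1'$ is pinned to the diagonal box $(1,1)$ by the no-primed-repeat-in-a-row rule and the entries equal to $1$ can only appear in row~$1$. The remaining boxes form a ShBT of shape $\kkappa/(n)$ filled from the ${\bf x}$-alphabet, contributing $\wtjq_{\kkappa/(n)}({\bf x})$. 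Dually, the LHS of $(\star)$ is the ``maximum-entry'' decomposition, treating $t$ as the \emph{largest} variable: the bars filled with the new maxima must occupy a shifted ribbon in the top-right of $\kkappa$, and the complementary shape takes the form $\mu$ for a strict partition $\mu$ exactly when $\llambda \subseteq \mu \subseteq \kkappa$ (the precise characterization of when $\kkappa/\mu$ is a shifted ribbon). The ribbon contributes $\wtjq_{\kkappa/\mu}(t)$ via the obvious identification between ShBT fillings of a shifted ribbon in the alphabet $\{N', N\}$ and those in $\{1', 1\}$, while the complementary ShBT of shape $\mu$ contributes $\wtjq_\mu({\bf x})$. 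That these two decompositions produce the same total is an avatar of the symmetry of $\wtjq_\kkappa$ guaranteed by Theorem~\ref{j-sym-thm}.

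The main obstacle will be the rigorous verification of the structural claims behind each decomposition, especially the ``maximum-entry'' one underlying the LHS: checking that the maximum-entry bars must fill a shifted ribbon $\kkappa/\mu$ with $\llambda \subseteq \mu$, that the complementary filling factors correctly as an honest ShBT of strict-partition shape $\mu$ weighted by ${\bf x}$, and that the weights multiply as claimed. The most delicate point is the case analysis near the diagonal, where the rules governing primed entries interact nontrivially with the bar-compatibility conditions (a phenomenon already visible in the intricate constructions of Section~\ref{bk-sect}). As a possible alternative route should this obstacle prove stubborn, one could attempt to prove $(\star)$ inductively on $\kkappa_1$, using the one-row product identities of Corollary~\ref{one-row-rules-cor} to reduce both sides to expressions in simpler-shaped $\wtjq$-functions, with base case given by Proposition~\ref{one-part-prop}.
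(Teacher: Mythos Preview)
Your proposal is correct and follows essentially the same route as the paper: both reduce to the identity $(\star)$ and prove it by computing $\wtjq_\kkappa(t,x_1,x_2,\dots)=\wtjq_\kkappa(x_1,x_2,\dots,t)$ in two ways, using the symmetry from Theorem~\ref{j-sym-thm}. The ``obstacle'' you anticipate near the diagonal is not really an issue---the paper dispatches the maximum-entry side in one line by observing that $\wtjq_{\kkappa/\mu}(t)\neq 0$ if and only if $\SD_{\kkappa/\mu}$ is a shifted ribbon, i.e.\ if and only if $\llambda\subseteq\mu\subseteq\kkappa$, with no special diagonal analysis required.
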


\begin{proof}
Since we  know from Theorem~\ref{j-sym-thm} that the generating function $\wtjq_{\kkappa}$ is symmetric,
we can express $\wtjq_{\kkappa}(t,x_1,x_2,\dots,x_m) =\wtjq_{\kkappa}(x_1,x_2,\dots,x_m,t)$
as an element of $\ZZ[t][x_1,x_2,\dots,x_m]$ in two different ways,
by letting $t$ either record the weight contribution of  the bars of $T \in \ShBTQ(\kkappa)$ containing $1'$ or $1$,
or the weight contribution of the bars containing $(m+1)'$ or $m+1$. We use this ability in the following argument.

Because $\wtjq_{\mu}(t)$ is nonzero if and only if $\ell(\mu)\leq 1$,
 we have
\be 
\label{eq:first wtjq_s(t) coefficient}
\wtjq_{\kkappa}(t,x_1,x_2,\dots) = \sum_{\mu\subseteq \kkappa}   \wtjq_{\mu}(t)\wtjq_{\kkappa/\mu}(x_1,x_2,\dots)
= \sum_{n=0}^{\kkappa_1} \wtjq_{\kkappa/(n)}    \wtjq_{n}(t).
\ee
On the other hand, for any fixed $m>0$ we have
\[
 \wtjq_{\kkappa}(t,x_1,x_2,\dots,x_m)
 =
  \wtjq_{\kkappa}(x_1,x_2,\dots,x_m,t )
  =
   \sum_{\mu \subseteq \kkappa} \wtjq_{\mu}(x_1,x_2,\dots,x_m)   \wtjq_{\kkappa/\mu}(t).
   \]
As the last sum is finite and since $\wtjq_{\kkappa/\mu}(t)$ is nonzero if and only if 
$\llambda \subseteq \mu \subseteq \kkappa$, we can take the limit as $m\to \infty$
in the sense of formal power series and apply \eqref{yyy-eq} to obtain
\[
\ba
\wtjq_{\kkappa}(t, x_1,x_2,\dots) &
=  \sum_{\llambda \subseteq \mu \subseteq \kkappa} \wtjq_{\mu}   \wtjq_{\kkappa/\mu}(t)
= 
 \sum_{n\geq 0} \sum_{\llambda \subseteq \mu \subseteq \kkappa}\yyy{\kkappa}{\mu}{n}  \beta^{|\kkappa|-|\mu| -n}  \wtjq_{\mu}   \wtjq_{n}(t).
 \ea
\]
 The lemma follows by equating coefficients of   $\wtjq_n(t)$ in this equation and \eqref{eq:first wtjq_s(t) coefficient}.
\end{proof}

Since $\kkappa \supseteq \nnu \supseteq \llambda = (\kkappa_2,\kkappa_3,\dots)$, the skew shape $\SD_{\nnu/\llambda}$ is also a shifted ribbon.  A removable corner box $(i,j) \in \SD_\llambda$ can be added to $\SD_{\nnu/\llambda}$ to form a shifted ribbon
if and only if $(i+1,j+1) \notin \SD_{\nnu/\llambda}$. 
Let $\cR$ be the set of such boxes $(i,j) \in \SD_\llambda$.
If $\kkappa = (15,12,9,6,3,1)$ and $\nnu = (13,9,7,6,2)$
then the boxes $(i,j) \in \cR$ are the ones marked as $\cU$, $\cV$, or $\cW$ below, with $\SD_{\nnu/\llambda}$
shown in gray:
\[
\ytab{
\none & \none & \none & \none & \none  & \ \\ 
\none & \none & \none & \none & \none[\cU]  &  *(lightgray)  & \ \\ 
\none & \none & \none & \none[\cdot] & \none[\cdot]  & \none[\cU]  &  *(lightgray)  &  *(lightgray)  &  *(lightgray)  \\ 
\none & \none & \none[\cdot] & \none[\cdot] & \none[\cdot]  & \none[\cdot] & \none[\cdot] & \none[\cdot] &  *(lightgray)  & \  & \ \\ 
\none & \none[\cdot] & \none[\cdot] & \none[\cdot] & \none[\cdot]  & \none[\cdot] & \none[\cdot] & \none[\cdot] & \none[\cdot] & \none[\cW] & \ & \ & \ \\ 
\none[\cdot] & \none[\cdot] & \none[\cdot] & \none[\cdot] & \none[\cdot]  & \none[\cdot] & \none[\cdot] & \none[\cdot] & \none[\cdot] & \none[\cdot] & \none[\cdot] & \none[\cV] &  *(lightgray)  & \  & \
}.
\]
We divide $\cR$ into three disjoint subsets.
Let $\cU$ be the set of boxes $(i,j)\in \cR$ 
with
   $i=j$ and $(i,j+1) \in\SD_{\nnu/\llambda}$
 or with $i\neq j$ and 
$(i,j+1),(i+1,j) \in \SD_{\nnu/\llambda}$.
Let $\cV$ be the set of boxes $(i,j) \in \cR$ either with $i=j$ and $(i,j+1) \notin \SD_{\nnu/\llambda}$
 or such that $i\neq j$ and exactly one of $(i,j+1)$ or $(i+1,j)$ belongs to $\SD_{\nnu/\llambda}$.
Let $\cW$ be the set of all other boxes $(i,j) \in \cR$.
In our example above, the cells of $\cR$ are labeled according to the decomposition $\cR =\cU\sqcup \cV \sqcup\cW$.

\begin{lemma}\label{uvw-lem}
Suppose $\kkappa_1 - \nnu_1 \geq 2$.
Then $ |\cU| = \aparam $ and $ |\cV| + 2|\cW| = \bparam + \cparam -1$.
\end{lemma}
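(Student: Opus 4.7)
The plan is to parameterize each removable corner $(i,j) \in \cU \cup \cV \cup \cW$ by the row $r = i+1$ of $\SD_{\kkappa/\nnu}$ and then compute both sides of each identity by summing over connected components. Using the equivalences $(i,j+1) \in \SD_{\nnu/\llambda} \iff \nnu_i > \kkappa_{i+1}$ and $(i+1,j) \in \SD_{\nnu/\llambda} \iff \nnu_{i+1} = \kkappa_{i+1}-1$, these two boolean indicators encode whether row $r$ is \emph{connected} to row $r-1$ in $\SD_{\kkappa/\nnu}$ (meaning $\nnu_{r-1} = \kkappa_r$) and whether row $r$ is a \emph{singleton row} (one box, $\nnu_r = \kkappa_r-1$) as opposed to \emph{wide} ($\nnu_r \leq \kkappa_r-2$). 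This sorts each relevant corner into one of four types indexed by $r$: connected-wide (in $\cW$, weight $2$ in $|\cV|+2|\cW|$), connected-singleton (in $\cV$, weight $1$), disconnected-wide (in $\cV$, weight $1$), and disconnected-singleton (in $\cU$).

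A direct check using the removable-corner inequality $\kkappa_r \geq \kkappa_{r+1}+2$ shows that each disconnected-singleton row is automatically also disconnected from row $r+1$, so it is an isolated box, i.e., a singleton connected component of $\SD_{\kkappa/\nnu}$. Since the hypothesis $\kkappa_1-\nnu_1 \geq 2$ forces row $1$ into a multi-box component, every singleton component lies in some row $r \geq 2$ and corresponds bijectively to a unique corner in $\cU$; this proves $|\cU|=\aparam$.

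For $|\cV|+2|\cW| = \bparam+\cparam-1$, I would sum contributions component-by-component. Fix a multi-box component $M$ with consecutive rows $r_1 \leq r \leq r_2$, set $w_r \in \{0,1\}$ to indicate that row $r$ is wide, and write $W_M = \sum_{r \in M} w_r$ and $k = r_2-r_1+1$. Summing the contribution $w_r + w_{r+1}$ from each of the $k-1$ intra-$M$ connections shows $M$ contributes $1 + 2W_M - w_{r_1} - w_{r_2}$ to $\bparam+\cparam$. On the $|\cV|+2|\cW|$ side, the key observation is that wide rows always admit a removable corner at $i = r-1$ (since $\kkappa_{r+1} \leq \nnu_r \leq \kkappa_r-2$ forces $\kkappa_r \geq \kkappa_{r+1}+2$), whereas singleton rows in the strict interior of $M$, or at its bottom row $r_1$ when $k \geq 2$, necessarily satisfy $\kkappa_r = \kkappa_{r+1}+1$ and hence lack a removable corner and contribute nothing. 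A careful tally then yields the $M$-contribution $[r_1 \geq 2]\cdot w_{r_1} + 2(W_M-w_{r_1}-w_{r_2}) + (1+w_{r_2})$ to $|\cV|+2|\cW|$, valid uniformly for all $k \geq 1$ (reducing to $[r_1 \geq 2]$ when $k=1$).

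Subtracting, the per-component difference simplifies to $w_{r_1}\cdot[r_1 = 1]$, which is nonzero only for the unique multi-box component containing row $1$, for which $w_1=1$. Summing over all multi-box components (singleton components contribute $0$ to both sides) yields $\bparam+\cparam-(|\cV|+2|\cW|)=1$, as required. The main obstacle will be the careful bookkeeping of singleton rows lacking removable corners at interior and bottom-row positions, together with verifying that the contribution formulas agree at every boundary: in particular the diagonal case $i=j$ (arising when $\kkappa_{i+1}=1$, so $i+1=\ell(\kkappa)$), the top row $r_2 = \ell(\kkappa)$, and the edge case $k=1$ where the ``interior'' of $M$ is empty.
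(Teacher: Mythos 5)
Your argument is correct, and it takes a genuinely different route from the paper's. You index the corners of $\cU\sqcup\cV\sqcup\cW$ by the nonempty rows $r=i+1$ of $\SD_{\kkappa/\nnu}$, classify each by the two booleans (connected to row $r-1$, i.e.\ $\nnu_{r-1}=\kkappa_r$) and (singleton vs.\ wide, i.e.\ $\nnu_r=\kkappa_r-1$ vs.\ $\nnu_r\leq\kkappa_r-2$), and then write closed-form per-component contributions to both sides of the second identity; the difference telescopes to $w_{r_1}[r_1=1]$, which is nonzero exactly once because the hypothesis $\kkappa_1-\nnu_1\geq 2$ makes row $1$ wide. I checked the key ingredients: the junction between adjacent rows $r,r+1$ of a component contributes exactly $w_r+w_{r+1}$ forced boxes (with no double-counting, since the two forced-box types occur at the distinct columns $c$ and $c+1$ of the unique overlap column $c=r+\nnu_r$); wide rows and top rows of components always have the needed removable corner of $\SD_\llambda$ while interior/bottom singleton rows of multi-box components never do; and a disconnected singleton row is automatically an isolated box. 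The paper instead characterizes $\cV$ and $\cW$ membership via first/last boxes of connected components and runs a push/pop stack over each multi-box component in row-reading order, showing the stack empties precisely for the unique component ending with two boxes in row $1$. Your tally is more explicit and mechanically checkable but demands the careful case analysis of when removable corners exist (which you correctly flag and which does all work out, including the diagonal and $k=1$ edge cases, noting that your uniform $k=1$ formula is only applied to multi-box single-row components); the paper's stack argument avoids those closed forms at the cost of a less transparent invariant.
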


\begin{proof}
Since $\kkappa_1 \geq \nnu_1 +2$ 
it is clear that $ |\cU| = \aparam$ and $\bparam\geq 1$.
Observe that a position $(i,j)$ belongs to $\cV$ if and only if either
$(i+1,j+1)$ and $(i,j+1)$ are the first two boxes of a connected component of $\SD_{\kkappa/\nnu}$
in the usual row reading order,
or 
$(i+1,j)$ and $(i+1,j+1)$ are the last two boxes of a connected component of $\SD_{\kkappa/\nnu}$
that does not end in the first row.
On the other hand, a position $(i,j)$ belongs to $\cW$ if and only if
all three of the boxes $(i+1,j)$, $(i+1,j+1)$, $(i,j+1)$ belong to a connected component of $\SD_{\kkappa/\nnu}$,
in which case $(i,j+1)$ is a forced box.

Now choose a connected component $\cC\subseteq \SD_{\kkappa/\nnu}$ with multiple boxes.
In this component, compare the number $p $ of boxes $(i,j)$ with $(i-1,j-1) \in \cV\sqcup \cW $
with the number $q$ of forced boxes $(i,j)$ that have $(i,j-1) \notin \cW$,
that is, appearing as $\bullet $ in a configuration  like $\ytabsmall{\ & \none \\ \ & \bullet }$ but not  $\ytabsmall{ \ & \ \\ \none & \bullet }$.
If $\cF$ is the set of forced boxes 
and we set $\hat \cV := \{(i+1,j+1) : (i,j) \in \cV\}$ and $\hat\cW := \{(i+1,j+1):(i,j) \in \cW\}$,
then $p =  |\hat\cV \cap \cC| + |\hat\cW \cap \cC|$
and $q=  |\cF\cap \cC| - |\hat\cW \cap \cC| $. 

We can reinterpret $p$ and $q$ in terms of the shape of a modified version of $\cC$.
If $\cC$ starts with two boxes $(i,j),(i-1,j)$ in the same column (so that $(i,j) \in \hat \cV$ and $(i-1,j-1) \in \cV$), then add $(i,j-1)$ to the component. 
Likewise, if $\cC$ ends with two boxes $(i,j-1),(i,j)$ in the same row (so that if $i>1$ then $(i,j) \in \hat \cV$ and $(i-1,j-1) \in \cV$), then add $(i-1,j)$ to the component.
Then in our modified component, the 
configurations of the form $\ytabsmall{ \ & \ \\ \none & \ }$ and $\ytabsmall{\ & \none \\ \ & \ }$
interleave, with one more $\ytabsmall{ \ & \ \\ \none & \ }$ configuration than $\ytabsmall{\ & \none \\ \ & \ }$ configuration.  Moreover, by construction, the number of $\ytabsmall{\ & \none \\ \ & \ }$ configurations is $q$ and the number of $\ytabsmall{ \ & \ \\ \none & \ }$ configurations is $p$, except when $\cC$ 
ends with two boxes in the first row, in which case the number of configurations  
of the latter form is $p-1$.  Thus if $\cC$  ends with two boxes in the first row then 
 $p-q=0$ and otherwise we have $p-q=1$.

Since   $\kkappa_1 \geq \nnu_1 +2$, the last connected component of  $\SD_{\kkappa/\nnu}$
ends with two 
boxes in the first row, and no other connected component of $\SD_{\kkappa/\nnu}$ has this property.
By summing $p-q$ over all connected components $\cC$ with multiple boxes,
we obtain $(|\hat\cV| + |\hat\cW|) - (|\cF| - |\hat\cW|) = \bparam -1$.
As $|\cF| = \cparam$, $|\hat\cV| = |\cV|$, and $|\hat\cW| =|\cW|$,
we can rewrite this identity as $|\cV| + 2|\cW| - \cparam = \bparam-1$.
\end{proof}

\begin{lemma}\label{jq-lem2b}
Let $u$ be an indeterminate and suppose $\kkappa_1 - \nnu_1 \geq 2$.
Then \[\sum_{n \geq 0} \widehat b^\nnu_{\llambda,(n)} \cdot \beta^{|\kkappa|- |\nnu| - (\kkappa_1-n)}\cdot u^{ (\kkappa_1-n)-\dparam }  = 2^{\aparam}   (u+\beta)^{\aparam+\bparam+\cparam-1}   (2u +\beta)^{\bparam-1}   .\]

\end{lemma}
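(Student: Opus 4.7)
By Proposition~\ref{hat-b-prop}, $\widehat b^\nnu_{\llambda,(n)} = |\bribbons{\nnu}{\llambda;n}|$: the number of semistandard shifted set-valued tableaux of shape $\SD_{\nnu/\mu}$ with entries in $\{1', 1\}$, no primed diagonal entries, and $n$ total entries, summed over strict partitions $\mu \subseteq \llambda$ with $\SD_{\llambda/\mu} \subseteq \RC(\llambda)$. The plan is to compute the generating function $\sum_n \widehat b^\nnu_{\llambda,(n)} z^n$ in closed form and then substitute $z = \beta/u$ to match Lemma~\ref{jq-lem2b}.

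For a single shifted ribbon shape $\SD_{\nnu/\mu}$, a semistandard set-valued filling with entries in $\{1',1\}$ and no primed diagonal entries is determined by the first box (in row-reading order) of each connected component: three choices ($\{1'\}$, $\{1\}$, or $\{1',1\}$) for an off-diagonal first box and one choice ($\{1\}$) for an on-diagonal one. Writing $c(\mu)$ for the number of off-diagonal connected components, this gives the generating function $z^{|\nnu/\mu|}(2+z)^{c(\mu)}$. Parameterizing $\mu = \llambda \setminus S$ for $S \subseteq \cU \cup \cV \cup \cW$ and analyzing how each added corner affects $c$ --- each $\cU$-corner decreases $c$ by one (by merging two off-diagonal components or by converting an off-diagonal component to a diagonal one), each $\cV$-corner leaves $c$ unchanged (extending an existing component or forming a new diagonal singleton), and each $\cW$-corner increases $c$ by one (forming a new off-diagonal singleton) --- yields a product formula over corner types. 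Applying the algebraic identity $1 + (\beta/u)(2+\beta/u) = (1+\beta/u)^2$ to the $\cW$-factors, together with Lemma~\ref{uvw-lem}, simplifies this to
\[
\sum_n \widehat b^\nnu_{\llambda,(n)} (\beta/u)^n \;=\; (\beta/u)^{|\nnu/\llambda|} \cdot 2^{\aparam}(2+\beta/u)^{c_0 - \aparam}(1+\beta/u)^{\aparam+\bparam+\cparam-1},
\]
where $c_0 := c(\llambda)$ is the number of off-diagonal components of $\SD_{\nnu/\llambda}$.

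To finish I must show $c_0 = \aparam + \bparam - 1$. Since $\SD_{\kkappa/\llambda}$ is itself a connected shifted ribbon whose box-adjacency graph is a tree, partitioning it into $\SD_{\kkappa/\nnu} \sqcup \SD_{\nnu/\llambda}$ gives $(\aparam+\bparam) + (c_0 + d_0) = 1 + (\text{number of interface edges})$, where $d_0 \in \{0,1\}$ is the number of diagonal components of $\SD_{\nnu/\llambda}$. A case analysis of horizontal interface edges (occurring in row $i$ exactly when $\kkappa_{i+1} < \nnu_i < \kkappa_i$) and vertical interface edges (occurring between rows $i, i{+}1$ exactly when $\nnu_{i+1} < \kkappa_{i+1} < \nnu_i$), leveraging the hypothesis $\kkappa_1 - \nnu_1 \geq 2$ to place the top-row boundary strictly inside $\SD_{\kkappa/\llambda}$, shows that the number of interface edges equals $2(\aparam+\bparam) + d_0 - 2$, so $c_0 = \aparam + \bparam - 1$ regardless of $d_0$. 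Plugging this in and rearranging powers of $u, \beta$ produces the claimed factored form. The main technical obstacle is this interface-edge case analysis, together with the related verification that adding a $\cU$-corner truly merges two distinct components of $\SD_{\nnu/\llambda}$ rather than attaching $(i,j)$ to boxes already joined by a C-shaped path through rows $i, i{+}1$; the hypothesis $\kkappa_1 \geq \nnu_1 + 2$ is what rules out such degeneracies.
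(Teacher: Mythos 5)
Your proposal is correct and follows essentially the same route as the paper's proof: interpret $\widehat b^\nnu_{\llambda,(n)}$ via Proposition~\ref{hat-b-prop}, write the generating function as $z^{|\nnu/\llambda|}(2+z)^{c_0}$ times a product of factors $\bigl(1+\tfrac{z}{2+z}\bigr)$, $(1+z)$, $(1+z(2+z))$ over $\cU$, $\cV$, $\cW$, simplify with Lemma~\ref{uvw-lem}, and substitute $z=\beta/u$. The only divergence is your justification that the base ribbon $\SD_{\nnu/\llambda}$ has $\aparam+\bparam-1$ off-diagonal components — you count interface edges in the rim $\SD_{\kkappa/\llambda}$ viewed as a tree, whereas the paper compares component counts of $\SD_{\nnu/\llambda}$ and $\SD_{\kkappa/\nnu}$ directly; both are valid.
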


\begin{proof}
Call a position in a shifted ribbon \defn{free} if it is not on the diagonal but is the first box in its connected component
in row reading order.
By Proposition~\ref{hat-b-prop}, the number $ \widehat b^\nnu_{\llambda,(n)}$ counts the ways that we can add a subset of removable
corner boxes of $\SD_\llambda$ to $\SD_{\nnu/\llambda}$ to form a shifted ribbon, and then assign either $1$ or $1'$ or $1'1$ 
to each free box in this ribbon, such that the number of boxes plus the number of entries equal to $1'1$ is $n$.

The shifted ribbons that can be formed by adding corner boxes of $\SD_\llambda$ to $\SD_{\nnu/\llambda}$
are   the unions of $\SD_{\nnu/\llambda}$ with arbitrary subsets of $\cU\sqcup \cV \sqcup \cW$.
Consider the number of free boxes in a ribbon formed by successively adding positions from $\cU\sqcup \cV \sqcup \cW$ to $\SD_{\nnu/\llambda}$.
Each time we add a box from $\cU$, this number is reduced by one,
since the added box either merges two connected components or changes the first box in a connected component from an off-diagonal position $(i,i+1)$
to a diagonal position $(i,i)$.
The number of free boxes is unchanged when we add a box from $\cV$,
since the added box will either be an isolated position on the diagonal or adjacent to exactly one connected component.
Finally, the number of free boxes increases by one each time we add a box from $\cW$ to $\SD_{\nnu/\llambda}$,
since the added box will always be an isolated position not on the diagonal.

If  $\SD_{\nnu/\llambda}$ intersects the diagonal then 
it has the same number $\aparam+\bparam$ of connected components as $\SD_{\kkappa/\nnu}$; 
otherwise, its number of connected components is $\aparam+\bparam-1$.
Either way, the number of free boxes in $\SD_{\nnu/\llambda}$ is $\aparam+\bparam-1$.
Given these observations and  Lemma~\ref{uvw-lem}, we deduce that
\[\ba
\sum_{n \geq 0} \widehat b^\nnu_{\llambda,(n)}  u^{n  }  &=     u ^{|\nnu|-|\llambda|}(2 + u)^{\aparam+\bparam-1}(1 + \tfrac{u}{2+u})^{|\cU|}(1+u)^{|\cV|} (1 + u(2+u))^{|\cW|}
\\& = 2^{\aparam}  u ^{|\nnu|-|\llambda|}     (2+u)^{\bparam-1} (1+u)^{\aparam+\bparam+\cparam-1} .
\ea\]
 This becomes the desired identity after dividing both sides by $u ^{|\nnu|-|\llambda|}$,
   replacing $u$ by $\beta u^{-1}$, and then multiplying both sides by $u^{\aparam+2\bparam+\cparam-2} = u^{|\kkappa|-|\nnu|-\dparam} =u^{\kkappa_1-\dparam+ |\llambda|-|\nnu| }$.
\end{proof}

The following lemma is the most technical part of our argument. 

\begin{lemma}\label{jq-lem3}
If  $n = \kkappa_1 - \kkappa_2$ and $m=\kkappa_2>0$ then 
$ \widehat b^\nnu_{\llambda,(n)} - \yyy{\kkappa}{\nnu}{m} = \begin{cases} 1 &\text{if }\nnu = (\kkappa_1,\kkappa_3,\kkappa_4,\dots)\\
1 &\text{if }\nnu = (\kkappa_1-1,\kkappa_3,\kkappa_4,\dots)
\\ 0&\text{otherwise}.
\end{cases}$
\end{lemma}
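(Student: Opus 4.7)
My plan is to prove Lemma~\ref{jq-lem3} by a case analysis based on the size of $\kkappa_1 - \nnu_1$, treating the generic regime uniformly via the machinery already assembled in Lemma~\ref{jq-lem2b} and Corollary~\ref{jq-cor1}, and treating the boundary regime $\nnu_1 \in \{\kkappa_1-1, \kkappa_1\}$ by direct enumeration.

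In the generic regime $\kkappa_1 - \nnu_1 \geq 2$, Lemma~\ref{jq-lem2b} writes $\widehat{b}^\nnu_{\llambda,(n)}$ as the coefficient of $\beta^{|\kkappa/\nnu|-(\kkappa_1-n)}u^{(\kkappa_1-n)-\dparam}$ in the polynomial $2^{\aparam}(u+\beta)^{\aparam+\bparam+\cparam-1}(2u+\beta)^{\bparam-1}$, while the computation in the proof of Corollary~\ref{jq-cor1} writes $\yyy{\kkappa}{\nnu}{m}$ as the coefficient of $\beta^{|\kkappa/\nnu|-m}u^{m-\dparam}$ in the same polynomial. Since $n = \kkappa_1-\kkappa_2$ and $m = \kkappa_2$ force $\kkappa_1-n = m$, the two coefficients coincide and the discrepancy vanishes. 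Both exceptional partitions $(\kkappa_1, \kkappa_3, \ldots)$ and $(\kkappa_1-1, \kkappa_3, \ldots)$ fail the hypothesis $\kkappa_1 - \nnu_1 \geq 2$, so the lemma's claim is consistent with this regime.

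In the boundary regime I set $s = \sum_{i\geq 2}(\nnu_i - \kkappa_{i+1}) \geq 0$ and compute $|\kkappa/\nnu| = (\kkappa_1 - \nnu_1) + \kkappa_2 - s$ together with $|\SD_{\nnu/\llambla}|$, which equals $(\kkappa_1-\nnu_1)+s$ when $\nnu_1 = \kkappa_1$ and $(n-1)+s$ when $\nnu_1 = \kkappa_1-1$. Every box of a semistandard shifted set-valued tableau carries at least one entry, so $\widehat{b}^\nnu_{\llambda,(n)} = 0$ whenever $|\SD_{\nnu/\llambla}| > n$; meanwhile Corollary~\ref{jq-cor1}(a) gives $\yyy{\kkappa}{\nnu}{m} = 0$ whenever $m > |\kkappa/\nnu|$. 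These two bounds simultaneously eliminate every $\nnu$ in the boundary regime outside of the two lemma exceptions and the ``augmented'' case $\nnu_1 = \kkappa_1-1$, $s=1$; in the eliminated cases, both sides of the identity are zero.

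For the remaining exceptional and augmented subcases I argue directly. When $\nnu = (\kkappa_1, \kkappa_3, \kkappa_4, \ldots)$, the shape $\SD_{\nnu/\llambla}$ is a single row of $n$ boxes admitting precisely the two singleton fillings $(1,\ldots,1)$ and $(1',1,\ldots,1)$, so $\widehat{b} = 2$; while $\SD_{\kkappa/\nnu}$ is a single connected ribbon of $\kkappa_2$ boxes (or a diagonal singleton when $\kkappa_2 = 1$), and Corollary~\ref{jq-cor1}(b) delivers $\yyy = 2^{\aparam+\bparam-1} = 1$. When $\nnu_1 = \kkappa_1 - 1$ and $s = 1$, Corollary~\ref{jq-cor1}(b) again gives $\yyy = 2^{\aparam+\bparam-1}$, and a direct count of all-singleton fillings of $\SD_{\nnu/\llambla}$ (which has an $(n-1)$-box row-$1$ strip plus one extra box in some row $j\geq 2$) matches this via the component structure of $\SD_{\kkappa/\nnu}$. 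Finally when $\nnu = (\kkappa_1-1, \kkappa_3, \ldots)$, Corollary~\ref{jq-cor1}(c) gives $\yyy = 2^{\aparam+\bparam-2}(2\aparam+3\bparam+2\cparam-3)$, and $\widehat{b}$ decomposes as a unique filling at $\mu = \llambla$ (with a single $\{1',1\}$-box forced to the leftmost row-$1$ position) together with all-singleton fillings for each $\mu$ obtained from $\llambla$ by removing one individual removable corner. The main obstacle lies in this final subcase: one must correlate each type of removable corner of $\llambla$ with the statistics $(\aparam, \bparam, \cparam)$ of $\SD_{\kkappa/\nnu}$ and verify an identity of the form $r_1 + 2r_2^{\mathrm{nd}} + r_2^{\mathrm{d}} = 1 + \cparam$, which after cancellation reduces to a telescoping relation among the indicators $[\kkappa_j \geq \kkappa_{j+1}+2]$ coupled with the elementary observation $[\kkappa_\ell \geq 2] + [\kkappa_\ell = 1] = 1$.
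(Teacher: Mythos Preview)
Your approach is essentially the paper's: dispose of the generic regime $\kkappa_1-\nnu_1\ge 2$ via the polynomial identity of Lemma~\ref{jq-lem2b}, then enumerate directly when $\nnu_1\in\{\kkappa_1-1,\kkappa_1\}$. Your parameter $s=\sum_{i\ge 2}(\nnu_i-\kkappa_{i+1})$ is a clean way to organize what the paper does by splitting on $|\nnu|-|\llambda|$, and the first two boundary subcases match the paper's cases~(1) and~(2).

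There is, however, a genuine gap in your final subcase $\nnu=(\kkappa_1-1,\kkappa_3,\dots)$. When $n=1$ one has $\nnu=\llambda$, so $\SD_{\nnu/\llambda}=\varnothing$ and there is \emph{no} ``unique filling at $\mu=\llambda$ with a $\{1',1\}$-box''; the $\mu=\llambda$ term contributes~$0$, not~$1$, and your count is off. The paper isolates $n=1$ for exactly this reason (there $\aparam=0$, $\bparam=1$, and one checks $\cparam+1=f+2g$ for the $f$ diagonal and $g$ off-diagonal removable corners of~$\llambda$). Likewise, when $\llambda=(1)$ and $n>1$, the single removable corner $(1,1)$ is simultaneously in row~$1$ and on the diagonal, so your trichotomy $r_1,r_2^{\mathrm{nd}},r_2^{\mathrm{d}}$ is ambiguous; here $\aparam=2$, $\bparam=\cparam=0$, and the paper simply verifies $\widehat b=2$, $\yyy=1$ by hand rather than via the corner identity. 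Both are easily patched, but the sketch as written does not cover them. (Two minor slips: your displayed value of $|\SD_{\nnu/\llambda}|$ when $\nnu_1=\kkappa_1$ should be $n+s$, not $(\kkappa_1-\nnu_1)+s$; and $\llambla$ should be $\llambda$ throughout.)
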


\begin{proof}
If $\kkappa_1 - \nnu_1 \geq 2$
then $\widehat b^\nnu_{\llambda,(n)} = \yyy{\kkappa}{\nnu}{m}$ for any integers  $n$ and $m$ with $n+m = \kkappa_1$.
For $m \geq 2$, this is because if we set $u=t-\beta$, then comparing Proposition~\ref{one-var-prop} and Lemma~\ref{jq-lem1}
shows that
$\yyy{\kkappa}{\nnu}{m}$ is the coefficient of 
$\beta^{|\kkappa|-|\nnu| -m} u^{m-\dparam}$
 in $2^{\aparam}   (u+\beta)^{\aparam+\bparam+\cparam-1}   (2u +\beta)^{\bparam-1} $,
and by Lemma~\ref{jq-lem2b} this is also equal to $\widehat b^\nnu_{\llambda,(n)}$ when $m= \kkappa_1-n$.  For $m\leq 1$, we have that $\widehat b^\nnu_{\llambda,(n)}$, $ \yyy{\kkappa}{\nnu}{m}$, and the relevant
 coefficient 
 in $2^{\aparam}   (u+\beta)^{\aparam+\bparam+\cparam-1}   (2u +\beta)^{\bparam-1} $ are all zero.
 
It remains to check the desired identity when  $\kkappa_1 -\nnu_1 \in \{0,1\}$.
From this point on, we fix $n=\kkappa_1-\kkappa_2$ and $m=\kkappa_2>0$;
in particular, in these cases $\kkappa$ and $\llambda$ are both nonempty.
If $|\nnu| - |\llambda| >n$ then  
$|\kkappa| - |\nnu| =  \kkappa_1 - (|\nnu| - |\llambda| )<\kkappa_1 - n= m$
and it follows from Proposition~\ref{hat-b-prop}
and Corollary~\ref{jq-cor1}
that $\widehat b^\nnu_{\llambda,(n)} =  \yyy{\kkappa}{\nnu}{m}= 0$ as desired.
We may therefore assume $|\nnu| - |\llambda| \leq n$. There are three cases:
\ben
\item[(1)] Suppose $\kkappa_1 = \nnu_1$. Then  
$|\nnu| - |\llambda| \geq \nnu_1 - \llambda_1 =  n$
also holds, so  $|\nnu| - |\llambda| = n$ and $|\kkappa| - |\nnu| = m$.
Since $\nnu_1-\llambda_1 = \kkappa_1-\kkappa_2 = n$, it follows that
$
\nnu = (\kkappa_1,\kkappa_3,\kkappa_4,\dots) = \llambda + (n,0,0,\dots)$
and we wish to show that $ \widehat b^\nnu_{\llambda,(n)} =1 +  \yyy{\kkappa}{\nnu}{m}$.
This holds as     $\aparam+\bparam=1$ so
$ \yyy{\kkappa}{\nnu}{m} = 2^{\aparam+\bparam-1} = 1$ while
$ \widehat b^\nnu_{\llambda,(n)} = |\bribbons{\nnu}{\llambda;n}| =2$
 by Corollary~\ref{jq-cor1}
and  Proposition~\ref{hat-b-prop}.

\item[(2)] Suppose  $\kkappa_1 - \nnu_1 = 1$.
Then
$ |\nnu| - |\llambda|    \geq \nnu_1 - \llambda_1 
= n-1$  so 
 $ |\nnu| - |\llambda|   \in \{n-1,n\}$.
If $|\nnu| - |\llambda| =n$ then $|\kkappa| - |\nnu| = m$ and 
$\SD_{\nnu/\llambda}$ consists of the $n-1$ boxes $(1,j) $ with $ \kkappa_2 < j < \kkappa_1$ 
plus one box $(i,j) \in \SD_{\kkappa/\llambda}$ with $i>1$.
In this event, 
  Corollary~\ref{jq-cor1} tells us
that 
$\yyy{\kkappa}{\nnu}{m} = 2^{\aparam+\bparam-1}$
while    Proposition~\ref{hat-b-prop} tells us that
$\widehat b^\nnu_{\llambda,(n)}=2^e$ where $e \in \{0,1,2\} $ is the number of connected components in $\SD_{\nnu/\llambda}$ not intersecting the diagonal. As $\aparam+\bparam$ is  the number of connected components
in $\SD_{\kkappa/\nnu}$, it follows that $e=\aparam+\bparam-1$
so $\widehat b^\nnu_{\llambda,(n)} = \yyy{\kkappa}{\nnu}{m} $ as claimed.

\item[(3)] Suppose  $\kkappa_1 - \nnu_1 = 1$
and $|\nnu| - |\llambda| =n-1$ so that $|\kkappa| - |\nnu| = m+1$.
Since  $\nnu_1 - \llambda_1=n-1$, we can therefore write  
$\nnu = (\kkappa_1-1,\kkappa_3,\kkappa_4,\dots) = \llambda + (n-1,0,0,\dots)$,
so we wish to show that $ \widehat b^\nnu_{\llambda,(n)} - \yyy{\kkappa}{\nnu}{m}=1$.
There are three subcases:
\begin{itemize}

\item If $n=1$ then   $\nnu=\llambda$ so $\aparam=0$, $\bparam=1$, and $\yyy{\kkappa}{\nnu}{m}=2^{\aparam+\bparam-2}(2\aparam+3\bparam+2\cparam-3)=\cparam$ by Corollary~\ref{jq-cor1}.
At the same time, it follows from 
Proposition~\ref{hat-b-prop} that
  $\widehat b^\nnu_{\llambda,(n)}=f+2g$ where $f \in \{0,1\}$ and $g\geq0$ are the numbers of removable corner boxes of $\SD_\llambda$
  on and off the diagonal.
 One checks that $\cparam+1=f+2g$ in this situation, as illustrated in Figure~\ref{fig:case psi = gamma}.

\begin{figure}[ht]
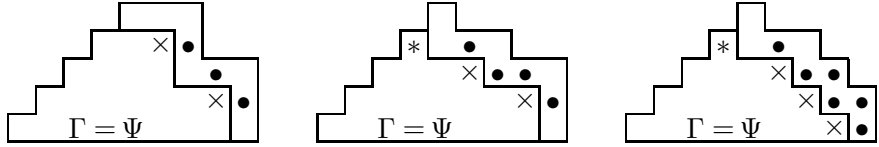

\[
\begin{young}[10pt][c] 
, & , &, &, & ]= & & =] \ynobottom  \\ 
, & , &, & \ynobottom  & \ynobottom & =]\ynobottom \times & \ynobottom\ynotop \bullet \\
, & , & ]= \ynobottom & , & , & =]\ynobottom\ynotop & ]= \ynotop & \bullet & =] \ynobottom \\
, & ]= \ynobottom & \ynobottom\ynotop & \ynobottom\ynotop  & , & , & \ynobottom & =]\ynobottom \times & ]=] \ynobottom\ynotop \bullet \\
]= & \ynotop& \ynotop \llambda & \ynotop = & \ynotop \nnu & \ynotop & \ynotop& \ynotop & ]=]\ynotop
\end{young}
\qquad
\begin{young}[10pt][c] 
, & , &, &, & ]=]\ynobottom  \\ 
, & , &, & ]=] \ynobottom * & ]=\ynotop & \bullet & =]\ynobottom \\
, & , & ]= \ynobottom & , & , & =]\ynobottom \times & ]= \ynotop \bullet & \bullet & =] \ynobottom \\
, & ]= \ynobottom & \ynobottom\ynotop & \ynobottom\ynotop  & , & , & \ynobottom & =]\ynobottom \times & ]=] \ynobottom\ynotop \bullet \\
]= & \ynotop& \ynotop \llambda & \ynotop = & \ynotop \nnu & \ynotop & \ynotop& \ynotop & ]=]\ynotop
\end{young}
\qquad
\begin{young}[10pt][c] 
, & , &, &, & ]=]\ynobottom  \\ 
, & , &, & ]=] \ynobottom * & ]=\ynotop & \bullet & =]\ynobottom \\
, & , & ]= \ynobottom & , & , & =]\ynobottom \times & ]= \ynotop \bullet &=]\ynobottom \bullet  \\
, & ]= \ynobottom & \ynobottom\ynotop & \ynobottom\ynotop  & , & ,  & =]\ynobottom \times & ]= \ynotop \bullet  & =]\ynobottom \bullet \\
]= & \ynotop& \ynotop \llambda & \ynotop = & \ynotop \nnu & \ynotop & \ynotop& \ynotop \times & ]=]\ynotop \bullet
\end{young}
\]
\caption{Case (3) in the proof of Lemma~\ref{jq-lem3}: the subcase when $n = 1$, $\nnu = \llambda$.  Forced boxes in $\SD_{\kkappa / \nnu}$ are denoted by $\bullet$, removable corners of $\SD_{\llambda}$ off the diagonal are denoted by $\times$, and removable corners on the diagonal are denoted by $*$.}
\label{fig:case psi = gamma}
\end{figure}


\item  If $\llambda=(1)$ and $n>1$, so that $\nnu=(n)$ and $\kkappa = (n+1,1)$, 
  then $\aparam=2$ and $\bparam=\cparam=0$ and 
  one can check via Proposition~\ref{hat-b-prop} 
and Corollary~\ref{jq-cor1} that $\widehat b^\nnu_{\llambda,(n)} =2$ while $\yyy{\kkappa}{\nnu}{m}=1$.

\item If $\llambda \neq (1)$ and $n>1$, then $\aparam=\bparam=1$ so $\yyy{\kkappa}{\nnu}{m} =2^{\aparam+\bparam-2}(2\aparam+3\bparam+2\cparam-3)= 2(\cparam+1)$
by Corollary~\ref{jq-cor1}, while  Proposition~\ref{hat-b-prop} implies that
  $\widehat b^\nnu_{\llambda,(n)} = 1 + 2i + 2j +4k$ where $i\in\{0,1\}$, $j\in\{0,1\}$, and $k\geq 0$ are the numbers of removable corner boxes of $\SD_\llambda$ which are respectively in the first row, on the diagonal, or neither in the first row nor on the diagonal.  One checks that $\cparam+1=i+j+2k$ in this situation, as illustrated in Figure~\ref{fig:case 3.3}.

\begin{figure}[ht]
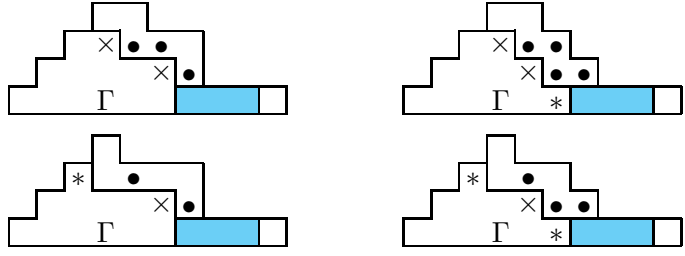

\[
\begin{young}[10pt][c] 
, & , &, & ]= & \ynobottom  \\ 
, & , & ]= \ynobottom & =]\ynobottom \times & ]= \ynotop \bullet & \bullet & =] \ynobottom \\
, & ]= \ynobottom & \ynobottom\ynotop & \ynobottom\ynotop & \ynobottom & =]\ynobottom \times & ]=] \ynotop \bullet \\
]= & \ynotop& \ynotop& \ynotop \Gamma & \ynotop& =]\ynotop & ]=![cyan!50] & ![cyan!50] & =]![cyan!50] & ]=]
\end{young}
\qquad\qquad
\begin{young}[10pt][c] 
, & , &, & ]= & \ynobottom  \\ 
, & , & ]= \ynobottom & =]\ynobottom \times & ]= \ynotop \bullet & =] \ynobottom \bullet \\
, & ]= \ynobottom & \ynobottom\ynotop & \ynobottom\ynotop & =]\ynobottom \times & ]= \ynotop \bullet & =] \bullet \\
]= & \ynotop& \ynotop& \ynotop \Gamma & \ynotop& =]\ynotop * & ]=![cyan!50] & ![cyan!50] & =]![cyan!50] & ]=]
\end{young}
\]
\[
\begin{young}[10pt][c] 
, & , &, & \ynobottom  \\ 
, & , & ]=] \ynobottom * & ]=\ynotop & \bullet &  & =] \ynobottom \\
, & ]= \ynobottom & \ynobottom\ynotop & \ynobottom\ynotop & \ynobottom & =]\ynobottom \times & ]=] \ynotop \bullet \\
]= & \ynotop& \ynotop& \ynotop \Gamma & \ynotop& =]\ynotop & ]=![cyan!50] & ![cyan!50] & =]![cyan!50] & ]=]
\end{young}
\qquad\qquad
\begin{young}[10pt][c] 
, & , &, & \ynobottom  \\ 
, & , & ]=] \ynobottom * & ]=\ynotop & \bullet &  =] \ynobottom \\
, & ]= \ynobottom & \ynobottom\ynotop & \ynobottom\ynotop & =]\ynobottom \times & ]= \ynotop \bullet & =] \bullet \\
]= & \ynotop& \ynotop& \ynotop \Gamma & \ynotop& =]\ynotop * & ]=![cyan!50] & ![cyan!50] & =]![cyan!50] & ]=]
\end{young}
\]
\caption{Case (3) in the proof of Lemma~\ref{jq-lem3}: the subcase when $n > 1$, $\nnu \neq (1)$.  Left column $i = 0$, right column $i = 1$; top row $j = 0$, bottom row $j = 1$.  The blue cells represent $\nnu / \llambda$. }
\label{fig:case 3.3}
\end{figure}

  \end{itemize}
    In each subcase we have $ \widehat b^\nnu_{\llambda,(n)} - \yyy{\kkappa}{\nnu}{m}=1$ as desired.
  \een
This case analysis concludes the proof.
\end{proof}

By putting everything together we can prove our first main theorem.

\begin{theorem}\label{jq-thm}
If $\mu$ and $ \lambda$ are strict partitions then $ \jq_{\lambda/\mu}=\wtjq_{\lambda/\mu}$.
\end{theorem}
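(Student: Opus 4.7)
The plan is to reduce the theorem to the non-skew case $\mu=\emptyset$ via a combinatorial splitting, then prove $\wtjq_\kappa=\jq_\kappa$ for every strict $\kappa$ by induction on $|\kappa|$ using the Pieri rules together with Lemmas~\ref{jq-lem2} and~\ref{jq-lem3}.

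\emph{Reduction to non-skew.} Each semistandard shifted bar tableau of shape $\lambda$ whose entries lie in the concatenated alphabet tracking both $\mathbf{x}$- and $\mathbf{y}$-variables decomposes uniquely as a pair consisting of an SS ShBT of shape $\mu\subseteq\lambda$ (using $\mathbf{x}$-entries) and one of shape $\lambda/\mu$ (using $\mathbf{y}$-entries). This bijection yields
\[
\wtjq_{\lambda}(\mathbf{x},\mathbf{y}) = \sum_{\mu}\wtjq_{\mu}(\mathbf{x})\,\wtjq_{\lambda/\mu}(\mathbf{y}),
\]
mirroring the defining identity~\eqref{jp-xy-eq} of $\jq_{\lambda/\mu}$. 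Granted $\wtjq_\kappa=\jq_\kappa$ for every strict $\kappa$, linear independence of $\{\jq_\mu(\mathbf{x})\}$ then forces $\wtjq_{\lambda/\mu}=\jq_{\lambda/\mu}$ for every $\lambda,\mu$.

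\emph{Non-skew case by induction on $|\kappa|$.} The base $\ell(\kappa)\leq 1$ is Proposition~\ref{one-part-prop}. For $\ell(\kappa)\geq 2$, write $\llambda=(\kappa_2,\kappa_3,\ldots)$ and $n=\kappa_1-\kappa_2$. The inductive hypothesis gives $\wtjq_{\llambda}\wtjq_n=\jq_{\llambda}\jq_n$; expanding the left side via the $\wtjq$-Pieri rule of Corollary~\ref{one-row-rules-cor} and applying the inductive hypothesis to the two non-skew summands (both of size $<|\kappa|$) yields
\[
\wtjq_{\kappa/(\kappa_2)} = \jq_{\llambda}\jq_n - \jq_{(\kappa_1,\kappa_3,\ldots)} - \beta\,\jq_{(\kappa_1-1,\kappa_3,\ldots)}.
\]
Expanding $\jq_{\llambda}\jq_n$ via Proposition~\ref{hat-b-prop} and using Lemma~\ref{jq-lem3} to absorb the two correction terms rewrites the right side as $\sum_\nu\yyy{\kappa}{\nu}{\kappa_2}\,\beta^{|\kappa|-\kappa_2-|\nu|}\jq_\nu$. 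Lemma~\ref{jq-lem2} combined with the inductive hypothesis gives the very same expansion for $\wtjq_{\kappa/(\kappa_2)}$, so $\wtjq_{\kappa/(\kappa_2)}=\jq_{\kappa/(\kappa_2)}$. Iterating with analogous Pieri choices (or pulling back through the coproduct) then yields $\wtjq_{\kappa/(m)}=\jq_{\kappa/(m)}$ for every $m\geq 1$. Finally, specializing the splitting formula to $(\mathbf{x},\mathbf{y})=(x_1,(x_2,x_3,\ldots))$ and subtracting the analogous identity for $\jq_\kappa$ expresses $\wtjq_\kappa-\jq_\kappa$ as a function independent of $x_1$; symmetry via Theorem~\ref{j-sym-thm} extends this to independence from every $x_i$, making the difference a constant, and positive-degree homogeneity under $\deg\beta=1$ forces it to vanish.

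\emph{Main obstacle.} The hardest step is extending $\wtjq_{\kappa/(m)}=\jq_{\kappa/(m)}$ from $m=\kappa_2$ to all $m\geq 1$: Lemma~\ref{jq-lem3} matches the $\yyy$- and $\widehat b$-coefficients precisely on the diagonal $n+m=\kappa_1$, and carrying the identification to arbitrary $m$ demands either careful iterated Pieri expansions against several auxiliary partitions, or a cleaner reduction via the Cauchy identity~\eqref{cauchy-eq4}, which establishes the agreement between $\wtjq_{\lambda/\nu}$ and $\jq_{\lambda/\nu}$ at once.
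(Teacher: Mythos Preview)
Your reduction to the non-skew case and your use of the key Lemmas~\ref{jq-lem2} and~\ref{jq-lem3} are correct, but the inductive scheme has a genuine gap that you yourself flag as the ``main obstacle.'' By setting $\llambda=(\kappa_2,\kappa_3,\ldots)$ and $n=\kappa_1-\kappa_2$, the Pieri product $\wtjq_{\llambda}\wtjq_n$ expands only into terms $\wtjq_\nu$ with $|\nu|\le |\llambda|+n=\kappa_1+\kappa_3+\kappa_4+\cdots<|\kappa|$; the function $\wtjq_\kappa$ never appears. This is why you are forced into the skew detour of proving $\wtjq_{\kappa/(m)}=\jq_{\kappa/(m)}$ for \emph{every} $m\ge 1$, and your argument only delivers $m=\kappa_2$. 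The extension to other $m$ would require a $\jq$-analogue of Lemma~\ref{jq-lem2}, i.e., that $\jq_{\kappa/(m)}=\sum_\nu \yyy{\kappa}{\nu}{m}\beta^{|\kappa|-|\nu|-m}\jq_\nu$, but the proof of that lemma uses the explicit tableau description of $\wtjq$ and does not transfer to $\jq$ without already knowing the theorem. Your proposed workarounds (``iterated Pieri expansions'' or ``pulling back through the coproduct'') are not spelled out.

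The paper avoids this entirely by a different choice in the induction: it strips the \emph{last} part rather than the first. With $\mu=(\lambda_1,\ldots,\lambda_{k-1})$ and $n=\lambda_k$, the same lemmas (applied with $\kkappa=(n+\mu_1,\mu_1,\mu_2,\ldots)$) yield the clean Pieri identity
\[
\wtjq_\mu\,\wtjq_n=\sum_\nu \widehat b^{\nu}_{\mu,(n)}\,\beta^{|\mu|+n-|\nu|}\,\wtjq_\nu,
\]
which matches the known rule for the $\jq$-functions. Crucially, in this product the partition $\lambda$ itself appears with coefficient $\widehat b^{\lambda}_{\mu,(n)}=1$, and every other $\nu$ with nonzero coefficient satisfies $\nu\prec\lambda$ in a suitable total order. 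One can then solve for $\jq_\lambda$ (and identically for $\wtjq_\lambda$) in terms of strictly smaller data, and induction finishes immediately. Your ingredients are exactly right; only the choice of which part to peel off needs to change.
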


\begin{proof}
We first claim that if $n>0$ then
$ \wtjq_\lambda \wtjq_n  = \sum_\nu  \widehat b^{\nu}_{\lambda,(n)}   \beta^{|\lambda|-|\nu|+n}   \wtjq_\nu$
where the sum is over all strict partitions $\nu$. This holds if $\lambda = \emptyset$
since $\wtjq_\lambda=1$ and  it is clear from Proposition~\ref{hat-b-prop} that
$\widehat b^{(n)}_{\emptyset,(n)} =1$ and $\widehat b^{\nu}_{\emptyset,(n)} =0$ for $\nu \neq (n)$. 
If $\lambda$ is nonempty 
then the desired formula 
follows by substituting 
 Lemmas~\ref{jq-lem2} and \ref{jq-lem3}
with $\kkappa := (n+\lambda_1,\lambda_1,\lambda_2,\dots)$ into
 Corollary~\ref{one-row-rules-cor}. 

Let $\prec$ be  the total order  on strict partitions with $\mu \prec \lambda$
if $\mu \neq \lambda$ and either
  $|\mu| < |\lambda|$
or $|\mu| = |\lambda|$ and the first index $i$ with $\mu_i \neq \lambda_i$ has $\mu_i > \lambda_i$.
Suppose $\lambda$ is a strict partition with $k>0$ nonzero parts.
Let $\mu := (\lambda_1,\lambda_2,\dots,\lambda_{k-1})$ and $n =\lambda_k$.
Then it follows from Proposition~\ref{hat-b-prop} that
$ \jq_{\mu} \jq_{n}  
= \jq_\lambda + \sum_{\nu \prec \lambda} \widehat b^{\nu}_{\mu,(n)}   \beta^{|\mu|-|\nu|+n}   \jq_\nu$
 so we can write $\jq_\lambda  =  \jq_{\mu} \jq_{n}  -  \sum_{\nu \prec \lambda} \widehat b^{\nu}_{\mu,(n)}   \beta^{|\mu|-|\nu|+n}   \jq_\nu$.
The same formula holds with each ``$\jq$'' replaced by ``$\wtjq$'' 
in view of the previous paragraph. 
Since $\jq_n=\wtjq_n$ for all integers $n\geq 0$ by Proposition~\ref{one-part-prop},
it follows by induction that $\jq_\lambda=\wtjq_\lambda$. 

The last thing to explain is how to generalize this identity to  
$\jq_{\lambda/\mu}=\wtjq_{\lambda/\mu}$.
As the power series $\wtjq_\lambda$ is symmetric, it is clear from its definition  that
$ \wtjq_\lambda({\bf x},{\bf y}) = \sum_{\mu} \wtjq_\mu({\bf x}) \wtjq_{\lambda/\mu}({\bf y})$.
Compare this with \eqref{jp-xy-eq}.
Since  $\wtjq_\lambda({\bf x},{\bf y})  =  \jq_\lambda({\bf x},{\bf y}) $ and 
since the symmetric functions $\jq_\mu({\bf x})  = \wtjq_\mu({\bf x}) $ are linearly independent,
we must have $\jq_{\lambda/\mu}=\wtjq_{\lambda/\mu}$ for all $\mu$.
\end{proof}

%
%

\subsection{Bar tableau generating functions without diagonal primes}

\def\APARAM{\aparam^\ast}
\def\BPARAM{\bparam^\ast}
\def\CPARAM{\cparam^\ast}
\def\DPARAM{\dparam^\ast}

We turn to our other family $\wtjp$ of bar tableau generating functions. 
The relevant arguments are similar to those above.
 Continue to assume that
 $\kkappa=(\kkappa_1,\kkappa_2,\dots) \supseteq \nnu \supseteq \llambda = (\kkappa_2,\kkappa_3,\dots)$
are strict partitions so that 
 $ \SD_{\kkappa/\nnu}$ is a shifted ribbon.
We retain the same meaning of a \defn{forced box} in $\SD_{\kkappa/\nnu}$.
We additionally define a position $(i,j) \in \SD_{\kkappa/\nnu}$ to be \defn{diagonally-forced}
 if $i=j$ or if $(i+1,j) = (j,j) \in \SD_{\kkappa/\nnu}$.
 
Throughout, we fix the meaning of four integer parameters $\APARAM$, $\BPARAM$, $\CPARAM$, $\DPARAM$
 which are slightly different from the ones $\aparam$, $\bparam$, $\cparam$, $\dparam$ defined in the previous subsection.
Let $\APARAM$ (respectively, $\BPARAM$) be the number of connected components in $\SD_{\kkappa/\nnu}$ that are disjoint from the diagonal
that are singleton sets  (respectively, have multiple boxes).
Let $\CPARAM$ be the number of forced or diagonally-forced boxes in $\SD_{\kkappa/\nnu}$.
Then define $\DPARAM := |\kkappa| - |\nnu| - \APARAM - 2\BPARAM-\CPARAM+1 \geq 1$.

\begin{lemma}\label{jp-lem1}
With the definitions above, $ \wtjp_{\kkappa/\nnu}(t) =    t(t-\beta)^{\DPARAM-1}   2^{\APARAM}   t^{\APARAM+\BPARAM+\CPARAM-1}  (2t  -\beta)^{\BPARAM} $.
\end{lemma}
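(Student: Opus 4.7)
The strategy is to mimic the proof of Lemma~\ref{jq-lem1} by factoring $\wtjp_{\kkappa/\nnu}(t)$ as a product of contributions from the connected components of $\SD_{\kkappa/\nnu}$. The key structural fact is that a shifted ribbon contains at most one diagonal box, since two distinct diagonals $(i,i)$ and $(j,j)$ with $i<j$ would violate the anti-chain condition defining a ribbon; moreover the constraint $\nnu\supseteq(\kkappa_2,\kkappa_3,\ldots)$ combined with strictness of $\nnu$ forces any diagonal of $\SD_{\kkappa/\nnu}$ to sit at $(\ell(\kkappa),\ell(\kkappa))$. So at most one component, call it $C_d$, meets the diagonal. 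In the case that no diagonal is present, the ``no primed diagonal'' condition is vacuous and $\wtjp_{\kkappa/\nnu}=\wtjq_{\kkappa/\nnu}$; here $\APARAM=\aparam$, $\BPARAM=\bparam$, $\CPARAM=\cparam$, and $\DPARAM=\dparam-1$, so the proposed formula collapses to Lemma~\ref{jq-lem1} by direct algebra.

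In the case that a diagonal $(j,j)$ exists, the components of $\SD_{\kkappa/\nnu}$ disjoint from it contribute to $\wtjp$ exactly the same factors they would to $\wtjq$, so the problem reduces to computing the contribution of $C_d$ alone. The crucial step is that the filling of $C_d$ is rigidly determined: from $(j,j)=1$, the row inequality propagates $1$ along all of row $j$; whenever $C_d$ descends to a lower row via a shared corner column $c$, the entry beneath the corner must be $1'$ (since $1$ cannot repeat in column $c$) and then the remainder of that row to the right must be $1$s (since $1'$ cannot repeat in a row). Iterating this downward determines the unique filling in which row $j$ is all $1$s and every lower row of $C_d$ begins with a $1'$ and continues with $1$s.

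With the filling fixed, only bar divisions are free, and the heart of the argument is to show that the number of boxes in $C_d$ forced to begin a new bar is exactly the number of L-shape forced plus diagonally-forced boxes in $C_d$. This is a counting identity rather than a pointwise correspondence: for instance a box below the diagonal is diagonally-forced but may or may not be forced to begin a new bar, while the leftmost box of a lower row may be free or forced depending on the row-length pattern. I will verify the identity by a row-by-row bookkeeping, tracking that the first box of a row below $j$ is free precisely when that row and the one immediately below each consist of a single box (so that the corner-column $1'$ continues vertically as a primed bar), and noting that the second box of every row below $j$ is automatically forced by the entry change from $1'$ to $1$. Each of the remaining boxes in $C_d$ then contributes $t+(-\beta)=t-\beta$ upon summing over the two bar choices, giving a $C_d$-contribution of $t^{\cparam_d+d}(t-\beta)^{|C_d|-\cparam_d-d}$ where $d\in\{1,2\}$ counts the diagonally-forced boxes in $C_d$ and $\cparam_d$ counts its L-shape forced boxes. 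Multiplying this with the disjoint-component contributions and simplifying via the parameter identities $\APARAM=\aparam$, $\BPARAM=\bparam-1$, $\CPARAM=\cparam+d$, $\DPARAM=\dparam+1-d$ (with the obvious modification when $C_d$ is the singleton $\{(j,j)\}$) recovers the stated formula; the principal obstacle is the counting identity inside $C_d$, and everything else is formal algebra.
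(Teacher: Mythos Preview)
Your overall strategy---splitting $\SD_{\kkappa/\nnu}$ into off-diagonal components (handled via Lemma~\ref{jq-lem1}) and the single diagonal component $C_d$ (analyzed directly)---is exactly the paper's approach, and your parameter bookkeeping ($\APARAM=\aparam$, $\BPARAM=\bparam-1$, $\CPARAM=\cparam+d$, $\DPARAM=\dparam+1-d$ in the multi-box diagonal case, together with the no-diagonal reduction) is correct.

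The gap is in your analysis of bar-starts inside $C_d$. You assert that the equality
\[(\text{number of boxes in }C_d\text{ forced to start a new bar}) = (\text{L-forced boxes}) + (\text{diagonally-forced boxes})\]
is ``a counting identity rather than a pointwise correspondence,'' and you support this with two claims that are both false. First, you say the box below the diagonal ``may or may not be forced to begin a new bar''; in fact $(j-1,j)$, when present, sits directly under $(j,j)=1$ and hence carries $1'$, so it \emph{always} starts a new bar. Second, your characterization that the first box of a lower row is free ``precisely when that row and the one immediately below each consist of a single box'' is wrong: freeness of the first box of row $i$ depends only on row $i+1$ (the row \emph{above}), namely it is free iff $i+1<j$ and row $i+1$ has a single box (so that the box above is a $1'$ in the same column). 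Row $i$'s own length, and row $i-1$, are irrelevant.

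Once you trace the entries correctly (row $j$ is all $1$'s; every lower row begins with $1'$ then continues with $1$'s), a direct check shows the two sets coincide box-by-box: the first box of row $i<j$ must start a new bar iff row $i+1$ has $\ge 2$ boxes or $i+1=j$, which is exactly when it is L-forced or diagonally-forced; the second box of any lower row always starts a new bar and is always L-forced; all remaining boxes are simultaneously free and unforced. This is precisely what the paper asserts in one sentence, yielding $C_d$'s contribution $t^{\CPARAM_d}(t-\beta)^{|C_d|-\CPARAM_d}$ with no further argument needed. Your formal algebra after this point is fine.
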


\begin{proof}
If $\SD_{\kkappa/\nnu}$ has no diagonal positions then 
$\wtjp_{\kkappa/\nnu}(t) = \wtjq_{\kkappa/\nnu}(t)$ and $\aparam=\APARAM$, $\bparam=\BPARAM$, $\cparam=\CPARAM$, and $\dparam=\DPARAM+1$,
so the desired formula is equivalent to Lemma~\ref{jq-lem1}.

If instead $\SD_{\kkappa/\nnu}$ has just one connected component and this component intersects the diagonal, 
consider the elements of  $\ShBTP(\nnu/\llambda)$ with all entries in  $\{1' ,1\}$.
As one reads such a tableau in the row-reading order, 
each forced box and each diagonally-forced box must start a new bar,
while every remaining box is free to either start a new bar or continue the bar of its predecessor.
In particular, the first box of $\SD_{\kkappa/\nnu}$, which is some diagonal position $(j,j)$, must start a new bar and contain an unprimed entry,
and if the second box is the diagonally-forced position $(j-1,j)$ then it must contain a primed entry and start a new bar.
Comparing these observations with the definition  $ \wtjp_{\kkappa/\nnu}$, we see that 
 $ \wtjp_{\kkappa/\nnu}(t) = t^{\CPARAM}  (t-\beta)^{|\kkappa|-|\nnu| - \CPARAM}$,
 which is equivalent to the desired formula since in this special case $\APARAM=\BPARAM=0$ and $\DPARAM = |\kkappa|-|\nnu|-\CPARAM + 1$.
 
When not in these cases, $\SD_{\kkappa/\nnu} = \SD_{\kkappa^1/\nnu^1}\sqcup \SD_{\kkappa^2/\nnu^2}$
 for some strict partitions $\nnu^i\subsetneq \kkappa^i$ 
 such that $\SD_{\kkappa^1/\nnu^1}$ has no diagonal boxes and $\SD_{\kkappa^2/\nnu^2}$ has just one connected component
 which intersects the diagonal and is not adjacent to any box in $\SD_{\kkappa^1/\nnu^1}$.
Then $\wtjp_{\kkappa/\nnu}(t) = \wtjp_{\kkappa^1/\nnu^1}(t)  \wtjp_{\kkappa^2/\nnu^2}(t)$
 where    
 $ \wtjp_{\kkappa^i/\nnu^i}(t) =    (t-\beta)^{d_i-1}  2^{\APARAM_i}   t^{\APARAM_i+\BPARAM_i+\CPARAM_i}  (2t  -\beta)^{\BPARAM_i} $
 for integers with $\APARAM=\APARAM_1+\APARAM_2$, $\BPARAM=\BPARAM_1+\BPARAM_2$, $\CPARAM=\CPARAM_1+\CPARAM_2$, and $\DPARAM = \DPARAM_1 + \DPARAM_2 - 1$,
 which suffices.
\end{proof}

Since  $\{\wtjp_n(t): n\geq 0\}$ is a homogeneous $\ZZ[\beta]$-basis for  $\ZZ[\beta,t]$,
we can write 
\be\label{zzz-eq}
 \wtjp_{\kkappa/\nnu}(t)  =  \sum_{n\geq 0} \zzz{\kkappa}{\nnu}{n}   \cdot \beta^{|\kkappa|-|\nnu| - n}  \cdot \wtjp_n(t)
 \ee
 for unique numbers $\zzz{\kkappa}{\nnu}{n} \in \ZZ$.
\begin{corollary}\label{jp-cor1}
 The coefficients $\zzz{\kkappa}{\nnu}{n}$ are all nonnegative integers
with the following  properties:
 \ben
 \item[(a)] If $n = 0 < |\kkappa|-|\nnu|$ or $n > |\kkappa|-|\nnu|$ then $\zzz{\kkappa}{\nnu}{n} =  0$.
 \item[(b)] If $0 < n = |\kkappa| - |\nnu|$ then $\zzz{\kkappa}{\nnu}{n} =  2^{ \APARAM+\BPARAM}$.
 \item[(c)] If $0 < n = |\kkappa| - |\nnu| - 1$ then $\zzz{\kkappa}{\nnu}{n} = 2^{\APARAM+\BPARAM-1}(2\APARAM+3\BPARAM+2\CPARAM-2)$.
 \een
 \end{corollary}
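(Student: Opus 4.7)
My plan is to mirror the proof of Corollary~\ref{jq-cor1}, which handled the analogous $\wtjq$ statement. The key identities are Lemma~\ref{jp-lem1}, giving
\[
\wtjp_{\kkappa/\nnu}(t) = t(t-\beta)^{\DPARAM-1}\cdot 2^{\APARAM}\cdot t^{\APARAM+\BPARAM+\CPARAM-1}\cdot (2t-\beta)^{\BPARAM},
\]
together with Proposition~\ref{one-var-prop}, which tells us that $\wtjp_n(t)=t(t-\beta)^{n-1}$ for $n\geq 1$ and $\wtjp_0(t)=1$. Since $\{\wtjp_n(t):n\geq 0\}$ is a $\ZZ[\beta]$-basis for $\ZZ[\beta,t]$ consisting of polynomials, the coefficients $\zzz{\kkappa}{\nnu}{n}$ are at least integers; the work is to see that they are nonnegative and satisfy the stated formulas.

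The first step is to dispose of the case $\SD_{\kkappa/\nnu}=\varnothing$: then $\kkappa=\nnu$, so $\wtjp_{\kkappa/\nnu}=1$ and $\zzz{\kkappa}{\nnu}{n}=\delta_{n,0}$, which is consistent with (a), (b), (c) (all three statements exclude $|\kkappa|-|\nnu|=0$ or follow vacuously). So from now on assume $\SD_{\kkappa/\nnu}$ is nonempty; I will show that this forces $\APARAM+\BPARAM+\CPARAM\geq 1$, since any diagonal box of $\SD_{\kkappa/\nnu}$ is diagonally-forced and hence counted by $\CPARAM$, while any component disjoint from the diagonal contributes to $\APARAM$ or $\BPARAM$. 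This legitimizes dividing out a factor of $t$ from both sides of the defining identity~\eqref{zzz-eq}: the right side gives $t\sum_{n\geq 1}\zzz{\kkappa}{\nnu}{n}\beta^{|\kkappa|-|\nnu|-n}(t-\beta)^{n-1}$, and $t$ genuinely divides $\wtjp_{\kkappa/\nnu}(t)$ by Lemma~\ref{jp-lem1}.

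Next I substitute $u=t-\beta$. The divided identity then becomes
\[
2^{\APARAM}\cdot u^{\DPARAM-1}(u+\beta)^{\APARAM+\BPARAM+\CPARAM-1}(2u+\beta)^{\BPARAM} \;=\; \sum_{n\geq 1}\zzz{\kkappa}{\nnu}{n}\beta^{|\kkappa|-|\nnu|-n}u^{n-1}.
\]
The left side is a product of polynomials with nonnegative integer coefficients in $u$ and $\beta$, so every coefficient $\zzz{\kkappa}{\nnu}{n}$ is a nonnegative integer. For part~(a), observe that as a polynomial in $u$ the left side has minimum degree $\DPARAM-1\geq 0$ and maximum degree $(\DPARAM-1)+(\APARAM+\BPARAM+\CPARAM-1)+\BPARAM=|\kkappa|-|\nnu|-1$, so $\zzz{\kkappa}{\nnu}{n}$ vanishes whenever $n>|\kkappa|-|\nnu|$. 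For the $n=0<|\kkappa|-|\nnu|$ subcase I evaluate the original identity at $t=0$: the factor of $t$ in Lemma~\ref{jp-lem1} kills the left side, and on the right only $\zzz{\kkappa}{\nnu}{0}\beta^{|\kkappa|-|\nnu|}$ survives, forcing $\zzz{\kkappa}{\nnu}{0}=0$.

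For~(b) and~(c), I extract the top two coefficients in $u$ from the displayed polynomial identity. The $\beta^{0}u^{|\kkappa|-|\nnu|-1}$ coefficient is obtained by setting $\beta=0$ in $2^{\APARAM}(u+\beta)^{\APARAM+\BPARAM+\CPARAM-1}(2u+\beta)^{\BPARAM}u^{\DPARAM-1}$, giving $2^{\APARAM+\BPARAM}u^{|\kkappa|-|\nnu|-1}$ and hence $\zzz{\kkappa}{\nnu}{|\kkappa|-|\nnu|}=2^{\APARAM+\BPARAM}$. The $\beta^{1}u^{|\kkappa|-|\nnu|-2}$ coefficient is the $u^{|\kkappa|-|\nnu|-2}$ coefficient of the $\beta$-derivative at $\beta=0$, which by the product rule works out to
\[
2^{\APARAM+\BPARAM}(\APARAM{+}\BPARAM{+}\CPARAM{-}1) + 2^{\APARAM+\BPARAM-1}\BPARAM = 2^{\APARAM+\BPARAM-1}(2\APARAM{+}3\BPARAM{+}2\CPARAM{-}2),
\]
yielding~(c). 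There is no substantive obstacle here: once Lemma~\ref{jp-lem1} is established, the whole corollary is just a polynomial extraction, and the only mild subtlety is treating the empty skew shape and the $n=0$ case separately so that the division by $t$ is justified.
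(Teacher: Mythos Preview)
Your proof is correct and follows essentially the same approach the paper intends: the paper's own proof simply says the argument mirrors that of Corollary~\ref{jq-cor1} via Proposition~\ref{one-var-prop} and Lemma~\ref{jp-lem1} and omits the details, which is exactly the substitution $u=t-\beta$ and coefficient extraction you carry out. Your only minor deviation is handling the empty skew shape and the $n=0$ case explicitly rather than folding them into a case split on a parameter, but this is cosmetic.
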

 
 \begin{proof} 
 The argument is similar to the proof of Corollary~\ref{jq-cor1}, and follows as 
a simple exercise in algebra from Proposition~\ref{one-var-prop} and Lemma~\ref{jp-lem1}.
We omit the details.
\end{proof}


\begin{lemma}\label{jp-lem2}
If $0 \leq n \leq \kkappa_1$ is an integer then
\[
\wtjp_{\kkappa/(n)} = \sum_{\substack{\mu \text{ strict} \\  \llambda \subseteq \mu \subseteq \kkappa}}   \zzz{\kkappa}{\mu}{n} \cdot \beta^{|\kkappa|-|\mu|-n}\cdot  \wtjp_\mu .
\]
\end{lemma}

\begin{proof}
Repeat the proof of Lemma~\ref{jq-lem2}, replacing   ``$\wtjq$''
by ``$\wtjp$'' and   ``$\yyyletter$'' by ``$\zzzletter$''.
\end{proof}

Consider the set of removable corner boxes $(i,j) \in \SD_\llambda$ with $(i+1,j+1) \notin \SD_{\nnu/\llambda}$.
Let $\cU^\ast$, $\cV^\ast$, and $\cW^\ast$ be the sets of such corners $(i,j)$ 
for which the size of 
$\{(i,j+1),(i+1,j)\} \cap \SD_{\nnu/\llambda}$
is exactly two, one, or zero, respectively.
The union $\cU^\ast \sqcup \cV^\ast \sqcup \cW^\ast$ of these three sets is the same as the union
of  $\cU \sqcup \cV \sqcup \cW$ of the sets defined before Lemma~\ref{uvw-lem}.

\begin{lemma}\label{uvw-lem2}
Suppose $\kkappa_1 - \nnu_1 \geq 2$.
Then $ |\cU^\ast| = \APARAM $ and $ |\cV^\ast| + 2|\cW^\ast| = \BPARAM+\CPARAM-1$.
\end{lemma}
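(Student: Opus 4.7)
The plan is to reduce Lemma~\ref{uvw-lem2} to Lemma~\ref{uvw-lem} by bookkeeping a small, highly local difference between the two classifications of corners. First I would observe that $\cU^{\ast}\sqcup \cV^{\ast}\sqcup \cW^{\ast}$ and $\cU\sqcup \cV\sqcup \cW$ are the same underlying set, namely all removable corners $(i,j)\in \SD_{\llambda}$ with $(i+1,j+1)\notin \SD_{\nnu/\llambda}$. For a non-diagonal corner $(i,j)$, both tests ``$(i,j+1)\in \SD_{\nnu/\llambda}$'' and ``$(i+1,j)\in \SD_{\nnu/\llambda}$'' make sense, and unpacking the definitions shows that the two classifications agree on such corners; so the discrepancy lies entirely at diagonal removable corners of $\SD_{\llambda}$.

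Write $K$ for the length of $\kkappa$, so $\llambda$ has $K-1$ rows and $\nnu$ has either $K-1$ or $K$ rows. Strictness forces any diagonal removable corner of $\SD_{\llambda}$ to be the unique position $(K-1,K-1)$, which further requires $\llambda_{K-1}=\kkappa_K=1$; and the condition $(K,K)\notin \SD_{\nnu/\llambda}$ needed for inclusion amounts to $\nnu$ having only $K-1$ rows. When this diagonal corner is present, the box $(K,K-1)$ lies outside the shifted diagram, so the classification of $(K-1,K-1)$ in the starred system depends only on whether $(K-1,K)\in \SD_{\nnu/\llambda}$: yes puts it in $\cV^{\ast}$, no puts it in $\cW^{\ast}$. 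In the original system, the same two alternatives put it in $\cU$ and $\cV$ respectively. Letting $d_U$ and $d_V$ be the indicator variables for these two alternatives, one obtains $|\cU^{\ast}|=|\cU|-d_U$, $|\cV^{\ast}|=|\cV|-d_V+d_U$, and $|\cW^{\ast}|=|\cW|+d_V$. Combining with Lemma~\ref{uvw-lem} (applicable since $\kkappa_1-\nnu_1\geq 2$), the claim reduces to the two identities
\[
\aparam-d_U=\APARAM \quand (\BPARAM-\bparam)+(\CPARAM-\cparam)=d_U+d_V.
\]

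The remaining work is a small case analysis on the diagonal structure of $\SD_{\kkappa/\nnu}$, which I expect to be the main, though not deep, obstacle: the three quantities $\APARAM-\aparam$, $\BPARAM-\bparam$, and $\CPARAM-\cparam$ each depend on several interrelated features near the diagonal box $(K,K)$, so the identities must be checked by hand. In Case A ($\nnu$ has $K$ rows), $\SD_{\kkappa/\nnu}$ has no diagonal box, $\APARAM=\aparam$, $\BPARAM=\bparam$, $\CPARAM=\cparam$, and the diagonal corner is excluded so $d_U=d_V=0$, and both identities hold trivially. In Case B ($\nnu$ has $K-1$ rows), the unique diagonal box $(K,K)$ lies in $\SD_{\kkappa/\nnu}$, and one splits into three subcases according to whether (i) $\kkappa_K\geq 2$, (ii) $\kkappa_K=1$ and $\nnu_{K-1}\geq 2$, or (iii) $\kkappa_K=1$ and $\nnu_{K-1}=1$. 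In each, one reads off directly whether $(K,K)$ is a singleton or multi-box component (giving $\aparam-\APARAM$ and $\bparam-\BPARAM$), whether $(K-1,K)\in \SD_{\kkappa/\nnu}$ (controlling whether it contributes an extra diagonally-forced box to $\CPARAM-\cparam$), and whether $(K-1,K-1)$ is a removable corner lying in $\cU$ or $\cV$ (determined by $\llambda_{K-1}=\kkappa_K$ and the membership of $(K-1,K)$ in $\SD_{\nnu/\llambda}$). A short calculation in each of the four configurations confirms both identities, completing the proof.
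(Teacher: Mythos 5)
Your proposal is correct and takes essentially the same approach as the paper: both reduce the claim to Lemma~\ref{uvw-lem} via a local case analysis of how the starred and unstarred corner classifications (and the parameters $\aparam,\bparam,\cparam$ versus $\APARAM,\BPARAM,\CPARAM$) differ near the unique diagonal box of $\SD_{\kkappa/\llambda}$. The paper organizes its cases by whether that diagonal box lies in $\SD_{\nnu/\llambda}$, is isolated in $\SD_{\kkappa/\nnu}$, or sits in a multi-box component whose second box is to its right or above it, which corresponds exactly to your Case A and subcases (i)--(iii) of Case B.
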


\begin{proof}
Let $(i,i)$ be the unique diagonal position in $\SD_{\kkappa/\llambda}$.
If this box is in $\SD_{\nnu/\llambda}$ rather than $\SD_{\kkappa/\nnu}$,
then  $\cU=\cU^\ast$, $\cV=\cV^\ast$, and $\cW = \cW^\ast$
along with $\aparam=\APARAM$, $\bparam=\BPARAM$, and $\cparam=\CPARAM$, so the desired properties hold by Lemma~\ref{uvw-lem}.
If  box $(i,i)$ belongs to $\SD_{\kkappa/\nnu}$ but is not adjacent to any other box
in $\SD_{\kkappa/\nnu}$, then we cannot have $(i,i+1) \in \SD_{\kkappa/\llambda}$
so it must hold that $(i-1,i) \in \SD_{\nnu/\llambda}$,
which means that  $\cU = \cU^\ast \sqcup \{(i-1,i-1)\} $ and  $ \cV^\ast = \cV \sqcup \{(i-1,i-1)\}$
and therefore
\[|\cU|-1 = |\cU^\ast|, \ |\cV| +1= |\cV^\ast|,  \ |\cW| = |\cW^\ast|
\quad\text{and}\quad
\aparam-1=\APARAM,\ \bparam=\BPARAM,\ \cparam+1=\CPARAM.
\]
Suppose finally that box $(i,i)$ belongs to a connected component of $\SD_{\kkappa/\nnu}$ with multiple boxes.
 Similar to our reasoning above, if the second box of $\SD_{\kkappa/\nnu}$ is $(i,i+1)$ 
then we have 
\[|\cU| = |\cU^\ast|, \ |\cV| = |\cV^\ast|,  \ |\cW| = |\cW^\ast|
\quad\text{and}\quad
\aparam=\APARAM,\ \bparam-1=\BPARAM,\ \cparam+1=\CPARAM,
\]
 while if the second box of $\SD_{\kkappa/\nnu}$ is $(i-1,i)$ 
then we have
\[|\cU| = |\cU^\ast|, \ |\cV| -1= |\cV^\ast|,  \ |\cW|+1 = |\cW^\ast|
\quad\text{and}\quad
\aparam=\APARAM,\ \bparam-1=\BPARAM,\ \cparam+2=\CPARAM.
\]
In each of these cases the desired identities follow by Lemma~\ref{uvw-lem}.
\end{proof}

\begin{lemma}\label{jp-lem2b}
If $\kkappa_1 - \nnu_1 \geq 2$ 
then 
\[\sum_{n \geq 0} \widehat c^\nnu_{\llambda,(n)} \cdot \beta^{|\kkappa|- |\nnu| - (\kkappa_1-n)} \cdot u^{ (\kkappa_1-n)-\DPARAM } = 2^{\APARAM}  (u+\beta)^{\APARAM+\BPARAM+\CPARAM-1}  (2u +\beta)^{\BPARAM}    .\]

\end{lemma}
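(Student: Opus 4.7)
The plan is to parallel the derivation of Lemma~\ref{jq-lem2b}, adapted to the $Q$-type counting of $\widehat c^\nnu_{\llambda,(n)}$. By Proposition~\ref{hat-b-prop}, this quantity equals $|\cribbons{\nnu}{\llambda;n}|$. An element of $\cribbons{\nnu}{\llambda}$ is obtained by choosing a subset $S \subseteq \cU^\ast \sqcup \cV^\ast \sqcup \cW^\ast$ of removable corners to adjoin to $\SD_{\nnu/\llambda}$ (the condition $(i+1,j+1) \notin \SD_{\nnu/\llambda}$ is exactly the requirement that the resulting shape remain a ribbon) and then filling the ribbon with $1'$s and $1$s. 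In contrast with the $\bribbons$ case, \emph{every} box that is first in its connected component in row-reading order---including those on the diagonal---may be filled with any of $\{1\}$, $\{1'\}$, or $\{1',1\}$, contributing $u(2+u)$ to the generating function in the variable $u$ that records $n$, while every other box is forced and contributes~$u$. Hence a valid ribbon $R$ contributes $u^{|R|}(2+u)^{C_R}$ to $\sum_n \widehat c^\nnu_{\llambda,(n)}\, u^n$, where $C_R$ counts connected components of $R$.

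I would then track the effect of adjoining corners on the pair $(|R|,C_R)$. Since removable corners of $\SD_\llambda$ are mutually non-adjacent, and since the strict partition condition implies $(i-1,j) \in \SD_\llambda$ for every off-diagonal removable corner $(i,j)$ of $\SD_\llambda$ (while $(i+1,i)$ and $(i,i-1)$ lie outside $\SD_\kkappa$ for any on-diagonal corner $(i,i)$), adjoining a corner from $\cU^\ast$, $\cV^\ast$, or $\cW^\ast$ shifts $(|R|,C_R)$ by $(+1,-1)$, $(+1,0)$, or $(+1,+1)$, respectively. A traversal argument along the ribbon $\SD_{\kkappa/\llambda}$---splitting into the cases $\ell(\nnu)=\ell(\kkappa)$ and $\ell(\nnu)<\ell(\kkappa)$---then yields $C_{R_0}=\APARAM+\BPARAM$ for $R_0=\SD_{\nnu/\llambda}$ in both cases. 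Summing over allowed corner subsets, using $1+u(2+u)=(1+u)^2$, and applying Lemma~\ref{uvw-lem2} will give
\[
\sum_{n\geq 0} \widehat c^\nnu_{\llambda,(n)}\, u^n \;=\; u^{|\nnu|-|\llambda|}\, 2^{\APARAM}\, (2+u)^{\BPARAM}\, (1+u)^{\APARAM+\BPARAM+\CPARAM-1}.
\]

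To finish, I would substitute $u \mapsto \beta/u$ in this polynomial identity, viewing both sides as Laurent polynomials, and multiply through by $u^{\kkappa_1-\DPARAM}\beta^{|\kkappa|-|\nnu|-\kkappa_1}$. Using the relations $|\llambda|+\kkappa_1=|\kkappa|$ and $\DPARAM = |\kkappa|-|\nnu|-\APARAM-2\BPARAM-\CPARAM+1$, a direct calculation of the resulting exponents of $u$ and $\beta$ on both sides rearranges to the desired identity. The main obstacle I anticipate is the bookkeeping of component counts near the diagonal: verifying that $C_{R_0}=\APARAM+\BPARAM$ holds uniformly across both cases, and that each on-diagonal corner in $\cV^\ast \sqcup \cW^\ast$ behaves consistently with its off-diagonal counterparts. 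Both of these steps rely crucially on the hypothesis $\kkappa_1-\nnu_1 \geq 2$ (as they do in Lemmas~\ref{uvw-lem} and \ref{uvw-lem2}) to guarantee a controlled structure at the top of the ribbon.
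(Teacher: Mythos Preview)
Your proposal is correct and follows essentially the same route as the paper's proof: both interpret $\widehat c^\nnu_{\llambda,(n)}$ via Proposition~\ref{hat-b-prop}, track how adjoining corners from $\cU^\ast,\cV^\ast,\cW^\ast$ changes the component count of the ribbon, invoke Lemma~\ref{uvw-lem2}, obtain the same closed form $2^{\APARAM}u^{|\nnu|-|\llambda|}(2+u)^{\BPARAM}(1+u)^{\APARAM+\BPARAM+\CPARAM-1}$, and finish with the substitution $u\mapsto\beta/u$. The paper simply asserts $C_{R_0}=\APARAM+\BPARAM$ without the case split you outline, but otherwise the arguments coincide.
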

\begin{proof}
By Proposition~\ref{hat-b-prop}, the number $ \widehat c^\nnu_{\llambda,(n)}$ counts the ways that we can add a subset of removable
corner boxes of $\SD_\llambda$ to $\SD_{\nnu/\llambda}$ to form a shifted ribbon, and then assign either $1$ or $1'$ or $1'1$ 
to the first box in each connected component of this ribbon, such that the number of boxes plus the number of entries equal to $1'1$ is $n$.
As in the proof of Lemma~\ref{jq-lem2b}, 
the shifted ribbons that arise in this way are the 
unions of $\SD_{\nnu/\llambda}$
with arbitrary subsets of $\cU^\ast \sqcup \cV^\ast \sqcup \cW^\ast$.
The ribbon $\SD_{\nnu/\llambda}$ starts out with $\APARAM+\BPARAM$ connected components, and 
each box added from $\cU^\ast $, $ \cV^\ast $, and $ \cW^\ast$ respectively adds $-1$, $0$, and $1$ to this number.
Combining these observations with  Lemma~\ref{uvw-lem2}, we deduce that
\[\ba
\sum_{n \geq 0} \widehat c^\nnu_{\llambda,(n)}  u^{n  }  &=     u ^{|\nnu|-|\llambda|}(2 + u)^{\APARAM+\BPARAM}(1 + \tfrac{u}{2+u})^{|\cU^\ast|}(1+u)^{|\cV^\ast|} (1 + u(2+u))^{|\cW^\ast|}
\\& = 2^{\APARAM}  u ^{|\nnu|-|\llambda|}     (2+u)^{\BPARAM} (1+u)^{\APARAM+\BPARAM+\CPARAM-1} .
\ea\]
 This becomes the desired identity after dividing both sides by $u ^{|\nnu|-|\llambda|}$,
   replacing $u$ by $\beta u^{-1}$, and then multiplying both sides by $u^{\APARAM+2\BPARAM+\CPARAM-1} = u^{|\kkappa|-|\nnu|-\DPARAM} =u^{\kkappa_1-\DPARAM+ |\llambda|-|\nnu| }$.
\end{proof}

\begin{lemma}\label{jp-lem3}
If $n = \kkappa_1 - \kkappa_2$ and $m=\kkappa_2>0$ then 
$ \widehat c^\nnu_{\llambda,(n)} - \zzz{\kkappa}{\nnu}{m} = \begin{cases} 1 &\text{if }\nnu = (\kkappa_1,\kkappa_3,\kkappa_4,\dots)\\
1 &\text{if }\nnu = (\kkappa_1-1,\kkappa_3,\kkappa_4,\dots)
\\ 0&\text{otherwise}.
\end{cases}$
\end{lemma}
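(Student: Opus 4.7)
The plan is to prove Lemma~\ref{jp-lem3} by mirroring the structure of the proof of Lemma~\ref{jq-lem3}, replacing each ingredient specific to the $\jq$-setting (namely, Lemmas~\ref{jq-lem1} and \ref{jq-lem2b}, Corollary~\ref{jq-cor1}, the coefficients $\widehat b$ and $\yyy{}{}{}$, and the parameters $\aparam, \bparam, \cparam, \dparam$) by its $\jp$-analogue (Lemmas~\ref{jp-lem1} and \ref{jp-lem2b}, Corollary~\ref{jp-cor1}, the coefficients $\widehat c$ and $\zzz{}{}{}$, and the starred parameters $\APARAM, \BPARAM, \CPARAM, \DPARAM$).

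First I would establish that when $\kkappa_1 - \nnu_1 \geq 2$, the equality $\widehat c^\nnu_{\llambda,(n)} = \zzz{\kkappa}{\nnu}{m}$ holds for all integers $n,m$ with $n + m = \kkappa_1$. Setting $u = t - \beta$ and using $\wtjp_m(t) = (u + \beta)u^{m-1}$ from Proposition~\ref{one-var-prop}, Lemma~\ref{jp-lem1} identifies $\zzz{\kkappa}{\nnu}{m}$ as the coefficient of $\beta^{|\kkappa|-|\nnu|-m}u^{m-\DPARAM}$ in $2^{\APARAM}(u+\beta)^{\APARAM+\BPARAM+\CPARAM-1}(2u+\beta)^{\BPARAM}$, and Lemma~\ref{jp-lem2b} asserts this is precisely $\widehat c^\nnu_{\llambda,(n)}$. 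This remains valid when $m \leq 1$ since all three quantities then vanish.

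The remaining cases $\kkappa_1 - \nnu_1 \in \{0, 1\}$ will be handled in parallel with the corresponding cases of Lemma~\ref{jq-lem3}. If $|\nnu| - |\llambda| > n$ then $|\kkappa| - |\nnu| < m$, so both sides vanish by Corollary~\ref{jp-cor1}(a) and Proposition~\ref{hat-b-prop}. Otherwise, the argument proceeds through three cases: first, $\kkappa_1 = \nnu_1$, which forces $\nnu = (\kkappa_1, \kkappa_3, \kkappa_4, \dots)$ with $|\kkappa|-|\nnu| = m$; second, $\kkappa_1 - \nnu_1 = 1$ with $|\nnu| - |\llambda| = n$; and third, $\kkappa_1 - \nnu_1 = 1$ with $|\nnu| - |\llambda| = n - 1$, which forces $\nnu = (\kkappa_1 - 1, \kkappa_3, \kkappa_4, \dots)$. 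In cases 1 and 3 the difference should equal $1$, while case 2 should give equality. Case 3 will be further decomposed into the subcases $n = 1$, $\llambda = (1)$ with $n > 1$, and $\llambda \neq (1)$ with $n > 1$, mirroring exactly the structure used in Lemma~\ref{jq-lem3}.

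The main obstacle will be case 3, which requires computing $\widehat c^\nnu_{\llambda,(n)}$ via a careful enumeration of removable corner boxes of $\SD_\llambda$ using Proposition~\ref{hat-b-prop}, distinguishing corners that lie on the diagonal, in the first row, and elsewhere. Because $\widehat c$ counts shifted ribbons in $\cribbons{\nnu}{\llambda;n}$ (which allows primed diagonal entries) rather than in $\bribbons{\nnu}{\llambda;n}$, the contribution of diagonal corners to the count is larger than in the $\jq$-setting, and this must be matched against the formulas for $\zzz{\kkappa}{\nnu}{m}$ from Corollary~\ref{jp-cor1}, in which $\APARAM,\BPARAM,\CPARAM$ count only components and boxes \emph{disjoint} from the diagonal (together with diagonally-forced positions). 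Verifying that the net difference is still exactly $1$ in each subcase is where the bookkeeping is most delicate, but the required computations are elementary.
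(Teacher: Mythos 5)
Your proposal is correct and follows essentially the same route as the paper's proof: the $\kkappa_1-\nnu_1\geq 2$ case via Proposition~\ref{one-var-prop} and Lemmas~\ref{jp-lem1}, \ref{jp-lem2b}, the vanishing when $|\nnu|-|\llambda|>n$, and then the same three-case analysis with the diagonal-sensitive comparison of $\widehat c^\nnu_{\llambda,(n)}$ (from Proposition~\ref{hat-b-prop}) against $\zzz{\kkappa}{\nnu}{m}$ (from Corollary~\ref{jp-cor1}). The only cosmetic difference is that in case 3 the paper collapses the $\llambda=(1)$ and $\llambda\neq(1)$ subcases for $n>1$ into a single computation ($\widehat c^\nnu_{\llambda,(n)}=1+2J+4K$ with $\CPARAM=J+2K$), whereas you keep them separate as in Lemma~\ref{jq-lem3}; either works.
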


\begin{proof}
As in 
 the proof of Lemma~\ref{jq-lem3},
if $\kkappa_1-\nnu_1 \geq 2$ and $n+m=\kkappa_1$, then it follows 
from Proposition~\ref{one-var-prop} and Lemmas~\ref{jp-lem1} and \ref{jp-lem2b}
 that 
$\widehat c^\nnu_{\llambda,(n)}$ and $ \zzz{\kkappa}{\nnu}{m} $ 
are both equal to the coefficient of $\beta^{|\kkappa|-|\nnu| -m} u^{m-\DPARAM}$
 in $2^{\APARAM}   (u+\beta)^{\APARAM+\BPARAM+\CPARAM-1}   (2u +\beta)^{\BPARAM} $.
 
To handle the cases when $\kkappa_1-\nnu_1 \in \{0,1\}$, we fix $n=\kkappa_1-\kkappa_2$ and $m=\kkappa_2>0$; in particular, $\kkappa$ and $\llambda$ are both nonempty.
If  $|\nnu|-|\llambda|>n$ then $|\kkappa|-|\nnu| < m$ and
it follows from Proposition~\ref{hat-b-prop}
and Corollary~\ref{jp-cor1} that 
 $\widehat c^\nnu_{\llambda,(n)} = \zzz{\kkappa}{\nnu}{m}  = 0$, as desired.
 We may therefore assume $|\nnu|-|\llambda|\leq n$. 
 There are three main cases, which are similar to the ones in the proof of Lemma~\ref{jq-lem3}:
 \ben
 \item[(1)] If $\kkappa_1=\nnu_1$ then $\nnu = (\kkappa_1,\kkappa_3,\kkappa_4,\dots)= \llambda + (n,0,0,\dots)$
 as in case (1) of the proof  of Lemma~\ref{jq-lem3}.
 Then $\APARAM=\BPARAM=0$ so $\zzz{\kkappa}{\nnu}{n} = 2^{\APARAM+\BPARAM}=1$ by Corollary~\ref{jp-cor1},
 and 
since $\llambda$ is nonempty, one has 
 $\widehat c^\nnu_{\llambda,(n)} = \widehat b^\nnu_{\llambda,(n)}=2$ by Proposition~\ref{hat-b-prop},
 so $\widehat c^\nnu_{\llambda,(n)} - \zzz{\kkappa}{\nnu}{m}=1 $, as claimed.

\item[(2)] If $\kkappa_1-\nnu_1=1$ then $|\nnu|-|\llambda| \in \{n-1,n\}$ as in case (2) of the proof  of Lemma~\ref{jq-lem3}.
If we also have $|\nnu|-|\llambda| = n$ then 
$\SD_{\nnu/\llambda}$
must again consist of the $n-1$   boxes $(1,j) $ with $ \kkappa_2 < j < \kkappa_1$ 
along with one additional box $(i,j) \in \SD_{\kkappa/\llambda}$ with $i>1$.
In this case
$\zzz{\kkappa}{\nnu}{m} = 2^{\APARAM+\BPARAM}$ by Corollary~\ref{jp-cor1} 
while 
$\widehat c^\nnu_{\llambda,(n)}=2^E$ where $E \in \{1,2\} $ is the number of connected components in $\SD_{\nnu/\llambda}$
by Proposition~\ref{hat-b-prop}. 
We have $\APARAM+\BPARAM = E$
so $\widehat c^\nnu_{\llambda,(n)} = \zzz{\kkappa}{\nnu}{m} $, as needed.

\item[(3)] Finally, if  $\kkappa_1 - \nnu_1 = 1$
and $|\nnu| - |\llambda| =n-1$,
then as in case (3) of 
the proof  of Lemma~\ref{jq-lem3}
we must   have 
$\nnu = (\kkappa_1-1,\kkappa_3,\kkappa_4,\dots) = \llambda + (n-1,0,0,\dots)$.
There are now only two subcases:  \begin{itemize}

\item If $n=1$ then   $\nnu=\llambda$, so $\APARAM=\BPARAM=0$ and $\zzz{\kkappa}{\nnu}{m}=2^{\APARAM+\BPARAM-1}(2\APARAM+3\BPARAM+2\CPARAM-2)=\CPARAM-1$ by Corollary~\ref{jp-cor1},
while 
Proposition~\ref{hat-b-prop} tells us that
  $\widehat c^\nnu_{\llambda,(n)}=2G$ where $G>0$ is the number of removable corner boxes of $\SD_\llambda$.
 One checks that $\CPARAM=2G$, as illustrated in Figure~\ref{fig:case psi = gamma 2}.

\begin{figure}[ht]
\[
\begin{young}[10pt][c] 
, & , &, &, & ]= \cdot & & =] \ynobottom  \\ 
, & , &, & \ynobottom  & \ynobottom & =]\ynobottom \times & \ynobottom\ynotop \bullet \\
, & , & ]= \ynobottom & , & , & =]\ynobottom\ynotop & ]= \ynotop & \bullet & =] \ynobottom \\
, & ]= \ynobottom & \ynobottom\ynotop & \ynobottom\ynotop  & , & , & \ynobottom & =]\ynobottom \times & ]=] \ynobottom\ynotop \bullet \\
]= & \ynotop& \ynotop \llambda & \ynotop = & \ynotop \nnu & \ynotop & \ynotop& \ynotop & ]=]\ynotop
\end{young}
\qquad
\begin{young}[10pt][c] 
, & , &, &, & ]=]\ynobottom \cdot  \\ 
, & , &, & ]=] \ynobottom \times & ]=\ynotop  \cdot & \bullet & =]\ynobottom \\
, & , & ]= \ynobottom & , & , & =]\ynobottom \times & ]= \ynotop \bullet & \bullet & =] \ynobottom \\
, & ]= \ynobottom & \ynobottom\ynotop & \ynobottom\ynotop  & , & , & \ynobottom & =]\ynobottom \times & ]=] \ynobottom\ynotop \bullet \\
]= & \ynotop& \ynotop \llambda & \ynotop = & \ynotop \nnu & \ynotop & \ynotop& \ynotop & ]=]\ynotop
\end{young}
\qquad
\begin{young}[10pt][c] 
, & , &, &, & ]=]\ynobottom \cdot  \\ 
, & , &, & ]=] \ynobottom \times & \ynotop\ynobottom \cdot\\
, & , & ]= \ynobottom & , & ]=\ynotop &  \bullet & &=]\ynobottom   \\
, & ]= \ynobottom & \ynobottom\ynotop & \ynobottom\ynotop  & , & ,  & =]\ynobottom \times & ]= \ynotop \bullet  & =]\ynobottom \bullet \\
]= & \ynotop& \ynotop \llambda & \ynotop = & \ynotop \nnu & \ynotop & \ynotop& \ynotop \times & ]=]\ynotop \bullet
\end{young}
\]
\caption{Case (3) in the proof of Lemma~\ref{jp-lem3}: the subcase when $n = 1$, $\nnu = \llambda$.  Forced boxes in $\SD_{\kkappa / \nnu}$ are denoted by $\bullet$, diagonally-forced boxes are denoted by $\cdot$, and removable corners of $\SD_{\llambda}$ are denoted by $\times$.}
\label{fig:case psi = gamma 2}
\end{figure}

\item If  $n>1$ then $\APARAM=1$ and $\BPARAM=0$ so $\yyy{\kkappa}{\nnu}{m} =2^{\APARAM+\BPARAM-1}(2\APARAM+3\BPARAM+2\CPARAM-2)= 2\CPARAM$
by Corollary~\ref{jp-cor1}. But in this case Proposition~\ref{hat-b-prop} implies that
  $\widehat c^\nnu_{\llambda,(n)} = 1 + 2J + 4K$ where $J\in\{0,1\}$ and $K\geq 0$ are the numbers of removable corner boxes of $\SD_\llambda$ which are in or not in the first row.  One checks that $\CPARAM=J+2K$ as needed.  
 
\begin{figure}[ht]
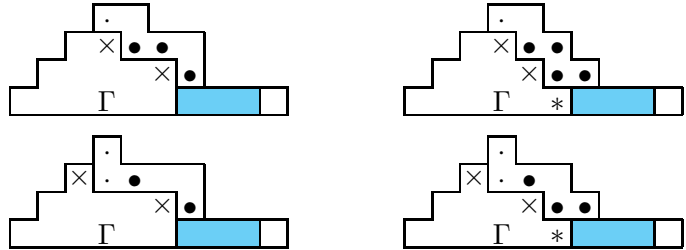

\[
\begin{young}[10pt][c] 
, & , &, & ]= \cdot & \ynobottom  \\ 
, & , & ]= \ynobottom & =]\ynobottom \times & ]= \ynotop \bullet & \bullet & =] \ynobottom \\
, & ]= \ynobottom & \ynobottom\ynotop & \ynobottom\ynotop & \ynobottom & =]\ynobottom \times & ]=] \ynotop \bullet \\
]= & \ynotop& \ynotop& \ynotop \Gamma & \ynotop& =]\ynotop & ]=![cyan!50] & ![cyan!50] & =]![cyan!50] & ]=]
\end{young}
\qquad\qquad
\begin{young}[10pt][c] 
, & , &, & ]= \cdot & \ynobottom  \\ 
, & , & ]= \ynobottom & =]\ynobottom \times & ]= \ynotop \bullet & =] \ynobottom \bullet \\
, & ]= \ynobottom & \ynobottom\ynotop & \ynobottom\ynotop & =]\ynobottom \times & ]= \ynotop \bullet & =] \bullet \\
]= & \ynotop& \ynotop& \ynotop \Gamma & \ynotop& =]\ynotop * & ]=![cyan!50] & ![cyan!50] & =]![cyan!50] & ]=]
\end{young}
\]
\[
\begin{young}[10pt][c] 
, & , &, & \ynobottom \cdot \\ 
, & , & ]=] \ynobottom \times  & ]=\ynotop \cdot & \bullet &  & =] \ynobottom \\
, & ]= \ynobottom & \ynobottom\ynotop & \ynobottom\ynotop & \ynobottom & =]\ynobottom \times & ]=] \ynotop \bullet \\
]= & \ynotop& \ynotop& \ynotop \Gamma & \ynotop& =]\ynotop & ]=![cyan!50] & ![cyan!50] & =]![cyan!50] & ]=]
\end{young}
\qquad\qquad
\begin{young}[10pt][c] 
, & , &, & \ynobottom \cdot \\ 
, & , & ]=] \ynobottom \times & ]=\ynotop \cdot & \bullet &  =] \ynobottom \\
, & ]= \ynobottom & \ynobottom\ynotop & \ynobottom\ynotop & =]\ynobottom \times & ]= \ynotop \bullet & =] \bullet \\
]= & \ynotop& \ynotop& \ynotop \Gamma & \ynotop& =]\ynotop * & ]=![cyan!50] & ![cyan!50] & =]![cyan!50] & ]=]
\end{young}
\]
\caption{Case (3) in the proof of Lemma~\ref{jp-lem3}: the subcase when $n > 1$.  Left column $J = 0$, right column $J = 1$.  The blue cells represent $\nnu / \llambda$. }
\end{figure}

  \end{itemize}
    In each subcase we have $ \widehat c^\nnu_{\llambda,(n)} - \zzz{\kkappa}{\nnu}{m}=1$ as desired.
  \een
This case analysis concludes the proof.
\end{proof}

We now arrive at our second main theorem.

\begin{theorem}\label{jp-thm}
If $\mu$ and $ \lambda$ are strict partitions then $ \jp_{\lambda/\mu}=\wtjp_{\lambda/\mu}$.
\end{theorem}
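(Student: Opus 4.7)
The plan is to mirror the proof of Theorem~\ref{jq-thm} step for step, with $\wtjq$ replaced by $\wtjp$, Lemmas~\ref{jq-lem2} and~\ref{jq-lem3} replaced by Lemmas~\ref{jp-lem2} and~\ref{jp-lem3}, and the first line of Corollary~\ref{one-row-rules-cor} replaced by its second line. The first step is to establish the Pieri-type identity
\[
\wtjp_\lambda \wtjq_n = \sum_\nu \widehat c^\nu_{\lambda,(n)}\, \beta^{|\lambda|+n-|\nu|}\, \wtjp_\nu
\]
for all strict partitions $\lambda$ and integers $n>0$. The base case $\lambda=\emptyset$ reduces to $\wtjq_n = 2\wtjp_{(n)} + [n \geq 2]\,\beta\, \wtjp_{(n-1)}$, which follows from Proposition~\ref{one-part-prop} together with a direct enumeration of $\cribbons{\nu}{\emptyset;n}$ via Proposition~\ref{hat-b-prop} (the only contributing $\nu$ are $(n)$ and $(n-1)$, with counts $2$ and $1$ respectively). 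For nonempty $\lambda$, substituting Lemmas~\ref{jp-lem2} and~\ref{jp-lem3} with $\kkappa:=(n+\lambda_1,\lambda_1,\lambda_2,\dots)$ into the second equation of Corollary~\ref{one-row-rules-cor} yields the identity by the same purely formal manipulation as in the $\wtjq$ proof.

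Next I would induct on strict partitions using the total order $\prec$ of Theorem~\ref{jq-thm}. For $\lambda$ with $k>0$ nonzero parts, set $\mu:=(\lambda_1,\dots,\lambda_{k-1})$ and $n:=\lambda_k$. Subtracting the algebraic identity $\jp_\mu \jq_n = \sum_\nu \widehat c^\nu_{\mu,(n)}\, \beta^{|\mu|+n-|\nu|}\, \jp_\nu$ (a consequence of \eqref{hat-abc-eq}) from the corresponding identity for $\wtjp$ established above, I must verify two combinatorial facts: (i) the coefficient $\widehat c^\lambda_{\mu,(n)}$ equals $2$, since $\SD_{\lambda/\mu}$ is the single diagonal row at position $k$ and its weight-$n$ fillings in $\cribbons{\lambda}{\mu;n}$ are exactly those whose first entry is $\{1'\}$ or $\{1\}$ (the rest forced to $\{1\}$); and (ii) every other $\nu$ with $\widehat c^\nu_{\mu,(n)} \neq 0$ satisfies $\nu \prec \lambda$. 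When $|\nu|<|\lambda|$ this is trivial; when $|\nu|=|\lambda|$ one forces $\kappa=\mu$ so that $\nu/\mu$ is a shifted ribbon of $n$ singleton-entry boxes, and the shifted-ribbon condition combined with the presence of the diagonal box $(k,k)$ rules out any box in row $k+1$, so either $\nu=\lambda$ or some row $i<k$ gets extended, giving $\nu_i>\lambda_i$ at the first index of disagreement. Invoking the inductive hypothesis and Proposition~\ref{one-part-prop} (for $\jq_n=\wtjq_n$) reduces the subtracted identities to $2(\jp_\lambda-\wtjp_\lambda)=0$, which gives $\jp_\lambda=\wtjp_\lambda$ since $\ZZ[\beta]\llbracket x_1,x_2,\dots\rrbracket$ is $2$-torsion-free.

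The extension to skew shapes is identical to the closing paragraph of the proof of Theorem~\ref{jq-thm}: symmetry of $\wtjp_\lambda$ (Theorem~\ref{j-sym-thm}) gives $\wtjp_\lambda({\bf x},{\bf y}) = \sum_\mu \wtjp_\mu({\bf x})\wtjp_{\lambda/\mu}({\bf y})$ directly from the definition, and comparing with \eqref{jp-xy-eq} together with linear independence of the $\jp_\mu = \wtjp_\mu$ yields $\jp_{\lambda/\mu}=\wtjp_{\lambda/\mu}$ for all $\mu$. The one substantive difference from the $\jq$-argument is that here the coefficient of $\jp_\lambda$ in the key Pieri expansion is $2$ rather than $1$; the resulting factor of $2$ in the final identity is harmless but forces us to rely on $2$-torsion-freeness, and it makes the combinatorial exclusion of every $\nu\succ\lambda$ from the Pieri sum the main delicate point of the proof. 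The shifted-ribbon obstruction at the diagonal --- the fact that a box at $(k,k)$ prevents any box at $(k+1,k+1)$ --- is what forces $\nu_{k+1}=0=\lambda_{k+1}$ and hence prevents any $\nu$ from sitting above $\lambda$ in the order $\prec$.
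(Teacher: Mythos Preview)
Your proposal is correct and follows essentially the same approach as the paper's proof: establish the Pieri identity $\wtjp_\lambda \wtjq_n = \sum_\nu \widehat c^\nu_{\lambda,(n)} \beta^{|\lambda|+n-|\nu|} \wtjp_\nu$ via Corollary~\ref{one-row-rules-cor} and Lemmas~\ref{jp-lem2}--\ref{jp-lem3}, then induct on $\prec$ and extend to skew shapes exactly as in Theorem~\ref{jq-thm}. You spell out more details than the paper does (the paper simply cites Proposition~\ref{hat-b-prop} for the decomposition $\jp_\mu\jq_n = 2\jp_\lambda + \sum_{\nu\prec\lambda}\cdots$ and divides by $2$ without comment).

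One small correction: in verifying that every $\nu\neq\lambda$ with $|\nu|=|\lambda|$ and $\widehat c^\nu_{\mu,(n)}\neq 0$ satisfies $\nu\prec\lambda$, you appeal to ``the presence of the diagonal box $(k,k)$'' in $\SD_{\nu/\mu}$ to rule out row $k+1$. But $(k,k)$ need not be present in $\SD_{\nu/\mu}$ (e.g.\ $\mu=(3,2)$, $n=1$, $\nu=(4,2)$). The correct justification is simpler: the shifted-ribbon condition on $\SD_{\nu/\mu}$ is equivalent to $(\nu_2,\nu_3,\dots)\subseteq\mu$, and since $\mu_k=0$ this forces $\nu_{k+1}=0$. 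From there your conclusion (either $\nu=\lambda$, or some $\nu_i>\lambda_i$ with $i<k$) follows as you wrote.
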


\begin{proof}
Our argument has the same structure as the proof of Theorem~\ref{jq-thm}.
Let $\prec$ be the total order on strict partitions defined in that proof.
Substituting Lemmas~\ref{jp-lem2} and \ref{jp-lem3} with  $\kkappa := (n+\lambda_1,\lambda_1,\lambda_2,\dots)$ 
into Corollary~\ref{one-row-rules-cor},
shows
that  
$ \wtjp_\lambda \wtjq_n  = \sum_\nu  \widehat c^{\nu}_{\lambda,(n)}   \beta^{|\lambda|-|\nu|+n}   \wtjp_\nu$
for all $n>0$.
Next, if $\lambda$  has $k>0$ nonzero parts,
 $\mu := (\lambda_1,\lambda_2,\dots,\lambda_{k-1})$, and $n :=\lambda_k$,
then 
$ \jp_{\mu} \jq_{n} 
= 2\jp_\lambda + \sum_{\nu \prec \lambda} \widehat c^{\nu}_{\mu,(n)}   \beta^{|\mu|-|\nu|+n}   \jp_\nu$
by Proposition~\ref{hat-b-prop},
 so    $\jp_\lambda  =  \frac{1}{2}\jp_{\mu} \jq_{n}  - \frac{1}{2} \sum_{\nu \prec \lambda} \widehat c^{\nu}_{\mu,(n)}   \beta^{|\mu|-|\nu|+n}   \jp_\nu$ and 
an analogous formula holds for $\wtjp_{\lambda} $.
Since $\jp_n=\wtjp_n$ and $\jq_n=\wtjq_n$ by Proposition~\ref{one-part-prop},
the identity $\jp_\lambda=\wtjp_\lambda$ follows by induction.
To deduce that $\jp_{\lambda/\mu}=\wtjp_{\lambda/\mu}$,
  one can repeat the last paragraph of the proof of Theorem~\ref{jq-thm}
after changing each ``$\jq$'' to ``$\jp$''.
\end{proof}

%
%

\subsection{Reverse plane partition generating functions}\label{rpp-gf-sect}

\def\finLam{\widehat{\Lambda}_{\mathrm{fin}}}
\def\mindeg{\mathrm{mindeg}}

It remains to show that $\gp_{\lambda/\mu} = \wtgp_{\lambda/\mu}$
and $\gq_{\lambda/\mu} = \wtgq_{\lambda/\mu}$.
Thankfully, we can derive this from Theorems~\ref{jq-thm} and \ref{jp-thm}
by a relatively succinct formal argument.

Let $\Lambda $ be the free $\ZZ[\beta]\llbracket x_1,x_2,\dots\rrbracket$-module of   linear combinations of strict partitions
and let $\widehat{\Lambda} $ be the $\ZZ[\beta]\llbracket x_1,x_2,\dots\rrbracket$-module of formal (possibly infinite) linear combinations of strict partitions.  
Write $\mindeg(f)$ to denote the smallest degree 
of any monomial in $0\neq f \in \ZZ[\beta]\llbracket x_1,x_2,\dots\rrbracket$, where $\deg \beta = 0$ and $\deg x_i = 1$, and set $\mindeg(0)=\infty$.
Let  $\finLam$ be the submodule of $\widehat{\Lambda} $ consisting of the elements $\sum_\lambda f_\lambda \cdot \lambda$
for which $\{ \lambda : \mindeg (f_\lambda) = n\}$ is finite for all integers $n$.
Then write
\[\langle \cdot,\cdot \rangle : \finLam \times \Lambda  \to \ZZ[\beta]\llbracket x_1,x_2,\dots\rrbracket\]
for the  bilinear form,
continuous in the first coordinate,
with $\langle \lambda,\mu \rangle = \delta_{\lambda\mu}$ for all strict partitions.

Fix an element $t \in \ZZ[\beta]\llbracket x_1,x_2,\dots\rrbracket$ with $\mindeg(t)>0$.
Define $\cP(t)$ and $\cQ(t)$
to be the continuous linear maps $\finLam \to \finLam$ 
such that for all strict partitions $\mu$ and $\lambda$ one has 
\be
\langle \cP(t) \mu, \lambda\rangle =  \GP_{\lambda\ss\mu}(t)
\quand
\langle \cQ(t) \mu, \lambda\rangle =  \GQ_{\lambda\ss\mu}(t).
\ee
Both $\cP(t)$ and $\cQ(t)$ make sense as maps 
$\widehat{\Lambda} \to \widehat{\Lambda}$ but not as maps ${\Lambda} \to {\Lambda}$, 
since they send any single $\mu$ to the infinite sums $\sum_\lambda \GP_{\lambda\ss\mu}(t) \cdot \lambda$ 
and $\sum_\lambda \GQ_{\lambda\ss\mu}(t)\cdot\lambda$ over all strict partitions $\lambda \supseteq \mu$.
These operators are well-defined maps $\finLam\to\finLam$ because $\mindeg (\GP_{\lambda\ss\mu}(t))=\mindeg (\GQ_{\lambda\ss\mu}(t)) = |\lambda/\mu| \cdot \mindeg(t)$.
Similarly, define $\cp(t)$ and $\cq(t)$
to be the continuous linear maps $\finLam \to \finLam$ with 
\be
\langle \cp(t) \lambda, \mu\rangle =  \wtgp_{\lambda/\mu}(t)
\quand
\langle \cq(t) \lambda, \mu\rangle =  \wtgq_{\lambda/\mu}(t).
\ee
Finally, let $\cpp(t)$ and $\cqq(t)$
 be the continuous linear maps $\finLam \to \finLam$ with 
\be
\langle \cpp(t) \lambda, \mu\rangle =  \wtjp_{\lambda/\mu}(t) = \jp_{\lambda/\mu}(t)
\quand
\langle \cqq(t) \lambda, \mu\rangle =  \wtjq_{\lambda/\mu}(t) = \jq_{\lambda/\mu}(t).
\ee
The second equalities in this last pair of definitions rely on Theorems~\ref{jq-thm} and \ref{jp-thm}.

The operators $ \cp(t) $, $ \cq(t)$, $ \cpp(t) $, $ \cqq(t)$
all make sense as maps ${\Lambda} \to {\Lambda}$ but not as maps $\widehat{\Lambda} \to \widehat{\Lambda}$.
 For example, the coefficient of $\emptyset$
in the image of $(1) + (2) + (3) + \dots \in \widehat{\Lambda} - \finLam$ under any of these operators would be in 
$\ZZ\llbracket \beta, t\rrbracket$ rather than $\ZZ[\beta]\llbracket t\rrbracket$.
These operators send $\finLam\to\finLam$
because they each map $\lambda$ into the $\ZZ[\beta]\llbracket t\rrbracket$-span of the finite set of strict partitions $\mu \subseteq \lambda$.

\begin{lemma}\label{gg-lem1}
It holds that $\cp(-t)\cpp(t) = \cpp(t)\cp(-t) =  \cq(-t)\cqq(t) = \cqq(t)\cq(-t) = 1$.
\end{lemma}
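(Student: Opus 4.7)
Each of the operators $\cp(-t), \cpp(t), \cq(-t), \cqq(t)$ is represented in the basis of strict partitions by a lower-triangular matrix with $1$'s on the diagonal (since $\wtgp_{\lambda/\lambda} = \wtjp_{\lambda/\lambda} = \wtgq_{\lambda/\lambda} = \wtjq_{\lambda/\lambda} = 1$), so proving $\cp(-t)\cpp(t) = 1$ automatically yields $\cpp(t)\cp(-t) = 1$, and similarly for the $Q$-pair. Two identities then remain: the $P$-version $\cp(-t)\cpp(t)=1$ and the $Q$-version $\cq(-t)\cqq(t)=1$. The arguments for these two are entirely parallel, differing only in which diagonal entries are primed or unprimed in the combinatorial configurations that arise; I sketch the $P$-version.

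Applying Theorem~\ref{jp-thm} to rewrite $\jp_{\lambda/\mu}$ as $\wtjp_{\lambda/\mu}$, the identity $\cp(-t)\cpp(t)=1$ unpacks to the combinatorial assertion
\[
\sum_\mu \wtjp_{\lambda/\mu}(t)\cdot \wtgp_{\mu/\kappa}(-t) \;=\; \delta_{\lambda\kappa}
\]
for all strict partitions $\lambda,\kappa$, where $\mu$ ranges over strict partitions with $\kappa\subseteq\mu\subseteq\lambda$. The plan is to exhibit a sign-reversing, shape-preserving involution on the set of triples $(\mu,S,T)$ in which $S\in\ShBTP(\lambda/\mu)$ is a shifted bar tableau with entries in $\{1',1\}$ and $T\in\MRPP_P(\mu/\kappa)$ is a shifted plane partition with entries in $\{1',1\}$. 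The sign attached to $(\mu,S,T)$ is $(-1)^{|\weight_\RPP(T)|}$, which arises from the substitution $t\mapsto -t$ in $\wtgp$. The natural involution I have in mind scans along the boundary $\partial\mu$ to identify a ``first swappable'' configuration: a row or column along which one can either annex a box from $T$ as a new bar of $S$, or annex the last bar of $S$ as a new row or column of $T$. A careful accounting should show that each such local swap preserves $|S|+|\weight_\RPP(T)|$ while flipping the sign, so all non-trivial configurations pair off, leaving only the fixed point $(\mu,S,T)=(\lambda,\emptyset,\emptyset)$, which occurs precisely when $\lambda=\kappa$ and contributes $\delta_{\lambda\kappa}$.

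The principal obstacle is the careful treatment near the main diagonal, where the restrictions on $S$ (no primed diagonal entries) and on $T$ (no unprimed diagonal entries) are mirror images of each other. Pushing a configuration across $\partial\mu$ near the diagonal forces a re-priming or un-priming step, and one must verify that the image remains a valid pair; the edge cases will parallel the diagonal configurations handled in Section~\ref{bk-sect} (cf.~Proposition~\ref{2row-prop}), and I expect the verification to require a case analysis of the kind used in the proof of Theorem~\ref{thm:simple transposition}. As an algebraic fallback, one can induct on $|\lambda/\kappa|$, reducing the skew identity via the non-skew one-row Pieri rule of Corollary~\ref{one-row-rules-cor} to a telescoping one-row identity that is verifiable directly from Proposition~\ref{one-var-prop} (giving $\wtjp_n(t)=t(t-\beta)^{n-1}$) together with the companion formula $\wtgp_n(t)=t(t^n-(-\beta)^n)/(t+\beta)$ obtained from the shifted plane partition definition; the cost of this fallback is that one must first establish a suitable skew Pieri rule, which is a non-trivial preliminary.
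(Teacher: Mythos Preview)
Your proposal is not a proof but a plan: you describe the shape of a sign-reversing involution without constructing it, and you explicitly flag the diagonal boundary cases as ``the principal obstacle'' without resolving them. Your algebraic fallback is likewise left unexecuted and, as you note, would require a skew Pieri rule you have not established. So as written there is a genuine gap: the central bijection is missing, and the acknowledged hard case (moving a bar across $\partial\mu$ at the diagonal while respecting the opposite priming conventions on $S$ and $T$) is exactly where a naive swap can fail to produce a valid pair.

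The paper bypasses all of this with an algebraic factorization that makes the inverse relation transparent. For each $j$ one defines locally nilpotent ``strip-removal'' operators $r_j$, $c_j$, $b_j$ on $\Lambda$ that carve boxes out of row $j$, column $j$, and column $j$ off the diagonal, respectively. For any $\lambda$ with $\SD_\lambda\subseteq[n]\times[n]$ one then has
\[
\cq(t)\lambda = R_n(t)C_n(t)\lambda,\qquad \cqq(t)\lambda = \tilde C_n(t)\tilde R_n(t)\lambda,
\]
where $R_n(t)=(1+tr_1)\cdots(1+tr_n)$, $C_n(t)=(1+tc_1)\cdots(1+tc_n)$, and $\tilde R_n(t),\tilde C_n(t)$ are the reverse-ordered products of the geometric-series inverses $(1-tr_j)^{-1}$, $(1-tc_j)^{-1}$. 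The identities $\tilde R_n(-t)R_n(t)=1$ and $\tilde C_n(-t)C_n(t)=1$ are then tautological, and $\cqq(-t)\cq(t)=1$ follows immediately; the $P$-version is the same with $b_j$ in place of $c_j$. The diagonal is handled for free, since $b_j$ simply omits diagonal boxes by definition. Your triangular-matrix reduction in the first paragraph is correct and would halve the work in either approach, but the factorization makes even that unnecessary.
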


\begin{proof}
For each integer $j>0$ let $r_j $, $c_j$, and $b_j$ be the 
  linear maps ${\Lambda}  \to {\Lambda} $ 
with 
\[
 r_j(\lambda) = \sum (-\beta)^{|\mu|-|\lambda|-1} \mu,
\quad c_j(\lambda) = \sum  (-\beta)^{|\nu|-|\lambda|-1} \nu,
\quand
 b_j(\lambda) = \sum  (-\beta)^{|\kappa|-|\lambda|-1} \kappa,
\]
where the sums are respectively over the finite sets of strict partitions $\mu\subsetneq\lambda$, $\nu\subsetneq\lambda$,
and $\kappa\subsetneq \lambda$
such that  $\SD_{\lambda/\mu}$ is contained in row $j$,
$\SD_{\lambda/\nu}$ is contained in column $j$,
and
$\SD_{\lambda/\kappa}$ is contained in column $j$ but does not contain a box on the main diagonal.
Next, fix an integer $n>0$ and define
\[
\ba
R_n(t)& := (1+t r_1)(1+tr_2) \cdots (1+tr_n), \\
C_n(t) &:= (1+tc_1)(1+tc_2)\cdots (1+tc_n), \\
B_n(t) &:= (1+t b_1)(1+t b_2)\cdots (1+t b_n),
\ea
\qquand
\ba
\tilde R_n(t)& := \tfrac{1}{1-t r_n} \cdots\tfrac{1}{1-tr_2}\tfrac{1}{1-tr_1}, \\
\tilde C_n(t) &:=   \tfrac{1}{1-t c_n} \cdots\tfrac{1}{1-tc_2}\tfrac{1}{1-tc_1} , \\
\tilde B_n(t) &:=  \tfrac{1}{1-t b_n} \cdots\tfrac{1}{1-t b_2}\tfrac{1}{1-tb_1} .
\ea
\]
Here we interpret $\tfrac{1}{1-t r_j} $ as the geometric series $1 + t r_j + t^2 r_j^2 + \dots$ (and likewise for $\tfrac{1}{1-t c_j} $ and $\tfrac{1}{1-t b_j} $);
this makes sense as maps ${\Lambda}  \to {\Lambda} $ because $r_j$, $c_j$, and $b_j$ are locally nilpotent, i.e., for each $\lambda$ we have $r_j^m(\lambda) = 0$ for sufficiently large $m$.

Now suppose $\lambda$ is a strict partition whose shifted diagram fits inside $[n]\times [n]$.
By construction,
\[
\cq(t) \lambda = \sum_{\mu \subseteq \lambda}   \wtgq_{\lambda/\mu}(t) \mu  = R_n(t) C_n(t) \lambda
\quand
\cqq(t) \lambda = \sum_{\mu \subseteq \lambda}   \wtjq_{\lambda/\mu}(t) \mu  = \tilde C_n(t) \tilde R_n(t) \lambda.
\]
Here, one should think of $C_n(t)$ as carving out space from $\lambda$ for the unprimed
entries in a ShRPP while $R_n(t)$ carves out space for the primed entries.
On the other hand, $ \tilde R_n(t) $ carves out space from $\lambda$ for the unprimed row bars in a ShBT
while $ \tilde C_n(t) $ carves out space for the primed column bars.
Since $\SD_\mu\subseteq [n]\times [n]$ for all $\mu\subseteq \lambda$, 
it follows that
\[
\cqq(-t) \cq(t) \lambda = \sum_{\mu \subseteq \lambda}   \wtgq_{\lambda/\mu}(t) \cqq(-t)\mu
=\sum_{\mu \subseteq \lambda}   \wtgq_{\lambda/\mu}(t)  \tilde C_n(-t) \tilde R_n(-t)\mu
=  \tilde C_n(-t) \tilde R_n(-t) R_n(t) C_n(t) \lambda.
\]
Since $ \tilde R_n(-t) R_n(t)   =  \tilde C_n(-t)  C_n(t)  = 1$, we have $\cqq(-t) \cq(t) \lambda = \lambda$, and a similar calculation shows $\cq(t) \cqq(-t) \lambda = \lambda$.
Since $n$ can be arbitrary, so that $\lambda$ can also be arbitrary, we conclude that $\cq(-t)\cqq(t) = \cqq(t)\cq(-t) = 1$.
The identity  $\cp(-t)\cpp(t) = \cpp(t)\cp(-t) = 1$
follows by a similar argument since
$
\cp(t) \lambda =  R_n(t) B_n(t) \lambda
$
and
$
\cpp(t) \lambda =  \tilde B_n(t) \tilde R_n(t) \lambda
$
when $\SD_\lambda \subseteq [n]\times[n]$.
\end{proof}

Fix $u,v\in \ZZ[\beta]\llbracket x_1,x_2,\dots\rrbracket$ with 
$\mindeg(u)=\mindeg(v)=1$
  and recall that $\overline{x} := \frac{-x}{1+\beta x}$.

\begin{lemma}\label{gg-lem2}
It holds that $\cpp(v)\cQ(u) =\tfrac{1+uv}{1+\overline{u}v} \cQ(u)\cpp(v) $
and $\cqq(v)\cP(u) =\tfrac{1+uv}{1+\overline{u}v} \cP(u)\cqq(v) $.
\end{lemma}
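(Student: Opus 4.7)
The plan is to derive both identities as direct operator-theoretic translations of the Cauchy identities \eqref{cauchy-eq4} and \eqref{cauchy-eq5}, specialized to one-variable substitutions.

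First I would substitute $x_1 = u$, $x_i = 0$ for $i > 1$, $y_1 = v$, $y_j = 0$ for $j > 1$ into \eqref{cauchy-eq5}. Under this substitution the infinite product $\prod_{i,j \geq 1} \tfrac{1+x_iy_j}{1+\overline{x_i}y_j}$ collapses to the single factor $\tfrac{1+uv}{1+\overline{u}v}$, which is a well-defined element of $\ZZ[\beta]\llbracket x_1,x_2,\dots\rrbracket$ since $\mindeg(uv) \geq 2$ makes $1+\overline{u}v$ invertible. The specialization is legitimate because both sides of \eqref{cauchy-eq5} are formal power series identities in the variables $x_i$ and $y_j$, and setting all but one $x_i$ and one $y_j$ to zero is a continuous algebra homomorphism. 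The resulting identity is
\[
\sum_\lambda \GQ_{\lambda\ss\mu}(u)\,\jp_{\lambda/\nu}(v) \;=\; \tfrac{1+uv}{1+\overline{u}v}\sum_\kappa \GQ_{\nu\ss\kappa}(u)\,\jp_{\mu/\kappa}(v)
\]
for all strict partitions $\mu$ and $\nu$.

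Next, to establish $\cpp(v)\cQ(u) = \tfrac{1+uv}{1+\overline{u}v}\cQ(u)\cpp(v)$, by continuity it suffices to check equality after applying both sides to a basis element $\mu$ and then pairing against $\nu$. Unfolding the definitions yields $\cQ(u)\mu = \sum_\lambda \GQ_{\lambda\ss\mu}(u)\,\lambda$ and $\cpp(v)\lambda = \sum_\nu \jp_{\lambda/\nu}(v)\,\nu$, so composing gives
\[
\langle \cpp(v)\cQ(u)\mu,\nu\rangle = \sum_\lambda \GQ_{\lambda\ss\mu}(u)\,\jp_{\lambda/\nu}(v),
\qquad
\langle \cQ(u)\cpp(v)\mu,\nu\rangle = \sum_\kappa \GQ_{\nu\ss\kappa}(u)\,\jp_{\mu/\kappa}(v).
\]
These sums converge in $\ZZ[\beta]\llbracket x_1,x_2,\dots\rrbracket$ because $\mindeg(u) = \mindeg(v) = 1$ forces the $\mindeg$ of each summand to grow with $|\lambda|$ or $|\kappa|$. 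The specialized Cauchy identity displayed above then relates these two coefficients exactly by the factor $\tfrac{1+uv}{1+\overline{u}v}$, yielding the first claim.

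The proof of the second identity $\cqq(v)\cP(u) = \tfrac{1+uv}{1+\overline{u}v}\cP(u)\cqq(v)$ is formally identical, substituting $\GP$ for $\GQ$ and $\jq$ for $\jp$ and invoking \eqref{cauchy-eq4} in place of \eqref{cauchy-eq5}. No genuine obstacle arises; the only point requiring care is confirming that the one-variable specialization preserves both the validity and summability of the Cauchy identity, which is automatic from the hypotheses on $\mindeg(u)$ and $\mindeg(v)$ together with the basic observations that $\GQ_{\lambda\ss\mu}(u)$ and $\jp_{\lambda/\nu}(v)$ are polynomials in $u$ and $v$ whose minimum degrees are $|\lambda/\mu|$ and $|\lambda/\nu|$, respectively.
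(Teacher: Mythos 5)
Your proof is correct and follows essentially the same route as the paper: specialize the skew Cauchy identity \eqref{cauchy-eq5} (resp.\ \eqref{cauchy-eq4}) to a single $x$-variable and a single $y$-variable, and observe that the two sides of the specialized identity are exactly the matrix entries $\langle \cpp(v)\cQ(u)\mu,\nu\rangle$ and $\tfrac{1+uv}{1+\overline{u}v}\langle \cQ(u)\cpp(v)\mu,\nu\rangle$. The only difference is cosmetic: the paper simply declares ``we may assume $u=x_1$ and $v=y_1$,'' whereas you spell out the substitution and the convergence bookkeeping explicitly.
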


\begin{proof}
For any strict partitions $\mu$ and $\nu$, the Cauchy identity
\eqref{cauchy-eq5} implies that
\[ \langle \cpp(u)\cQ(v) \mu, \nu\rangle =  \sum_\lambda \GQ_{\lambda\ss\mu}(u) \jp_{\lambda/\nu}(v)
=\tfrac{1+uv}{1+\overline{u}v} \sum_\kappa \GQ_{\nu\ss\kappa}(u) \jp_{\mu/\kappa}(v)
\]
(after setting $x_1 = u$, $y_1 = v$, and all other variables to $0$).
The first identity holds since we have
$\langle \cQ(v) \cpp(u)\mu, \nu\rangle = \sum_\kappa \GQ_{\nu\ss\kappa}(u) \jp_{\mu/\kappa}(v)$. 
The second identity follows similarly using \eqref{cauchy-eq4}.
\end{proof}

\begin{corollary}\label{gg-cor}
It holds that $\cp(v)\cQ(u) =\tfrac{1-\overline{u}v} {1-uv} \cQ(u)\cp(v) $
and $\cq(v)\cP(u) =\tfrac{1-\overline{u}v} {1-uv} \cP(u)\cq(v) $.
\end{corollary}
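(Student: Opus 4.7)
The plan is to derive Corollary~\ref{gg-cor} as a purely algebraic consequence of Lemmas~\ref{gg-lem1} and~\ref{gg-lem2}, without revisiting any combinatorics. The first step is to reinterpret Lemma~\ref{gg-lem1} as an invertibility statement: as continuous linear operators on $\finLam$, one has $\cpp(t) = \cp(-t)^{-1}$ and $\cqq(t) = \cq(-t)^{-1}$, valid whenever $\mindeg(t)>0$ (which is where the geometric-series manipulations of Lemma~\ref{gg-lem1} converge). This makes sense with $t$ replaced by $\pm v$ since we have assumed $\mindeg(v)=1$.

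The second step is to substitute $v\mapsto -v$ in the identities of Lemma~\ref{gg-lem2}. Since $u$ is left untouched, the scalar factor $\tfrac{1+uv}{1+\overline{u}v}$ becomes $\tfrac{1-uv}{1-\overline{u}v}$, giving
\[
\cpp(-v)\,\cQ(u) = \tfrac{1-uv}{1-\overline{u}v}\,\cQ(u)\,\cpp(-v)
\quad\text{and}\quad
\cqq(-v)\,\cP(u) = \tfrac{1-uv}{1-\overline{u}v}\,\cP(u)\,\cqq(-v).
\]
Rewriting $\cpp(-v)=\cp(v)^{-1}$ and $\cqq(-v)=\cq(v)^{-1}$ via step one turns these into
\[
\cp(v)^{-1}\cQ(u) = \tfrac{1-uv}{1-\overline{u}v}\,\cQ(u)\,\cp(v)^{-1}
\quad\text{and}\quad
\cq(v)^{-1}\cP(u) = \tfrac{1-uv}{1-\overline{u}v}\,\cP(u)\,\cq(v)^{-1}.
\]

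The third step is to conjugate by $\cp(v)$ (respectively $\cq(v)$), i.e.\ to multiply the first identity on the left and right by $\cp(v)$, which yields $\cQ(u)\cp(v) = \tfrac{1-uv}{1-\overline{u}v}\cp(v)\cQ(u)$; inverting the scalar on the other side gives exactly $\cp(v)\cQ(u) = \tfrac{1-\overline{u}v}{1-uv}\cQ(u)\cp(v)$. The second identity of the corollary follows by the same manipulation applied to the $\cqq$/$\cP$ statement.

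No step is expected to be a serious obstacle: the only thing to watch is that the scalar series $\tfrac{1-uv}{1-\overline{u}v}$ and its reciprocal both make sense in $\ZZ[\beta]\llbracket x_1,x_2,\dots\rrbracket$ (they do, because $\mindeg(uv)\geq 2$ so the geometric expansions converge), and that $\cp(v)$ and $\cq(v)$ are genuinely invertible as maps on $\finLam$, which is the content of the first step.
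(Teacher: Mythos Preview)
Your proof is correct and takes essentially the same approach as the paper: both derive the corollary by combining the invertibility statement of Lemma~\ref{gg-lem1} with Lemma~\ref{gg-lem2}, conjugating by $\cp(\pm v)$ and negating $v$. The only difference is the order in which you perform the sign change and the conjugation, which is cosmetic.
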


\begin{proof}
Consider the first identity in Lemma~\ref{gg-lem2}.
Multiplying both sides on the left and right by $\cp(-v)$ gives
$\cQ(u)\cp(-v) =\tfrac{1+uv}{1+\overline{u}v} \cp(-v) \cQ(u)$
by Lemma~\ref{gg-lem1}.
Thus $ \cp(-v) \cQ(u) = \tfrac{1+\overline{u}v} {1+uv} \cQ(u)\cp(-v)$,
and this becomes the first desired identity on negating $v$.
The other identity follows similarly.
\end{proof}

Putting together all of these identities lets us prove our last main theorem.

\begin{theorem}\label{gp-gq-thm}
If $\mu$ and $ \lambda$ are strict partitions then $ \gp_{\lambda/\mu}=\wtgp_{\lambda/\mu}$
and  $ \gq_{\lambda/\mu}=\wtgq_{\lambda/\mu}$.
\end{theorem}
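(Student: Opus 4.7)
The plan is to use the operator identities in Corollary~\ref{gg-cor} to establish the Cauchy identity
\[\sum_\lambda \GQ_\lambda(\mathbf{x})\wtgp_\lambda(\mathbf{y}) = \prod_{i,j\geq 1}\tfrac{1-\overline{x_i}y_j}{1-x_iy_j}\]
(together with its analogue exchanging $\cP\leftrightarrow\cQ$ and $\cp\leftrightarrow\cq$ for $\wtgq$), then invoke the uniqueness property in the definition \eqref{cauchy-eq} of $\gp_\lambda$ and $\gq_\lambda$. For the skew refinement I will use a layer-decomposition of shifted plane partitions combined with linear independence of the $\gp_\mu$'s.

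First I will compute $\cQ(x_1)\cdots\cQ(x_n)\cp(y_1)\cdots\cp(y_m)\,\emptyset$ in two ways. On one hand, since $\wtgp_{\emptyset/\mu}(y_j)$ is nonzero only for $\mu=\emptyset$, each $\cp(y_j)$ fixes $\emptyset$, so the expression reduces to $\cQ(x_1)\cdots\cQ(x_n)\,\emptyset = \sum_\lambda \GQ_\lambda(x_1,\dots,x_n)\,\lambda$ (using iterated application of \eqref{G-coproduct}); its coefficient at $\emptyset$ is $\GQ_\emptyset=1$. On the other hand, using Corollary~\ref{gg-cor} in the form $\cQ(u)\cp(v) = \tfrac{1-uv}{1-\overline{u}v}\,\cp(v)\cQ(u)$ and shuffling all $\cp$'s to the left picks up a factor of $\tfrac{1-x_iy_j}{1-\overline{x_i}y_j}$ for every pair $(i,j)$, turning the expression into $\prod_{i,j}\tfrac{1-x_iy_j}{1-\overline{x_i}y_j}\cdot\cp(y_1)\cdots\cp(y_m)\cQ(x_1)\cdots\cQ(x_n)\,\emptyset$. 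Its $\emptyset$-coefficient is $\prod_{i,j}\tfrac{1-x_iy_j}{1-\overline{x_i}y_j}\cdot\sum_\mu \GQ_\mu(x_1,\dots,x_n)\,[\emptyset]\,\cp(y_1)\cdots\cp(y_m)\,\mu$.

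The essential intermediate step is the identity $[\emptyset]\,\cp(y_1)\cdots\cp(y_m)\,\mu = \wtgp_\mu(y_1,\dots,y_m)$. Iterating the definition of $\cp(y_j)$ produces (after reindexing) a sum over chains $\emptyset=\nu_0\subseteq\nu_1\subseteq\cdots\subseteq\nu_m=\mu$ of products $\prod_{k=1}^m \wtgp_{\nu_k/\nu_{k-1}}(y_k)$. In parallel, given $T\in\MRPP_P(\mu)$ with entries in $\{1',1,\dots,m',m\}$, the sublevel sets $\nu_k:=\{(i,j)\in\mu:T_{ij}\le k\}$ form exactly such a chain of strict partitions, and the $k$th layer $\nu_k/\nu_{k-1}$ records the positions of entries equal to $k$ or $k'$, contributing precisely $\wtgp_{\nu_k/\nu_{k-1}}(y_k)$ to $\wtgp_\mu(y_1,\dots,y_m)$; the no-unprimed-diagonal condition descends to each slice. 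The two chain expansions thus agree term by term.

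Combining the two computations yields $\sum_\mu \GQ_\mu(x_1,\dots,x_n)\wtgp_\mu(y_1,\dots,y_m) = \prod_{i,j}\tfrac{1-\overline{x_i}y_j}{1-x_iy_j}$ for every $n,m$; passing to the projective limit $n,m\to\infty$ in the formal power series ring gives the full Cauchy identity \eqref{cauchy-eq} with $\wtgp$ in place of $\gp$, so by uniqueness $\wtgp_\lambda = \gp_\lambda$. The parallel computation with $\cP$ and $\cq$ (using the second identity of Corollary~\ref{gg-cor}) gives $\wtgq_\lambda=\gq_\lambda$. For the skew version, splitting a shifted plane partition of shape $\lambda$ into its subtableaux with entries $\leq m$ and $>m$ (and checking that the primed-diagonal condition is inherited by both pieces) yields $\wtgp_\lambda(\mathbf{x},\mathbf{y}) = \sum_\mu \wtgp_\mu(\mathbf{x})\,\wtgp_{\lambda/\mu}(\mathbf{y})$; comparing with \eqref{gp-xy-eq}, using the already-proved $\wtgp_\mu=\gp_\mu$, and invoking the linear independence of $\{\gp_\mu(\mathbf{x})\}$ forces $\wtgp_{\lambda/\mu}=\gp_{\lambda/\mu}$, and the same argument gives $\wtgq_{\lambda/\mu}=\gq_{\lambda/\mu}$. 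The only mildly delicate step is the bookkeeping in the chain decomposition of the intermediate step, where one must track the order in which $\cp(y_j)$'s act (rightmost first) against the natural layering of $T$ by entry value; once this reindexing is set up, the rest is a formal consequence of Corollary~\ref{gg-cor} and the uniqueness in \eqref{cauchy-eq}.
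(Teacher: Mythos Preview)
Your proof is correct and follows essentially the same approach as the paper's: both use the commutation relation of Corollary~\ref{gg-cor} to pass the $\cp$-operators through the $\cQ$-operators acting on $\emptyset$, extract the Cauchy identity for $\wtgp_\lambda$, invoke the uniqueness in \eqref{cauchy-eq}, and then upgrade to the skew case via the coproduct formula \eqref{gp-xy-eq} and linear independence. The only differences are organizational---you start from $\cQ\cdots\cQ\,\cp\cdots\cp\,\emptyset$ and commute the $\cp$'s leftward, whereas the paper starts from $\cp\cdots\cp\,\cQ\cdots\cQ\,\emptyset$ and commutes them rightward---and you spell out the chain decomposition underlying $\langle \cp(y_1)\cdots\cp(y_m)\mu,\emptyset\rangle = \wtgp_\mu(y_1,\dots,y_m)$ more explicitly than the paper does.
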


\begin{proof}
Fix an integer $n>0$.
Then one has
$
\langle  \cQ(x_n)\cdots \cQ(x_2)\cQ(x_1) \emptyset, \lambda \rangle = \GQ_{\lambda}(x_1,x_2,\dots,x_n)
$
and
$
\langle  \cp(x_1)\cp(x_2)\cdots \cp(x_n) \lambda, \emptyset \rangle = \wtgp_{\lambda}(x_1,x_2,\dots,x_n)
$
for any strict partition $\lambda$, so
\[\ba
\sum_\lambda \GQ_\lambda(x_1,\dots,x_n) \wtgp_\lambda(y_1,\dots,y_n)
&=
\langle  \cp(y_1) \cp(y_2)\cdots  \cp(y_n) \cQ(x_n)\cdots\cQ(x_2)\cQ(x_1) \emptyset, \emptyset \rangle
\\&=
\prod_{1\leq i,j\leq n} \tfrac{1-\overline{x_i}y_j}{1-x_iy_j} 
\langle  \cQ(x_n)  \cdots \cQ(x_1) \cp(y_1)\cdots \cp(y_n) \emptyset, \emptyset \rangle
\ea
\]
by Corollary~\ref{gg-cor}.
As   $\cp(t) \emptyset = \emptyset$,  $\langle \cQ(t) \emptyset,\emptyset\rangle = 1$, and $\langle \cQ(t) \mu,\emptyset\rangle = 0$ for $\mu \neq \emptyset$,
we get
\[\sum_\lambda \GQ_\lambda({\bf x}) \wtgp_\lambda({\bf y}) = \prod_{i,j\geq 1} \tfrac{1-\overline{x_i}y_j}{1-x_iy_j}\]
by taking   $n\to \infty$.
This is the   Cauchy identity \eqref{cauchy-eq} defining $\gp_\lambda$, so  
$\gp_\lambda = \wtgp_\lambda$.
In particular, this means that $\wtgp_\lambda$ is symmetric.
It therefore follows from the definition of $\wtgp_{\lambda/\mu}$ that 
$\wtgp_\lambda({\bf x},{\bf y}) = \sum_{\mu }\wtgp_\mu({\bf x}) \wtgp_{\lambda/\mu}({\bf y})$.
On comparing this with \eqref{gp-xy-eq}
we conclude that $\gp_{\lambda/\mu} = \wtgp_{\lambda/\mu}$ for all  $\lambda$ and $\mu$.
A similar argument using $\cq(t)$ and $\cP(t)$ 
shows that $\gq_{\lambda/\mu} = \wtgq_{\lambda/\mu}$.
\end{proof}

\end{document}